\theoremstyle{definition}
\newtheorem{thm}{Theorem}[section]
\newtheorem{lem}[thm]{Lemma}
\newtheorem{df}[thm]{Definition}
\newtheorem{prop}[thm]{Proposition}
\newtheorem{rem}[thm]{Remark}
\newtheorem{nota}[thm]{Notation}
\newcommand{\C}{\mathbb{C}}
\newcommand{\R}{\mathbb{R}}
\newcommand{\Z}{\mathbb{Z}}
\newcommand{\N}{\mathbb{N}}
\newcommand{\M}{\mathcal{M}}
\newcommand{\ii}{\mathbf{i}}
\newcommand{\bfb}{\mathbf{b}}
\newcommand{\bft}{\mathbf{t}}
\newcommand{\bfy}{\mathbf{y}}
\newcommand{\X}{\mathfrak{X}}
\newcommand{\Y}{\mathfrak{Y}}
\newcommand{\LL}{\mathcal{L}}
\newcommand{\GT}{\mathbb{GT}}
\newcommand{\GTp}{\mathbb{GT}^+}
\newcommand{\half}{\frac{1}{2}}
\newcommand{\Res}{\textrm{Res}}
\newcommand{\Prob}{\textrm{Prob}}
\newcommand{\Martin}{\textrm{Martin}}
\numberwithin{equation}{section}
\begin{document}

\title[$q$-deformed Character Theory $\cdots$]{$q$-deformed Character Theory for Infinite-Dimensional Symplectic and Orthogonal Groups}

\author{
  Cesar Cuenca
  and
  Vadim Gorin
}

\date{}

\maketitle

\begin{abstract}

The classification of irreducible, spherical characters of the infinite-dimensional 
unitary/orthogonal/symplectic groups can be obtained by finding all 
possible limits of normalized, irreducible characters of 
the corresponding finite-dimensional groups, as the rank tends to infinity.
We solve a $q$-deformed version of the latter problem for orthogonal and 
symplectic groups, extending previously known results for the unitary group.
The proof is based on novel determinantal and double-contour integral formulas for the $q$-specialized characters.

\end{abstract}

\tableofcontents

\section{Introduction}

\subsection{Preface}

For each of the three series of classical compact Lie groups: unitary $U(N)$, orthogonal $SO(N)$,
and symplectic $Sp(N)$, one can naturally embed groups of the smaller rank into the larger ones
and form inductive limits $U(\infty)=\bigcup_{N=1}^{\infty} U(N)$, $SO(\infty)=\bigcup_{N=1}^{\infty} SO(N)$, $Sp(\infty)=\bigcup_{N=1}^{\infty} Sp(N)$. The study of
such infinite--dimensional or ``big'' groups has been a  central topic of the asymptotic
representation theory during the last 40 years. These groups are wild, which means that one needs
to restrict the class of representations, in order to get a meaningful theory. One point of view is
to deal with characters, i.e.\ central, positive--definite, continuous functions on the group. The
\emph{extreme} characters then correspond to finite factor representations of the group; for
$U(\infty)$ all of them were classified by Voiculescu \cite{Voiculescu}, while for $SO(\infty)$ and
$Sp(\infty)$ similar results were obtained by Boyer \cite{Boyer_OSp}. More general characters can be
identified with spherical representations of the Gelfand pairs $(U(\infty)\times U(\infty),
U(\infty))$, $(SO(\infty)\times SO(\infty), SO(\infty))$, $(Sp(\infty)\times Sp(\infty),
Sp(\infty))$, and such a theory was introduced and thoroughly studied by Olshanski \cite{Olsh_repesentations_U_short},
\cite{Olsh_repesentations_U_long}. From another direction, as first noticed in \cite{VK_U},\cite{Boyer_U}, one can identify extreme characters with totally--positive
Toeplitz matrices, and then their classification theorem becomes equivalent to the earlier Edrei's
theorem \cite{Edrei} from classical analysis.

Yet another \emph{approximative} approach was suggested by Vershik and Kerov \cite{VK_U}; in this
approach, the parameters of the characters of $U(\infty)$, $SO(\infty)$, $Sp(\infty)$ become
limits of normalized lengths as $N\to\infty$ of rows and columns in the Young diagrams
parameterizing the irreducible representations of finite--dimensional
groups $U(N)$, $SO(N)$, $Sp(N)$. This idea can be used to produce several distinct proofs of the character
classification theorems of Voiculescu and Boyer, see \cite{OO2}, \cite{OkOlsh_BC}, \cite{BO_new}, \cite{Petrov},
\cite{GP}, \cite{Olsh_RelDim}.

\medskip

Classical Lie groups admit a $q$--deformation to \emph{quantum groups}, cf.\ \cite{BK}, \cite{CP}, which
leads to a natural question of whether a similar deformation is possible for the character theory
of their infinite--dimensional versions. This question for the unitary groups $U(N)$ was first
addressed by the second author \cite{G} and based on the following observation.
In the Vershik--Kerov approach, the characters of
$U(\infty)$ can be treated as the limits of normalized \emph{Schur polynomials} (characters
of irreducible representation of $U(N)$) as the number of variables goes to infinity
\begin{equation}
\label{eq_intro_norm_Schur}
 \lim_{N\to\infty}\frac{s_\lambda(x_1,x_2,\dots,x_k, 1^{N-k})}
 {s_\lambda(1^N)}, \quad k=1,2,\dots,\quad |x_1|=|x_2|=\dots=|x_k|=1,
\end{equation}
where $\lambda=\lambda(N)$ changes in an appropriate way (as $N\to\infty$) to guarantee the existence
of the limit, and $1^N$ means $N$ variables all equal to $1$. Then the $q$--deformation of \cite{G}
is based on the replacement of $1^{N-k}$ and $1^{N}$ in \eqref{eq_intro_norm_Schur} by the
geometric series with ratio $q$. Such a point of view turned out to be fruitful: in \cite{G} a
classification theorem for the new $q$--characters was obtained (they are parameterized by nondecreasing
sequences of integers) and in \cite{BG}, \cite{C}, \cite{Petrov}, \cite{GP}, \cite{GO},
\cite{Ol2} the topic was further developed. It was noticed in \cite{G} that the
$q$--characters have a link to the \emph{quantum traces} for the representations of the quantized
universal enveloping algebra $U_{q}(\mathfrak{gl}_N)$; however, no infinite--dimensional object was constructed. A more elaborate representation--theoretic
interpretation for the $q$--characters of $U(\infty)$ was presented recently by Sato
\cite{Sato} in the language of compact quantum groups.

After \cite{G} appeared, an immediate question arose: can the results be extended to other root
systems, i.e.\ to orthogonal and symplectic groups? Despite several other approaches to
$q$--characters of unitary groups appearing in the subsequent years, it remained unclear whether the existence of a ``good''
$q$--deformation of the character theory for the infinite--dimensional group is an artifact of the
root system of type $A$, or if it exists for all classical series of Lie groups? In the present article we
resolve this question by constructing a rich $q$--deformed character theory for $SO(\infty)$ and
$Sp(\infty)$.

\subsection{$q$--deformed characters: main results} The Vershik--Kerov approach \cite{VK_U}, \cite{OO} to the asymptotic
representation theory makes the classification of all extreme characters of $U(\infty)$ equivalent to the following problem. Find all sequences of \emph{signatures} (i.e.\ highest weights of irreducible representations) $\lambda(N)=(\lambda(N)_1 \ge \lambda(N)_2 \ge \dots \ge \lambda(N)_N)\in \mathbb Z^{N}$, $N=1,2,\dots$, such that for each $k=1,2,\dots$, the limit \eqref{eq_intro_norm_Schur} exists uniformly on the torus $\{ (x_1,\dots,x_k)\in \mathbb C^k\, :\,  |x_i|=1,\, 1\le i \le k\}$. The limit itself is then identified with the value of the character on a unitary matrix from the group $U(\infty)$ with non-trivial (i.e.\ different from $1$) eigenvalues $x_1,\dots,x_k$.

\medskip

The $q$--deformation of \cite{G} suggests to fix a parameter $0<q<1$ and find the sequences of signatures $\lambda(N)$, such that for each $k=1,2,\dots$ there exists a limit
\begin{equation}
\label{eq_intro_norm_Schur_q}
 \lim_{N\to\infty}\frac{s_{\lambda(N)}(x_1,x_2,\dots,x_k, q^{-k}, q^{-k-1},\dots, q^{1-N})}
 {s_\lambda(1, q^{-1}, q^{-2},\dots, q^{1-N})}.
\end{equation}
A priori in \eqref{eq_intro_norm_Schur_q} the convergence is assumed to be uniform over $|x_i|=q^{1-i}$, $1\le i \le k$, i.e.\ on the torus of $q$--growing radius. However, a posteriori, \eqref{eq_intro_norm_Schur_q} converges for all $x_i\ne 0$. The limit of \eqref{eq_intro_norm_Schur_q} is then the desired $q$--character. There are several reformulations of this asymptotic problem: one of them is the identification of the minimal boundary (=extreme Gibbs measure on the space of paths) of a certain branching graph, called the $q$--Gelfand--Tsetlin graph. The vertices of this graph are labels of irreducible representations of $U(N)$, $N=1,2,\dots$, i.e.\ signatures, and the edges encode the branching rules upon restrictions from $U(N)$ onto $U(N-1)$. Each edge comes with a $q$--dependent weight, and the whole combinatorics can be linked to the notion of the quantum dimension for the representations of the quantized unversal envelopping algebra $U_q(\mathfrak{gl}_N)$. We refer to \cite{G} and Section \ref{sec:qschurgraph} for the details. Another reformulation deals with $q$--Toeplitz matrices, see \cite[Section 1.5]{G}.

The limits in \eqref{eq_intro_norm_Schur_q} turn out to be parameterized by infinite sequences of integers $\nu_1\le \nu_2\le \nu_3\le\dots$, $\nu_i\in\Z$, which are identified with the \emph{last} rows of $\lambda(N)$:
\begin{equation}
\label{eq_intro_stab_q_Schur_1}
 \nu_i = \lim_{N\to\infty} \lambda(N)_{N+1-i},\quad i=1,2,\dots.
\end{equation}

One clear feature of \eqref{eq_intro_norm_Schur_q} is its asymmetry under the change $q\mapsto q^{-1}$. The properties of Schur polynomials imply that such change is equivalent to the transformation $\lambda(N)\mapsto\widetilde{\lambda}(N) := (-\lambda(N)_N \ge \dots \geq -\lambda(N)_1)$ and therefore, when $q > 1$, as $N\to\infty$ we would need to consider the first rows of $\lambda(N)$ instead of the last rows in \eqref{eq_intro_stab_q_Schur_1}.

\medskip

If we now switch to orthogonal and symplectic groups, then the symmetry becomes important. Indeed, the eigenvalues for the matrices of these groups come in pairs $\{z_i,z_i^{-1}\}$ and the characters are invariant under inversion of the variables. Thus, the first step towards the $q$--deformation for $SO(\infty)$, $Sp(\infty)$ is to make the setting of \eqref{eq_intro_norm_Schur_q} symmetric by changing it to
\begin{equation}
\label{eq_intro_norm_Schur_q_symmetric}
 \lim_{N\to\infty}\frac{s_{\lambda(N)}(q^{-N},q^{1-N},\dots,q^{-k-1},x_{-k},\dots,x_k, q^{k+1}, q^{k+1},\dots, q^{N})}
 {s_{\lambda(N)}(q^{-N}, q^{1-N},\dots,q^{N-1},q^N)},\quad x_1,\dots,x_k\in \mathbb C^*,
\end{equation}
where $\lambda(N)$ now has $2N+1$ rows: $\lambda(N)_{-N} \ge \lambda(N)_{1-N} \ge\dots\ge \lambda(N)_N$. Note that, since the Schur polynomials are homogeneous, there is some further freedom, as we can multiply all its variables by arbitrary $q^M$; the most general setup is described in Section \ref{Section_Type_A_results}.

Our first main result (Theorem \ref{thm:asymptoticsmultivariate}) proves that the limits of \eqref{eq_intro_norm_Schur_q_symmetric} are parameterized by \emph{two--sided} sequences of integers $\dots\ge \nu_{-1}\ge \nu_0 \ge \nu_1 \ge\dots$ and the limit exists if and only if
\begin{equation}
\label{eq_intro_stabilization_A}
 \lim_{N\to\infty} \lambda(N)_i=\nu_{i}\quad \text{ for all } i\in\mathbb Z.
\end{equation}
The limiting functions (``symmetric $q$--characters'') are given by explicit contour integral formulas.

\medskip

Proceeding to the orthogonal and symplectic cases, let $G(N)$ denote either $SO(2N+1)$, or $Sp(N)$, or $SO(2N)$, corresponding to the root systems $B_N$, $C_N$, $D_N$, respectively. The irreducible representations are still parameterized by signatures $\lambda_1\ge\lambda_2\ge\dots \ge\lambda_N$, but this time all the coordinates are required to be \emph{positive}.\footnote{For $SO(2N)$, $\lambda_N$ is allowed to be negative, but one should have $\lambda_1\ge\dots\ge\lambda_{N-1}\ge|\lambda_N|$. In this case we deal instead with the direct sum of two twin representations that differ by a flip of the sign of $\lambda_N$.} The characters $\chi^{G}_{\lambda}$ of the representations can be then identified with symmetric Laurent polynomials in $N$ variables $z_1,\dots,z_N$, invariant under the inversions $z_i \mapsto z_i^{-1}$, see Section \ref{sec:BCqchars} for more details. Set $\epsilon=1/2,1,0$ for types $B,C,D$, respectively, and consider the limits
\begin{equation}
\label{eq_intro_norm_BC_q}
 \lim_{N\to\infty} \frac{\chi_{\lambda(N)}^G(x_1,\dots,x_k, q^{k+\epsilon},\dots,q^{N-1+\epsilon})}
 {\chi_{\lambda(N)}^G(q^{\epsilon},q^{1+\epsilon},\dots,q^{N-1+\epsilon})}, \qquad x_1,\dots,x_k\in \mathbb C^*,
\end{equation}
where $\lambda(N) = (\lambda(N)_1\ge\lambda(N)_2\ge\dots\ge\lambda(N)_N\ge 0)$.
Our second main result (Theorem \ref{thm:multivariablesymplectic}) proves that the limits in \eqref{eq_intro_norm_BC_q} are parameterized by growing sequences of nonnegative integers $0\le \nu_1\le \nu_2\le \dots$ and the limit exists if and only if
\begin{equation}
\label{eq_intro_stabilization_BC}
 \lim_{N\to\infty} \lambda(N)_{N+1-i} = \nu_i, \quad i=1,2,\dots.
\end{equation}
The limit functions are given by explicit contour integral formulas.

\smallskip

One might be wondering about our choice of normalization in
\eqref{eq_intro_norm_BC_q}. It has several explanations. First, the numbers $\epsilon,\epsilon+1,\dots,N-1+\epsilon$ are precisely the coordinates of $\rho$ --- the half--sum of the positive roots in the corresponding root system. The value of ${\chi_{\lambda(N)}^G(q^{\epsilon},q^{1+\epsilon},\dots,q^{N-1+\epsilon})}$ is typically called the \emph{quantum dimension} in the quantum groups literature --- this matches the normalization in \eqref{eq_intro_norm_Schur} by the ordinary dimension $\mathrm{Dim}(\lambda)=s_\lambda(1^N)$ of the representation.  From a more technical point of view,
under the normalization of \eqref{eq_intro_norm_BC_q}, the characters possess the label--variable duality, see Lemma \ref{lem:dualitysymplectic}; this duality is one of the key ingredients of our proofs.

Another point worth emphasizing is that we get two different $q$--deformed character theories for $SO(\infty)$: one from the approximation by odd-dimensional groups and another one from even--dimensional groups. In the $q=1$ case there was no difference, however the answers (e.g.\ the functional form of the limit in \eqref{eq_intro_norm_BC_q})  look different in our $q$--deformation. From the representation--theoretic point of view this can be linked to the fact that quantum groups related to $SO(2N)$ and $SO(2N+1)$ are also very different, and no embeddings of one into another are known. In the infinite--dimensional setting related to quantum groups, the distinction between $B$ and $D$ series also appeared before, e.g.\ in \cite{Olsh_Yang}.

\subsection{Further results and outlook}

There is another reformulation of the classification theorem for characters of $U(\infty)$, $SO(\infty)$, $Sp(\infty)$ in the language of combinatorial probability. Then one needs to describe all sequences of measures $P_N$, $N=1,2,\dots$ on the labels of irreducible representations of groups of rank $N$, which would agree through certain \emph{coherency relations}, obtained from the branching rules for the representations. Such a reformulation is also possible in our $q$--deformed setting and we describe it in Section \ref{sec:qschurgraph}. In particular, the stabilization property of \eqref{eq_intro_stabilization_A}, \eqref{eq_intro_stabilization_BC} then turns into the \emph{Law of Large Numbers} as explained in Sections \ref{sec:martin} and \ref{sec:martin_BC}.

We believe that there should be a way to transform our constructions from the language of special functions theory (as study of limits in \eqref{eq_intro_norm_Schur_q_symmetric}, \eqref{eq_intro_norm_BC_q}) and from the language of combinatorial probability (as classifying the coherence family of probability measures in Section \ref{sec:qschurgraph}) into a truly representation--theoretic framework dealing with certain infinite--dimensional analogues of the quantum groups. Development of such a framework is an important open problem.

In \cite{G} it was found that by using certain inhomogeneous analogues of Schur functions, one can encode the limits to \eqref{eq_intro_norm_Schur_q} through simple multiplicative formulas (in particular, avoiding any contour integrals). It would be interesting to try to find similar formulas also for the symmetric setup \eqref{eq_intro_norm_Schur_q_symmetric} as well as for the $B,C,D$ series of \eqref{eq_intro_norm_BC_q}, since the conceptual understanding for the limiting functions in \eqref{eq_intro_norm_Schur_q_symmetric} and \eqref{eq_intro_norm_BC_q} (given by the contour integral formulas in Theorems \ref{thm:asymptoticsmultivariate} and \ref{thm:multivariablesymplectic}) is currently missing.

\subsection{Methodology} Let us indicate one important idea that made the developments of this article possible. In \cite{GP} an approach to the study of asymptotics of \eqref{eq_intro_norm_Schur_q} was suggested based on contour integral formulas for $k=1$ and $k\times k$ determinantal formulas reducing the general $k$ case to the base case $k=1$. Through label--variable symmetry duality for the normalized Schur functions and analytic continuation, these formulas are dual to the closed generating function expression for complete homogeneous symmetric functions $h_k$ and Jacobi--Trudi formulas expressing Schur functions through $h_k$.

Although certain formulas for symplectic and orthogonal characters were also presented in \cite{GP}, they were not suitable for the purpose of the constructions of asymptotic representation theory (the normalization in an analogue of \eqref{eq_intro_norm_BC_q} was different and, more importantly, the particular choice of the geometric progression was $k$--dependent). The approach of \cite{GP} also did not work for the symmetric case of \eqref{eq_intro_norm_Schur_q_symmetric}.

In the present paper (as well as in the companion article \cite{GS} where similar ideas are used for the study of products of random matrices) we make the following observation. Through the label--variable duality and analytic continuation, the $k=1$ versions of the expressions \eqref{eq_intro_norm_Schur_q_symmetric}, \eqref{eq_intro_norm_BC_q} are linked to the characters corresponding to \emph{hook signatures}. The latter also have explicit generating functions, though more complicated than the ones for $h_k$, and therefore, we need to use \emph{double} contour integrals. The next step is to reduce general $k$ case to $k=1$. Instead of Jacobi--Trudi, our formulas are now related to Giambelli and Frobenius formulas, expressing characters with arbitrary signatures as determinants of hooks. We refer to Sections \ref{Section_A_char_proofs}, \ref{Section_BC_char_integrals} for the details.

\subsection{Terminology and conventions}

For convenience of the reader, we collect here some terminology that is used throughout the paper.

\medskip

Unless otherwise stated, we assume $q\in(0, 1)$ is a fixed parameter.

We often use the $q$-Pochhammer symbols
\begin{equation*}
(a; q)_n := \prod_{i=1}^n (1 - aq^{i-1}),\ n \geq 0; \ \ \ (a; q)_{\infty} := \prod_{i = 1}^{\infty} (1 - aq^{i-1}).
\end{equation*}

Moreover,
\begin{equation*}
(a_1, \ldots, a_m; q)_n := (a_1; q)_n\cdots (a_m; q)_n; \ \ \ (a_1, \ldots, a_m; q)_{\infty} := (a_1; q)_{\infty}\cdots (a_m; q)_{\infty}.
\end{equation*}

\medskip

We also use the following terminology and conventions:

\begin{itemize}[noitemsep]
	\item We denote $\N := \{1, 2, 3, \ldots\}$ and $\N_0 := \N \cup \{0\} = \{0, 1, 2, \ldots\}$.
	\item For $N\in\N$, denote by $\GT_N$ the set of signatures of length $N$, i.e., the set of $N$-tuples $\lambda = (\lambda_1, \ldots, \lambda_N)\in\Z^N$ such that $\lambda_1 \geq \ldots \geq \lambda_N$.
	\item For $N\in\N$, denote by $\GTp_N$ the set of nonnegative signatures of length $N$, i.e., the set of $N$-tuples $\lambda = (\lambda_1, \ldots, \lambda_N)\in\GT_N$ such that $\lambda_N \geq 0$.
	\item For $\lambda\in\GT_N$, denote $|\lambda| := \sum_{i=1}^N{\lambda_i}$ and $n(\lambda) := \lambda_2 + 2\lambda_3 + \ldots + (N-1)\lambda_N$. For $\lambda\in\GT_N, \mu\in\GT_{N-1}$, write $\lambda \succ \mu$ (or $\mu \prec \lambda$) if $\lambda_1 \geq \mu_1 \geq \lambda_2 \geq \cdots \geq \mu_{N-1} \geq \lambda_N$.
Similarly, when $\lambda, \mu\in\GTp_{N}$, write $\lambda \succ \mu$ if $\lambda_1 \geq \mu_1 \geq \lambda_2 \geq \mu_2 \geq \dots \geq \lambda_N \geq \mu_N$.
	\item We use $i$ as an index very often, thus we use the bold letter $\ii := \sqrt{-1}$ for the imaginary unit.
	\item The notation $(a^n)$, $n\in\N_0$, indicates the string $(a, \ldots, a)$ ($n$ entries). This is an empty string if $n = 0$.
\end{itemize}

\subsection{Acknowledgements}

We would like to thank G.~Olshanski for encouraging us to study whether the extension of \cite{G}
to orthogonal and symplectic groups is possible and for a number of fruitful discussions. V.G.~was
partially supported by the NSF grant DMS-1664619, by the NEC Corporation Fund for Research in
Computers and Communications, and by the Sloan Research Fellowship.
The authors also thank the organizers of the Park City Mathematics Institute research program on Random Matrix Theory, where part of this work was carried out.

\section{Characters of $U(N)$, $SO(N)$ and $Sp(N)$}

\subsection{Type A characters}

For any $\lambda\in\GT_N$, the \textit{Schur polynomial} $s_{\lambda}(x_1, \ldots, x_N)$ is
\[
s_{\lambda}(x_1, \ldots, x_N) := \frac{\det_{1\leq i, j\leq N} \left[ x_i^{\lambda_j + N - j} \right]}{V(x_1, \ldots, x_N)},
\]
where
\[
V(x_1, \ldots, x_N) := \prod_{1\leq i<j\leq N}(x_i - x_j)
\]
is the Vandermonde determinant.

Recall that the unitary group is $U(N) := \{ A \in GL(N, \C) : AA^* = A^*A = I \}$.
By Weyl's formula, $s_{\lambda}(x_1, \ldots, x_N)$ is the value of the character of the irreducible representation of $U(N)$ with highest weight $\lambda$ on a matrix with eigenvalues $x_1, \ldots, x_N$, \cite[Ch. 24]{FH}.
In general, $s_{\lambda}(x_1, \ldots, x_N)$ is a symmetric homogeneous \textit{Laurent polynomial} of degree $|\lambda|$, and it is a polynomial when $\lambda\in\GTp_N$.

Two special Schur polynomials occur for the partitions $\lambda = (m, 0^{N-1}) = (m, 0, \ldots, 0)$, for some $m \geq 0$, and $\lambda = (1^m, 0^{N-m}) = (1, \ldots, 1, 0, \ldots, 0)$, for some $N \geq m \geq 0$.
In fact, $s_{(m, 0^{N-1})}(x_1, \ldots, x_N) = h_m(x_1, \ldots, x_N)$ is the $m$-th complete homogeneous symmetric polynomial, whereas $s_{(1^m, 0^{N-m})}(x_1, \ldots, x_N) = e_m(x_1, \ldots, x_N)$ is the $m$-th elementary symmetric polynomial.

The reader can refer to \cite[Ch. I]{M} for a very thorough combinatorial study of Schur polynomials.
Below we only list the properties that will be used in this article.

\begin{prop}[branching rule; \cite{M}, Ch. I, 5.11]\label{prop:branchingrule}
For any $N\in\N$, $\lambda\in\GT_{N+1}$,
\[
s_{\lambda}(x_1, \ldots, x_N, u) = \sum_{\substack{\mu\in\GT_N\\ \mu \prec \lambda}} s_{\mu}(x_1, \ldots, x_N) u^{|\lambda| - |\mu|}.
\]
\end{prop}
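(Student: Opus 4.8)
The plan is to reduce to the case of partitions using homogeneity, and then give a short combinatorial argument with semistandard Young tableaux. First I would dispose of negative entries: both sides of the claimed identity behave identically under the shift $\lambda\mapsto\lambda+(c^{N+1})$, $c\in\Z$. Indeed, the left side is multiplied by $(x_1\cdots x_N u)^c$ by homogeneity of $s_\lambda$; on the right side the index set is unchanged (since $\mu\prec\lambda\iff\mu+(c^N)\prec\lambda+(c^{N+1})$), each $s_\mu$ gains a factor $(x_1\cdots x_N)^c$, and $|\lambda|-|\mu|$ increases by $c$, producing the matching factor $u^c$. Choosing $c$ with $\lambda_{N+1}+c\ge 0$ lets me assume $\lambda\in\GTp_{N+1}$, so every Schur polynomial in sight is an honest polynomial.

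Next I would invoke the combinatorial description of the bialternant: for a partition $\lambda$, $s_\lambda(y_1,\dots,y_M)=\sum_T y^T$ over semistandard Young tableaux $T$ of shape $\lambda$ with entries in $\{1,\dots,M\}$, where $y^T=\prod_i y_i^{m_i(T)}$ and $m_i(T)$ is the number of $i$'s in $T$ (see \cite[Ch.~I]{M}). I apply this with $M=N+1$ and $(y_1,\dots,y_{N+1})=(x_1,\dots,x_N,u)$. In any such $T$, the boxes holding the maximal value $N+1$ lie in distinct columns (columns strictly increase) and occupy the right ends of the rows they meet (rows weakly increase); deleting them leaves a partition $\mu$ with at most $N$ parts such that $\lambda/\mu$ is a horizontal strip, which is exactly the interlacing relation $\mu\prec\lambda$, and the remaining tableau $T'$ is semistandard of shape $\mu$ with entries in $\{1,\dots,N\}$. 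The assignment $T\mapsto(\mu,T')$ is a bijection; the deleted boxes contribute $u^{|\lambda|-|\mu|}$ and $T'$ contributes $x^{T'}$, so summing over $T$ and regrouping by $\mu$ yields $\sum_{\mu\prec\lambda}s_\mu(x_1,\dots,x_N)\,u^{|\lambda|-|\mu|}$, as claimed.

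The only non-formal input above is the tableau formula for $s_\lambda$. If one prefers to stay strictly within the bialternant definition, the alternative is to expand $\det_{1\le i,j\le N+1}\!\big[y_i^{\lambda_j+N+1-j}\big]$ along the row of the variable $u$, recognize each $N\times N$ minor as $\det_{1\le i,r\le N}\!\big[x_i^{\mu_r+N-r}\big]=s_\mu(x_1,\dots,x_N)\,V(x_1,\dots,x_N)$ for the appropriate partition $\mu$, and then resum. I expect this resummation — reconciling the Laplace expansion with the factor $V(x_1,\dots,x_N,u)/V(x_1,\dots,x_N)=\prod_{i=1}^N(x_i-u)$ so that precisely the interlacing terms survive — to be the one place requiring genuine care. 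The other small point worth stating carefully, since it recurs throughout the paper (including in the definition of $\prec$ on $\GTp_N$), is the equivalence between "$\lambda/\mu$ is a horizontal strip" and the interlacing relation $\mu\prec\lambda$.
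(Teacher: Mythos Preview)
Your argument is correct. The paper does not supply its own proof of this proposition; it merely cites \cite[Ch.~I, 5.11]{M}. Your reduction from signatures to partitions via the homogeneity shift $\lambda\mapsto\lambda+(c^{N+1})$ is the natural way to handle the $\GT_{N+1}$ setting, and your tableau argument (strip off the boxes labelled $N+1$, observe they form a horizontal strip, and identify the residual shape $\mu$ with an interlacing signature) is precisely the standard proof one finds in Macdonald. So there is no divergence in approach to discuss: you have simply filled in what the paper leaves as a reference.
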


\begin{lem}[label-variable duality]\label{lem:Schurduality}
For any $\lambda, \mu\in\GT_N$, we have
$$\frac{s_{\lambda}(q^{\mu_1+N-1}, \ldots, q^{\mu_{N-1}+1}, q^{\mu_N})}{s_{\lambda}(q^{N-1}, \ldots, q, 1)}
= \frac{s_{\mu}(q^{\lambda_1+N-1}, \ldots, q^{\lambda_{N-1}+1}, q^{\lambda_N})}{s_{\mu}(q^{N-1}, \ldots, q, 1)}.$$
\end{lem}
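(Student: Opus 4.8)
The plan is to use the bialternant (Weyl character) formula for the Schur polynomial directly. Write $s_\lambda(x_1,\dots,x_N) = \det[x_i^{\lambda_j+N-j}]/V(x_1,\dots,x_N)$, and specialize $x_i = q^{\mu_i + N - i}$. Then the $(i,j)$-entry of the numerator determinant becomes $q^{(\mu_i+N-i)(\lambda_j+N-j)}$, which is manifestly symmetric under the simultaneous swap $(\lambda,j)\leftrightarrow(\mu,i)$. Thus if I set $a_i := \lambda_i + N - i$ and $b_i := \mu_i + N - i$ (both strictly decreasing sequences of integers since $\lambda,\mu\in\GT_N$), the numerator of the left-hand side is $\det_{1\le i,j\le N}\!\big[q^{a_j b_i}\big]$ and the numerator of the right-hand side is $\det_{1\le i,j\le N}\!\big[q^{a_i b_j}\big]$ — these are transposes of one another, hence equal.

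Next I would handle the denominators. Taking $\mu = (0^N)$, i.e. $b_i = N-i$, the specialization $q^{\mu_i+N-i}=q^{N-i}$ gives exactly the arguments $(q^{N-1},\dots,q,1)$ appearing in the denominators, so $s_\lambda(q^{N-1},\dots,q,1) = \det[q^{a_j(N-i)}]/V(q^{N-1},\dots,q,1)$. Since the Vandermonde factors $V(q^{\mu_1+N-1},\dots,q^{\mu_N})$ in numerator and denominator of the left-hand side depend only on $\mu$ (not on $\lambda$), and likewise the ones on the right depend only on $\lambda$, I would show that the ratio of the two sides reduces to
\[
\frac{\det[q^{a_j b_i}]}{\det[q^{a_j(N-i)}]} \;\Big/\; \frac{\det[q^{a_i b_j}]}{\det[q^{(N-i)b_j}]},
\]
where I have used the transpose identity once more on $s_\mu(q^{N-1},\dots,q,1)$. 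The two numerator determinants of the big fraction are transposes, as are the two denominator ones, so the whole expression is $1$.

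The only genuine subtlety — and the step I expect to be the main (mild) obstacle — is bookkeeping the Vandermonde determinants and making sure nothing vanishes. One must check that $V(q^{\mu_1+N-1},\dots,q^{\mu_N})\ne 0$: this holds because $b_1 > b_2 > \cdots > b_N$ are distinct integers and $0<q<1$, so the specialized arguments $q^{b_i}$ are pairwise distinct; similarly for $\lambda$ and for $(0^N)$. Once nonvanishing is secured, the identity is a pure reshuffling of four determinants into two transpose pairs, with the four Vandermonde factors cancelling in the appropriate combination. I would present it by displaying the left-hand side as (bialternant in $b$, entries $q^{a_jb_i}$) over (bialternant in $b$, entries $q^{a_j(N-i)}$), noting the common $V$-in-$\mu$ factors cancel, then transposing both determinants to land on the right-hand side written the same way with roles of $\lambda$ and $\mu$ exchanged.
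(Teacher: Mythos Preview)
Your proposal is correct and is precisely the approach the paper has in mind: the paper's proof reads in its entirety ``Obvious from the definition of Schur polynomials,'' and what you have written is the explicit unpacking of that observation via the bialternant formula and the symmetry of the exponent $(\mu_i+N-i)(\lambda_j+N-j)$. Your bookkeeping is sound; in particular, the identification $\det[q^{(N-i)a_j}]=V(q^{a_1},\dots,q^{a_N})$ makes the Vandermonde cancellation immediate.
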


\begin{proof}
Obvious from the definition of Schur polynomials.
\end{proof}

\begin{prop}[$q$-geometric specialization; \cite{M}, Ch. I, Ex. 1]\label{prop:evaluation}
For any $\lambda\in\GT_N$,
\[
s_{\lambda}(1, q, \ldots, q^{N-1}) = q^{n(\lambda)}\prod_{1\leq i < j\leq N}\frac{1 - q^{\lambda_i - \lambda_j + j - i}}{1 - q^{j - i}}.
\]
\end{prop}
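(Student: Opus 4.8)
The plan is to substitute the geometric progression directly into the bialternant (Weyl) formula defining $s_\lambda$ and to evaluate the numerator determinant and the denominator Vandermonde in closed form, then match the leftover power of $q$ with $n(\lambda)$.

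First I would set $x_i = q^{i-1}$ and abbreviate $a_j := \lambda_j + N - j$ for $1 \le j \le N$, which is a strictly decreasing sequence of integers. Each entry of the numerator matrix becomes $x_i^{a_j} = (q^{i-1})^{a_j} = (q^{a_j})^{i-1}$, so the numerator is $\det_{1\le i,j\le N}\bigl[y_j^{\,i-1}\bigr]$ with $y_j := q^{a_j}$. Since $0<q<1$ and the $a_j$ are pairwise distinct, the $y_j$ are pairwise distinct, and this is a genuine Vandermonde determinant, equal (with the sign convention $V(x_1,\dots,x_N) = \prod_{i<j}(x_i-x_j)$) to $\prod_{1\le i<j\le N}(y_j - y_i) = \prod_{1\le i<j\le N}\bigl(q^{a_j}-q^{a_i}\bigr)$.

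Next I would factor a power of $q$ out of every term on both sides. For $i<j$ one has $a_i > a_j$, hence $q^{a_j}-q^{a_i} = q^{a_j}\bigl(1-q^{a_i-a_j}\bigr)$ with $a_i - a_j = \lambda_i - \lambda_j + j - i$, so the numerator equals $q^{\,\sum_{j=1}^N (j-1)a_j}\,\prod_{1\le i<j\le N}\bigl(1-q^{\lambda_i-\lambda_j+j-i}\bigr)$. Likewise $V(1,q,\dots,q^{N-1}) = \prod_{1\le i<j\le N}(q^{i-1}-q^{j-1}) = q^{\,\sum_{i=1}^N (i-1)(N-i)}\,\prod_{1\le i<j\le N}\bigl(1-q^{j-i}\bigr)$, using $q^{i-1}-q^{j-1} = q^{i-1}(1-q^{j-i})$ and counting the indices. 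Dividing, the two products of $(1-q^{\,\cdot})$ factors assemble precisely into $\prod_{1\le i<j\le N}\frac{1-q^{\lambda_i-\lambda_j+j-i}}{1-q^{j-i}}$.

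Finally I would check the exponent of $q$: it is $\sum_{j=1}^N (j-1)(\lambda_j+N-j) - \sum_{i=1}^N (i-1)(N-i)$, and the two sums $\sum_j (j-1)(N-j)$ cancel, leaving $\sum_{j=1}^N (j-1)\lambda_j = \lambda_2 + 2\lambda_3 + \dots + (N-1)\lambda_N = n(\lambda)$. This completes the identification. There is no genuine obstacle; the computation is short, and the only points requiring care are the orientation of the Vandermonde (so that the sign matches the convention for $V$) and the bookkeeping that isolates exactly $n(\lambda)$ from the leftover powers of $q$.
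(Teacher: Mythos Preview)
Your proof is correct; the computation via the bialternant formula and the Vandermonde evaluation is exactly the standard route. The paper does not give its own proof of this proposition but simply cites Macdonald \cite{M}, where essentially the same direct calculation is carried out.
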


\begin{prop}[\cite{M}, Ch. I, 3.9]\label{prop:hookidentity}
For any integers $a, b \geq 0$, $N \geq b+1$, we have
\[
s_{(a+1, 1^b, 0^{N-b-1})}(x_1, \ldots, x_N) = \sum_{i=0}^b{(-1)^i h_{a+1+i}(x_1, \ldots, x_N)e_{b-i}(x_1, \ldots, x_N)}.
\]
\end{prop}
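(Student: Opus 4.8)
The plan is to treat this as the hook-shape instance of the Jacobi--Trudi formula and to prove it by a single cofactor expansion. First I would recall the Jacobi--Trudi identity
$s_{\mu}(x_1,\dots,x_N)=\det_{1\le i,j\le \ell}\big[h_{\mu_i-i+j}(x_1,\dots,x_N)\big]$,
valid for every $\ell\ge\ell(\mu)$ with the conventions $h_0=1$ and $h_m=0$ for $m<0$. For the hook $\mu=(a+1,1^b,0^{N-b-1})$ one has $\ell(\mu)=b+1$ (since $a+1\ge1$), so this is a $(b+1)\times(b+1)$ determinant. I would then write the matrix out explicitly: the first row, coming from $\mu_1=a+1$, is $(h_{a+1},h_{a+2},\dots,h_{a+b+1})$, while rows $2,\dots,b+1$, coming from $\mu_2=\dots=\mu_{b+1}=1$, have $(i,j)$-entry $h_{j-i+1}$ and, after the reindexing $r:=i-1\in\{1,\dots,b\}$, form the ``staircase'' block $S$ with $S_{r,c}=h_{c-r}$ for $c\in\{1,\dots,b+1\}$; its first $b$ columns are upper unitriangular and its first column is $(1,0,\dots,0)^{\mathsf T}$.

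Next I would expand the determinant along its first row, obtaining $s_{(a+1,1^b)}=\sum_{j=1}^{b+1}(-1)^{1+j}h_{a+j}\,D_j$, where $D_j$ is the $b\times b$ minor of $S$ with column $j$ removed. The heart of the matter is the identity $D_j=e_{b+1-j}$. For $j=1$ this is precisely the standard single-row determinant $e_b=\det_{1\le r,c\le b}\big[h_{1-r+c}\big]$, itself equivalent to the fundamental relation $\sum_{t\ge0}(-1)^t e_t h_{m-t}=\delta_{m,0}$ (i.e.\ to $H(u)E(-u)=1$); for $j\ge2$ the first column of $D_j$ is still $(1,0,\dots,0)^{\mathsf T}$, so expanding along it reduces $D_j$ at staircase size $b$ to the column-$(j-1)$ minor at staircase size $b-1$, and an induction on $b$ yields $D_j=e_{b+1-j}$, the extreme case $j=b+1$ being immediate from unitriangularity ($D_{b+1}=1=e_0$). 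Finally, substituting $j=i+1$ and using $(-1)^{1+j}=(-1)^i$ converts $\sum_{j=1}^{b+1}(-1)^{1+j}h_{a+j}D_j$ into $\sum_{i=0}^{b}(-1)^i h_{a+1+i}e_{b-i}$, which is the asserted formula.

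I expect the only real work to be bookkeeping: transcribing the Jacobi--Trudi matrix with the right shifts and checking that the signs in the cofactor expansion come out with no stray factor; there is no conceptual obstacle. As a cross-check (and an alternative that avoids determinants), one can instead use the Pieri rule $h_m e_n=s_{(m+1,1^{n-1})}+s_{(m,1^n)}$ for $m,n\ge1$ together with $h_m e_0=s_{(m)}$: writing $A_i:=s_{(a+1+i,1^{b-i})}$, the $i$-th term of $\sum_{i=0}^b(-1)^i h_{a+1+i}e_{b-i}$ equals $(-1)^i(A_{i+1}+A_i)$ when $i<b$ and $(-1)^b A_b$ when $i=b$, so the alternating sum telescopes to $A_0=s_{(a+1,1^b)}$. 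Either route is short, consistent with simply citing \cite{M}, Ch.~I, 3.9.
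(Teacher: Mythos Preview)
Your proposal is correct. The paper gives no proof of its own here---it simply cites \cite{M}, Ch.~I, 3.9---and your Jacobi--Trudi cofactor expansion (with the minor identification $D_j=e_{b+1-j}$) is exactly the standard argument behind that reference; the telescoping Pieri alternative is a nice independent check.
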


\subsection{Type B-C-D characters}\label{sec:BCqchars}

For $N\in\N$, let $G(N)$ be one of the rank $N$ classical compact Lie groups
\begin{equation*}
SO(2N+1),\hspace{.2in} Sp(N),\hspace{.2in} SO(2N).
\end{equation*}
In these cases, $G(N)$ is \textit{of type} $B, C$ or $D$, respectively.

We recall that the orthogonal group is realized as a matrix group by $O(N') := \{ A\in GL(N', \R) : AA^t = A^tA = I \}$, and the special orthogonal group $SO(N')$ is the subgroup of $O(N')$ consisting of those matrices with determinant $1$.
Similarly, the symplectic group is $$Sp(2N) := \left\{  A \in GL(2N, \C) :\ A^tBA = B , \ \ B =  \begin{bmatrix}
    0 & I_N  \\
    I_N & 0
  \end{bmatrix} \right\}$$
and the compact symplectic group (which is of our interest) is
$$Sp(N) := Sp(2N) \cap U(2N).$$

The irreducible representations of $G(N)$ are parametrized by $\GTp_N$ if $G(N)$ is of type $B$ or $C$.
We denote the character that corresponds to $\lambda\in\GTp_N$ by $so^{2N+1}_{\lambda}$ and $sp^{N}_{\lambda}$.
On the other hand, if $G(N)$ is of type $D$, then the irreducible representations of $G(N)$ are parametrized by $N$-tuples of integers $(\lambda_1, \ldots, \lambda_N)$ such that $\lambda_1 \geq \ldots \geq \lambda_{N-1} \geq |\lambda_N|$.
For any $\lambda\in\GTp_N$, denote by $so_{\lambda}^{2N}$ the \textit{reducible} character of $SO(2N)$ that is the sum of the twin characters corresponding to $\lambda^+ := (\lambda_1, \ldots, \lambda_{N-1}, \lambda_N)$ and $\lambda^- := (\lambda_1, \ldots, \lambda_{N-1}, -\lambda_N)$.

The characters $so_{\lambda}^{2N+1}, sp^{N}_{\lambda}$ and $so^{2N}_{\lambda}$ are central functions of the groups $SO(2N+1)$, $Sp(N)$ and $SO(2N)$, respectively.
Consequently, if $M$ is a matrix in any of these groups, the value of a corresponding character on $M$ depends only on the eigenvalues of $M$.
The eigenvalues of any matrix $M \in G(N)$ come in pairs $\{ z_i, z_i^{-1} \}$, $|z_i| = 1$ (any matrix in $SO(2N+1)$ has an additional eigenvalue $1$).
Therefore all the information about the characters $so_{\lambda}^{2N+1}, sp^{N}_{\lambda}, so^{2N}_{\lambda}$, is encoded in some functions of $N$ variables $z_1, \ldots, z_N$, which are written down below in \eqref{eqn:weylformula}.

\smallskip

To uniformize notation, let $\mathbb{T} := \{ \zeta \in \C : |\zeta| = 1 \}$, and for any $\lambda\in\GTp_N$, define the functions $\chi^{B}_{\lambda}, \chi^{C}_{\lambda}, \chi^{D}_{\lambda} : \mathbb{T}^N \rightarrow \C$ by
\begin{multline*}
\chi_{\lambda}^{G}(z_1, \ldots, z_N) :=\\
\begin{cases}
    so_{\lambda}^{2N+1}(M),& \text{if } G = B;\ \text{ $M\in SO(2N+1)$ has eigenvalues $\{z_i, z_i^{-1}\}_{i=1}^n \cup \{1\}$};\\
    sp_{\lambda}^{N}(M),& \text{if }G = C;\ \text{ $M\in Sp(N)$ has eigenvalues $\{z_i, z_i^{-1}\}_{i=1}^n$};\\
    so_{\lambda}^{2N}(M),& \text{if }G = D;\ \text{ $M\in SO(2N)$ has eigenvalues $\{z_i, z_i^{-1}\}_{i=1}^n$}.
\end{cases}
\end{multline*}

They are symmetric with respect to permutation of their $N$ variables and also with respect to the involutions $z_i \mapsto z_i^{-1}$, i.e., they are symmetric with respect to the natural action of the Weyl group $W_N = (\Z/2\Z)^N \rtimes S_N$.
Moreover they are Laurent polynomials in $z_1, \ldots, z_N$, i.e., they belong to the ring $\C[z_1^{\pm}, \ldots, z_N^{\pm}]$.
In fact, if we denote
\begin{equation*}
V^s(z_1, \ldots, z_N) := \prod_{1\leq i < j\leq N}{(z_i + z_i^{-1} - z_j - z_j^{-1})},
\end{equation*}
then Weyl's formulas (e.g., see \cite{B}, \cite{FH}) imply
\begin{equation}\label{eqn:weylformula}
  \chi_{\lambda}^{G}(z_1, \ldots, z_N) = \frac{1}{V^s(z_1, \ldots, z_N)}
\times\begin{cases}
\displaystyle    \det_{1\leq i, j\leq N}\left[ \frac{z_j^{\lambda_i + N - i + \half} - z_j^{-\lambda_i - N + i - \half}}{z_j^{\half}- z_j^{-\half}} \right],& \text{if } G = B,\\
\displaystyle    \det_{1\leq i, j\leq N}\left[ \frac{z_j^{\lambda_i + N - i + 1} - z_j^{-(\lambda_i + N - i + 1)}}{z_j- z_j^{-1}} \right],& \text{if }G = C,\\
\displaystyle    \det_{1\leq i, j\leq N}\left[ z_j^{\lambda_i + N - i} + z_j^{-(\lambda_i + N - i)} \right],& \text{if }G = D.
\end{cases}
\end{equation}

Thus we can (and we will) treat $\chi^G_{\lambda}(z_1, \ldots, z_N)$ either as $W_N$-symmetric Laurent polynomials or as functions on $(\C^*)^N$, via the formulas in \eqref{eqn:weylformula}.

\smallskip

\begin{prop}[branching rule]\label{prop:branchsymplectic}
For any $N\in\N$, $\lambda\in\GTp_{N+1}$, and $G \in \{B, C, D\}$, we have
\[
\chi^G_{\lambda}(z_1, \ldots, z_N, u) =
\sum_{\mu \in \GTp_N}{ \chi_{\lambda/\mu}^G(u) \chi_{\mu}^G(z_1, \ldots, z_N) },
\]
where
\begin{equation}\label{chilambdamu}
\chi_{\lambda/\mu}^G(u) := \sum_{\substack{\nu\in\GTp_{N+1} \\ \lambda \succ \nu \succ \mu}} { \tau^G(u; \lambda, \nu, \mu) \ u^{2|\nu| - |\lambda| - |\mu|} },
\end{equation}
and for $\lambda \succ \nu \succ \mu$:
\begin{eqnarray*}
\tau^B(u; \lambda, \nu, \mu) &:=&
\begin{cases}
1, & \textrm{ if } \nu_1' \leq N,\\
1 + u^{-1},& \textrm{ otherwise};
\end{cases}\\
\tau^C(u; \lambda, \nu, \mu) &:=& 1;\\
\tau^D(u; \lambda, \nu, \mu) &:=&
\begin{cases}
0, & \textrm{ if } 0 < \nu_{N+1} < \min\{ \mu_N,  \lambda_{N+1} \},\\
2, & \textrm{ if }\lambda_1' = \mu_1' = N,\\
1,& \textrm{ otherwise}.
\end{cases}
\end{eqnarray*}
\end{prop}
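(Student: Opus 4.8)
The plan is to derive the branching rules from the Weyl character formulas \eqref{eqn:weylformula} by the standard device of expanding a one-variable determinant and then iterating down to the base case. Concretely, I would first prove the single-step branching identity: for $\lambda\in\GTp_{N+1}$ and $G\in\{B,C,D\}$, the restriction from $G(N+1)$ to $G(N)$ gives
\[
\chi^G_\lambda(z_1,\dots,z_N,u)=\sum_{\substack{\mu\in\GTp_N\\ \lambda\succ\mu}} m^G(u;\lambda,\mu)\,\chi^G_\mu(z_1,\dots,z_N),
\]
for suitable Laurent polynomials $m^G(u;\lambda,\mu)$ in $u$. The cleanest route is to compare both sides with the ordinary $U$-group interlacing. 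For type $C$, note that $\chi^C_\lambda(z_1,\dots,z_N)$ is, up to the factor $\prod(z_i-z_i^{-1})/\prod(\text{stuff})$, a Schur polynomial in the $2N$ variables $z_1^{\pm1},\dots,z_N^{\pm1}$ with partition $(\lambda_1+1,\dots,\lambda_N+1)$ padded appropriately; then one applies Proposition~\ref{prop:branchingrule} (the $A$-type branching rule) once to split off the variable $u$ and once more to split off $u^{-1}$, collect terms, and recognize the result as a sum over $\mu\in\GTp_N$ with the interlacing $\lambda\succ\mu$. Each intermediate signature $\nu$ appearing in \eqref{chilambdamu} records which rows of $\lambda$ were decreased when passing to $\mu$, and the exponent $2|\nu|-|\lambda|-|\mu|$ is exactly the total degree in $u,u^{-1}$ bookkeeping.

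For types $B$ and $D$ the same strategy applies, but the half-integer shifts (type $B$) and the plus-sign in the determinant (type $D$) produce the extra multiplicities $\tau^B=1+u^{-1}$ and $\tau^D=2$. The natural way to see these is: in type $D$, the symmetrized monomial $z^{a}+z^{-a}$ degenerates to $2$ when $a=0$, which is precisely the condition $\lambda_1'=\mu_1'=N$ (all parts of $\lambda$ and $\mu$ are positive so the determinant picks up a doubled constant term), and the vanishing case $0<\nu_{N+1}<\min\{\mu_N,\lambda_{N+1}\}$ reflects a repeated exponent forcing a zero determinant row. In type $B$ the factor $(z^{1/2}-z^{-1/2})^{-1}(z^{a+1/2}-z^{-a-1/2})=z^{a}+z^{a-1}+\dots+z^{-a}$ expands as a two-sided geometric sum; tracking its highest versus lowest terms against the interlacing constraint yields the dichotomy $\nu_1'\le N$ versus $\nu_1'>N$, i.e.\ whether a new part equal to the maximal allowed value was created, which contributes the extra $u^{-1}$. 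I would organize this as three parallel computations, each reducing to manipulating a single determinant expansion along the column corresponding to $u$.

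The main obstacle I anticipate is not the algebra of expanding one determinant but rather the combinatorial bookkeeping that identifies the coefficient of each $\chi^G_\mu$ on the right-hand side with the \emph{sum over intermediate $\nu$} in \eqref{chilambdamu}, including getting all the boundary cases of $\tau^B,\tau^C,\tau^D$ exactly right. In particular, for type $D$ one must be careful that $\chi^D_\lambda$ is the \emph{reducible} (twin-sum) character, so that the branching $SO(2N+2)\downarrow SO(2N+1)\downarrow SO(2N)$ — which is what \eqref{eqn:weylformula} encodes when we strip one $z$-variable and the auxiliary eigenvalue — produces the doubling and the vanishing conditions; reconciling this two-step restriction with the one-line formula requires care. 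An alternative that sidesteps some of this is to cite the classical GT-pattern branching rules for $B,C,D$ (e.g.\ from Proctor or from the representation-theory literature) and simply translate them into the $\chi^G_{\lambda/\mu}(u)$ notation, checking that the edge weights $\tau^G$ and the power of $u$ match; I would present the determinantal derivation as the primary argument and remark that it recovers the known classical branching. Either way, once the single-step rule is established, the stated Proposition follows immediately, since its statement \emph{is} precisely the single-step rule.
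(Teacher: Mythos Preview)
Your proposal is sound in principle, but it takes a genuinely different route from the paper. The paper's own proof is simply a citation: it states that the proposition is a reformulation of \cite[Prop.~10.2]{P} (Proctor), with earlier appearances in \cite{Zh} for type $C$ and \cite{KES} for type $D$, and does no computation at all. Your \emph{alternative} (``cite the classical GT-pattern branching rules for $B,C,D$ \dots\ and simply translate them'') is exactly what the paper does, so you already identified the paper's approach as a backup.

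Your primary plan --- expanding the Weyl determinant \eqref{eqn:weylformula} along the $u$-column and recognizing the coefficients as a sum over intermediate $\nu$ --- is essentially how Proctor (and before him Zhelobenko) obtained these rules, so it would reproduce the cited result rather than bypass it. The trade-off is clear: your direct derivation is self-contained and illuminates why the factors $\tau^G$ arise (your remarks about the degeneration $z^a+z^{-a}\to 2$ for $D$ and the two-sided geometric sum for $B$ are on the right track), but it requires careful case analysis at the boundary, especially for the reducible $\chi^D_\lambda$. The paper, aiming to use this proposition as a tool rather than a subject, opts for the one-line citation. If you pursue the determinantal route, be aware that the type-$D$ bookkeeping you flag as the main obstacle is genuinely delicate --- the twin-sum convention interacts with the two-step restriction $SO(2N+2)\downarrow SO(2N)$ in a way that is easy to get off by factors of $2$, which is precisely why the paper defers to the literature.
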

\begin{proof}
This proposition is a reformulation of \cite[Prop. 10.2]{P}.
More specifically, the branching of characters $\chi_{\lambda}^C$ is given in part (a) of that proposition.
This statement appeared first in \cite{Zh}.
The branching of characters $\chi_{\lambda}^B$ is given in part (b).
The branching of characters $\chi_{\lambda}^D$ is given in part (c), which discusses the characters of certain (possibly reducible) tensor representations of $SO(2N)$.
An equivalent description of the branching for the characters $\chi^D_{\lambda}$ appeared earlier in \cite{KES}.
\end{proof}

\begin{rem}\label{symmetry:inverse}
The Laurent polynomial $\chi_{\lambda/\mu}^G(u)$ is symmetric with respect to the inversion $u \leftrightarrow 1/u$,
even though this is not evident from $(\ref{chilambdamu})$. Then we have the alternative formula
\begin{equation*}
\chi_{\lambda/\mu}^G(u) = \sum_{\substack{\nu\in\GTp_{N+1} \\ \lambda \succ \nu \succ \mu}} { \tau^G(1/u; \lambda, \nu, \mu) \ u^{|\lambda| + |\mu| - 2|\nu|} }.
\end{equation*}
\end{rem}

\begin{nota}\label{nota:BC}
For the propositions below, and for the remainder of the paper, we consider the real parameter $\epsilon = \epsilon(G)$ defined by
\begin{equation*}
    \epsilon :=
\begin{cases}
    1/2,& \text{if } G = B,\\
    1,& \text{if }G = C,\\
    0,& \text{if }G = D.
\end{cases}
\end{equation*}
Also, given nonnegative signatures $\lambda, \nu\in\GTp_N$, we denote
\begin{equation*}
l_i^G := \lambda_i + N - i + \epsilon, \ n_i^G :=  \nu_i + N - i + \epsilon, \textrm{ for all }i = 1, 2, \ldots, N.
\end{equation*}
These numbers are integers (when $G = C, D$) or half-integers (when $G = B$).
We will simply write $l_1, \ldots, l_N$ (resp. $n_1, \ldots, n_N$) instead of $l_1^G, \ldots l_N^G$ (resp. $n_1^G, \ldots, n_N^G$), as the type $G \in \{B, C, D\}$ will be clear from the context.
\end{nota}

\begin{lem}[label-variable duality]\label{lem:dualitysymplectic}
For $G \in \{B, C, D\}$, and any $\lambda, \nu\in\GTp_N$, we have
\begin{equation*}
\frac{\chi^G_{\lambda}(q^{n_1}, q^{n_2}, \ldots, q^{n_N})}
{\chi^G_{\lambda}(q^{N-1+\epsilon}, q^{N-2+\epsilon}, \ldots, q^{\epsilon})}
= \frac{\chi^G_{\nu}(q^{l_1}, q^{l_2}, \ldots, q^{l_N})}
{\chi^G_{\nu}(q^{N-1+\epsilon}, q^{N-2+\epsilon}, \ldots, q^{\epsilon})}.
\end{equation*}
\end{lem}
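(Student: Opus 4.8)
The plan is to imitate the proof of Lemma~\ref{lem:Schurduality}: express both sides through the Weyl character formulas \eqref{eqn:weylformula} and observe that after substituting the geometric progressions, both the numerator and the denominator become, up to the same combinatorial prefactor, the value of a determinant whose $(i,j)$ entry depends on $l_i$ and $n_j$ \emph{symmetrically}. Concretely, fix $G$ and recall $\epsilon = \epsilon(G)$. Substituting $z_j = q^{n_j}$ into \eqref{eqn:weylformula}, the numerator of $\chi^G_\lambda(q^{n_1},\dots,q^{n_N})$ is, for $G = C$ for instance,
\[
\det_{1\le i,j\le N}\!\left[ \frac{q^{n_j(l_i - N + i - \epsilon + N - i + 1)} - q^{-n_j(l_i - N + i - \epsilon + N - i + 1)}}{q^{n_j} - q^{-n_j}} \right]
= \det_{1\le i,j\le N}\!\left[ \frac{q^{n_j l_i} - q^{-n_j l_i}}{q^{n_j} - q^{-n_j}} \right],
\]
using $\lambda_i + N - i + 1 = l_i - \epsilon + 1 = l_i$ since $\epsilon = 1$. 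The crucial point is that the matrix entry $(q^{n_j l_i} - q^{-n_j l_i})/(q^{n_j}-q^{-n_j})$ is symmetric in the pair $(l_i, n_j)$ except for the denominator $q^{n_j}-q^{-n_j}$, which depends on $j$ only and hence factors out of the determinant as $\prod_j (q^{n_j}-q^{-n_j})$. After this extraction the remaining determinant $\det[q^{n_j l_i} - q^{-n_j l_i}]$ is manifestly invariant under transposing the roles of the $l$'s and the $n$'s (transpose the matrix). The Vandermonde-type factor $V^s(q^{n_1},\dots,q^{n_N}) = \prod_{i<j}(q^{n_i} + q^{-n_i} - q^{n_j} - q^{-n_j})$ likewise depends only on the $n$'s, and similarly on the denominator side everything depends only on the ``$\rho$'' progression $n_i = N - i + \epsilon$ (which is the specialization $\lambda = \nu = 0$, so $l_i = n_i = N-i+\epsilon$).

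Next I would take the ratio. Write $A(l; n) := \det_{1\le i,j\le N}[\,\phi(l_i n_j)\,]$ where $\phi(x) = q^x - q^{-x}$ for types $C,B$ (with the half-integer shift absorbed) and $\phi(x) = q^x + q^{-x}$ for type $D$; in all three cases $A(l;n) = A(n;l)$ because the matrix is symmetric under swapping the two index families. Then from \eqref{eqn:weylformula},
\[
\chi^G_\lambda(q^{n_1},\dots,q^{n_N}) = \frac{A(l; n)}{\left(\prod_{j=1}^N \psi(n_j)\right) V^s(q^{n_1},\dots,q^{n_N})},
\]
where $\psi(n_j) = q^{n_j}-q^{-n_j}$ (types $B,C$) or $\psi(n_j)=1$ (type $D$) is the denominator in the Weyl formula, and the whole factor $\bigl(\prod_j \psi(n_j)\bigr)V^s$ depends on $n$ alone, call it $W(n)$. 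Denoting by $\rho=(N-1+\epsilon,\dots,\epsilon)$ the progression with $l_i = n_i = \rho_i$, the left side of the Lemma becomes
\[
\frac{\chi^G_\lambda(q^{n_1},\dots,q^{n_N})}{\chi^G_\lambda(q^{\rho_1},\dots,q^{\rho_N})}
= \frac{A(l;n)/W(n)}{A(l;\rho)/W(\rho)} = \frac{A(l;n)}{A(l;\rho)}\cdot\frac{W(\rho)}{W(n)},
\]
and the right side is the same expression with $l$ and $n$ interchanged, i.e.\ $\dfrac{A(n;l)}{A(n;\rho)}\cdot\dfrac{W(\rho)}{W(l)}$. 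Using $A(l;n)=A(n;l)$, $A(l;\rho)=A(\rho;l)$, $A(n;\rho)=A(\rho;n)$, the identity to prove reduces to
\[
\frac{A(l;n)}{A(l;\rho)}\cdot\frac{W(\rho)}{W(n)} = \frac{A(l;n)}{A(n;\rho)}\cdot\frac{W(\rho)}{W(l)},
\]
i.e.\ $A(l;\rho)\,W(l) = A(n;\rho)\,W(n)$ — but wait, that is false in general; the correct statement is that $A(l;\rho)W(l)$ equals $A(\rho;l)W(l)$, which is proportional to $\chi^G_\nu$ evaluated at the $\rho$ progression times something symmetric. Let me restate cleanly: the point is simply that $\chi^G_\lambda(q^{\rho}) = A(l;\rho)/W(\rho)$ and $\chi^G_\nu(q^{\rho}) = A(n;\rho)/W(\rho)$, so the claimed equality is $A(l;n)/A(l;\rho) \cdot A(\rho;\rho)/A(n;\rho) = A(n;l)/A(n;\rho)\cdot A(\rho;\rho)/A(l;\rho)$, which is an immediate consequence of $A(l;n)=A(n;l)$ after clearing denominators. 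So the whole lemma collapses to the symmetry $A(l;n)=A(n;l)$ of a single determinant, exactly as in the type $A$ case.

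The only real work, and the \textbf{main obstacle}, is bookkeeping the three cases $B, C, D$ uniformly: checking that in each case the shifts $\lambda_i + N - i + (\text{extra shift in } \eqref{eqn:weylformula})$ combine with $\epsilon$ to give precisely $l_i$ and that the per-column denominator ($z_j^{1/2}-z_j^{-1/2}$ for $B$, $z_j - z_j^{-1}$ for $C$, nothing for $D$) depends on $j$ alone so it factors out of the determinant and cancels in the ratio. For type $B$ one must be slightly careful that $l_i$ and $n_j$ are half-integers, so $q^{n_j l_i}$ makes sense (it does: $n_j l_i \in \frac14\Z$, and all exponents appearing are consistent), and the entry $(z_j^{l_i} - z_j^{-l_i})/(z_j^{1/2}-z_j^{-1/2})$ at $z_j = q^{n_j}$ becomes $(q^{n_j l_i}-q^{-n_j l_i})/(q^{n_j/2}-q^{-n_j/2})$, whose denominator again depends only on $j$. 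For type $D$ the entry $z_j^{\lambda_i+N-i}+z_j^{-(\lambda_i+N-i)}$ at $z_j=q^{n_j}$ is $q^{n_j l_i}+q^{-n_j l_i}$ (here $\epsilon=0$ so $l_i=\lambda_i+N-i$), already symmetric in $(l_i,n_j)$ with no denominator to worry about. Once these three elementary substitutions are recorded, the symmetry of the determinant under $l \leftrightarrow n$ (transpose the matrix) finishes the proof, and one notes as in Lemma~\ref{lem:Schurduality} that the statement is then "obvious from the definition."
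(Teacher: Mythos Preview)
Your approach is correct and is precisely what the paper intends: its entire proof is the single sentence ``This is a consequence of Weyl's formulas \eqref{eqn:weylformula},'' and your unpacking---factor the column-only denominators $\psi(n_j)$ out of the Weyl numerator, leaving a determinant $A(l;n)=\det[\phi(l_i n_j)]$ that is symmetric under $l\leftrightarrow n$---is exactly that consequence. The stumble in the middle can be removed by observing once that $A(l;\rho)=A(\rho;l)=W(l)$ (this is the Weyl denominator formula, equivalently $\chi^G_0\equiv 1$), after which both sides of the lemma are visibly equal to $A(l;n)\,W(\rho)/\bigl(W(l)W(n)\bigr)$; your ``restated cleanly'' version uses this identity implicitly when replacing $W(n)$ by $A(n;\rho)$.
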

\begin{proof}
This is a consequence of Weyl's formulas $(\ref{eqn:weylformula})$.
\end{proof}

\begin{prop}[$q$-geometric specialization]\label{prop:evalsymplectic}
For $G \in \{B, C, D\}$, and any $\lambda\in\GTp_N$, we have
\begin{eqnarray*}
\chi_{\lambda}^{B}(q^{\half}, q^{\frac{3}{2}}, \ldots, q^{N - \half}) &=&
\prod_{i=1}^N{\frac{q^{-\half l_i} - q^{\half l_i}}{q^{-\half(N - i + \half)} - q^{\half(N - i + \half)}}}
\frac{V^s(q^{l_N}, q^{l_{N-1}}, \ldots, q^{l_1})}{V^s(q^{\half}, q^{\frac{3}{2}, }\ldots, q^{N - \half})},\\
\chi_{\lambda}^C(q, q^2, \ldots, q^N) &=&
\prod_{i=1}^N{\frac{q^{-l_i} - q^{l_i}}{q^{-(N+1-i)} - q^{N+1-i}}}
\frac{V^s(q^{l_N},q^{l_{N-1}}, \ldots, q^{l_1})}{V^s(q, q^2, \ldots, q^N)},\\
\chi_{\lambda}^D(1, q, \ldots, q^{N-1}) &=&
2 \cdot \frac{V^s(q^{l_N}, q^{l_{N-1}}, \ldots, q^{l_1})}{V^s(1, q, \ldots, q^{N-1})}.
\end{eqnarray*}
\end{prop}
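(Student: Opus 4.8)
The plan is to evaluate, for each type $G\in\{B,C,D\}$, the determinant in Weyl's formula \eqref{eqn:weylformula} at the geometric progression $z_j=q^{j-1+\epsilon}$ and to reduce it, after stripping off suitable row and column factors, to a single Vandermonde determinant in the variables $w_i:=q^{l_i}+q^{-l_i}$.

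First I would isolate the one-variable fact that drives everything. Writing $w=t+t^{-1}$, the expressions
\[
p_m(w):=\frac{t^{m+1}-t^{-(m+1)}}{t-t^{-1}},\qquad
g_m(w):=\frac{t^{m+\half}-t^{-(m+\half)}}{t^{\half}-t^{-\half}},\qquad
f_m(w):=t^m+t^{-m}
\]
are polynomials in $w$ of degree $m$ (a standard Chebyshev-type fact: $f_m=2T_m(w/2)$ and $p_m,g_m$ are built the same way); the first two are monic, while $f_m$ is monic for $m\ge1$ with $f_0=2$. As an immediate corollary, for any polynomials $\pi_0,\dots,\pi_{N-1}$ with $\deg\pi_j=j$ and leading coefficient $c_j$, factoring the coefficient matrix into an upper-triangular matrix times the Vandermonde matrix gives
\[
\det_{1\le i,j\le N}\bigl[\pi_{j-1}(w_i)\bigr]=\Bigl(\prod_{j=0}^{N-1}c_j\Bigr)\prod_{1\le i<j\le N}(w_j-w_i).
\]

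Next, fix $G$ and substitute $z_j=q^{j-1+\epsilon}$ into \eqref{eqn:weylformula}; since all powers are of positive reals, $z_j^{\,l_i}=(q^{l_i})^{\,j-1+\epsilon}$, so one may swap base and exponent. After pulling the column-dependent factor $z_j^{\half}-z_j^{-\half}$ (type $B$) or $z_j-z_j^{-1}$ (type $C$) out of the determinant — there is no such factor in type $D$ — and, for types $B,C$, the row-dependent factor $q^{\half l_i}-q^{-\half l_i}$ resp.\ $q^{l_i}-q^{-l_i}$, the $(i,j)$ entry becomes exactly $g_{j-1}(w_i)$, $p_{j-1}(w_i)$, $f_{j-1}(w_i)$ respectively, with $w_i=q^{l_i}+q^{-l_i}$. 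Applying the displayed identity, the numerator of $\chi^G_\lambda$ at the specialization equals an explicit scalar times $\prod_{i<j}(w_j-w_i)$; reindexing $j\mapsto N+1-j$ identifies $\prod_{i<j}(w_j-w_i)$ with $V^s(q^{l_N},q^{l_{N-1}},\dots,q^{l_1})$ (the order reversal and the sign $(-1)^{\binom N2}$ cancel). Finally I would divide by $V^s(z_1,\dots,z_N)=V^s(q^{\epsilon},q^{1+\epsilon},\dots,q^{N-1+\epsilon})$ and reorganize the leftover scalar: the factor $\prod_{j=0}^{N-1}c_j$ is $2$ in type $D$ (the lone $2$ in the statement) and $1$ in types $B,C$, while the remaining product of row factors over column factors becomes, after the same reindexing $j\mapsto N+1-j$ and absorbing a sign $(-1)^N$, precisely $\prod_i\dfrac{q^{-\half l_i}-q^{\half l_i}}{q^{-\half(N-i+\half)}-q^{\half(N-i+\half)}}$ in type $B$ and $\prod_i\dfrac{q^{-l_i}-q^{l_i}}{q^{-(N+1-i)}-q^{N+1-i}}$ in type $C$.

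The only genuine difficulty is bookkeeping: one must combine correctly the sign $(-1)^{\binom N2}$ from reversing the arguments of $V^s$, the sign $(-1)^N$ from the flips $q^{l_i}-q^{-l_i}\leftrightarrow q^{-l_i}-q^{l_i}$, and — in type $B$ — the half-integer exponents, where the rewriting $z_j^{\,l_i}=(q^{l_i})^{j-1}\cdot q^{\half l_i}$ must be done carefully so that $\tfrac{t^{\,j-\half}-t^{-(j-\half)}}{t^{\half}-t^{-\half}}$ is recognized as $g_{j-1}(w)$. There is no conceptual obstacle once the Vandermonde-reduction identity above is in hand.
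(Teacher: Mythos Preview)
Your proposal is correct and follows essentially the same approach as the paper: substitute the geometric progression into Weyl's formula \eqref{eqn:weylformula}, swap base and exponent, and reduce the resulting determinant to a Vandermonde in $w_i=q^{l_i}+q^{-l_i}$. The paper phrases the reduction (for type $B$) as the row-operation identity $\det[y_j^{i-\half}-y_j^{-(i-\half)}]=\det[(y_j^{\half}-y_j^{-\half})^{2i-1}]$ and then factors, whereas you invoke the equivalent Chebyshev-polynomial fact that $g_{j-1}(w)$ is monic of degree $j-1$ together with the general ``polynomials of increasing degree yield a Vandermonde'' lemma; these are the same idea.
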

\begin{proof}
The formulas for $q$-geometric specializations can be derived from Weyl's denominator formulas; for instance, for types C and D, see \cite[(4.4), (4.5)]{JN}.
One also can give direct proofs; let us do it for type B.
From $(\ref{eqn:weylformula})$, we have that $\chi_{\lambda}^B(q^{\half}, \ldots, q^{N - \half})$ equals $\prod_{i=1}^{N}{(q^{\half(i - \half)} - q^{-\half(i - \half)})^{-1}}$ times $V^s(q^{\half}, q^{\frac{3}{2}}, \ldots, q^{N - \half})^{-1}$ times the determinant
\[
\det\begin{vmatrix}
y_1^{\half} - y_1^{-\half} & y_2^{\half} - y_2^{-\half} & \dots & y_N^{\half} - y_N^{-\half} \\
y_1^{\frac{3}{2}} - y_1^{-\frac{3}{2}} & y_2^{\frac{3}{2}} - y_2^{-\frac{3}{2}} & \dots & y_N^{\frac{3}{2}} - y_N^{-\frac{3}{2}} \\
\hdotsfor{4} \\
y_1^{N - \half} - y_1^{-(N - \half)} & y_2^{N - \half} - y_2^{-(N - \half)} & \dots & y_N^{N - \half} - y_N^{-(N - \half)}
\end{vmatrix},
\]
where we denoted $y_i := q^{l_i}$ for all $i$.
By elementary row operations, we see that the determinant above equals $\det_{1\leq i, j\leq N}\left[ (y_j^{\half} - y_j^{-\half})^{2i - 1} \right] = \prod_{i=1}^N{(y_i^{\half} - y_i^{-\half})}\times \det_{1\leq i, j\leq N}\left[ (y_j + y_j^{-1} - 2)^{i - 1} \right] = \prod_{i=1}^N{(y_i^{\half} - y_i^{-\half})}\times\prod_{1\leq i < j\leq N}{(y_j + y_j^{-1} - y_i - y_i^{-1})}$. We get the final answer after we combine all these factors.
\end{proof}

For the next proposition, let $N$ be a fixed positive integer, and let $G \in \{B, C, D\}$.
Define symmetric Laurent polynomials $\{H^G_m(x_1, \ldots, x_N)\}_{m \in \Z}$, $\{E^G_m(x_1, \ldots, x_N)\}_{m \in \Z}$ as follows:
\begin{align*}
H^G_m(x_1, \ldots, x_N) &:= 0, \textrm{ for all }m < 0;\\
E^G_m(x_1, \ldots, x_N) &:= 0, \textrm{ for all }m < 0 \textrm{ and }m > 2N + \mathbf{1}_{\{G = B\}};\\
H^B_m(x_1, \ldots, x_N) &:= h_m(1, x_1, x_1^{-1}, \ldots, x_N, x_N^{-1}), \textrm{ for all }m \geq 0;\\
E^B_m(x_1, \ldots, x_N) &:= e_m(1, x_1, x_1^{-1}, \ldots, x_N, x_N^{-1}), \textrm{ for all }2N+1 \geq m \geq 0;\\
H^G_m(x_1, \ldots, x_N) &:= h_m(x_1, x_1^{-1}, \ldots, x_N, x_N^{-1}), \textrm{ for all }m \geq 0,\ G \in\{C, D\};\\
E^G_m(x_1, \ldots, x_N) &:= e_m(x_1, x_1^{-1}, \ldots, x_N, x_N^{-1}), \textrm{ for all }2N \geq m \geq 0,\ G \in\{C, D\}.
\end{align*}
In these expressions, recall that $h_m$ (resp. $e_m$) is the $m$-th complete homogeneous (resp. elementary) symmetric polynomial.

\begin{prop}\label{prop:hooksymplectic}
For any integers $N, a, b$ such that $N \geq b+1$ and $a, b \geq 0$, we have
\begin{align*}
\chi_{(a+1, 1^b, 0^{N-b-1})}^B &= (H_{a+1}^B - H_{a-1}^B)E_{b}^B - \ldots + (-1)^b(H_{a+b+1}^B - H_{a-b-1}^B)E_0^B;\\
\chi_{(a+1, 1^b, 0^{N-b-1})}^C &= H_{a+1}^CE_b^C - (H_{a+2}^C + H_a^C)E_{b-1}^C + \ldots + (-1)^b(H_{a+b+1}^C + H_{a-b+1}^C)E_0^C;\\
\chi_{(a+1, 1^b, 0^{N-b-1})}^D &= \left( 2 - \mathbf{1}_{\{ b = N-1 \}} \right) \times \{ (H_{a+1}^D - H_{a-1}^D)E_{b}^D - \ldots + (-1)^b(H_{a+b+1}^D - H_{a-b-1}^D)E_0^D \}.
\end{align*}
\end{prop}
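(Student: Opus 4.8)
The plan is to reduce each of the three identities to the type-$A$ hook identity of Proposition~\ref{prop:hookidentity} by expressing the $B$-$C$-$D$ characters of hook signatures in terms of Schur functions evaluated at the ``doubled'' alphabet $(x_1, x_1^{-1}, \ldots, x_N, x_N^{-1})$ (together with an extra $1$ in type $B$). The starting point is the classical fact that the irreducible characters $\chi_\lambda^G$ expand into Schur functions of these doubled variable sets via the Littlewood-type formulas: for a one-row signature $(m)$ one has closed generating-function expressions, and more generally there are signed sums over partitions obtained by removing vertical or horizontal strips. Concretely, I would first record, for each $G$, the expansion of the ``row'' and ``column'' characters $\chi_{(m,0^{N-1})}^G$ and $\chi_{(1^m,0^{N-m})}^G$ in terms of the $H_j^G$ and $E_j^G$; for type $C$ the relevant identity is $\chi_{(m,0^{N-1})}^C = H_m^C - H_{m-2}^C$ in the stable range, for type $B$ it is $\chi_{(m,0^{N-1})}^B = H_m^B - H_{m-1}^B$ (coming from the extra eigenvalue $1$, equivalently $h_m - h_{m-1}$ telescoping), and for type $D$ one gets the factor $(2 - \mathbf 1_{\{\cdot\}})$ anomaly from the reducibility convention for $so^{2N}_\lambda$. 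These follow from \eqref{eqn:weylformula} by the same Vandermonde-quotient manipulation used in the proof of Proposition~\ref{prop:evalsymplectic}, or can be quoted from standard references on universal characters (Koike--Terada, Littlewood).

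The second step is the combinatorial heart: I would use the Jacobi--Trudi / Giambelli-type determinantal identity adapted to each series. For type $C$, the symplectic character $\chi_\lambda^C$ equals a determinant $\det[H_{\lambda_i - i + j}^C + (\text{a correction})]$ — the precise statement being $sp_\lambda = \det_{1\le i,j\le \ell}[ h_{\lambda_i-i+j}^{\mathrm{doubled}} - h_{\lambda_i - i - j}^{\mathrm{doubled}} ]$ when written over the doubled alphabet — and specializing to a hook $\lambda = (a+1, 1^b, 0^{N-b-1})$ collapses this determinant to exactly the alternating sum in the statement, via the same cofactor expansion that turns Proposition~\ref{prop:hookidentity} into its stated form. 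An equivalent and perhaps cleaner route: expand the hook character directly using the fact that $\chi_{(a+1,1^b,0^{N-b-1})}^G$ is, up to the $H\leftrightarrow E$ relations and the type-specific $h\mapsto h - h_{\cdot\mp 1}$ or $h\mapsto h + h_{\cdot}$ twist, the image of the type-$A$ hook Schur function $s_{(a+1,1^b)}$ under the ring homomorphism sending $h_m \mapsto H_m^G - (\pm)H_{m\mp\delta}^G$. Under this homomorphism, applying Proposition~\ref{prop:hookidentity} verbatim and then resumming the telescoping contributions produces the claimed formulas; the type-$B$ sign pattern $H_{a+j+1}^B - H_{a-j-1}^B$ and the type-$C$ sign pattern $H_{a+j+1}^C + H_{a-j+1}^C$ are precisely what one gets from the two different twists.

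The third step is bookkeeping of the boundary effects. In the stated range $N \ge b+1$ the doubled alphabet has enough variables that no extra ``wrap-around'' Schur functions survive — this is why the hypothesis $N \ge b+1$ appears — so I must check that the truncations $E_m^G = 0$ for $m > 2N + \mathbf 1_{\{G=B\}}$ and $H_m^G$ having no relation in the relevant degrees do not introduce spurious terms; here one verifies that the largest elementary-symmetric index appearing, namely $E_b^G$ with $b \le N-1 < 2N$, stays inside the non-vanishing range. For type $D$ the factor $2 - \mathbf 1_{\{b = N-1\}}$ has to be tracked from the very first step: when $b = N - 1$ the signature $(a+1,1^{N-1})$ has $\lambda_N = 1 \ne 0$, so $so_\lambda^{2N}$ is the sum of two genuinely distinct twin characters in a way that the generic doubling of the determinant already accounts for once, hence the reduction of the multiplicity from $2$ to $1$; I would verify this by comparing \eqref{eqn:weylformula} for $G = D$ against the $H^D$-determinant in the two cases $\lambda_N = 0$ and $\lambda_N \ge 1$.

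The main obstacle I anticipate is not any single identity but getting the three twists — $h_m \mapsto H_m^B - H_{m-1}^B$, $h_m \mapsto H_m^C - H_{m-2}^C$ (with the sign flipping to $+$ under the Giambelli reflection $h_{a-b+1}$), and the $D$-series multiplicity correction — to propagate correctly through the cofactor/telescoping resummation so that the signs $(-1)^i$ and the arguments $a+i+1$ versus $a-i\mp 1$ come out exactly as written. Managing this cleanly will likely require phrasing everything through a single ring homomorphism from $\Lambda$ (or from the type-$A$ Schur ring) into $\C[z_1^{\pm},\ldots,z_N^{\pm}]^{W_N}$ and invoking Propositions~\ref{prop:hookidentity} and \ref{prop:branchsymplectic} functorially, rather than manipulating determinants by hand.
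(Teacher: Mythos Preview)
Your overall strategy---quote the Jacobi--Trudi determinantal formula for $\chi_\lambda^G$ in the $H^G$'s, specialize to the hook $\lambda=(a+1,1^b)$, and expand along the first row so that the cofactors become the $E^G$'s---is exactly what the paper does (it cites the determinantal identities from \cite{JN,SV,ESK} and expands). So the architecture is right.

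However, the specific formulas you wrote down are wrong, and they are wrong in a systematic way: you have swapped the types. From the target identity with $b=0$ one reads off $\chi^B_{(m)}=H^B_m-H^B_{m-2}$ and $\chi^C_{(m)}=H^C_m$, not $H^B_m-H^B_{m-1}$ and $H^C_m-H^C_{m-2}$ as you claim. Likewise, the determinant $\det[h_{\lambda_i-i+j}-h_{\lambda_i-i-j}]$ that you attribute to type $C$ is in fact the $B/D$ Jacobi--Trudi; the type-$C$ formula is
\[
\chi_{\lambda}^C=\det_{1\le i,j\le \ell}\bigl[H^C_{\lambda_i+j-i}+(1-\delta_{j,1})H^C_{\lambda_i-i-j+2}\bigr],
\]
with a $+$ sign and a column-dependent correction. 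Once you use the correct determinants the cofactor expansion goes through mechanically.

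Your alternative ``ring homomorphism'' route---define $\phi:\Lambda\to\C[z^{\pm}]^{W_N}$ by $\phi(h_m)=H^G_m\pm H^G_{m\mp\delta}$ and transport Proposition~\ref{prop:hookidentity} functorially---does not work as stated, and not only because the twists are misidentified. The $B$-$C$-$D$ Jacobi--Trudi determinants have entries of the form $H^G_{\lambda_i+j-i}\pm H^G_{\lambda_i-i-j+c}$ in which the \emph{second} subscript depends on $j$ separately, so $\chi^G_\lambda$ is not the image of $s_\lambda$ under any specialization of the $h_m$'s alone; there is no ring homomorphism $\phi$ with $\phi(s_\lambda)=\chi^G_\lambda$ for all $\lambda$. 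The honest route is the one the paper takes: quote the correct determinantal identity for each type, expand, and identify the minors via the dual Jacobi--Trudi (in the $E^G$'s). Drop the homomorphism framing and fix the three type-specific formulas, and your sketch becomes the paper's proof.
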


\begin{proof}
This is a well known statement, see for example \cite[(3.14), (3.15)]{ESK}.
The idea for the proof is simple: for hook Young diagrams $\lambda = (a+1, 1^b)$, expand the following Jacobi-Trudi formulas  (which can be found, for instance, also in \cite[Sec. 4]{JN} or \cite{SV}) for symplectic/orthogonal polynomials along the first row:
\begin{eqnarray}
\chi_{\lambda}^C &=& \det_{1\leq i, j \leq b+1}\left[ H^C_{\lambda_i + j - i} + (1 - \delta_{j, 1}) H^C_{\lambda_i - i - j + 2} \right];\\
\chi_{\lambda}^G &=& \det_{1\leq i, j \leq b+1}\left[ H^G_{\lambda_i + j - i} - H^G_{\lambda_i - i - j} \right], \textrm{ for }G = B, D;
\end{eqnarray}
and then identify the $b\times b$ minors with the Laurent polynomials $E_m^G$ by another application of the Jacobi-Trudi formulas.
\end{proof}

\section{Asymptotics of $q$-deformed characters}

In this section, we study the asymptotic behavior of characters as $N \rightarrow \infty$ and all but finitely many variables are specialized in a $q$-geometric series.

\subsection{Type A characters: statements of results}

\label{Section_Type_A_results}

\begin{df}\label{def:stabilization}
We denote by $\X$ the set of all doubly infinite, nondecreasing integer sequences, i.e.,
$$\X :=\{ \bft = (\dots, t_{-2}, t_{-1}, t_0, t_1, t_2, \dots) \in \Z^{\infty}  \mid \dots \leq  t_{-1} \leq t_0 \leq t_1 \leq \cdots \}.$$

Let $\bfb = (b(1), b(2), \ldots)\in\N_0^{\infty}$ be any sequence of nonnegative integers such that $\lim_{N \rightarrow \infty}{b(N)} = \lim_{N \rightarrow \infty}{(N - b(N))} = +\infty$.
A sequence of signatures $\{\lambda(N) \in \GT_N\}_{N \geq 1}$ $\bfb$-\textit{stabilizes} to $\bft\in\X$ if
\begin{equation*}
\lim_{N\rightarrow\infty}{\lambda(N)_{b(N) + 1 - i}} = t_i, \textrm{ for all } i \in \Z.
\end{equation*}
\end{df}

For $\bft\in\X$, let
\begin{multline}\label{eqn:formulalimit}
\Phi^{\bft}(x; q) :=
x(\ln{q})^2 \frac{(q; q)^2_{\infty} }{(qx, q/x; q)_{\infty}}\\
\times \left\{\int^{\frac{\pi\ii}{\ln{q}}}_{-\frac{\pi\ii}{\ln{q}}}
\frac{d v}{2\pi\ii } \int_{\LL} \frac{d u}{2\pi \ii } \frac{x^u}{1 - q^{v - u}}\prod_{i = 1}^{\infty} \frac{1- q^{t_i + i - 1 - v}}{1 - q^{t_i + i - 1 - u}}
\prod_{j = 1}^{\infty} \frac{1 - q^{ j + v - t_{1-j}}}{1 - q^{j + u - t_{1-j}}}
- \int^{\frac{\pi\ii}{2\ln{q}}}_{-\frac{\pi\ii}{2\ln{q}}} \frac{x^v}{\ln{q}}\frac{d v}{2 \pi \ii } \right\},
\end{multline}
where the $u$-contour $\LL$ consists of the directed lines $(+\infty-\frac{\pi\ii}{2\ln{q}},-\infty-\frac{\pi\ii}{2\ln{q}})$ and $(-\infty + \frac{\pi\ii}{2\ln{q}}, +\infty + \frac{\pi\ii}{2\ln{q}})$, see Figure $\ref{fig:contours}$.
The expressions $x^u$ and $x^v$ stand for $e^{u\ln{x}}$ and $e^{v\ln{x}}$, where $\ln{x}$ is taken over the branch of the logarithm with the cut along the negative real axis.

\begin{figure}
\begin{center}
\begin{tikzpicture}[decoration={markings,
mark=at position 0.6cm with {\arrow[line width=1.5pt]{>}},
mark=at position 2.6cm with {\arrow[line width=1.5pt]{>}},
mark=at position 5.6cm with {\arrow[line width=1.5pt]{>}},
mark=at position 7.5cm with {\arrow[line width=1.5pt]{>}},
}
]
\draw[help lines,->] (-4,0) -- (4,0) coordinate (xaxis);
\draw[help lines,->] (0,-2) -- (0,2) coordinate (yaxis);

\path[dash dot,draw=red,line width=1pt,postaction=decorate] (-4,-0.5) -- (4,-0.5);
\path[dash dot,draw=red,line width=1pt,postaction=decorate] (4,0.5) -- (-4,0.5);
\path[draw=blue,line width=3pt,postaction=decorate] (0,1.5) -- (0,-1.5);

\foreach \Point in {(0,1.5), (0,0.5), (0,-0.5), (0,-1.5)}{
\node at \Point {\textbullet};
}
\node at (.5, 1.5) {-$\frac{\pi}{\ln{q}}$};
\node at (.5, -1.5) {$\frac{\pi}{\ln{q}}$};
\node at (.5, 0.8) {-$\frac{\pi}{2\ln{q}}$};
\node at (.5, -0.85) {$\frac{\pi}{2\ln{q}}$};

\node[below] at (xaxis) {$\Re z$};
\node[left] at (yaxis) {$\Im z$};
\end{tikzpicture}
\end{center}
\caption{Contours for Lemma $\ref{lem:yisunrewrite}$ and formula $(\ref{eqn:formulalimit})$: the $u$-contour is red (dashed lines) and the $v$-contour is blue (thick line).}
\label{fig:contours}
\end{figure}
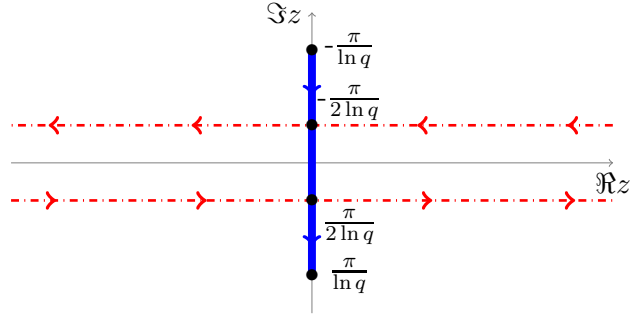

\begin{lem}\label{lem:analyticA}
For any $\bft\in\X$, the function $\Phi^{\bft}(x; q)$ defined by $(\ref{eqn:formulalimit})$ is analytic on the domain $\C \setminus ((-\infty, 0] \cup \{	q^n : n \in \Z \})$.
Moreover, it admits an analytic continuation to $\C^*$.
\end{lem}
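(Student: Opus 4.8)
The plan is to analyze the two pieces of $\Phi^{\bft}(x;q)$ in \eqref{eqn:formulalimit} separately. The second summand, $-\int_{-\pi\ii/(2\ln q)}^{\pi\ii/(2\ln q)} \frac{x^v}{\ln q}\frac{dv}{2\pi\ii}$, is an explicit integral over a fixed compact segment of an entire function of $x$ (for each fixed $v$, $x\mapsto x^v = e^{v\ln x}$ is holomorphic away from $(-\infty,0]$), so it is analytic on $\C\setminus(-\infty,0]$; in fact one can evaluate it in closed form as an elementary function, which will make its continuation to $\C^*$ transparent. The prefactor $x(\ln q)^2 (q;q)_\infty^2/(qx,q/x;q)_\infty$ is meromorphic on all of $\C^*$ with possible poles only where $(qx;q)_\infty$ or $(q/x;q)_\infty$ vanishes, i.e. at $x\in\{q^{-n}:n\ge 1\}\cup\{q^{n}:n\ge 1\}\subset\{q^n:n\in\Z\}$, and it is manifestly nonvanishing and analytic elsewhere on $\C^*$; note it has no branch cut.

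The crux is therefore the double integral $J(x):=\int\!\!\int \frac{x^u}{1-q^{v-u}}\,\Pi(u,v)\,\frac{du\,dv}{(2\pi\ii)^2}$, where $\Pi(u,v) = \prod_{i\ge1}\frac{1-q^{t_i+i-1-v}}{1-q^{t_i+i-1-u}}\prod_{j\ge1}\frac{1-q^{j+v-t_{1-j}}}{1-q^{j+u-t_{1-j}}}$. First I would check that the infinite products converge: since $\bft$ is nondecreasing with integer values and $b(N),N-b(N)\to\infty$ forces $t_i+i\to+\infty$ as $i\to+\infty$ and $t_{1-j}-j\to-\infty$ as $j\to+\infty$ (more precisely $t_i = O(1)$ is not assumed, but monotonicity gives $t_i+i-1-u \to +\infty$ uniformly for $u$ on $\LL$, so each factor $\to 1$ geometrically), both products converge uniformly for $u\in\LL$ and $v$ on its segment, defining a function holomorphic in $u$ in a neighborhood of $\LL$ and continuous (indeed holomorphic, periodic) in $v$. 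The $v$-integral is over one period $[-\pi\ii/\ln q,\pi\ii/\ln q]$ of a function periodic in $v$ with period $2\pi\ii/\ln q$ (because $q^v$ has that period), so it is a genuine integral over a compact contour; the factor $1/(1-q^{v-u})$ is the only source of a $v$-pole, but for $v$ on the imaginary segment and $u\in\LL$ we have $\Re(v-u)=-\Re u\neq 0$ except... — here one must check the contour $\LL$ (the two horizontal lines at imaginary parts $\mp\pi\ii/(2\ln q)$) stays away from $v-u\in 2\pi\ii\Z/\ln q$; since $v$ ranges over imaginary parts in $[-\pi/\ln q,\pi/\ln q]\cdot\ii$ wait — I would instead argue that $\Re u$ is unbounded on $\LL$ but the integrand decays, and the pole locus $q^{v-u}=1$ is avoided because on $\LL$, $\Im u = \mp\pi/(2\ln q)$ while $\Im v\in[0,\mp\pi/\ln q]$ after folding, so $\Im(v-u)\notin (2\pi/\ln q)\Z$. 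Granting the contour is admissible, for each fixed admissible pair $(u,v)$ the map $x\mapsto x^u$ is holomorphic on $\C\setminus(-\infty,0]$, and to conclude $J$ is holomorphic there I would apply Morera's theorem together with Fubini, after establishing a dominating bound: the $x^u = e^{u\ln x}$ factor is bounded by $e^{\Re(\ln x)\Re u}$ wait, $|x^u| = e^{\Re u \cdot \ln|x| - \Im u\cdot\arg x}$, which for $u$ on $\LL$ grows like $e^{\pm C\Re u}$ as $\Re u\to\pm\infty$ depending on $\operatorname{sign}(\ln|x|)$; so to get convergence one needs the product $\Pi(u,v)$ to decay faster. The product $\prod_j \frac{1-q^{j+v-t_{1-j}}}{1-q^{j+u-t_{1-j}}}$ has denominator $\prod_j(1-q^{j+u-t_{1-j}})$ which, as $\Re u\to +\infty$, tends to $1$, but as $\Re u\to-\infty$ behaves like $\prod q^{j+u-t_{1-j}}$-type growth — this is exactly the mechanism providing super-exponential decay in one direction, matched by the other product in the other direction.

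The main obstacle is precisely this convergence/domination bookkeeping: showing that $\Pi(u,v)\,x^u/(1-q^{v-u})$ is absolutely integrable over $\LL\times[\text{segment}]$, locally uniformly in $x\in\C\setminus((-\infty,0]\cup\{q^n\})$, so that differentiation under the integral sign (or Morera) is legitimate and $J$ is holomorphic on that domain. I expect the estimate to take the form: on $\LL$, $|\Pi(u,v)| \le C(\bft)\, \exp(-c\,(\Re u)^2/\,|\ln q|)$ for $|\Re u|$ large — Gaussian decay coming from the $q^{(\Re u)^2}$-type growth of the Pochhammer-like infinite products (since $\prod_{j=1}^{M}(1-q^{j+u-\ast})$ contributes $q^{\Theta(M^2)}$ and the number of "active" factors scales with $\Re u$) — which beats the exponential growth $e^{|\Re u|\,|\ln|x||}$ of $x^u$ for every fixed $x\neq 0$. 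This Gaussian-vs-exponential comparison also gives the analytic continuation to all of $\C^*$: the bound is valid for $x$ on the punctured unit circle and, by the locally-uniform estimate, on every annulus, so $J(x)$ extends holomorphically across the cut $(-\infty,0]$ and across the points $q^n$ (the latter are at worst removable or cancelled against the prefactor's zeros/poles — one checks the combined function $\Phi^{\bft}$ has no singularity there using the known identity that at $x=q^n$ the double integral can be residue-evaluated). Finally, assembling the three pieces, the product of the meromorphic prefactor with the entire-on-$\C^*$ function $J-(\text{second integral})$ is analytic on $\C^*$ except possibly at $\{q^n\}$, and a direct check (e.g. comparing with the original finite-$N$ character ratios, which are Laurent polynomials in $x$ and converge, or a residue computation) shows the apparent poles at $q^n$ are spurious, completing the proof.
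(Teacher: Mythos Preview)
Your treatment of the first part of the lemma---analyticity on $\C\setminus((-\infty,0]\cup\{q^n:n\in\Z\})$---is sound and matches the paper's Step~1: the prefactor is meromorphic with poles exactly in $\{q^n:n\in\Z\}$, the single $v$-integral is elementary, and the double integral converges locally uniformly in $x$ because the denominator products $\prod_i(1-q^{t_i+i-1-u})^{-1}$ and $\prod_j(1-q^{j+u-t_{1-j}})^{-1}$ force super-exponential decay as $|\Re u|\to\infty$ along $\LL$. (Your ``Gaussian'' label is not accurate for general $\bft$---the rate depends on how fast $t_i+i$ grows---but what you actually need, and what holds, is that the exponent grows faster than linearly in $\Re u$, which follows since $\{t_i+i-1\}_i$ is a strictly increasing integer sequence.)

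The genuine gap is in your argument for the analytic continuation to $\C^*$. You write that the locally-uniform bound on annuli ``so $J(x)$ extends holomorphically across the cut $(-\infty,0]$'', but this inference is false. The integrand involves $x^u=e^{u\ln x}$ with the principal branch, and under $\ln x\mapsto \ln x+2\pi\ii$ this picks up a factor $e^{2\pi\ii u}$; for $u\in\LL$ (where $\Im u=\pm\pi/(2\ln q)$) this factor is $e^{\mp\pi^2/\ln q}\neq 1$. So the integral formula \eqref{eqn:formulalimit} is genuinely branch-dependent, and no decay estimate alone can produce single-valuedness. Your suggestion that a residue computation handles the apparent poles at $q^n$ is likewise left undeveloped.

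The paper takes a completely different route for this second part (its Step~5), and in fact your final parenthetical remark---``comparing with the original finite-$N$ character ratios, which are Laurent polynomials in $x$ and converge''---is exactly the idea, though you present it as a throwaway alternative. The paper first proves (Steps~3--4) that the normalized Schur ratios $\Phi_{\lambda(N)}(x;q,b(N),N)$ converge to $\Phi^{\bft}(x;q)$ on the restricted domain. Then, using positivity of the branching coefficients, it bounds $|\Phi_{\lambda(N)}(x;\ldots)|\le \Phi_{\lambda(N)}(R;\ldots)+\Phi_{\lambda(N)}(R^{-1};\ldots)$ for $R^{-1}\le|x|\le R$; the right side is a bounded sequence by the already-established convergence at $x=R^{\pm1}$. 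Montel's theorem then gives normality of the family on $\C^*$, and any subsequential limit must agree with $\Phi^{\bft}$ on an open set, hence is unique. This simultaneously handles the branch cut and the $\{q^n\}$ points, since the finite-$N$ ratios are Laurent polynomials with no such issues. Note that this makes the proof of Lemma~\ref{lem:analyticA} inseparable from that of Theorem~\ref{thm:asymptoticsmultivariate}; trying to prove the lemma in isolation by direct integral analysis, as you attempt, would require a separate argument for single-valuedness that you have not supplied.
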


We denote the analytic continuation of $\Phi^{\bft}(x; q)$ to $\C^*$ also by $\Phi^{\bft}(x; q)$.
For $m\in\Z$, let $A_m : \X \rightarrow \X$ be the map $A_m \bft = \bft' := (\dots \leq t_{-1}' \leq t_0' \leq t_1' \leq \dots)$, $t'_n := t_{n + 1 - m}$.
Define also the multivariate function
\begin{equation}\label{def:Phik1}
\Phi^{\bft}(x_1, \ldots, x_k; q) := \frac{q^{\frac{k(k-1)(2k-1)}{6}}}{V(x_k, \ldots, x_1)}
\det_{1\leq i, j\leq k}\left[ \Phi^{A_j\bft}(x_i q^{1 - j}; q) \prod_{\substack{1\leq s\leq k\\ s \neq j}}{(x_iq^{1 - s} - 1)} \right].
\end{equation}
By Lemma $\ref{lem:analyticA}$, the functions $\Phi^{A_j \bft}(x; q)$ are analytic on $\C^*$, therefore $\Phi^{\bft}(x_1, \ldots, x_k; q)$ defines an analytic function on $(\C^*)^k$.
In fact, the determinant in $(\ref{def:Phik1})$ has zeroes at the diagonals $x_i = x_j$, which cancel the poles coming from the Vandermonde determinant $V(x_k, \ldots, x_1)$.

\begin{thm}\label{thm:asymptoticsmultivariate}
Let $k\in\N$, and let $\bfb = (b(1), b(2), \ldots)\in\N_0^{\infty}$ be such that $\lim_{N\rightarrow\infty}{b(N)} = \lim_{N\rightarrow\infty}{(N - b(N))} = + \infty$.
Also let $\{\lambda(N)\in\GT_N\}_{N \geq 1}$ be a sequence of signatures that $\bfb$-stabilizes to some $\bft \in \X$.
Then
\begin{equation}\label{mult:limit}
\lim_{N \rightarrow \infty}{\frac{s_{\lambda(N)}(1, q, \ldots, q^{b(N)-1}, q^{b(N)}x_1, \ldots, q^{b(N)}x_k, q^{b(N)+k}, \ldots, q^{N-1})}{s_{\lambda(N)}(1, q, \ldots, q^{N-1})}}
= \Phi^{\bft}(x_1, \ldots, x_k; q)
\end{equation}
holds uniformly over $(x_1, \ldots, x_k)$ in compact subsets of $(\C^*)^k$.
Conversely, if $\{\lambda(N)\in\GT_N\}_{N \geq 1}$ is a sequence of signatures such that the limit in the left hand side of $(\ref{mult:limit})$ exists and is uniform on compact subsets of $(\C^*)^k$ and any $k\in\N$, then there exists $\bft\in\X$ such that $\{\lambda(N)\in\GT_N\}_{N \geq 1}$ $\bfb$-stabilizes to $\bft$, and $(\ref{mult:limit})$ holds.
\end{thm}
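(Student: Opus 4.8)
The plan is to establish the $k=1$ case first and then bootstrap to general $k$ via the determinantal formula \eqref{def:Phik1}. For $k=1$, I would start from the label-variable duality (Lemma \ref{lem:Schurduality}), which rewrites the normalized Schur polynomial $s_{\lambda(N)}(1,q,\dots,q^{b(N)-1},q^{b(N)}x,q^{b(N)+1},\dots,q^{N-1})/s_{\lambda(N)}(1,q,\dots,q^{N-1})$ — after the substitution $x \mapsto q^{m}$ with $m$ an integer — as a normalized Schur polynomial $s_{\mu}(q^{\lambda(N)_1+N-1},\dots,q^{\lambda(N)_N})/s_\mu(q^{N-1},\dots,1)$, where $\mu$ is the one-variable-shifted signature $(\dots,1,1,0,0,\dots)$ associated to the position $b(N)$ and the value $m$. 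Such a signature is essentially a hook (more precisely a translate $(1^j,0^{N-j})$ shifted by a box), so $s_\mu$ decomposes into $h$'s and $e$'s by Proposition \ref{prop:hookidentity}. Using the closed generating functions for $h$ and $e$ evaluated at the geometric-type specialization $(q^{\lambda(N)_1+N-1},\dots,q^{\lambda(N)_N})$, one gets a single-contour integral representation of the $k=1$ normalized character valid at the lattice points $x=q^m$; since both sides are analytic (the right side by Lemma \ref{lem:analyticA}), analytic continuation in $x$ extends the identity, and letting $N\to\infty$ under the $\bfb$-stabilization hypothesis produces the formula $\Phi^{\bft}(x;q)$ in \eqref{eqn:formulalimit}. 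The appearance of the double integral in \eqref{eqn:formulalimit} comes precisely from the generating function of hook Schur functions being a product/ratio rather than a single geometric series, forcing the second contour.

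Once $k=1$ is done, the passage to general $k$ uses the branching rule (Proposition \ref{prop:branchingrule}) together with a dual Jacobi--Trudi / Giambelli-type identity: the $k$-variable normalized Schur polynomial, again after specializing $x_i = q^{m_i}$ and applying label-variable duality, becomes a normalized Schur polynomial $s_\mu$ evaluated at the geometric progression where $\mu$ now depends on $b(N)$ and the $k$ integers $m_1,\dots,m_k$ interlaced into the $q$-progression. One expresses this $s_\mu$ as a $k\times k$ determinant whose entries are the $k=1$ objects (normalized hook characters at shifted arguments), then takes $N\to\infty$ entrywise using the $k=1$ asymptotics, and recognizes the limiting determinant as \eqref{def:Phik1} after matching the Vandermonde and the shift factors $q^{1-s}$; the prefactor $q^{k(k-1)(2k-1)/6}$ and the products $\prod_{s\ne j}(x_iq^{1-s}-1)$ are exactly the bookkeeping that arises from the interlacing of the extra variables into the progression $1,q,\dots,q^{N-1}$. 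Uniformity on compacts of $(\C^*)^k$ follows from uniformity of the $k=1$ convergence together with the fact that the determinant and Vandermonde are continuous, and that the zeros of the numerator on the diagonals cancel the Vandermonde poles as already noted after \eqref{def:Phik1}.

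For the converse direction I would argue that existence of the limit for all $k$ forces $\bfb$-stabilization. One route: specialize to $k=1$ and track how the limit function encodes the ``edges of the diagram'' near position $b(N)$ — the leading behavior of $\Phi^{\bft}(x;q)$ as $x\to 0$ and as $x\to\infty$, or the residues at $x=q^n$, recover the individual $t_i$; since distinct $\bft\in\X$ give distinct functions $\Phi^{\bft}$, convergence of the normalized characters for all $k$ together with the already-proven direct statement (applied along subsequences for which $\lambda(N)_{b(N)+1-i}$ converges, possibly to $\pm\infty$ a priori) pins down a unique finite limit $\bft\in\X$ and then shows the whole sequence stabilizes. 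A compactness argument rules out escape to $\pm\infty$: if some $\lambda(N)_{b(N)+1-i}$ were unbounded along a subsequence, the corresponding $k=1$ limit would either fail to exist or degenerate, contradicting the assumed uniform convergence on compacts of $\C^*$.

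The main obstacle I anticipate is the $k=1$ asymptotic analysis itself: one must justify interchanging the $N\to\infty$ limit with the contour integrals and control the infinite products $\prod_{i\geq 1}\frac{1-q^{t_i+i-1-v}}{1-q^{t_i+i-1-u}}$ and $\prod_{j\geq1}\frac{1-q^{j+v-t_{1-j}}}{1-q^{j+u-t_{1-j}}}$ uniformly — these converge because $t_i+i\to+\infty$ and $t_{1-j}-j\to-\infty$ under $\bfb$-stabilization, but making the convergence uniform in $u,v$ on the (non-compact) contours $\LL$ and the vertical segments, and showing the pre-limit finite products converge to them with uniform error, is the delicate estimate. A secondary subtlety is the correct handling of the branch of $x^u=e^{u\ln x}$ and the resulting cut along $(-\infty,0]$ in Lemma \ref{lem:analyticA}, together with verifying that the analytic continuation past the poles $x=q^n$ (needed so that the identity at lattice points $x=q^m$ propagates to all of $\C^*$) is consistent with the one obtained from the determinantal formula for general $k$.
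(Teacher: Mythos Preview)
Your plan for the direct direction is essentially the paper's: the $k=1$ integral formula is exactly Theorem~\ref{thm:gorinsun}/Lemma~\ref{lem:yisunrewrite} (label--variable duality reduces to a hook, Proposition~\ref{prop:hookidentity} plus the $h$-- and $e$--generating functions give the double contour integral, and rationality in $q^a$ extends off the lattice), and the reduction from general $k$ to $k=1$ is Theorem~\ref{thm:gorinsun2} via Giambelli. Your identification of the uniform-in-$u$ control of the infinite products on the unbounded contour $\LL$ as the main analytic issue is also accurate.

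There is one genuine missing idea. After Steps~3--4 you only get convergence on compact subsets of $\C\setminus((-\infty,0]\cup\{q^n\})$, and your proposal to ``analytically continue past the poles $x=q^n$'' does not by itself give \emph{uniform convergence of the sequence} on compacts of $\C^*$. The paper's Step~5 supplies this: using the nonnegativity of the branching coefficients (Proposition~\ref{prop:branchingrule}) one bounds $|\Phi_{\lambda(N)}(x;q,b(N),N)|$ on any annulus $R^{-1}\le|x|\le R$ by $\Phi_{\lambda(N)}(R;\cdot)+\Phi_{\lambda(N)}(R^{-1};\cdot)$, which is already known to converge; then Montel's theorem gives subsequential limits on $\C^*$, and agreement with $\Phi^{\bft}$ on the restricted domain forces uniqueness. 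Without this normal-family argument you do not get the uniform statement on $(\C^*)^k$ claimed in the theorem.

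Your converse argument is a different route from the paper's and, as written, has a gap. The paper does \emph{not} read off $\bft$ from the asymptotics of $\Phi^{\bft}(x;q)$; instead it expands the normalized character in the Schur basis (equation~\eqref{eqn:expansionA}), observes that the coefficients $\Lambda^N_k(\lambda(N),\cdot)$ are probability measures on $\GT_k$, and then invokes the concentration bound of Proposition~\ref{prop:schurestimate} (packaged in the proof of Theorem~\ref{thm:minimalschur}) to force each $\lambda(N)_{b(N)+m}$ to converge. Your proposed compactness/injectivity argument would need, at minimum, a way to rule out that different coordinates $\lambda(N)_{b(N)+1-i}$ stabilize only along incompatible subsequences, and to show that escape to $\pm\infty$ actually destroys the uniform limit on compacts of $\C^*$; neither is obvious from the integral formula alone, whereas the probabilistic concentration estimate handles both at once.
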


\subsection{Type A characters: proofs of results}

\label{Section_A_char_proofs}

We need the following formulas for $q$-specializations of Schur polynomials.

\begin{thm}\label{thm:gorinsun}
Let $\lambda\in\GT_N$, $b \in \{1, 2, \ldots, N - 1\}$, and $a\in\C$; then
\begin{multline}\label{eqn:gorinsun}
\frac{s_\lambda(1, q, \ldots, q^{b-1}, q^a, q^{b+1}, \ldots, q^{N-1})}{s_\lambda(1, \ldots, q^{N -  1})} =
\prod_{i=1}^{N-b-1} \frac{1 - q^{i}}{q^{a-b} - q^i}
\prod_{i=1}^{b} \frac{1 - q^i}{1 - q^{a-b+i}}  \\
\times\oint_{\{q^{\lambda_i+N-i}\}} \frac{d z}{2\pi \ii } \oint_{\{\infty\}} \frac{d w}{2 \pi \ii } \cdot \frac{{ z}^{a} {w}^{-b - 1}}{z - w}\cdot
\prod_{i = 1}^N \frac{w- q^{\lambda_i + N - i}}{z - q^{\lambda_i + N - i}}.
\end{multline}
In $(\ref{eqn:gorinsun})$, the $z$-contour encloses all poles $q^{\lambda_i + N - i}$, $i = 1, \ldots, N$, but not the origin, while the $w$-contour encloses the origin and the $z$-contour.
\end{thm}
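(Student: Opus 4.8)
\textbf{Proof plan for Theorem \ref{thm:gorinsun}.}

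The plan is to reduce the identity to the single-variable determinantal structure of Schur polynomials and then recognize both sides as rational functions, so that it suffices to verify them on a Zariski-dense set or to match residues. First I would observe that, writing $x_i := q^{i-1}$ for $i \neq b+1$ and $x_{b+1} := q^a$, the left-hand side is, by the Weyl/bialternant definition,
\[
\frac{s_\lambda(x_1,\ldots,x_N)}{s_\lambda(1,q,\ldots,q^{N-1})}
= \frac{V(1,q,\ldots,q^{N-1})}{V(x_1,\ldots,x_N)} \cdot \frac{\det_{i,j}\left[x_j^{\lambda_i+N-i}\right]}{\det_{i,j}\left[q^{(i-1)(\lambda_j+N-j)}\right]}.
\]
Because all but one of the $x_j$ are already the $q$-geometric values, the ratio of the two $N\times N$ determinants collapses: expanding $\det_{i,j}[x_j^{\lambda_i+N-i}]$ along the column $j=b+1$ and doing the same for the denominator, one reduces to a single alternating sum $\sum_{i=1}^N (-1)^{i+b+1}\, q^{a(\lambda_i+N-i)}\, M_i$, where $M_i$ is the complementary minor built purely from $q$-powers; the same minors appear (up to the geometric specialization) in the denominator, and the Vandermonde prefactor $V(1,q,\ldots,q^{N-1})/V(x_1,\ldots,x_N)$ supplies exactly the product $\prod_{i=1}^{N-b-1}\frac{1-q^i}{q^{a-b}-q^i}\prod_{i=1}^b\frac{1-q^i}{1-q^{a-b+i}}$ that sits in front of the double integral. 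This identifies the left-hand side with the explicit prefactor times a sum over $i$ of $q^{a(\lambda_i+N-i)}$ weighted by $\prod_{\ell\neq i}(q^{\lambda_\ell+N-\ell}-q^{\lambda_i+N-i})^{-1}$.

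Next I would evaluate the double contour integral on the right-hand side by residues and match it to that sum. For the $w$-integral: since the $w$-contour surrounds the origin and the $z$-contour, and the integrand $z^a w^{-b-1}(z-w)^{-1}\prod_i\frac{w-q^{\lambda_i+N-i}}{z-q^{\lambda_i+N-i}}$ decays appropriately at $w=\infty$, I close at infinity and pick up minus the residue at $w=0$ (a pole of order $b+1$) plus the residue at $w=z$. The residue at $w=z$ cancels against a later term, and the residue at $w=0$ produces a finite sum involving the Taylor coefficients of $\prod_i(w-q^{\lambda_i+N-i})/(z-w)$, i.e.\ elementary symmetric combinations; alternatively (and more cleanly) I close the $w$-contour the other way or use that $\oint_{\{\infty\}} w^{-b-1}(z-w)^{-1}\,dw/2\pi\ii$ together with the product is itself a polynomial-in-$z$ factor. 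Then the $z$-integral, whose contour encircles exactly the simple poles $z=q^{\lambda_i+N-i}$, contributes $\sum_{i=1}^N \operatorname{Res}_{z=q^{\lambda_i+N-i}}$, and at each such pole $\prod_\ell (z-q^{\lambda_\ell+N-\ell})^{-1}$ has residue $\prod_{\ell\neq i}(q^{\lambda_i+N-i}-q^{\lambda_\ell+N-\ell})^{-1}$, while $z^a$ evaluates to $q^{a(\lambda_i+N-i)}$. Combining, the double integral equals precisely the alternating sum produced in the first step, up to a polynomial correction factor that I expect to match the remaining combinatorial prefactor; tracking the $w^{-b-1}$ pole is what reproduces the shift $q^{a-b}$ and the truncation of the two products to lengths $N-b-1$ and $b$.

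A cleaner organizational alternative, which I would actually prefer to write up, is to prove the identity by analytic continuation in $a$: both sides are, after clearing the explicit denominators, entire functions of $q^a$ (the left side manifestly, the right side because the $z$-contour integral is visibly holomorphic in $a$ once we note $z^a$ never vanishes on the contour and the $w$-integral is independent of $a$). So it suffices to check equality at the integer points $a = 0, 1, \ldots, N-1$ with $a \neq b$, where the left-hand side degenerates: for $a \in \{0,\ldots,N-1\}\setminus\{b\}$ the specialization $s_\lambda(1,q,\ldots,q^{b-1},q^a,q^{b+1},\ldots,q^{N-1})$ has a repeated argument $q^a$, hence vanishes, and one must check the right-hand side vanishes too — which it does, because the prefactor contains the factor $q^{a-b}-q^{a}$ (when $a<b+1$, inside $\prod_{i=1}^{N-b-1}(q^{a-b}-q^i)^{-1}$... ) — wait, more carefully: for integer $a\neq b$ in range, one of the explicit denominator factors $q^{a-b}-q^i$ or $1-q^{a-b+i}$ has a matching zero in the numerator structure, forcing the right side to vanish as well, or the double integral itself vanishes by a pole-pinching argument. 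Comparing leading behavior as $q^a\to\infty$ then pins down the overall constant. \textbf{The main obstacle} I anticipate is the bookkeeping in the $w=0$ residue of order $b+1$: extracting the coefficient of $w^b$ in $\prod_{i=1}^N(w-q^{\lambda_i+N-i})/(z-w)$ and showing it conspires with the $z$-residues to reproduce exactly the two truncated $q$-Pochhammer-type products (and not extra spurious terms) is the delicate computational heart; organizing it via the generating-function identity $\sum_{b\ge 0} w^b \cdot [\text{product}] $ rather than term by term should keep it manageable.
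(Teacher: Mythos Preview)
Your first approach---direct cofactor expansion of the bialternant along the distinguished row, then residue evaluation of the double integral---is valid in principle and would eventually succeed, but it is not how the paper proceeds, and the difference is instructive. The paper also begins with the observation that both sides are rational in $q^a$, but rather than matching residues directly it specializes to large integer $a\ge N$ and invokes the label--variable duality (Lemma~\ref{lem:Schurduality}): the left-hand side becomes a normalized hook Schur polynomial $s_{(a+1-N,\,1^{N-b-1},\,0^b)}$ evaluated at the points $q^{\lambda_i+N-i}$. One then expands this hook via Proposition~\ref{prop:hookidentity} into $\sum_i (-1)^i h_{a+1-N+i}\, e_{N-b-1-i}$, and writes each $h_m$, $e_m$ as a single contour integral using their generating functions. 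Summing the geometric series and massaging the contours yields the stated double integral. The point is that the duality step trades the ``column'' variable $q^a$ for a hook \emph{shape}, so that the entire $b$-dependence lives in the combinatorics of Proposition~\ref{prop:hookidentity} rather than in a high-order pole at $w=0$. This is exactly what lets one sidestep the obstacle you correctly flag: the order-$(b+1)$ residue bookkeeping is replaced by a finite, structured sum of $h\cdot e$ products whose contour representations are each simple poles.

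Your second approach, checking vanishing at the $N-1$ integer points $a\in\{0,\dots,N-1\}\setminus\{b\}$ plus leading asymptotics, is not sufficient as written: you have not established the degree (in $q^a$) of the two sides, so $N-1$ zeros plus one asymptotic condition need not pin down equality. More seriously, your parenthetical ``wait, more carefully'' is warranted---at those integer $a$ the prefactor has a \emph{pole}, not a zero, so the vanishing of the left side must be matched by a zero of the double integral, and you have not argued that. This route can be made to work, but only after you control the degree and verify the integral vanishes at the relevant $a$; at that point it is no easier than the residue matching of your first approach.
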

\begin{rem}
There are some apparent singularities in the right side of $(\ref{eqn:gorinsun})$, as a function of $a$, coming from factors in the denominators.
However, the left side is a Laurent polynomial in $q^a$ and thus an entire function of $a$.
Therefore the double integral vanishes at the apparent singularities.
\end{rem}

\begin{rem}
For the special case $b = N-1$, a similar formula was proved in \cite{GP}; see also \cite{C}.
\end{rem}

\begin{proof}
The theorem was stated in \cite[(3.10)]{GS} and is similar to Theorem \ref{thm:bctype}.
We will go into the details of the proof of Theorem \ref{thm:bctype}, so we only present a sketch of proof here.

Since both sides of \eqref{eqn:gorinsun} are rational functions on $q^a$ (for the right hand side, do a residue expansion), it suffices to prove this identity for positive integers $a \geq N$.
But then we can use Lemma \ref{lem:Schurduality} to the left hand side of \eqref{eqn:gorinsun} and find that it is proportional (up to a factor not depending on $\lambda$) to $s_{(a+1-N, 1^{N-b-1}, 0^b)}(q^{\lambda_1+N-1}, \ldots, q^{\lambda_N})$.
Apply Proposition \ref{prop:hookidentity} to express this as
\begin{equation}\label{heterm}
\sum_{i=1}^{N-b-1}{h_{a+1-N+i}(\{q^{\lambda_i + N - i}\}) e_{N-b-1-i}(\{q^{\lambda_i + N - i}\})}.
\end{equation}
Then use the following well--known generating functions for the symmetric polynomials $h_n$ and $e_m$ (see \cite[Ch. I.2, (2.2) \& (2.5)]{M}):
\begin{equation*}
\sum_{n = 0}^{\infty}{h_n(x_1, \ldots, x_N)z^n} = \prod_{j=1}^N{(1 - zx_j)^{-1}};\ \ \
\sum_{n = 0}^{N}{e_n(x_1, \ldots, x_N)w^n} = \prod_{j=1}^N{(1 + wx_j)}.
\end{equation*}
Equipped with these generating functions, write each term of \eqref{heterm} as a product of two contour integrals, whose contours are small counterclockwise oriented circles.
After swapping the sum and the integration signs, we find that \eqref{heterm} equals
$$\int_{\{0\}}{ \frac{dz}{2\pi\ii}\int_{\{0\}} \frac{dw}{2\pi\ii} \ \prod_{j=1}^N{\frac{1 + wq^{\lambda_j + N - j}}{1 - zq^{\lambda_j + N - j}}} \sum_{i=1}^{N-b-1} { z^{-(a+2-N+i)} w^{-(N-b-i)} } }.$$
After some simplifications, one then arrives at the double contour integral in the right hand side of \eqref{eqn:gorinsun}.
To obtain the precise factor before the integral, one also needs Proposition \ref{prop:evaluation}.
\end{proof}

\begin{thm}\label{thm:gorinsun2}
Let $b\in\N_0$, $k, N\in\N$, be such that $b + k \leq N$, and also let $\lambda\in\GT_N$.
Then
\begin{multline}\label{eqn:gorinsun2}
\frac{s_{\lambda}(1, q, \ldots, q^{b-1}, q^bx_1, \ldots, q^bx_k, q^{b+k}, \ldots, q^{N-1})}{s_{\lambda}(1, q, \ldots, q^{N-1})}
= \frac{q^{\frac{k(k-1)(2k-1)}{6}}}{V(x_k, \ldots, x_1)}\\
\times \det_{1\leq i, j\leq k}\left[
\frac{s_{\lambda}(1, q, \ldots, q^{b+j-2}, q^bx_i, q^{b+j}, \ldots, q^{N-1})}{s_{\lambda}(1, q, \ldots, q^{N-1})}
\prod_{\substack{1\leq s\leq k\\ s \neq j}}{(x_iq^{1 - s} - 1)}
\right].
\end{multline}
\end{thm}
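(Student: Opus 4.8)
The plan is to view both sides of \eqref{eqn:gorinsun2} as rational functions in the variables $q^{x_1}, \ldots, q^{x_k}$ — more precisely, to note that the left side is a symmetric Laurent polynomial in $x_1, \ldots, x_k$ (after the $q$-substitution) while the right side is a ratio involving a determinant and the Vandermonde $V(x_k, \ldots, x_1)$. The natural strategy is to reduce the multivariate identity to the single-variable formula of Theorem \ref{thm:gorinsun} by an algebraic manipulation purely at the level of Schur polynomials, with no asymptotics involved. Concretely, I would first use Lemma \ref{lem:Schurduality} (label-variable duality) to rewrite each normalized Schur polynomial appearing on both sides as a normalized Schur polynomial in which the \emph{roles of the label and the specialization are swapped}. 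Under this duality the geometric progression $1, q, \dots, q^{b-1}, q^b x_1, \dots, q^b x_k, q^{b+k}, \dots, q^{N-1}$ (for integer $x_i$) becomes a new signature, while $\lambda$ becomes the new set of variables; this turns the claimed identity into a statement comparing a Schur polynomial in the "staircase-with-$k$-insertions" signature against a $k\times k$ determinant of Schur polynomials in "staircase-with-one-insertion" signatures.

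The core combinatorial input is then a \emph{determinantal (Giambelli/Jacobi–Trudi-type) identity} expressing the Schur polynomial of the $k$-fold modified staircase as a $k\times k$ determinant whose entries are Schur polynomials of the single-fold modified staircases. After the duality substitution the relevant signatures are near-rectangular, and the needed determinant identity is essentially the classical expansion of a Schur function indexed by such a shape in terms of Schur functions indexed by one-row (or one-hook) modifications — this is the analogue that the introduction flags as "related to Giambelli and Frobenius formulas" rather than Jacobi–Trudi. I would establish it by a direct Lindström–Gessel–Viennot / Lewis Carroll (Desnanot–Jacobi) type manipulation on the bialternant (Weyl) determinant formula $s_\lambda = \det[x_i^{\lambda_j + N - j}]/V(x)$: writing the numerator determinant for the $k$-fold insertion and performing column operations / a Laplace-type expansion along the $k$ inserted columns produces exactly the $k\times k$ determinant of the single-insertion numerators, and dividing through by the Vandermonde factors yields \eqref{eqn:gorinsun2}. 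The powers of $q$ collected in the prefactor $q^{k(k-1)(2k-1)/6}$ and the product $\prod_{s\neq j}(x_i q^{1-s}-1)$ then arise mechanically from comparing the Vandermonde of the inserted nodes $q^b x_1, \dots, q^b x_k$ against the Vandermonde of $q^{b}, q^{b+1}, \dots, q^{b+k-1}$ and tracking the normalizing denominators $s_\lambda(1,q,\dots,q^{N-1})$ via Proposition \ref{prop:evaluation}.

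Finally, since both sides of \eqref{eqn:gorinsun2} are manifestly rational functions of $q^{x_1}, \dots, q^{x_k}$ with poles controlled only by the Vandermonde $V(x_k,\dots,x_1)$ — and the apparent poles at $x_i = x_j$ cancel because the determinant vanishes there (the same remark made after \eqref{def:Phik1}) — it suffices to verify the identity for $x_i$ ranging over infinitely many integer values with $x_1 > x_2 > \cdots > x_k$ chosen so that the inserted terms $q^b x_i$ slot cleanly into the staircase; the general case then follows by analytic continuation. I expect the main obstacle to be the bookkeeping in the determinant expansion: one must check that the column operations on the bialternant reorganize the $k$ inserted columns into precisely the single-insertion bialternants \emph{with the correct companion factors} $\prod_{s\neq j}(x_i q^{1-s}-1)$, and that the leftover scalar is exactly $q^{k(k-1)(2k-1)/6}/V(x_k,\dots,x_1)$ after all the normalizations cancel — this is a finite but delicate computation. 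An alternative to the bialternant approach, which may be cleaner, is to run the argument entirely on the right-hand contour-integral side: substitute the formula of Theorem \ref{thm:gorinsun} for each entry $s_\lambda(1,\dots,q^{b+j-2},q^b x_i, q^{b+j},\dots,q^{N-1})/s_\lambda(1,\dots,q^{N-1})$ into the $k\times k$ determinant, pull the $z$- and $w$-integrations outside, and recognize the resulting $k\times k$ determinant of integrands (together with the Vandermonde) as a Cauchy-type determinant that collapses to the single double-integral representation of the left-hand side; this trades the combinatorial identity for a determinant evaluation under the integral sign, which may be the more robust route.
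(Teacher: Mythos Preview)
Your proposal is correct and follows essentially the same route as the paper: specialize $x_i = q^{N+a_i-b-i}$ for integers $a_1 \ge \cdots \ge a_k > k$, apply the label--variable duality of Lemma~\ref{lem:Schurduality} so that the left side becomes (up to explicit factors) $s_{(a_1,\ldots,a_k,k^{N-b-k},0^b)}$ evaluated at $q^{\lambda_i+N-i}$, and then use the Giambelli formula for this partition --- whose Frobenius coordinates are exactly $(a_1-1,\ldots,a_k-k \mid N-b-1,\ldots,N-b-k)$ --- to obtain the $k\times k$ determinant of hook Schur polynomials, which duality converts back into the single-insertion ratios on the right; Proposition~\ref{prop:evaluation} then supplies the scalar prefactor, and analytic continuation in $x_1,\ldots,x_k$ finishes. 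The paper's identification of the Frobenius coordinates makes the Giambelli step immediate and obviates the bialternant/LGV manipulations and the contour-integral alternative you contemplate.
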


\begin{proof}
This theorem is similar to \cite[Prop. 3.1]{GS} and also to Theorem \ref{multivariate}.
We go into details for the proof of Theorem \ref{multivariate}, but here we only give a sketch.

Since both sides are rational functions on $x_1, \ldots, x_k$, it suffices to prove the identity for $x_1 = q^{N+a_1-b-1}, \ldots, x_k = q^{N+a_k-b-k}$, for any integers $a_1 \geq a_2 \geq \dots \geq a_k > k$.
But then we can apply Lemma \ref{lem:Schurduality} to the left hand side of \eqref{eqn:gorinsun2}, and conclude that it is proportional (up to a factor independent of $\lambda$) to $s_{(a_1, \ldots, a_k, k^{N-b-k}, 0^b)}(q^{\lambda_1+N-1}, \ldots, q^{\lambda_{N-1}+1}, q^{\lambda_N})$.
Since the partition $(a_1, \ldots, a_k, k^{N-b-k})$ has Frobenius coordinates $(a_1 - 1, \dots, a_k - k \mid N-b-1, \ldots, N-b-k)$, we can use Giambelli formula (see \cite[Ch. I.3, Example 9]{M}) to express $s_{(a_1, \ldots, a_k, k^{N-b-k}, 0^b)}(q^{\lambda_1+N-1}, \ldots, q^{\lambda_{N-1}+1}, q^{\lambda_N})$ as the determinant
\begin{equation}\label{detkk}
\det_{1\leq i, j\leq k}\left[
s_{(a_i - i + 1,\ 1^{N-b-j},\ 0^{b+j-1})}(q^{\lambda_1+N-1}, \ldots, q^{\lambda_{N-1}+1}, q^{\lambda_N})
\right].
\end{equation}
Multiply and divide the $(i, j)$--entry of by $s_{(a_i - i + 1, \ 1^{N-b-j}, \ 0^{b+j-1})}(1, q, \ldots, q^{N-1})$, so that we can apply Lemma \ref{lem:Schurduality} again; then the determinant \eqref{detkk} becomes
\begin{equation*}
\det_{1\leq i, j\leq k}\left[
\frac{s_{\lambda}(1, q, \ldots, q^{b+j-2}, q^{a_i + N - i}, q^{b+j}, \ldots, q^{N-1})}{s_{\lambda}(1, q, \ldots, q^{N-1})}
\ s_{(a_i - i + 1, \ 1^{N-b-j}, \ 0^{b+j-1})}(1, q, \ldots, q^{N-1})
\right].
\end{equation*}
Note that the $(i, j)$--entry here contains the factor $s_{\lambda}(1, q, \ldots, q^{b+j - 2}, q^{a_i + N - i}, q^{b+j}, \ldots, q^{N-1})$, which is precisely the Schur polynomial in the $(i, j)$--entry of the right hand side of \eqref{eqn:gorinsun2} after specializing $x_i = q^{N+a_i-b-i}$.
Finally, after some simplifications and using Proposition \ref{prop:evaluation}, we arrive at the desired \eqref{eqn:gorinsun2}.
\end{proof}

The following equivalent version of Theorem $\ref{thm:gorinsun}$ is needed for the proof of Theorem $\ref{thm:asymptoticsmultivariate}$.

\begin{lem}\label{lem:yisunrewrite}
Let $b, N\in\N$ be such that $b \in \{1, 2, \ldots, N-1\}$.
Let $x\in\C\setminus((-\infty, 0] \cup\{q^n : n\in\Z\})$ be such that $q^{N-b} < |x| < q^{-(b+1)}$.
Also let $\lambda\in\GT_N$ and denote \begin{gather*}
t_i^N := \lambda_{b+1-i}, \textrm{ for all } b + 1 - N \leq i \leq b.
\end{gather*}
Then
\begin{multline}\label{eqn:lemmaprelimit}
\frac{s_\lambda(1, q, \ldots, q^{b-1}, q^bx, q^{b+1}, \ldots, q^{N-1})}{s_\lambda(1, \ldots, q^{N -  1})} =
x(\ln{q})^2 \frac{ (q; q)_b (q; q)_{N - b - 1} }{ (qx; q)_b (q/x; q)_{N - b - 1} }\\
\times \left\{\int^{\frac{\pi\ii}{\ln{q}}}_{-\frac{\pi\ii}{\ln{q}}} \frac{d v}{2\pi\ii }
\int_{\LL} \frac{d u}{2\pi \ii } \frac{x^u}{1 - q^{v - u}}\prod_{i = 1}^b \frac{1- q^{t_i^N + i - 1 - v}}{1 - q^{t_i^N + i - 1 - u}}
\prod_{j = 1}^{N-b} \frac{1 - q^{-t_{1 - j}^N + j + v}}{1 - q^{-t_{1 - j}^N + j + u}}
- \int^{\frac{\pi\ii}{2\ln{q}}}_{-\frac{\pi\ii}{2\ln{q}}} \frac{x^v}{\ln{q}}\frac{d v}{2 \pi \ii } \right\}.
\end{multline}
The expressions $x^u$ and $x^v$ stand for $e^{u\ln{x}}$ and $e^{v\ln{x}}$, where $\ln{x}$ is taken over the branch of the logarithm with the cut along the negative real axis. The $u$-contour $\LL$ consists of the directed lines $(+\infty-\frac{\pi\ii}{2\ln{q}},-\infty-\frac{\pi\ii}{2\ln{q}})$ and $(-\infty + \frac{\pi\ii}{2\ln{q}}, +\infty + \frac{\pi\ii}{2\ln{q}})$, see Figure $\ref{fig:contours}$.
\end{lem}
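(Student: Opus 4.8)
The plan is to show that the right-hand side of \eqref{eqn:lemmaprelimit} is just a rewriting of the double contour integral in Theorem \ref{thm:gorinsun} under the exponential change of variables $z = q^u$, $w = q^v$, together with an extra residue term that accounts for the difference between the contours in the two formulas. So the proof is essentially a bookkeeping argument: start from \eqref{eqn:gorinsun} with $q^a = q^b x$, i.e.\ $a = b + \log x/\log q$ (the hypothesis $q^{N-b} < |x| < q^{-(b+1)}$ is precisely what makes the real part of this $a$ lie strictly between $0$ and $N$, so that $x$ avoids the relevant singularities and $\ln x$ is well-defined on the principal branch), and substitute.

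First I would substitute $z = q^u$, $w = q^v$ into the double integral of \eqref{eqn:gorinsun}. Under this map, $dz/z = (\ln q)\,du$ and $dw/w = (\ln q)\,dv$, so $dz\,dw = (\ln q)^2 q^{u+v}\,du\,dv$; the periodicity of $q^u$ in $u \mapsto u + 2\pi\ii/\ln q$ means each contour integral over a circle becomes an integral over a horizontal segment of length $2\pi\ii/\ln q$ (or a pair of infinite horizontal lines, once we also account for the radial direction). Concretely, the small $w$-circle around the origin maps, after passing to $v$, to a vertical segment; the $z$-contour enclosing the poles $q^{\lambda_i + N - i}$ maps to the contour $\LL$ of two horizontal lines at heights $\mp\pi\ii/(2\ln q)$. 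Rewriting $z^a w^{-b-1}/(z-w)$ in the $u,v$ variables and collecting the factor $q^{u+v}$ from the Jacobians, the rational prefactor $z - w$ in the denominator becomes proportional to $1 - q^{v-u}$, which produces the kernel $x^u/(1 - q^{v-u})$ after absorbing the $x = q^{a-b}$ from $z^a = q^{au}$. The product $\prod_i (w - q^{\lambda_i+N-i})/(z - q^{\lambda_i+N-i})$ becomes, after factoring $q$-powers out of numerator and denominator, $\prod_i (1 - q^{v - \lambda_i - N + i})/(1 - q^{u - \lambda_i - N + i})$, which under the relabeling $t_i^N = \lambda_{b+1-i}$ and the index shift $i \mapsto b+1-i$ splits into the two products $\prod_{i=1}^{b}(1 - q^{t_i^N + i - 1 - v})/(1 - q^{t_i^N + i - 1 - u})$ and $\prod_{j=1}^{N-b}(1 - q^{-t_{1-j}^N + j + v})/(1 - q^{-t_{1-j}^N + j + u})$ (writing $i - (N-i-\ldots)$ carefully; the split occurs at $i = b$ because the two $q$-Pochhammer prefactors in the scalar in front are $(q;q)_b$ and $(q;q)_{N-b-1}$). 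The scalar prefactor $\prod_{i=1}^{N-b-1}(1-q^i)/(q^{a-b}-q^i)\prod_{i=1}^{b}(1-q^i)/(1 - q^{a-b+i})$ of \eqref{eqn:gorinsun} becomes, with $q^{a-b} = x$, exactly $(q;q)_b (q;q)_{N-b-1}/\big((qx;q)_b (q/x;q)_{N-b-1}\big)$ up to the factor $x(\ln q)^2$ pulled out front (the $(\ln q)^2$ from the two Jacobians, and one power of $x$ from reindexing $z^a = z^{b} \cdot z^{a-b}$ and shifting $u$).

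The one genuine subtlety — and the step I expect to be the main obstacle — is the contour: in Theorem \ref{thm:gorinsun} the $z$- and $w$-contours are nested circles, but after the logarithmic change of variables the natural image of a circle is a single horizontal segment, whereas \eqref{eqn:lemmaprelimit} has $\LL$ (two horizontal lines) for $u$ and a segment for $v$, plus the correction term $-\int x^v/\ln q \cdot dv/(2\pi\ii)$. I would handle this by deforming the $u$-contour: the circle-image segment can be pushed to the union of the two lines of $\LL$ at heights $\mp\pi\ii/(2\ln q)$, and in doing so one crosses the diagonal pole at $u = v$ of the kernel $1/(1 - q^{v-u})$; the residue picked up there is precisely the single integral $-\int_{-\pi\ii/(2\ln q)}^{\pi\ii/(2\ln q)} x^v/\ln q \cdot dv/(2\pi\ii)$, since $\operatorname*{Res}_{u=v} x^u/(1-q^{v-u}) = x^v/\ln q$ and the products over $i,j$ all equal $1$ at $u = v$. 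Care is needed with the orientation and with the relative position of the $v$-segment and the lines of $\LL$ (the condition $q^{N-b}<|x|<q^{-(b+1)}$ and the specific heights $\pi\ii/(2\ln q)$ versus $\pi\ii/\ln q$ are what keep everything consistent), but once the pole-crossing residue is correctly identified the two sides match term by term. I would close by noting that both sides are analytic in $x$ on the stated domain, so it suffices to verify the identity for, say, $x$ in a small real interval where all deformations are transparent, and then invoke analytic continuation.
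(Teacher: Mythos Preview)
Your approach is the same as the paper's: start from Theorem~\ref{thm:gorinsun}, pass to exponential variables $z=q^u$, $w=q^v$, and deform the $u$-contour to $\LL$, picking up the residue at $u=v$ as the single-integral correction. Two points to correct. First, in Theorem~\ref{thm:gorinsun} the $w$-contour is the \emph{large} circle (it encloses the origin \emph{and} the $z$-contour), so after $w=q^v$ the $v$-segment sits at $\Re v = R$ with $R$ very negative, to the left of all the $u$-poles; your description of it as the ``small $w$-circle'' is backwards. Second, and more substantively, you never move this $v$-contour to $\Re v = 0$, which is where it sits in the statement of the lemma. This step is not automatic: shifting the $v$-contour from $\Re v=R_1$ to $\Re v=R_2$ changes the double integral by a residue contribution from $v=u$ on the portion of $\LL$ with $R_1<\Re u<R_2$, and this is exactly cancelled by the corresponding change in the single-integral term $\int x^v/\ln q\,dv/(2\pi\ii)$. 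The paper checks this cancellation explicitly; without it your formula would still carry the auxiliary parameter $R$.
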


\begin{rem}
The condition $q^{N-b} < |x| < q^{-(b+1)}$ is in place to assure that the double integral in $(\ref{eqn:lemmaprelimit})$ converges.
In fact, as $\Re u \rightarrow \infty$, $u\in\LL$, the integrand is of order $|xq^{b+1}|^{\Re u}$, whereas if $\Re u \rightarrow -\infty$, $u\in\LL$, then the integrand is of order $|x^{-1}q^{N-b}|^{-\Re u}$.
\end{rem}

\begin{proof}[Proof of Lemma $\ref{lem:yisunrewrite}$]
We massage the formula from Theorem $\ref{thm:gorinsun}$.
Let $a = b + c$, and make the change of variables $z \mapsto q^{N - b}z$, $w \mapsto q^{N - b}w$.
We obtain
\begin{equation}\label{gorinsun2}
\begin{gathered}
\frac{s_\lambda(1, q, \ldots, q^{b-1}, q^{b+c}, q^{b+1}, \ldots, q^{N-1})}{s_\lambda(1, \ldots, q^{N -  1})} =
q^c\prod_{i=1}^{N-b-1} \frac{1 - q^{i}}{1 - q^iq^{-c}}
\prod_{i=1}^{b} \frac{1 - q^i}{1 - q^iq^c}  \\
\times\oint_{\{q^{\lambda_i+b-i}\}} \frac{d z}{2\pi \ii } \oint_{|w| = q^{R}} \frac{d w}{2 \pi \ii } \cdot \frac{{ z}^{b+c} {w}^{-b - 1}}{z - w}\cdot
\prod_{i = 1}^N \frac{w- q^{\lambda_i + b - i}}{z - q^{\lambda_i + b - i}},
\end{gathered}
\end{equation}
where $R$ is chosen to be any negative real number with very large absolute value.
Next we make the change of variables $z = q^u$ and $w = q^v$, or equivalently, $u = \ln{z}/\ln{q}$ and $v = \ln{w}/\ln{q}$, where the principal branch of the logarithm is used. Then $(\ref{gorinsun2})$ becomes
\begin{equation}\label{gorinsun3}
\begin{gathered}
\frac{s_\lambda(1, q, \ldots, q^{b-1}, q^{b+c}, q^{b+1}, \ldots, q^{N-1})}{s_\lambda(1, \ldots, q^{N -  1})} =
q^c\prod_{i=1}^{N-b-1} \frac{1 - q^{i}}{1 - q^iq^{-c}}
\prod_{i=1}^{b} \frac{1 - q^i}{1 - q^iq^c}  \\
\times(\ln{q})^2\oint_{\{\lambda_i+b-i\}} \frac{d u}{2\pi \ii } \int^{R + \ii\pi/\ln{q}}_{R - \ii\pi/\ln{q}} \frac{d v}{2 \pi \ii } \cdot \frac{q^{ub - vb}q^{uc}}{1 - q^{v - u}}\cdot
\prod_{i = 1}^N \frac{q^v- q^{\lambda_i + b - i}}{q^u - q^{\lambda_i + b - i}}.
\end{gathered}
\end{equation}
In the integral formula $(\ref{gorinsun3})$, the $u$-contour is closed, counter-clockwise oriented, encloses all points $\lambda_i + b - i$, $i = 1, \ldots, N$, and is within the strip $\{z\in\C : |\Im z| < \pi \ii/\ln{q}\}$, whereas the $v$-contour is the line $[R - \pi\ii/\ln{q}, R + \pi\ii/\ln{q}]$ going downwards, and is to the left of the $u$-contour.

Let $x \in \C\setminus\left(\{0\} \cup \{q^n : n\in\Z\}\right)$ and set $c := \ln{x}/\ln{q}$, so that $q^c = x$.
Exchange the order of integration for the double integral in $(\ref{gorinsun3})$.
Also, the factors before the integral can be written in terms of $q$-Pochhammer symbols.
Then $(\ref{gorinsun3})$ is rewritten as
\begin{equation}\label{gorinsun4}
\begin{gathered}
\frac{s_\lambda(1, q, \ldots, q^{b-1}, q^bx, q^{b+1}, \ldots, q^{N-1})}{s_\lambda(1, \ldots, q^{N -  1})} =
x(\ln{q})^2\frac{ (q; q)_b(q; q)_{N - b - 1} }{ (qx; q)_b (q/x; q)_{N - b - 1} }\\
\times\int^{R + \ii\pi/\ln{q}}_{R - \ii\pi/\ln{q}} \frac{d v}{2 \pi \ii } \oint_{\{\lambda_i+b-i\}} \frac{d u}{2\pi \ii } \cdot \frac{x^u}{1 - q^{v - u}}\cdot
\prod_{i = 1}^b \frac{1- q^{t_i^N + i - 1 - v}}{1 - q^{t_i^N + i - 1 - u}}
\prod_{j = 1}^{N-b} \frac{1 - q^{-t_{1 - j}^N + j + v}}{1 - q^{-t_{1-j}^N + j + u}}.
\end{gathered}
\end{equation}

Next we modify the contours in $(\ref{gorinsun4})$.
Observe that the integrand, as a function of $u$, has poles at $u = v + 2k\pi\ii/\ln{q}$, $k\in\Z$, and at the points of the set
\[
P_N := \{t_i^N + i - 1 : i = b+1 - N, \ldots, 0, 1, \ldots, b\}= \{\lambda_i + b - i : i = 1, \ldots, N\},
\]
and their shifts by some integral multiple of $2\pi\ii / \ln{q}$.
However, the only poles enclosed by the $u$-contour are those in $P_N$.
Thus we can deform the $u$-contour to be the counter-clockwise oriented rectangle with vertices at the points $A\pm\frac{\pi\ii}{2\ln{q}}$ and $B\pm\frac{\pi\ii}{2\ln{q}}$, for any real numbers $A, B$ such that $R < A < \lambda_N + b - N < \lambda_1 + b - 1 < B$. Call such contour $\LL(A, B)$.

Let $\LL$ be the infinite contour which is described in the statement of the lemma.
We want to replace $\LL(A, B)$ by $\LL$ in the integral formula $(\ref{gorinsun4})$.
For that, we need to guarantee that the absolute value of the integrand decays sufficiently fast as $|u|\rightarrow\infty$ along $\LL$.
In fact, if $\Re u$ is very large, then $\prod_{j=1}^{N-b}{(1 - q^{-t_{1-j}^N + j + u})} \approx 1$ and the factor $x^u(1 - q^{v-u})^{-1}\prod_{i=1}^b{(1 - q^{t_i^N + i - 1 - u})^{-1}}$ decays exponentially fast as $\Re u \rightarrow \infty$ along $\LL$, because $|xq^{b+1}| < 1$.
One can obtain the same conclusion if $\Re u \rightarrow -\infty$ along $\LL$, by using instead $|x^{-1}q^{N-b}| < 1$.

When we replace $\LL(A, B)$ by $\LL$, we pick up residues at the singularities $u = v \in [R - \frac{\ii \pi}{2\ln{q}}, R + \frac{\ii \pi}{2\ln{q}}]$.
The residue is simply $x^v/\ln{q}$ because all factors in the products of $(\ref{gorinsun4})$ equal $1$ when $u = v$. We conclude
\begin{equation}\label{gorinsun5}
\begin{gathered}
\frac{s_\lambda(1, q, \ldots, q^{b-1}, q^bx, q^{b+1}, \ldots, q^{N-1})}{s_\lambda(1, \ldots, q^{N -  1})} =
x(\ln{q})^2 \frac{ (q; q)_b (q; q)_{N - b - 1} }{ (qx; q)_b (q/x; q)_{N - b - 1}}\times\\
\left\{\int^{R + \frac{\pi\ii}{\ln{q}}}_{R - \frac{\pi\ii}{\ln{q}}} \frac{d v}{2\pi\ii } \int_{\LL} \frac{d u}{2\pi \ii } \frac{x^u}{1 - q^{v - u}}\prod_{i = 1}^b \frac{1- q^{t_i^N + i - 1 - v}}{1 - q^{t_i^N + i - 1 - u}}
\prod_{j = 1}^{N-b} \frac{1 - q^{-t_{1-j}^N + j + v}}{1 - q^{-t_{1-j}^N + j + u}}
- \int^{R + \frac{\pi\ii}{2\ln{q}}}_{R - \frac{\pi\ii}{2\ln{q}}} \frac{x^v}{\ln{q}}\frac{d v}{2 \pi \ii } \right\}.
\end{gathered}
\end{equation}

Finally we need to show that $(\ref{gorinsun5})$ does not depend on $R$, and thus we can set $R = 0$, concluding the proof of the lemma. For that we subtract the second line of $(\ref{gorinsun5})$ with $R = R_1$ from the same expression with $R = R_2$ and show that the result is zero; without loss of generality, assume $R_1 < R_2$. If we do this for the second term in the second line of $(\ref{gorinsun5})$, the residue theorem yields
\begin{equation}\label{eqn:match1}
\left(- \int^{R_1 + \frac{\pi\ii}{2\ln{q}}}_{R_1 - \frac{\pi\ii}{2\ln{q}}} + \int^{R_2 + \frac{\pi\ii}{2\ln{q}}}_{R_2 - \frac{\pi\ii}{2\ln{q}}}\right) \frac{x^v}{\ln{q}}\frac{d v}{2 \pi \ii } = \left(\int^{R_2 + \frac{\pi\ii}{2\ln{q}}}_{R_1 + \frac{\pi\ii}{2\ln{q}}} - \int^{R_2 - \frac{\pi\ii}{2\ln{q}}}_{R_1 - \frac{\pi\ii}{2\ln{q}}}\right) \frac{x^v}{\ln{q}}\frac{d v}{2 \pi \ii }.
\end{equation}
For the first term in the second line of $(\ref{gorinsun5})$, the residue theorem again yields
\begin{equation}\label{eqn:subtraction1}
\begin{gathered}
\left(\int^{R_1 + \frac{\pi\ii}{\ln{q}}}_{R_1 - \frac{\pi\ii}{\ln{q}}} \frac{d v}{2\pi\ii } \int_{\LL} \frac{d u}{2\pi \ii } -  \int^{R_2 + \frac{\pi\ii}{\ln{q}}}_{R_2 - \frac{\pi\ii}{\ln{q}}} \frac{d v}{2\pi\ii } \int_{\LL} \frac{d u}{2\pi \ii }\right) {F(u, v)}\\
= \int_{\LL} \frac{d u}{2\pi \ii }\left(\int^{R_2 - \frac{\pi\ii}{\ln{q}}}_{R_1 - \frac{\pi\ii}{\ln{q}}} \frac{d v}{2\pi\ii } -  \int^{R_2 + \frac{\pi\ii}{\ln{q}}}_{R_1 + \frac{\pi\ii}{\ln{q}}} \frac{d v}{2\pi\ii }\right) {F(u, v)} + \int_{\LL}{\Res_{v = u}{F(u, v)}\frac{du}{2\pi\ii}},
\end{gathered}
\end{equation}
where
\begin{equation*}
F(u, v) := \frac{x^u}{1 - q^{v - u}}\prod_{i = 1}^b \frac{1- q^{t_i^N + i - 1 - v}}{1 - q^{t_i^N + i - 1 - u}}
\prod_{j = 1}^{N-b} \frac{1 - q^{-t_{1-j}^N + j + v}}{1 - q^{-t_{1-j}^N + j + u}}
\end{equation*}
is a function of $u, v$.
Observe that $F(u, v)$ depends on $v$ via the variable $q^v$. If $\Im v = \pm \pi\ii/\ln{q}$, we have $q^v = -q^{\Re v}$, thus the integral of $F(u, v)$ with respect to $v$ along the contour $[R_1-\frac{\pi\ii}{\ln{q}}, R_2-\frac{\pi\ii}{\ln{q}}]$ is equal to its integral along the contour $[R_1+\frac{\pi\ii}{\ln{q}}, R_2+\frac{\pi\ii}{\ln{q}}]$; consequently the first term in the second line of $(\ref{eqn:subtraction1})$ vanishes.
On the other hand, it is clear that $\Res_{v = u}{F(u, v)} = -x^u/\ln{q}$ if $R_1 < \Re u < R_2$ and $\Res_{v=u}{F(u, v)} = 0$ otherwise.
Therefore the second line of $(\ref{eqn:subtraction1})$ equals
\begin{equation}\label{eqn:match2}
\int_{u\in\LL\cap\{R_1 < \Re z < R_2\}}{(-x^u/\ln{q})\frac{du}{2\pi\ii}} = \left( - \int_{R_1 + \frac{\pi\ii}{2\ln{q}}}^{R_2 + \frac{\pi\ii}{2\ln{q}}} +  \int_{R_1 - \frac{\pi\ii}{2\ln{q}}}^{R_2 - \frac{\pi\ii}{2\ln{q}}}\right) \frac{x^u}{\ln{q}}\frac{du}{2\pi\ii}
\end{equation}
and since $(\ref{eqn:match2})$ cancels exactly $(\ref{eqn:match1})$, we deduce the claim that $(\ref{gorinsun5})$ does not depend on $R$.
\end{proof}

We prove Theorem $\ref{thm:asymptoticsmultivariate}$ and Lemma $\ref{lem:analyticA}$ simultaneously.

\begin{proof}[Proofs of Theorem $\ref{thm:asymptoticsmultivariate}$ and Lemma $\ref{lem:analyticA}$] We proceed in several steps.

\medskip

\textit{Step 1.}
We prove the first part of Lemma $\ref{lem:analyticA}$, which says that the formula $(\ref{eqn:formulalimit})$ defines an analytic function on $\C\setminus ((-\infty, 0] \cup \{q^n : n\in\Z\})$. The first line of $(\ref{eqn:formulalimit})$ is clearly analytic on $\C \setminus \{q^n : n\in\Z\}$.
As for the second line of $(\ref{eqn:formulalimit})$, the only difficulty is to argue that the double integral is an analytic function of $x$ on $\C\setminus (-\infty, 0]$.
Indeed, the factors $\prod_{i=1}^{\infty}(1 - q^{t_i + i - 1 - u})^{-1}\prod_{j=1}^{\infty}(1 - q^{j+u - t_{1-j}})^{-1}$ ensure that the integrand is exponentially small, as $|\Re u| \rightarrow \infty$, $u\in\LL$, and in particular the integral converges uniformly for $x$ in compact subsets of $\C\setminus (-\infty, 0]$. The analyticity of $\Phi^{\bft}(x; q)$ follows.

\medskip

\textit{Step 2.}
The general case $k > 1$ of Theorem $\ref{thm:asymptoticsmultivariate}$ follows from the $k = 1$ case and the multivariate formula in Theorem $\ref{thm:gorinsun2}$.
We are left to prove the second part of Lemma $\ref{lem:analyticA}$ (about the analytic continuation of $\Phi^{\bft}(x; q)$), Theorem $\ref{thm:asymptoticsmultivariate}$ for $k = 1$, and the last statement of Theorem $\ref{thm:asymptoticsmultivariate}$ (the converse statement).

\medskip

Steps 3--4 below prove the limit $(\ref{mult:limit})$ for $k = 1$, uniformly for $x$ in compact subsets of $\C\setminus (-\infty, 0]$. Step 5 shows that $\Phi^{\bft}(x; q)$ admits an analytic continuation to $\C^*$ and that the limit $(\ref{mult:limit})$ for $k = 1$ continues to hold uniformly for $x$ in compact subsets of $\C^*$.
Finally, step 6 shows the converse statement of Theorem $\ref{thm:asymptoticsmultivariate}$ and finishes the proof.

\medskip

\textit{Step 3.}
We begin with formula $(\ref{eqn:lemmaprelimit})$ from Lemma $\ref{lem:yisunrewrite}$, which is valid for $x$ in the domain $\C\setminus((-\infty, 0]\cup\{q^n : n\in\Z\})$ and $N$ large enough so that $|xq^{b(N)+1}|, |x^{-1}q^{N - b(N)}| < 1$.
We show that the prefactor of $(\ref{eqn:lemmaprelimit})$ converges to the prefactor of $(\ref{eqn:formulalimit})$; we also show that the integrand (of the first integral) of $(\ref{eqn:lemmaprelimit})$ converges pointwise to the integrand (of the first integral) of $(\ref{eqn:formulalimit})$.
In fact, the statement about the prefactors is equivalent to the limit
\begin{equation*}
\lim_{N\rightarrow\infty}{x(\ln{q})^2 \frac{ (q; q)_{b(N)} (q; q)_{N - b(N) - 1} }{ (qx; q)_{b(N)} (q/x; q)_{N - b(N) - 1}}} =
x(\ln{q})^2 \frac{(q; q)_{\infty}^2 }{(qx, q/x; q)_{\infty}},
\end{equation*}
whereas the statement about the integrands is equivalent to the pointwise limit
\begin{equation}\label{eqn:limitintegrand}
\lim_{N\rightarrow\infty}{\prod_{i = 1}^{b(N)} \frac{1- q^{t_i^N + i - 1 - v}}{1 - q^{t_i^N + i - 1 - u}}
\prod_{j = 1}^{N - b(N)} \frac{1 - q^{-t_{1-j}^N + j + v}}{1 - q^{-t_{1-j}^N + j + u}}}
= \prod_{i = 1}^{\infty} \frac{1- q^{t_i + i - 1 - v}}{1 - q^{t_i + i - 1 - u}}
\prod_{j = 1}^{\infty} \frac{1 - q^{-t_{1-j} + j + v}}{1 - q^{-t_{1-j} + j + u}}.
\end{equation}
Both are obvious: the first comes from the assumption $\lim_{N\rightarrow\infty}{b(N)} = \lim_{N\rightarrow\infty}{(N - b(N))} = +\infty$, and the second from the limits $\lim_{N \rightarrow \infty} t_i^N = \lim_{N \rightarrow \infty} \lambda(N)_{b(N) + 1 - i} = t_i$.

\medskip

\textit{Step 4.} From the claim about the integrands in the previous step, we want to conclude that
\begin{equation}\label{eqn:limitintegrals}
\begin{gathered}
\lim_{N\rightarrow\infty}{\int^{\frac{\pi\ii}{\ln{q}}}_{-\frac{\pi\ii}{\ln{q}}} \frac{d v}{2\pi\ii } \int_{\LL} \frac{d u}{2\pi \ii } \frac{x^u}{(1 - q^{v - u})}\prod_{i = 1}^{b(N)} \frac{1- q^{t_i^N + i - 1 - v}}{1 - q^{t_i^N + i - 1 - u}}
\prod_{j = 1}^{N - b(N)} \frac{1 - q^{-t_{1 - j}^N + j + v}}{1 - q^{-t_{1 - j}^N + j + u}}}\\
= \int^{\frac{\pi\ii}{\ln{q}}}_{-\frac{\pi\ii}{\ln{q}}} \frac{d v}{2\pi\ii } \int_{\LL} \frac{d u}{2\pi \ii } \frac{x^u}{(1 - q^{v - u})}
\prod_{i = 1}^{\infty} \frac{1- q^{t_i + i - 1 - v}}{1 - q^{t_i + i - 1 - u}}
\prod_{j = 1}^{\infty} \frac{1 - q^{-t_{1 - j} + j + v}}{1 - q^{-t_{1-j} + j + u}},
\end{gathered}
\end{equation}
uniformly for $x$ on compact subsets of $\C \setminus (-\infty, 0]$.
We need the dominated convergence theorem and bounds of the prelimit integrands.
The product of factors of the form $1 - q^{t_i^N + i - 1 - v}$ and $1 - q^{t_j^N + j + v}$ has a modulus that can be upper bounded by the constant $\prod_{i = 1}^{\infty} (1 + q^{t_1 + i - 1})(1 + q^{-t_0 + i})$, at least when $N$ is large enough so that $t_1^N = t_1$ and $t_0^N = t_0$.
Because $u \in \LL$, we have $|x^u| = |x|^{\Re u}e^{\pm \pi \arg(x)/2\ln{q}} \leq |x|^{\Re u}e^{-\pi^2/(4\ln{q})}$.

Because $q^{v - u}$ is purely imaginary for $u\in\LL$ and $v\in [-\pi\ii/\ln{q}, \pi\ii/\ln{q}]$, then $|1/(1 - q^{v - u})| \leq 1$.
Similarly, because $q^{u}$ is purely imaginary for $u\in\LL$, the products of factors $1/(1 - q^{-t_{1 - j}^N + j + u})$ and the factors $1/(1 - q^{-t_{1 - j}^N + j + u})$ have moduli that are bounded above by $1$.
Moreover, if $u\in\LL$ and $\Re u \rightarrow +\infty$, then the product of factors $1/(1 - q^{t_i^N + i - 1 - u})$ decays exponentially and overcomes the factor $|x|^{\Re u}$ of the previous paragraph, whereas if $u\in\LL$ and $\Re u \rightarrow -\infty$, then the product of factors $1/(1 - q^{t_{1 - j}^N + j + u})$ is the one that decays exponentially and overcomes the factor $|x|^{\Re u}$.

All the bounds above show that the integrand in the left side of $(\ref{eqn:limitintegrals})$ converge uniformly (on $u$ and $x$) to the integrand in the right side of $(\ref{eqn:limitintegrals})$. The limit $(\ref{eqn:limitintegrals})$ follows, and the limit $(\ref{mult:limit})$ of Theorem $\ref{thm:asymptoticsmultivariate}$ has been shown to hold for $k = 1$ and for $x$ belonging to compact subsets of $\C \setminus ((-\infty, 0] \cup \{q^n : n\in\Z\})$.

\medskip

\textit{Step 5.}
We show that $\Phi^{\bft}(x; q)$ admits an analytic continuation to $\C^*$ and that $(\ref{mult:limit})$ continues to hold uniformly on compact subsets of $\C^*$.
It will be convenient to use the notation
\begin{equation*}
\Phi_{\lambda}(x; q, b, N) := \frac{s_{\lambda}(1, q, \ldots, q^{b-1}, q^{b}x, q^{b+1}, \ldots, q^{N-1})}{s_{\lambda}(1, q, \ldots, q^{N-1})}.
\end{equation*}
Let $R > 1$ be an arbitrary large positive number such that $R\notin\{q^n : n\in\Z\}$. For any $x\in\C^*$ with $R^{-1} \leq |x| \leq R$, we have $|x|^n < R^{n} + R^{-n}$ for any $n\in\Z$.
By the triangle inequality and the positivity of branching coefficients of the Schur polynomials, we deduce
\begin{equation*}
\left| \Phi_{\lambda(N)}(x; q, b(N), N) \right| \leq \Phi_{\lambda(N)}(|x|; q, b(N), N) < \Phi_{\lambda(N)}(R; q, b(N), N) + \Phi_{\lambda(N)}(R^{-1}; q, b(N), N).
\end{equation*}
From our choice of $R$ and steps 1--2, the limits $\lim_{N\rightarrow\infty}{\Phi_{\lambda(N)}(R^{\pm 1}; q, b(N), N)}$ exist and thus, both
\[
\{\Phi_{\lambda(N)}(R; q, b(N), N)\}_{N \geq 1},\ \{\Phi_{\lambda(N)}(R^{-1}; q, b(N), N)\}_{N \geq 1},
\]
are bounded sequences.
Then the sequence of functions $\{\Phi_{\lambda(N)}(z; q, b(N), N)\}_{N \geq 1}$ is uniformly bounded on $\{z\in\C : R^{-1} < |z| < R\}$.
Montel's theorem implies that any subsequence of $\{\Phi_{\lambda(N)}(x; q, b(N), N)\}_{N \geq 1}$ has a subsequential limit, and the convergence is uniform on compact subsets of $\{ z\in\C : R^{-1} < |z| < R \}$.
Since $R>1$ was arbitrarily large, each subsequence of $\{\Phi_{\lambda(N)}(x; q, b(N), N)\}_{N \geq 1}$ has subsequential limits, uniformly on compact subsets of $\C^*$; say one of the limiting functions is the analytic function $\widetilde{\Phi}(x; q)$ on $\C^*$.
But we already proved $(\ref{eqn:formulalimit})$ in a restricted domain so that $\widetilde{\Phi}(x; q) = \Phi(x; q)$, for $x\in\C\setminus((-\infty, 0]\cup\{q^n : n\in\Z\})$ and such analytic continuation $\widetilde{\Phi}(x; q)$ is unique.

\medskip

\textit{Step 6.}
Assume that $\{\lambda(N)\in\GT_N\}$ is a sequence of signatures such that the limit in the left hand side of $(\ref{mult:limit})$ exists for all $k\in\N$.
The prelimit expression is a symmetric Laurent polynomial in $x_1, \ldots, x_k$, therefore it can be written as a linear combination of Schur polynomials $s_{\mu}(x_1, \ldots, x_k)$, $\mu\in\GT_k$.
Write the expansion as
\begin{equation}\label{eqn:expansionA}
\begin{gathered}
\frac{s_{\lambda(N)}(1, q, \ldots, q^{b(N)-1}, q^{b(N)}x_1, \ldots, q^{b(N)}x_k, q^{b(N)+k}, \ldots, q^{N-1})}
{s_{\lambda(N)}(1, q, \ldots, q^{N-2}, q^{N-1})}\\
= \sum_{\mu\in\GT_k} \Lambda^N_k(\lambda(N), \mu)
\frac{s_{\mu}(x_1, \ldots, x_k)}{s_{\mu}(1, q, \ldots, q^{k-1})}.
\end{gathered}
\end{equation}

Theorem $\ref{prop:branchingrule}$ shows that the coefficients $\Lambda^N_k(\lambda(N), \mu)$ of the expansion are nonnegative.
Moreover, by plugging $x_1 = 1, x_2 = q, \dots, x_k = q^{k-1}$ into $(\ref{eqn:expansionA})$, we find that $\sum_{\mu\in\GT_k}\Lambda^N_k(\lambda(N), \mu) = 1$, i.e., $\Lambda^N_k(\lambda(N), \cdot)$ is a probability measure on $\GT_k$.
The proof of Proposition $\ref{prop:construction}$ below shows that our assumption (existence of the limit in the left hand side of $(\ref{mult:limit})$ on compact subsets of $(\C^*)^k$) implies the weak convergence of the sequence $\Lambda^N_k(\lambda(N), \cdot)$, as $N$ goes to infinity, to a probability measure $M_k$ on $\GT_k$.
The proof of Theorem $\ref{thm:minimalschur}$ below then shows that the limits
\begin{equation}\label{eqn:limitLambdas}
\lim_{N \rightarrow \infty}{ \lambda(N)_{b(N)+m} }
\end{equation}
exist for all $m\in\Z$.
If we denote by $t_{1 - m}$ the limit $(\ref{eqn:limitLambdas})$, then $\dots \leq t_{-1} \leq t_0 \leq t_{1} \leq \cdots$.
In other words, $\bft := (\dots, t_{-1}, t_0, t_1, \dots)$ belongs to $\X$ and $\{\lambda(N)\in\GT_N\}_{N \geq 1}$ $\bfb$-stabilizes to $\bft$.
We can now apply the first part of the proof (Steps 1--5) to show that the limit $(\ref{mult:limit})$ holds uniformly on compact subsets of $(\C^*)^k$ and we are done.
\end{proof}

\subsection{Type B-C-D characters: statements of results}

\label{Section_Type_BCD_results}

\begin{df}\label{stabilizationsymplectic}
Let $\Y$ be the set of nondecreasing, nonnegative integer sequences, i.e.,
$$\Y := \{\bfy = (y_1, y_2, y_3, \ldots) \in \N_0^{\infty} \mid 0 \leq y_1 \leq y_2 \leq \cdots \}.$$
We say that a sequence $\{\lambda(N) \in \GTp_N\}_{N \geq 1}$ \textit{stabilizes to $\bfy\in\Y$} if
\begin{equation*}
\lim_{N\rightarrow\infty}{\lambda(N)_{N+1-i}} = y_i, \textrm{ for all }i \geq 1.
\end{equation*}
\end{df}

To state our results, we need to define some functions parametrized by elements $\bfy\in\Y$, by a type $G\in\{B, C, D\}$, and by a nonnegative integer $m\in\N_0$:
\begin{multline}\label{typeB}
\Phi_m^{\bfy, B}(x; q) :=
(\ln{q})^2 \cdot \frac{q^{\half(m + \half)} - q^{-\half(m+\half)}}{x^{\half} - x^{-\half}}
\frac{(q^{m+1}, q^{-m}; q)_m (q, q^{2m+2}; q)_{\infty} }{(q^{\half}x, q^{\half}/x; q)_m (q^{m + \frac{3}{2}}x, q^{m + \frac{3}{2}}/x; q)_{\infty}}\\
\times \left\{\int^{\frac{\pi\ii}{\ln{q}}}_{-\frac{\pi\ii}{\ln{q}}} \frac{d v}{2\pi\ii }
\int_{\LL} \frac{d u}{2\pi \ii }
\ x^u
\frac{(q^{\frac{u}{2} - (m+1)v} - q^{-\frac{u}{2} + (m+1)v})(q^{\frac{v}{2}} - q^{-\frac{v}{2}})}
{(q^{\frac{v-u}{2}} - q^{\frac{u - v}{2}})(q^{\frac{u}{2}} - q^{-\frac{u}{2}})}
\prod_{i = 1}^{\infty} \frac{(1- q^{y_i - v})(1 - q^{y_i + v})}{(1 - q^{y_i - u})(1 - q^{y_i + u})} \right.\\
\left. - \frac{1}{\ln{q}} \int^{\frac{\pi\ii}{2\ln{q}}}_{-\frac{\pi\ii}{2\ln{q}}}
x^v \left(q^{(m + \half)v} - q^{-(m + \half)v}\right) \frac{d v}{2 \pi \ii } \right\};
\end{multline}

\begin{multline}\label{typeC}
\Phi_m^{\bfy, C}(x; q) :=
(\ln{q})^2 \cdot \frac{q^{m+1} - q^{-(m+1)}}{x - x^{-1}}
\frac{(q^{m+2}, q^{-m}; q)_m (q, q^{2m+3}; q)_{\infty} }{(qx, q/x; q)_m (q^{m+2}x, q^{m+2}/x; q)_{\infty}}\\
\times \left\{\int^{\frac{\pi\ii}{\ln{q}}}_{-\frac{\pi\ii}{\ln{q}}} \frac{d v}{2\pi\ii }
\int_{\LL} \frac{d u}{2\pi \ii }
\ x^u\frac{q^{(m+1)v} - q^{-(m+1)v}}{q^{u-v} - 1}
\prod_{i = 1}^{\infty} \frac{(1- q^{y_i - v})(1 - q^{y_i + v})}{(1 - q^{y_i - u})(1 - q^{y_i + u})} \right.\\
\left. - \frac{1}{\ln{q}} \int^{\frac{\pi\ii}{2\ln{q}}}_{-\frac{\pi\ii}{2\ln{q}}}
x^v \left( q^{(m+1)v} - q^{-(m+1)v} \right) \frac{d v}{2 \pi \ii } \right\};
\end{multline}

\begin{multline}\label{typeD}
\Phi_m^{\bfy, D}(x; q) :=
\frac{\left( 2 - \mathbf{1}_{\{m=0\}} \right) (\ln{q})^2}{2} \frac{ (q^m, q^{-m}; q)_m (q, q^{2m+1}; q)_{\infty} }{(x, 1/x; q)_m (q^{m+1} x, q^{m+1}/x; q)_{\infty}}\\
\times \left\{\int^{\frac{\pi\ii}{\ln{q}}}_{-\frac{\pi\ii}{\ln{q}}} \frac{d v}{2\pi\ii }
\int_{\LL} \frac{d u}{2\pi \ii }
\ x^u\frac{q^{\frac{u}{2} - (m + \half)v} + q^{-\frac{u}{2} + (m + \half)v}}{q^{\frac{v-u}{2}} - q^{\frac{u-v}{2}}}
\prod_{i = 1}^{\infty} \frac{(1- q^{y_i - v})(1 - q^{y_i + v})}{(1 - q^{y_i - u})(1 - q^{y_i + u})} \right. \\
\left. + \frac{1}{\ln{q}} \int^{\frac{\pi\ii}{2\ln{q}}}_{-\frac{\pi\ii}{2\ln{q}}}
x^v \left( q^{mv} + q^{-mv} \right) \frac{d v}{2 \pi \ii } \right\}.
\end{multline}
In the expressions above, $x^u$, $x^v$ and $x^{\half}$ (for $G = B$) stand for $e^{u\ln{x}}$, $e^{v\ln{x}}$ and $e^{\ln{x}/2}$, where $\ln{x}$ is defined over the branch of the logarithm with the cut along the negative real axis.
The $u$-contour $\LL$ consists of the directed lines $(+\infty-\frac{\pi\ii}{2\ln{q}},-\infty-\frac{\pi\ii}{2\ln{q}})$ and $(-\infty + \frac{\pi\ii}{2\ln{q}}, +\infty + \frac{\pi\ii}{2\ln{q}})$, see Figure $\ref{fig:contours}$.

\begin{lem}\label{eqn:analyticBCD}
The functions $\Phi^{\bfy, B}_m(x; q)$, $m\in\N_0$, defined by $(\ref{typeB})$, are analytic on $\C \setminus ((-\infty, 0] \cup \{ 1 \} \cup \{ q^{n+\frac{1}{2}} : n\in\Z \})$.
The functions $\Phi^{\bfy, G}_m(x; q)$, $m\in\N_0$, $G\in\{C, D\}$, defined by $(\ref{typeC})$, $(\ref{typeD})$, are analytic on $\C \setminus ((-\infty, 0] \cup \{ q^n : n\in\Z \})$.

Moreover, all of these functions admit analytic continuations to $\C^*$, and satisfy $\Phi_m^{\bfy, G}(x; q) = \Phi_m^{\bfy, G}(1/x; q)$.
\end{lem}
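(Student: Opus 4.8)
The proof splits into two parts, following Steps~1 and~5 in the proof of Lemma~\ref{lem:analyticA}. Part~(i): the formulas $(\ref{typeB})$--$(\ref{typeD})$ define analytic functions on the punctured domains asserted in the statement. Part~(ii): each $\Phi_m^{\bfy,G}(\cdot\,;q)$ extends analytically to $\C^*$ and is invariant under $x\mapsto1/x$; this is obtained by realizing it as a locally uniform limit of normalized $B$--$C$--$D$ characters and applying Montel's theorem together with the positivity of the branching coefficients.

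For part~(i), write each of $(\ref{typeB})$, $(\ref{typeC})$, $(\ref{typeD})$ as its prefactor times the braced double--integral expression. The prefactor is an explicit rational function of $x$ --- carrying the factor $(x^{\half}-x^{-\half})^{-1}$ for type $B$, $(x-x^{-1})^{-1}$ for type $C$, and no such factor for type $D$ --- times finitely many $q$--Pochhammer symbols in $q^{\half}x^{\pm1}$ (resp.\ $qx^{\pm1}$, $x^{\pm1}$) and one infinite $q$--Pochhammer symbol in $q^{m+\frac32}x^{\pm1}$ (resp.\ $q^{m+2}x^{\pm1}$, $q^{m+1}x^{\pm1}$); reading off the zeros of these denominators shows the prefactor is meromorphic on $\C^*$ with poles exactly at the stated excluded points (the point $x=1$ being excluded only for type $B$, because of the factor $x^{\half}-x^{-\half}$, while for types $C,D$ all poles lie in $\{q^n:n\in\Z\}$). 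The braced expression is analytic on $\C\setminus(-\infty,0]$: the cut comes from $x^u$, $x^v$ and, in type $B$, $x^{\half}$, defined via the branch of the logarithm cut along $(-\infty,0]$; and the double integral converges locally uniformly in $x$ exactly as in Step~1, since the infinite products $\prod_i\frac{(1-q^{y_i-v})(1-q^{y_i+v})}{(1-q^{y_i-u})(1-q^{y_i+u})}$, together with the rational factors in $q^u,q^v$ in each integrand (which stay bounded or decay for $u\in\LL$, $q^u$ being purely imaginary there), force the integrand to decay super--exponentially as $\Re u\to\pm\infty$ along $\LL$. Combining these two analyticity statements gives the stated domain.

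For part~(ii), I would first establish the $B$--$C$--$D$ analogue of Lemma~\ref{lem:yisunrewrite}, a double--contour--integral formula for the prelimit $\chi^G_{\lambda(N)}(q^{n_1},\dots,q^{n_N})/\chi^G_{\lambda(N)}(q^{N-1+\epsilon},\dots,q^{\epsilon})$ (here the subscript $m$ corresponds, via the hook formulas of Proposition~\ref{prop:hooksymplectic} and the duality of Lemma~\ref{lem:dualitysymplectic}, to evaluating a hook character, equivalently to placing one variable of $\chi^G_{\lambda(N)}$ at $x$ and the remaining $N-1$ in a $q$--geometric progression with $m$ entries shifted by one power of $q$). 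Passing to the limit along a sequence $\{\lambda(N)\}$ stabilizing to $\bfy$, pointwise convergence of the integrands plus dominated convergence show, as in Steps~3--4, that this prelimit converges to $\Phi_m^{\bfy,G}(x;q)$ uniformly on compacts of the restricted domain of part~(i). Now by Proposition~\ref{prop:branchsymplectic} the branching coefficients are nonnegative ($\tau^G\ge0$, and $\chi^G_{\lambda/\mu}(u)$ has nonnegative coefficients and is symmetric under $u\leftrightarrow1/u$); iterating the branching down to $N=1$ shows each prelimit function $\psi_N$ is a $W_N$--invariant Laurent polynomial with nonnegative coefficients. Hence, for $R^{-1}\le|x|\le R$, the triangle inequality and $|x|^n\le R^n+R^{-n}$ yield $|\psi_N(x)|\le\psi_N(|x|)\le\psi_N(R)+\psi_N(R^{-1})$; choosing $R>1$ outside the excluded set, the sequences $\{\psi_N(R)\}$ and $\{\psi_N(R^{-1})\}$ converge by the previous sentence, hence are bounded, so $\{\psi_N\}$ is uniformly bounded on each annulus $\{R^{-1}<|x|<R\}$. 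Montel's theorem then gives subsequential limits, analytic and uniformly convergent on compacts of $\C^*$, and uniqueness of analytic continuation identifies every one of them with $\Phi_m^{\bfy,G}$, proving the continuation. The symmetry $\Phi_m^{\bfy,G}(x;q)=\Phi_m^{\bfy,G}(1/x;q)$ is then immediate, since each $\psi_N$ is invariant under $x\mapsto x^{-1}$ by $W_N$--invariance of $\chi^G_{\lambda(N)}$ and this passes to the limit; alternatively it can be verified on $(\ref{typeB})$--$(\ref{typeD})$ directly, by substituting $(u,v)\mapsto(-u,-v)$ --- which maps $\LL$ onto itself, reverses the $v$--contour and the single--integral (residue) term, and makes the braced expression pick up the same sign that the prefactor acquires under $x\mapsto1/x$ (a sign for types $B,C$, none for type $D$).

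The step I expect to be the main obstacle is not the soft analysis of part~(ii) but its input: the $B$--$C$--$D$ prelimit double--contour formula. Deriving it requires expanding the hook determinants of Proposition~\ref{prop:hooksymplectic} through the generating functions for $h_m,e_m$, using the $q$--geometric specializations of Proposition~\ref{prop:evalsymplectic} to pin down the precise prefactors in $(\ref{typeB})$--$(\ref{typeD})$, and deforming the inner ($u$--)contour out to $\LL$ while carefully collecting the residue at $u=v$ (which produces the single--integral term in each formula). Compared with type $A$ this is heavier bookkeeping --- the three--fold case split, and the extra rational factors carrying $m$ --- but none of it affects the decay estimates along $\LL$ on which the analyticity in part~(i) rests.
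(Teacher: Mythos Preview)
Your proposal is correct and follows essentially the same approach as the paper: the first part of the lemma is handled by the direct analysis of the prefactor together with the decay of the integrand along $\LL$, while the analytic continuation and the $x\mapsto 1/x$ symmetry are obtained exactly as in the paper's proof of Theorems~\ref{thm:singlevariablesymplectic}--\ref{thm:multivariablesymplectic}, via Lemma~\ref{lem:symporth}, the branching positivity of Proposition~\ref{prop:branchsymplectic}, the bound $|\psi_N(x)|\le\psi_N(R)+\psi_N(R^{-1})$, and Montel's theorem. One cosmetic remark: a single application of the branching rule already exhibits the prelimit as a Laurent polynomial in $x$ with nonnegative coefficients (and $x\leftrightarrow x^{-1}$ symmetry), so there is no need to iterate down to $N=1$.
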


\begin{thm}\label{thm:singlevariablesymplectic}
Let $\{\lambda(N)\in\GTp_N\}_{N \geq 1}$ be a sequence that stabilizes to some $\bfy\in\Y$, and let $m\in\N_0$.
Then
\begin{equation}\label{eqn:asymptotics2}
\lim_{N \rightarrow \infty}
\frac{\chi^G_{\lambda(N)}(q^{\epsilon}, \ldots, q^{m-1+\epsilon}, x, q^{m+1+\epsilon}, \ldots, q^{N-1+\epsilon})}
{\chi^G_{\lambda(N)}(q^{\epsilon}, q^{1+\epsilon}, \ldots, q^{N-1+\epsilon})}
 = \Phi_m^{\bfy, G}(x; q)
\end{equation}
holds uniformly on compact subsets of $\C^*$.
\end{thm}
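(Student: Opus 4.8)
The strategy mirrors the type A argument (Lemma~\ref{lem:yisunrewrite} and the proof of Theorem~\ref{thm:asymptoticsmultivariate}), but now the ``$k=1$'' building block is tied to hook characters of $SO(2N+1)$, $Sp(N)$, $SO(2N)$ rather than to $h_k$. First I would establish the exact prelimit analogue of Lemma~\ref{lem:yisunrewrite}: a double-contour integral formula for the normalized character
\[
\frac{\chi^G_{\lambda(N)}(q^{\epsilon}, \ldots, q^{m-1+\epsilon}, x, q^{m+1+\epsilon}, \ldots, q^{N-1+\epsilon})}{\chi^G_{\lambda(N)}(q^{\epsilon}, q^{1+\epsilon}, \ldots, q^{N-1+\epsilon})},
\]
valid for $x$ in a $q$-growing annulus whose width tends to the whole of $\C^*$ as $N\to\infty$. (This is the content of the yet-unstated ``Theorem~\ref{thm:bctype}'' referenced in the proof of Theorem~\ref{thm:gorinsun}.) The derivation would go: specialize $x = q^{N+c}$ for an integer $c$ large enough, apply the label--variable duality of Lemma~\ref{lem:dualitysymplectic} to turn the left side into a normalized $\chi^G$ evaluated at the geometric points $q^{l_i}$ but now with a \emph{hook} signature $(a+1, 1^b, 0^{\cdots})$, use Proposition~\ref{prop:hooksymplectic} to expand that hook character as an alternating sum of products $H^G_\bullet E^G_\bullet$, plug in the known generating functions for $h_m$ and $e_m$ evaluated at $\{1,x_1,x_1^{-1},\dots\}$ (these are $\prod(1-zx_j)^{-1}(1-z/x_j)^{-1}$ type products, with an extra factor $(1-z)^{-1}$ or $(1-z)$ in type $B$), swap sum and integral, and collect. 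The $q$-geometric specialization Proposition~\ref{prop:evalsymplectic} supplies the precise prefactor, and rationality in $q^c$ (equivalently in $x$) upgrades the identity from integer $c$ to all $x$ in the annulus.

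Having the prelimit formula, I would then run the deformation-of-contours step exactly as in the proof of Lemma~\ref{lem:yisunrewrite}: the $u$-contour starts as a small loop around the finite pole set $\{y_i^N + \cdots\}$ (here $y_i^N := \lambda(N)_{N+1-i}$), and one pushes it out to the two-line contour $\LL$, picking up a residue at $u=v$ that produces the second, single-integral term in each of \eqref{typeB}, \eqref{typeC}, \eqref{typeD}. The decay estimates needed to justify the deformation are the same kind: as $\Re u\to\pm\infty$ along $\LL$, the products $\prod(1-q^{y_i^N\pm u})^{-1}$ force exponential decay that beats $|x|^{\Re u}$ provided $x$ lies in the relevant annulus (the asymmetry $y_i-u$ vs.\ $y_i+u$ in the symmetric setting makes \emph{both} directions decay for $x$ in a bounded annulus, which is why no upper constraint on $N$ is needed beyond $N\ge m+1$). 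One must also check the $R$-independence of the formula, i.e.\ that shifting the $v$-contour $[R-\pi\ii/\ln q, R+\pi\ii/\ln q]$ changes nothing — the same telescoping-of-residues argument from the proof of Lemma~\ref{lem:yisunrewrite} applies, using that the integrand is $\frac{\pi\ii}{\ln q}$-antiperiodic in $v$ (because it depends on $v$ through $q^v$ and through odd/even combinations $q^{\bullet v}\pm q^{-\bullet v}$).

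Next comes the passage to the limit $N\to\infty$. Since $\{\lambda(N)\}$ stabilizes to $\bfy$, for each fixed $i$ one has $y_i^N = \lambda(N)_{N+1-i}\to y_i$, so the finite products over $i\le N$ converge termwise to the infinite products $\prod_{i=1}^\infty\frac{(1-q^{y_i-v})(1-q^{y_i+v})}{(1-q^{y_i-u})(1-q^{y_i+u})}$, and the $q$-Pochhammer prefactors converge to their $(\,\cdot\,;q)_\infty$ counterparts. Dominated convergence then upgrades pointwise convergence of the integrands to convergence of the integrals: on $\LL\times[-\pi\ii/\ln q,\pi\ii/\ln q]$ the numerator products are uniformly bounded (using $|1-q^{y_i^N\pm v}|\le 1+q^{y_i^N}\le$ summable-in-$i$ when multiplied up, once $N$ is large enough that $y_1^N=y_1$ etc.), the factor $1/|q^{v-u}-1|$-type denominators are bounded by a constant because $q^{v-u}$ stays on a fixed circle, and $|x^u|$ times the decaying $u$-products is dominated by an integrable envelope. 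This yields \eqref{eqn:asymptotics2} for $x$ in any fixed annulus avoiding the branch cut and the singular points. Finally, to get uniform convergence on \emph{all} compact subsets of $\C^*$ (and simultaneously Lemma~\ref{eqn:analyticBCD}, the analytic continuation across the apparent poles), I would use the same Montel-theorem/normal-families argument as in Step~5 of the type~A proof: by the triangle inequality and positivity of the branching coefficients in Proposition~\ref{prop:branchsymplectic}, $|\Phi_{\lambda(N)}(x)|$ is bounded on $\{R^{-1}\le|x|\le R\}$ by $\Phi_{\lambda(N)}(R)+\Phi_{\lambda(N)}(R^{-1})$ (using also $\Phi_\lambda(x)=\Phi_\lambda(1/x)$), both of which converge; hence the family is normal, every subsequential limit is analytic on $\C^*$ and agrees with $\Phi_m^{\bfy,G}$ on the already-covered annulus, forcing a unique limit.

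The main obstacle is the first step: producing the correct prelimit double-integral formula for the B/C/D hook characters with all constants in place. The type~$B$ case is the delicate one — the extra fixed eigenvalue $1$ of $SO(2N+1)$ inserts an $(1-z)^{-1}$ (resp.\ $(1+z)$) into the generating functions, which is responsible for the asymmetric-looking factor $\frac{(q^{u/2-(m+1)v}-q^{-u/2+(m+1)v})(q^{v/2}-q^{-v/2})}{(q^{(v-u)/2}-q^{(u-v)/2})(q^{u/2}-q^{-u/2})}$ and for the half-integer powers $q^{1/2+\epsilon}$ throughout; keeping track of whether a given sign in Proposition~\ref{prop:hooksymplectic} produces a ``$+$'' or a ``$-$'' combination, and matching the Weyl-denominator prefactor from Proposition~\ref{prop:evalsymplectic}, is where essentially all the real work lies. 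Everything downstream (contour deformation, dominated convergence, normal families) is a routine adaptation of the type~A proof.
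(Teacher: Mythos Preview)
Your proposal is correct and follows essentially the same approach as the paper: the paper proves Theorem~\ref{thm:bctype} (the prelimit double-contour formula you outline, via duality, Proposition~\ref{prop:hooksymplectic}, and the $H^G$/$E^G$ generating functions), then states Lemma~\ref{lem:symporth} (the $\LL$-contour rewriting, proved as you describe by mimicking Lemma~\ref{lem:yisunrewrite}), and finally runs the dominated-convergence and Montel arguments exactly as you sketch. You have also correctly flagged type~$B$ as the case requiring the most bookkeeping.
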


For any $k\in\N$ and $G\in\{B, C, D\}$, define the constant
$$c_k^G(q) := \frac{ 1 }{ \prod_{i=1}^k (q^{i-1+2\epsilon}, q^{1-i}, q, q^{2k-2i+2\epsilon+1}; q)_{i-1} },$$
and the functions
\begin{multline}\label{def:Phik2}
\Phi^{\bfy, G}(x_1, \ldots, x_k; q) :=  c_k^G(q) \cdot
\frac{V^s(q^{\epsilon}, q^{1+\epsilon}, \ldots, q^{k - 1 + \epsilon})}{V^s(x_1, x_2, \ldots, x_k)}\\
\times \det_{1\leq i, j\leq k}\left[
\Phi^{\bfy, G}_j(x_i; q) \cdot (q^{\epsilon}x_i, q^{\epsilon}/x_i; q)_{j-1} (q^{j + \epsilon}x_i, q^{j+\epsilon}/x_i; q)_{k-j}
\right].
\end{multline}
The determinant in $(\ref{def:Phik2})$ vanishes when $x_i = x_j$ or $x_i = 1/x_j$, for some $i \neq j$.
Then the poles of $V^s(x_1, \ldots, x_k)$ are cancelled out so that the functions $\Phi^{\bfy, G}(x_1, \ldots, x_k; q)$ are analytic on $(\C^*)^k$.

\begin{thm}\label{thm:multivariablesymplectic}
Let $k\in\N$, $G \in \{B, C, D\}$, and $\{\lambda(N)\in\GTp_N\}_{N \geq 1}$ be a sequence of nonnegative signatures that stabilizes to $\bfy\in\Y$.
Then
\begin{equation}\label{eqn:multiasymptotics2}
\lim_{N \rightarrow \infty}
\frac{\chi^G_{\lambda(N)}(x_1, \ldots, x_k, q^{k+\epsilon}, \ldots, q^{N-1+\epsilon})}{\chi^G_{\lambda(N)}(q^{\epsilon}, q^{1+\epsilon}, \ldots, q^{N-1+\epsilon})}
 = \Phi^{\bfy, G}(x_1, \ldots, x_k; q)
\end{equation}
holds uniformly on compact subsets of $(\C^*)^k$.
Conversely, if $\{\lambda(N)\in\GTp_N\}_{N \geq 1}$ is a sequence of nonnegative signatures such that the limit in the left hand side of $(\ref{eqn:multiasymptotics2})$ exists and is uniform on compact subsets of $(\C^*)^k$, for any $k\in\N$, then there exists $\bfy\in\Y$ such that $\{\lambda(N)\in\GT_N\}_{N \geq 1}$ stabilizes to $\bfy$, and the limit relation $(\ref{eqn:multiasymptotics2})$ holds.
\end{thm}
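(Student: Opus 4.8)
\textbf{Plan of proof of Theorem \ref{thm:multivariablesymplectic}.}
The argument will closely parallel the proof of Theorem \ref{thm:asymptoticsmultivariate}, so I would organize it in the same way. First, one establishes the analytic statements of Lemma \ref{eqn:analyticBCD}: the convergence of the double integrals in \eqref{typeB}--\eqref{typeD} on the indicated punctured domains follows from the exponential decay of the infinite products $\prod_i (1 - q^{y_i \mp u})^{-1}$ as $|\Re u| \to \infty$ along $\LL$ (exactly as in Step 1 of the type-$A$ proof), and the symmetry $\Phi_m^{\bfy,G}(x;q) = \Phi_m^{\bfy,G}(1/x;q)$ is checked directly from the formulas (or deduced from Remark \ref{symmetry:inverse} applied at the prelimit level). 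Second, the multivariate case $k > 1$ is reduced to $k = 1$ purely algebraically: the needed $k \times k$ determinantal identity is the $B$-$C$-$D$ analogue of Theorem \ref{thm:gorinsun2}, which comes (via the label--variable duality of Lemma \ref{lem:dualitysymplectic}, analytic continuation in the $x_i$, and the $q$-geometric specialization of Proposition \ref{prop:evalsymplectic}) from the Giambelli/Frobenius-type formula expressing $\chi^G_\lambda$ for a general signature as a determinant of hook characters $\chi^G_{(a+1,1^b,0^{\ast})}$, the hooks themselves being handled by Proposition \ref{prop:hooksymplectic}. This is the "Theorem \ref{thm:bctype}" promised in the text, and once it is in hand the $k>1$ statement of \eqref{eqn:multiasymptotics2} follows by taking $N \to \infty$ inside a fixed finite determinant, using the $k=1$ result entrywise together with the convergence of the explicit prefactors (all written in terms of $q$-Pochhammer symbols, which stabilize since the number of variables in each entry is fixed).

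Third, for the core $k = 1$ limit, I would rewrite the single-variable prelimit ratio
$\chi^G_{\lambda(N)}(q^{\epsilon},\dots,q^{m-1+\epsilon},x,q^{m+1+\epsilon},\dots,q^{N-1+\epsilon}) / \chi^G_{\lambda(N)}(q^{\epsilon},\dots,q^{N-1+\epsilon})$
as a double contour integral, by the same route as Lemma \ref{lem:yisunrewrite}: start from the $B$-$C$-$D$ version of Theorem \ref{thm:gorinsun} (a contour-integral formula for a one-variable $q$-specialization of $\chi^G$, proved via duality $+$ the hook expansion of Proposition \ref{prop:hooksymplectic} $+$ the generating functions of $\sum_m H^G_m z^m$ and $\sum_m E^G_m z^m$), change variables $z = q^u$, $w = q^v$, deform the $u$-contour from a small loop around the poles $\{q^{\pm(y_i + \dots)}\}$ to the fixed infinite contour $\LL$, and collect the residue at $u = v$ which produces the single-integral "correction" term appearing in \eqref{typeB}--\eqref{typeD}. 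Here the extra factors $q^{\pm(m+1)v}$, $q^{\pm(m+\frac12)v}$, and the $x^{1/2}-x^{-1/2}$ or $x - x^{-1}$ denominators are exactly the signatures of the more complicated generating functions for hook characters of the $B$, $C$, $D$ root systems, which is why the formulas are bulkier than in type $A$. With this prelimit double-integral formula in place, the limit $N \to \infty$ is justified as in Steps 3--4 of the type-$A$ proof: the prefactors converge termwise (using $y_i^N := \lambda(N)_{N+1-i} \to y_i$ and the stabilization of the $q$-Pochhammer symbols as $N \to \infty$), the integrands converge pointwise, and a dominated-convergence argument with the same kind of bounds ($|1/(1-q^{v-u})| \le 1$ on the contours, exponential decay of the $H^G$-type products overpowering $|x|^{\Re u}$ at both ends of $\LL$) upgrades this to uniform convergence on compacts of the punctured domain; Montel's theorem plus the branching-rule positivity bound (Proposition \ref{prop:branchsymplectic}, giving $|\Phi| \le \Phi(|x|) < \Phi(R) + \Phi(R^{-1})$) then provides the analytic continuation to all of $\C^*$ and extends the limit there.

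Fourth, for the converse direction: if the left side of \eqref{eqn:multiasymptotics2} converges uniformly on compacts for every $k$, expand the prelimit $W_k$-symmetric Laurent polynomial in the irreducible characters $\chi^G_\mu(x_1,\dots,x_k)$, $\mu \in \GTp_k$; the branching rule of Proposition \ref{prop:branchsymplectic} shows the coefficients $\Lambda^N_k(\lambda(N),\cdot)$ are nonnegative, and evaluating at $x_i = q^{i-1+\epsilon}$ together with Proposition \ref{prop:evalsymplectic} shows they sum to $1$, so $\Lambda^N_k(\lambda(N),\cdot)$ is a probability measure on $\GTp_k$. Convergence of the character ratios forces (via the analogues of Proposition \ref{prop:construction} and Theorem \ref{thm:minimalschur}, to be established in the combinatorial-probability sections for the $B$-$C$-$D$ graphs) weak convergence of these measures and, in turn, the existence of $\lim_N \lambda(N)_{N+1-i} =: y_i$ for every $i \ge 1$; since $\lambda(N) \in \GTp_N$ we get $0 \le y_1 \le y_2 \le \cdots$, i.e. $\bfy \in \Y$ and $\{\lambda(N)\}$ stabilizes to $\bfy$, at which point the forward direction finishes the proof.

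I expect the main obstacle to be the $k = 1$ double-integral formula — i.e. producing and correctly proving the $B$-$C$-$D$ analogue of Theorem \ref{thm:gorinsun}. Unlike type $A$, where the hook generating function is the rational $\sum h_m z^m = \prod (1-zx_j)^{-1}$, here the relevant objects are the hook characters $\chi^G_{(a+1,1^b,\ast)}$ whose generating functions (packaged through the $H^G_m$, $E^G_m$ of Proposition \ref{prop:hooksymplectic}) carry the extra $z \leftrightarrow z^{-1}$-symmetric structure and the "$\pm$" terms, so the residue bookkeeping when collapsing the double sum over $(a,b)$ to a genuine double contour integral, and then matching the precise prefactor using the three different $q$-dimension formulas of Proposition \ref{prop:evalsymplectic} and the Weyl denominators $V^s$, is the delicate computational heart of the argument. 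Everything downstream (the determinantal reduction, the $N\to\infty$ limit, Montel, the converse) is then a faithful but careful transcription of the type-$A$ proof.
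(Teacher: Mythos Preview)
Your plan is correct and follows the paper's approach essentially line for line: reduce $k>1$ to $k=1$ via the determinantal identity (this is Theorem \ref{multivariate} in the paper, not Theorem \ref{thm:bctype} as you write --- Theorem \ref{thm:bctype} is the single-variable contour integral you describe in your third paragraph), prove the $k=1$ limit by rewriting the prelimit ratio as a double contour integral on the fixed contour $\LL$ (Lemma \ref{lem:symporth}, derived from Theorem \ref{thm:bctype}) and passing to the limit by dominated convergence, then extend to all of $\C^*$ via Montel and the branching positivity bound; the converse goes through Proposition \ref{prop:sympconstruction} and Theorem \ref{minimalSymplectic}. Your identification of the $k=1$ integral formula as the computational heart is exactly right.
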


\subsection{Integral representations and multivariate formulas for B-C-D type characters}

\label{Section_BC_char_integrals}

\begin{thm}\label{thm:bctype}
Let $0\leq m\leq N-1$ be integers, $\lambda\in\GTp_N$ and $a\in\C$; then
\begin{multline}\label{thm:typeB}
\frac{\chi_\lambda^B(q^{\half}, \ldots, q^{m-\half}, q^a, q^{m+\frac{3}{2}}, \ldots, q^{N - \half})}{\chi_\lambda^B(q^{\half}, q^{\frac{3}{2}}, \ldots, q^{N - \half})} =
\frac{q^{\half (m + \half)} - q^{-\half (m + \half)}}{q^{\frac{a}{2}} - q^{-\frac{a}{2}}}\\
\times\frac{(q^{m+1}, q^{-m}; q)_m (q, q^{2m+2}; q)_{N-m-1}}{(q^{\half + a}, q^{\half - a}; q)_m(q^{m + \frac{3}{2} + a}, q^{m + \frac{3}{2} - a}; q)_{N-m-1}}\\
\times\oint \frac{d z}{2\pi \ii } \oint \frac{d w}{2 \pi \ii } \frac{{z}^{a + N - \half} (z - w^{2m+2})}{w^{N+m+2}(w - z)} \frac{(w - 1)}{(z - 1)}\prod_{i = 1}^N \frac{(w- q^{\lambda_i + N + \half - i})(w- q^{-(\lambda_i + N + \half - i)})}{(z - q^{\lambda_i + N + \half - i})(z - q^{-(\lambda_i + N + \half - i)})};
\end{multline}

\begin{multline}\label{thm:typeC}
\frac{\chi_\lambda^C(q, \ldots, q^m, q^a, q^{m+2}, \ldots, q^N)}{\chi_\lambda^C(q, q^2, \ldots, q^N)} =
\frac{(q^{m+1} - q^{-(m+1)})(q^{m+2}, q^{-m}; q)_m(q, q^{2m+3}; q)_{N-m-1}}{(q^a - q^{-a})(q^{1+a}, q^{1-a}; q)_m(q^{m+2+a}, q^{m+2-a}; q)_{N-m-1}}\\
\times\oint \frac{d z}{2\pi \ii } \oint \frac{d w}{2 \pi \ii }
\frac{{z}^{a + N - 1} (1 - w^{2m+2}	)}{w^{N+m+1}(w - z)} \prod_{i = 1}^N \frac{(w- q^{\lambda_i + N + 1 - i})(w- q^{-(\lambda_i + N + 1 - i)})}{(z - q^{\lambda_i + N + 1 - i})(z - q^{-(\lambda_i + N + 1 - i)})};
\end{multline}

\begin{multline}\label{thm:typeD}
\frac{\chi_\lambda^D(1, \ldots, q^{m-1}, q^a, q^{m+1}, \ldots, q^{N-1})}{\chi_\lambda^D(1, q, \ldots, q^{N-1})} =
\frac{\left(2 - \mathbf{1}_{\{m=0\}}\right)(q^m, q^{-m}; q)_m(q, q^{2m+1}; q)_{N-m-1}}{2(q^a, q^{-a}; q)_m (q^{m+1+a}, q^{m+1-a}; q)_{N-m-1}}\\
\times\oint \frac{d z}{2\pi \ii } \oint \frac{d w}{2 \pi \ii } \frac{{z}^{a + N - 1} (z + w^{2m+1})}{w^{N+m+1}(w - z)}
\prod_{i = 1}^N \frac{(w- q^{\lambda_i + N - i})(w- q^{-(\lambda_i + N - i)})}{(z - q^{\lambda_i + N - i})(z - q^{-(\lambda_i + N - i)})}.
\end{multline}
In the right hand sides of $(\ref{thm:typeB})$, $(\ref{thm:typeC})$ and $(\ref{thm:typeD})$, the $z$-contour encloses the smallest closed interval $I\subseteq \R$ that contains all the singularities $\{q^{\pm(\lambda_i + N - i + \epsilon)} \}_{1 \leq i \leq N}$ (where $\epsilon = \half, 1, 0$ for types $B, C, D$, respectively), but it does not enclose the origin, whereas the $w$-contour encloses the $z$-contour and the origin.
\end{thm}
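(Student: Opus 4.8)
The plan is to prove the three identities in parallel, following the template of the proof sketch of Theorem~\ref{thm:gorinsun} but with \emph{hook characters} in place of the complete homogeneous polynomials $h_k$; I describe the argument for type $C$, identity~\eqref{thm:typeC}, the types $B$ and $D$ differing only in the extra factors recorded in Propositions~\ref{prop:hooksymplectic} and~\ref{prop:evalsymplectic}. First I would note that both sides of~\eqref{thm:typeC} are rational functions of $q^a$: the left-hand side is a Laurent polynomial in $q^a$ by Weyl's formula~\eqref{eqn:weylformula}, while the right-hand side becomes a finite sum of residues at $z=q^{\pm(\lambda_i+N+1-i)}$ once the $z$-integral is evaluated, each residue being rational in $q^a$; hence it suffices to prove~\eqref{thm:typeC} for all sufficiently large integers $a$. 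For such $a$ the list $(q,\dots,q^m,q^a,q^{m+2},\dots,q^N)$ equals $(q^{n_1},\dots,q^{n_N})$ with $n_i=\nu_i+N-i+\epsilon$, $\epsilon=1$, for the hook signature $\nu=(a-N,\,1^{N-m-1},\,0^{m})\in\GTp_N$, so the label--variable duality (Lemma~\ref{lem:dualitysymplectic}) rewrites the left-hand side of~\eqref{thm:typeC} as
$$\frac{\chi^C_\nu(q^{l_1},\dots,q^{l_N})}{\chi^C_\nu(q^N,q^{N-1},\dots,q)},\qquad l_i:=\lambda_i+N-i+1,$$
whose denominator is supplied in closed form by the $q$-geometric specialization of Proposition~\ref{prop:evalsymplectic}, accounting already for part of the $q$-Pochhammer prefactor and a ratio of $V^s$'s in~\eqref{thm:typeC}.

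Next I would expand the numerator $\chi^C_\nu(q^{l_1},\dots,q^{l_N})$ by Proposition~\ref{prop:hooksymplectic}, obtaining the alternating sum $\sum_{i=0}^{N-m-1}(-1)^i\bigl(H^C_{a-N+i}+(1-\delta_{i,0})H^C_{a-N-i}\bigr)E^C_{N-m-1-i}$, in which every $H^C$ and $E^C$ is evaluated on the inversion-symmetric alphabet $\{q^{\pm l_1},\dots,q^{\pm l_N}\}$. Then I would replace each $H^C_n$ and $E^C_k$ by a contour integral coming from the generating functions $\sum_n h_n z^n=\prod_r(1-zy_r)^{-1}$ and $\sum_k e_k w^k=\prod_r(1+wy_r)$, using the elementary identity $\prod_i(1-zq^{l_i})(1-zq^{-l_i})=\prod_i(z-q^{l_i})(z-q^{-l_i})$ together with the palindromy $e_{2N-k}=e_k$ valid for an inversion-symmetric alphabet; this brings in the denominator $\prod_i(z-q^{l_i})(z-q^{-l_i})$ and the numerator $\prod_i(w-q^{l_i})(w-q^{-l_i})$ of~\eqref{thm:typeC}, with the contours around the appropriate poles. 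Substituting these integrals into the hook expansion and interchanging the \emph{finite} sum over $i$ with the integrations, the sum collapses to a single rational kernel, the correction terms of highest index integrating to zero (after one integration they contribute a polynomial, or a function regular in the relevant disk); this is the origin of the factor $(1-w^{2m+2})/(w-z)$ in~\eqref{thm:typeC}, the analogous kernels for types $B$ and $D$ being $\tfrac{(z-w^{2m+2})(w-1)}{(w-z)(z-1)}$ and $\tfrac{z+w^{2m+1}}{w-z}$, the extra factor $(w-1)/(z-1)$ in type $B$ arising from the additional variable $1$ in $H^B_m,E^B_m$ (which also creates the pole at $z=1$). Finally I would deform the $z$-contour onto a loop around the interval $I$ not enclosing the origin, and the $w$-contour onto a loop around the $z$-contour and the origin; since the integrand has no residue at infinity the value is unchanged (equivalently, writing $h_n$ via $\tfrac{1}{2\pi\ii}\oint z^{n+2N-1}\prod_r(z-y_r)^{-1}\,dz$ presents the kernel at once with the positive power $z^{a+N-1}$). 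Collecting the leftover monomials in $z$ and $w$ from the arm and leg of $\nu$ and comparing the remaining constant against Propositions~\ref{prop:hooksymplectic} and~\ref{prop:evalsymplectic} then produces exactly the powers $z^{a+N-1}$, $w^{-(N+m+1)}$ and the $q$-Pochhammer and $(q^{m+1}-q^{-(m+1)})/(q^a-q^{-a})$ prefactor of~\eqref{thm:typeC}; types $B$ and $D$ run identically, with the $\tau^G$-corrections of Proposition~\ref{prop:branchsymplectic} already absorbed into Proposition~\ref{prop:hooksymplectic}.

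The conceptual skeleton above --- rationality in $q^a$, label--variable duality, Giambelli-type expansion into hooks, generating functions --- is routine; the hard part, and the main source of error, is the bookkeeping in the middle steps: pinning down the exact polynomial numerator of the kernel ($1-w^{2m+2}$, $z-w^{2m+2}$, $z+w^{2m+1}$) and the exact product of $q$-Pochhammer symbols, which requires correctly tracking (i) which pieces of the collapsed geometric series vanish against which contour; (ii) the many signs coming from $(-1)^i$ in the hook expansion, from the palindromy $e_{2N-k}=e_k$, and from the contour deformations; and (iii) the powers of $q$ concealed inside Proposition~\ref{prop:evalsymplectic}. I expect type $B$ to be the most delicate of the three, because of the half-integer shifts, the extra variable $1$ in $H^B_m,E^B_m$, and the associated $(q^{2m+2};q)$-type factors. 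As in the remark following Theorem~\ref{thm:gorinsun}, one should also check at the end that the apparent singularities of the right-hand side in the variable $a$ are removable, which is automatic once the identity is established for integer $a$ and both sides are recognized as rational in $q^a$.
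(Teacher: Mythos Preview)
Your plan is correct and follows essentially the same route as the paper: reduce to large (half-)integer $a$ by rationality in $q^a$, apply the label--variable duality to turn the left side into a hook character evaluated at $q^{l_i}$, expand that hook character via Proposition~\ref{prop:hooksymplectic}, and convert the resulting $H^G$--$E^G$ alternating sum into a double contour integral using the generating functions for $h_n$ and $e_n$. The paper carries out type $B$ rather than type $C$ in detail, and its contour manipulation is organized around the explicit change of variables $z=1/v$, $w=u/v$ (after first passing the $v$-contour through $\infty$ and discarding a piece with no pole at $u=\infty$), which is a clean way to land directly on the kernel and contours of the statement; this is the concrete mechanism behind what you describe as ``deform the $z$-contour onto a loop around $I$'' and your parenthetical about rewriting $h_n$ with the positive power of $z$.
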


\begin{rem}
There are some apparent singularities in the right sides of $(\ref{thm:typeB})$, $(\ref{thm:typeC})$ and $(\ref{thm:typeD})$, as functions of $a$, coming from Pochhammer symbols in the denominators.
However, the left sides of these identities are entire functions of $a$.
Therefore the integrals in the identities vanish at the apparent singularities.
\end{rem}

\begin{rem}
If the limit $q \rightarrow 1$, these integral representations recover \cite[Thm. 3.18]{GP}, for type $C$, and the integral representations coming from \cite[Props. 7.3, 7.4]{BG} for types $B$, $D$.
\end{rem}

\begin{proof}
The proofs of the three integral representations are very similar.
We give full details for the case $G = B$, and leave the cases $G = C, D$ to the reader.

\medskip

\textit{Step 1.}
From Lemma $\ref{lem:dualitysymplectic}$ and Proposition $\ref{prop:evalsymplectic}$, for any half-integer $a \geq N + \half$ ($a-\half\in\Z$), and $0 \leq m \leq N-1$, we obtain
\begin{equation}\label{step1}
\begin{gathered}
\frac{\chi_{\lambda}^B(q^{\half}, \ldots, q^{m - \half}, q^{m+\frac{3}{2}}, \ldots, q^{N - \half}, q^a)}
{\chi_{\lambda}^B(q^{N-\half}, \ldots, q^{\half})}
= \frac{\chi^B_{(a-N+\half, 1^{N-m-1}, 0^m)}(\{q^{\lambda_i + N - i + \half} \})}{\chi^B_{(a-N+\half, 1^{N-m-1}, 0^m)}(q^{N - \half}, \ldots, q^{\half})} = (-1)^{N-m+1} \times\\
\times\frac{(q^{\half (m + \half)} - q^{-\half (m + \half)})(q^{m+1}, q^{-m}; q)_m (q, q^{2m+2}; q)_{N-m-1}}{(q^{\frac{a}{2}} - q^{-\frac{a}{2}})(q^{\half + a}, q^{\half - a}; q)_m(q^{m + \frac{3}{2} + a}, q^{m + \frac{3}{2} - a}; q)_{N-m-1}}
\chi^B_{(a-N+\half, 1^{N-m-1}, 0^m)}(\{q^{\lambda_i + N - i + \half} \}).
\end{gathered}
\end{equation}
In the next two steps, we find a double integral representation for $\chi^B_{(b+1, 1^c, 0^{N-c-1})}(y_1, \ldots, y_N)$, for $b, c \in\N_0$ with $b$ is large enough, $N \geq c+1$, and $y_1, \ldots, y_N > 0$. Then we specialize these parameters to obtain a double integral representation for $(\ref{step1})$, when $a$ is very large.

\medskip

\textit{Step 2.}
Let us work temporarily with any $b, c\in\N_0$ such that $b > c+1$, $N \geq c+1$, and any positive real numbers $y_1, \ldots, y_N > 0$.
Let us denote
\begin{equation*}
H_n := h_n(1, y_1, y_1^{-1}, \ldots, y_N, y_N^{-1}), \ n \geq 0;\ E_n := e_n(1, y_1, y_1^{-1}, \ldots, y_N, y_N^{-1}), \ 2N+1 \geq n \geq 0.
\end{equation*}
We shall use the following generating functions:
\begin{align}
\sum_{n = 0}^{\infty}{H_n v^n} &= \frac{1}{1 - v}\prod_{i = 1}^N{\frac{1}{(1 - vy_i)(1 - vy_i^{-1})}};\label{generatingH}\\
\sum_{n = 0}^{2N+1}{E_n v^n} &= (1+v)\prod_{i=1}^N{(1+vy_i)(1+vy_i^{-1})}.\label{generatingE}
\end{align}
From Proposition $\ref{prop:hooksymplectic}$, and $(\ref{generatingH})$, we obtain
\begin{equation}\label{step2:0}
\begin{gathered}
\chi^B_{(b+1, 1^c, 0^{N-c-1})}(y_1, \ldots, y_N) = (H_{b+1} - H_{b-1})E_c - \ldots + (-1)^c (H_{b+c+1} - H_{b-c-1})E_0\\
= \oint_{\{0\}} \frac{dv}{2\pi\ii} \frac{\left\{ (v^{-b-2} - v^{-b})E_c - \ldots + (-1)^c (v^{-b-c-2} - v^{-b+c}) E_0 \right\}}{(1 - v)\prod_{i = 1}^N(1 - v y_i)(1 - v y_i^{-1})}.
\end{gathered}
\end{equation}
The sum in brackets in $(\ref{step2:0})$ can be written as two sums, with $c + 1$ terms each.
The first of those sums is
\begin{multline*}
v^{-b-2}E_c - \ldots + (-1)^c v^{-b-c-2}E_0 = (-1)^c v^{-b-c-2} (E_0 - vE_1 + \ldots + (-v)^cE_c)\\
= (-1)^c v^{-b-c-2} \oint_{\{0\}} \frac{du}{2\pi\ii} (u - v)\prod_{i = 1}^N{(u - vy_i)(u - vy_i^{-1})}\cdot \left( u^{-2N-2} + u^{-2N-1} + \ldots + u^{-2N-2+c} \right)\\
= (-1)^c v^{-b-c-2} \oint_{\{0\}} \frac{du}{2\pi\ii} (u - v)\prod_{i = 1}^N{(u - vy_i)(u - vy_i^{-1})} u^{-2N-2} \cdot \frac{1 - u^{c+1}}{1 - u}.
\end{multline*}
To give an integral representation for the second sum, we use:
\[
E_n = E_{2N+1-n}, \textrm{ for all }0 \leq n \leq 2N+1.
\]
The second sum is
\begin{gather*}
- v^{-b}E_c - \ldots + (-1)^{c+1} v^{-b+c} E_0 = - v^{-b}E_{2N+1-c} - \ldots + (-1)^{c+1} v^{-b+c} E_{2N+1}\\
= (-1)^c v^{-b+c-2N-1} \left( (-v)^{2N+1-c} E_{2N+1-c} - \ldots + (-v)^{2N+1}E_{2N+1} \right)\\
= (-1)^c v^{-b+c-2N-1} \oint_{\{0\}} \frac{du}{2\pi\ii} (u - v)\prod_{i = 1}^N{(u - vy_i)(u - vy_i^{-1})}\cdot (u^{-c-1} + u^{-c} + \ldots + u^{-1})\\
= (-1)^c v^{-b+c-2N-1} \oint_{\{0\}} \frac{du}{2\pi\ii} (u - v)\prod_{i = 1}^N{(u - vy_i)(u - vy_i^{-1})} u^{-c-1} \cdot \frac{1 - u^{c+1}}{1 - u}.
\end{gather*}
As a result, $\chi^B_{(b+1, 1^c)}(y_1, \ldots, y_N)$ is the sum of the following two double contour integrals:
\begin{gather}
(-1)^c \oint_{\{0\}} \frac{dv}{2\pi\ii} \oint_{\{0\}} \frac{du}{2\pi\ii} \frac{u - v}{1 - v}
\prod_{i = 1}^N \frac{(u - vy_i)(u - vy_i^{-1})}{(1 - vy_i)(1 - vy_i^{-1})} u^{-2N-2} v^{-b-c-2} \cdot \frac{1 - u^{c+1}}{1 - u};\label{step2:1}\\
(-1)^c \oint_{\{0\}} \frac{dv}{2\pi\ii} \oint_{\{0\}} \frac{du}{2\pi\ii} \frac{u - v}{1 - v}
\prod_{i = 1}^N \frac{(u - vy_i)(u - vy_i^{-1})}{(1 - vy_i)(1 - vy_i^{-1})} u^{-c-1} v^{-b+c-2N-1} \cdot \frac{1 - u^{c+1}}{1 - u}.\label{step2:2}
\end{gather}

\medskip

\textit{Step 3.}
Now we rewrite the contour integral representations in $(\ref{step2:1})$ and $(\ref{step2:2})$.
Let us begin with $(\ref{step2:1})$.
The integrand is of order $O(|v|^{-2})$, as $|v| \rightarrow \infty$.
Then we can deform the $v$-contour, pass it through $\infty$, and have the new $v$-contour enclosing the singularities $1, y_1, y_1^{-1}, \ldots, y_N, y_N^{-1}$; a minus sign appears in making this deformation.
Deform also the $u$-contour and make it very large (in particular, it encloses both $0$ and $1$).
Next, break the integral $(\ref{step2:1})$ into two integrals by writing $u^{-2N-2}(1-u^{c+1})/(1-u)$ as the difference $u^{-2N-2}/(1-u) - u^{-2N+c-1}/(1-u)$.
The first integral, corresponding to $u^{-2N-2}/(1-u)$, is of order $O(|u|^{-2})$, when $|u| \rightarrow \infty$.
Thus there is no pole outside the contour, meaning that the first integral vanishes.
From these considerations, $(\ref{step2:1})$ equals
\begin{equation*}
(-1)^c \oint_{\{1, y_i, y_i^{-1}\}} \frac{dv}{2\pi\ii} \oint_{\{\infty\}} \frac{du}{2\pi\ii} \frac{u - v}{1 - v}
\prod_{i = 1}^N \frac{(u - vy_i)(u - vy_i^{-1})}{(1 - vy_i)(1 - vy_i^{-1})} \frac{u^{-2N+c-1} v^{-b-c-2}}{1 - u},
\end{equation*}
where the $v$-contour encloses $1, y_1, y_1^{-1}, \ldots, y_N, y_N^{-1}$, but not the origin, while the $u$-contour encloses both $0$ and $1$.
After making the change of variables
\begin{equation}\label{changevars}
z = 1/v, \ w = u/v,
\end{equation}
we can have the new $z$-contour enclosing the singularities $1, y_1,\ldots, y_N^{-1}$, but not around the origin, whereas the new $w$-contour encloses the $z$-contour and $0$.
Then the integral $(\ref{step2:1})$ equals
\begin{equation}\label{step3:0}
(-1)^{c+1} \oint_{\{1, y_i, y_i^{-1}\}} \frac{dz}{2\pi\ii} \oint_{\{\infty\}} \frac{dw}{2\pi\ii} \frac{w - 1}{z - 1}
\prod_{i = 1}^N \frac{(w - y_i)(w - y_i^{-1})}{(z - y_i)(z - y_i^{-1})}  \frac{z^{2N+b+1}w^{-2N+c-1}}{z - w}.
\end{equation}
For the integral $(\ref{step2:2})$, follow the same procedure.
Pass the $v$-contour though infinity so that it encloses the singularities $1, y_1, \ldots, y_N^{-1}$, but not the origin; a negative sign appears.
Break the integral into two by using $u^{-c-1}(1 - u^{c+1})/(1 - u) = u^{-c-1}/(1-u) - 1/(1-u)$, and get rid of the second integral because no singularity at $0$ remains.
Then do the change of variables $(\ref{changevars})$ again.
After these simplifications, $(\ref{step2:2})$ equals
\begin{equation}\label{step3}
(-1)^c \oint_{\{1, y_i, y_i^{-1}\}} \frac{dz}{2\pi\ii} \oint_{\{0\}} \frac{dw}{2\pi\ii} \frac{w - 1}{z - 1}
\prod_{i = 1}^N \frac{(w - y_i)(w - y_i^{-1})}{(z - y_i)(z - y_i^{-1})} \cdot \frac{z^{2N+b} w^{-c-1}}{z - w}.
\end{equation}
The integrand, as a function of $w$, has only $w = z$ as a singularity.
Thus by expanding the $w$-contour, so that it swallows the $z$-contour, we pick up the residue $w = z$.
In enlarging the $w$-contour, the total residue that we pick up is (when $w = z$, the residue of the integrand in $(\ref{step3})$ is $-z^{2N+b-c-1}$):
\begin{gather*}
(-1)^{c+1} \oint_{\{1, y_i, y_i^{-1}\}} z^{2N+b-c-1} \frac{dz}{2\pi\ii} = 0.
\end{gather*}
Thus $(\ref{step3})$ does not change if the $w$-contour in that formula is replaced by one that encloses both the origin and the $z$-contour.
We conclude that $\chi^B_{(b+1, 1^c, 0^{N-c-1})}(y_1, \ldots, y_N)$ equals the sum of $(\ref{step3:0})$ and $(\ref{step3})$, the latter of which has the enlarged $w$-contour instead.

\medskip

\textit{Step 4.}
In step 3, specialized at $b = a - N - \half$, $c = N - m - 1$, and $y_i = q^{\lambda_i + N - i + \half}$, we found
\begin{gather*}
\chi^B_{(a-N+\half, 1^{N-m-1}, 0^{m})}(\{q^{\lambda_i + N - i + \half}\}_{i = 1, \ldots, N}) = (-1)^{N-m+1}
\oint_{\{1, q^{\pm(\lambda_i + N - i + \half)}\}} \frac{dz}{2\pi\ii} \oint_{\{0\}} \frac{dw}{2\pi\ii}\\
\left\{
\frac{w - 1}{z - 1} \prod_{i = 1}^N \frac{(w - q^{\lambda_i + N - i + \half})(w - q^{-(\lambda_i + N - i + \half)})}{(z - q^{\lambda_i + N - i + \half})(z - q^{-(\lambda_i + N - i + \half)})} \cdot \frac{z^{N+a-\half}w^{-N+m} - z^{N+a+\half}w^{-N-m-2}}{z - w}
\right\}.
\end{gather*}
Plugging the above formula into $(\ref{step1})$, and making simple algebraic manipulations, we finally arrive at the desired identity $(\ref{thm:typeB})$.
Recall that, in step 1, we assumed that $a$ was a very large positive half-integer.
However, observe that both sides of our identity are rational functions of $q^a$ (for the right side, this is seen after a residue expansion), so the identity $(\ref{thm:typeB})$ follows for all $a\in\C$.
\end{proof}

Recall the Frobenius coordinates for a partition $\lambda$.
Let $d$ be the length of the main diagonal of the Young diagram corresponding to $\lambda$.
The Frobenius coordinates $(a_1, \ldots, a_d \mid b_1, \ldots, b_d)$ of $\lambda$ are
\begin{equation*}
a_i := \lambda_i - i, \ b_i := \lambda_i' - i, \textrm{ for } i = 1, 2, \ldots, d,
\end{equation*}
where $\lambda' := (\lambda_1', \lambda_2', \ldots)$ is the conjugate partition of $\lambda$.
For example, the ``hook-shaped'' partition $\lambda = (a+1, 1^b)$ has Frobenius coordinates $(a \mid b)$.

\begin{prop}[Frobenius identities]\label{thm:frobenius}
Let $N\in\N$, and $\lambda\in\GTp_N$ be a partition with Frobenius coordinates $(a_1, \ldots, a_d \mid b_1, \ldots, b_d)$.
Denote $\chi^G_{\mu} = \chi^G_{\mu}(x_1, \ldots, x_N)$, for any $\mu\in\GTp_N$.
Also, for any $a, b\in\N_0$, $N \geq b+1$, denote $(a | b) := (a+1, 1^b, 0^{N-b-1})\in\GTp_N$.
Then
\begin{equation*}
\chi^G_{\lambda} =
\frac{1}{(1 + \mathbf{1}_{\{ G = D \}})^{d-1}}
\times\det
\begin{vmatrix}
\chi^G_{(a_1 | b_1)} & \chi^G_{(a_1 | b_2)} & \dots & \chi^G_{(a_1 | b_d)}\\
\chi^G_{(a_2 | b_1)} & \chi^G_{(a_2 | b_2)} & \dots & \chi^G_{(a_2 | b_d)}\\
\hdotsfor{4} \\
\chi^G_{(a_d | b_1)} & \chi^G_{(a_d | b_2)} & \dots & \chi^G_{(a_d | b_d)}
\end{vmatrix}.
\end{equation*}
\end{prop}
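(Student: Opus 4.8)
The plan is to deduce Proposition~\ref{thm:frobenius} from the Jacobi--Trudi type determinantal formulas for the characters $\chi^G_\mu$ (the hook case of which is recalled inside the proof of Proposition~\ref{prop:hooksymplectic}, and the general case of which can be found e.g.\ in \cite{ESK}), by adapting to the $B$/$C$/$D$ setting the classical argument that derives the Giambelli formula $s_\lambda=\det_{1\le i,j\le d}\bigl[s_{(a_i\mid b_j)}\bigr]$ from the ordinary Jacobi--Trudi identity, cf.\ \cite[Ch.~I.3, Ex.~8--9]{M}. Recall that for $G\in\{B,D\}$ one has $\chi^G_\mu=\det_{1\le i,j\le\ell}\bigl[H^G_{\mu_i+j-i}-H^G_{\mu_i-i-j}\bigr]$, and for $G=C$ the analogous determinant with a modified first column, valid once $\ell$ is at least the number of nonzero parts of $\mu$. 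First I would fix one such $\ell$ (large enough for all shapes appearing below) and set $m_i:=\lambda_i-i$, so that the Frobenius $a$-coordinates are $a_1=m_1>\dots>a_d=m_d\ge 0$ while the remaining $m_i$, $i>d$, are negative.

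The core is the determinantal identity that upgrades Jacobi--Trudi to Giambelli. Using Proposition~\ref{prop:hooksymplectic} together with the generating functions~\eqref{generatingH} and \eqref{generatingE}, each hook character $\chi^G_{(a\mid b)}$ is a bilinear pairing of a vector assembled from the $H^G_n$ (depending only on $a$) with a vector assembled from the $E^G_n$ (depending only on $b$); this exhibits the $d\times d$ matrix $\bigl[\chi^G_{(a_i\mid b_j)}\bigr]_{1\le i,j\le d}$ as a product of an $H^G$-matrix and an $E^G$-matrix, whose determinant I would process by Cauchy--Binet, using the orthogonality relation $\sum_s(-1)^sH^G_sE^G_{n-s}=\delta_{n,0}$. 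The latter holds because, up to the single extra variable $1$ present only for $G=B$, $H^G$ and $E^G$ are the complete and elementary symmetric functions of one and the same finite multiset of variables; it plays here exactly the role that $\sum_s(-1)^sh_se_{n-s}=\delta_{n,0}$ plays in the type-$A$ proof, and should collapse the Cauchy--Binet sum to the single Jacobi--Trudi determinant for $\chi^G_\lambda$. \textbf{Making this collapse precise is the step I expect to be the main obstacle}: the reflected indices $H^G_{\mu_i-i-j}$ and the $j=1$ anomaly in type $C$ spoil the Toeplitz structure of the matrices involved, so one must track carefully which index ranges survive and verify that the surviving contributions reassemble into $\chi^G_\lambda$ and nothing else. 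A cleaner organization may be to mimic step by step the Pl\"ucker-relation bookkeeping of \cite[Ch.~I.3]{M}, or (conceptually) to read both formulas off a Lindstr\"om--Gessel--Viennot model for $\chi^G$ in which the reflection terms correspond to paths allowed to bounce off a wall; in any case some version of this is likely already available in \cite{ESK}.

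Finally I would pin down the prefactor $(1+\mathbf 1_{\{G=D\}})^{d-1}$. For $G=B,C$ it is trivial and the identity is the clean $\chi^G_\lambda=\det\bigl[\chi^G_{(a_i\mid b_j)}\bigr]$. For $G=D$ the discrepancy reflects the reducibility doubling intrinsic to $so^{2N}_\lambda$ (a sum of twin characters), which is already visible in the factor $2-\mathbf 1_{\{b=N-1\}}$ in Proposition~\ref{prop:hooksymplectic} and the factor $2$ in Proposition~\ref{prop:evalsymplectic}: the Cauchy--Binet computation naturally produces a ``reduced'' Giambelli identity for $D$, and restoring the honest characters $\chi^D_{(a_i\mid b_j)}$ in the $d\times d$ determinant inserts, after the boundary bookkeeping, precisely a factor $2^{d-1}$. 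An alternative, more self-contained but computationally heavier route is induction on the Durfee length $d$: the base case $d=1$ is the definition of a hook character, and one passes from $d-1$ to $d$ by a cofactor expansion of the Giambelli determinant along its first row, matched against a branching- or Pieri-type recursion for $\chi^G_\lambda$ in its first Frobenius row; I would resort to this only if the determinantal identities of \cite{M} and \cite{ESK} fail to transfer cleanly.
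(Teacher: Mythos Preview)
The paper does not actually prove this proposition: its ``proof'' consists entirely of citations, pointing to \cite{ESK} where the identity was first derived and to \cite[Thm.~3.1]{SV} for a more general statement about \emph{generalised Schur functions} that specializes to all three cases $B,C,D$ as explained in \cite[Sec.~4.2]{SV}. So there is no argument in the paper to compare your proposal against at a technical level.

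That said, your plan is essentially a reconstruction of what those references do: the Giambelli-from-Jacobi--Trudi mechanism you describe (factor the hook matrix as an $H$-matrix times an $E$-matrix, apply Cauchy--Binet, and collapse via the $H$--$E$ orthogonality) is exactly the engine behind \cite{ESK} and the generalised framework of \cite{SV}. You correctly anticipate the two genuine subtleties --- the reflected indices $H^G_{\mu_i-i-j}$ that break the Toeplitz structure, and the first-column anomaly in type $C$ --- and you are right that these require careful bookkeeping rather than a new idea. Your handling of the $2^{d-1}$ factor in type $D$ is also on target. However, as written your proposal is a \emph{plan} rather than a proof: you yourself flag the collapse step as ``the main obstacle'' and defer to \cite{ESK} for its resolution. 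Since the paper's own proof is precisely such a deferral, your proposal is adequate for the paper's purposes, but if you intend it as a self-contained argument you would still need to carry out the index-tracking you describe (or, more efficiently, simply invoke \cite[Thm.~3.1]{SV} directly, as the paper does).
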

\begin{proof}
This is a well-known fact in representation theory, which was derived in \cite{ESK}.
The same statement holds for a more general class of functions, called \textit{generalised Schur functions}, see \cite[Thm 3.1]{SV}.
That theorem specializes to our Proposition in three special cases, as described in \cite[Section 4.2]{SV}.
\end{proof}

\begin{thm}\label{multivariate}
For any $1\leq k\leq N$, $G \in \{B, C, D\}$, let
$$c_{k, N}^G(q) := \prod_{i=1}^{k} \frac{(q^i, q^{k+i-1+2\epsilon}; q)_{N-k}}{(q^{i-1+2\epsilon}, q^{1-i}; q)_{i-1} (q, q^{2i-1+2\epsilon}; q)_{N-i}}.$$
Then for any $\lambda\in\GTp_N$, and $x_1, \ldots, x_k\in\C^*$, we have
\begin{multline}\label{thm2:typeG}
\frac{\chi_\lambda^G(x_1, \ldots, x_k, q^{k + \epsilon}, \ldots, q^{N - 1 + \epsilon})}
{\chi_\lambda^G(q^{\epsilon}, q^{1+\epsilon}, \ldots, q^{N - 1 + \epsilon})} =
c_{k, N}^G(q) \cdot
\frac{V^s(q^{\epsilon}, q^{1+\epsilon}, \ldots, q^{k - 1 + \epsilon})}{V^s(x_1, x_2, \ldots, x_k)}\\
\times \det_{1\leq i, j\leq k}\left[
\frac{\chi_{\lambda}^G(q^{\epsilon}, \ldots, q^{j-2+\epsilon}, x_i, q^{j+\epsilon}, \ldots, q^{N-1+\epsilon})}{\chi_{\lambda}^G(q^{\epsilon}, q^{1+\epsilon}, \ldots, q^{N-1+\epsilon})}
(q^{\epsilon}x_i, q^{\epsilon}/x_i; q)_{j-1} (q^{j + \epsilon}x_i, q^{j+\epsilon}/x_i; q)_{k-j} \right].
\end{multline}
\end{thm}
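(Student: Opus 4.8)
The plan is to prove \eqref{thm2:typeG} by the ``double label--variable duality plus Giambelli/Frobenius'' scheme; this is the detailed version of the argument that is merely sketched in the proof of its type $A$ counterpart, Theorem~\ref{thm:gorinsun2}. \textit{Step 1.} For a fixed $\lambda\in\GTp_N$ both sides of \eqref{thm2:typeG} are symmetric Laurent polynomials in $x_1,\dots,x_k$ invariant under each inversion $x_i\mapsto x_i^{-1}$ (for $G=B$, Laurent polynomials in the $x_i^{1/2}$): on the left because $\chi^G_\lambda$ has this property, and on the right because each $(i,j)$--entry of the determinant depends on $x_i$ only through $x_i+x_i^{-1}$, so the determinant vanishes whenever $x_i=x_j^{\pm1}$, cancelling the poles of $1/V^s(x_1,\dots,x_k)$. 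It therefore suffices to verify \eqref{thm2:typeG} on a Zariski--dense subset of $(\C^*)^k$, and we specialize $x_i=q^{\beta_i}$ with $\beta_i-\epsilon\in\Z$ and $\beta_1>\beta_2>\dots>\beta_k\ge N+\epsilon$.

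\textit{Step 2.} For such $x_i$, Lemma~\ref{lem:dualitysymplectic} rewrites the left side of \eqref{thm2:typeG} as $\chi^G_\nu(q^{l_1},\dots,q^{l_N})/\chi^G_\nu(q^{\epsilon},\dots,q^{N-1+\epsilon})$, where $l_i=\lambda_i+N-i+\epsilon$ and $\nu\in\GTp_N$ is determined by $\{\nu_i+N-i+\epsilon:1\le i\le N\}=\{\beta_1,\dots,\beta_k\}\cup\{k+\epsilon,k+1+\epsilon,\dots,N-1+\epsilon\}$; explicitly $\nu=(\beta_1-N+1-\epsilon,\ \dots,\ \beta_k-N+k-\epsilon,\ k^{N-k})$. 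The hypothesis $\beta_k\ge N+\epsilon$ guarantees $\nu\in\GTp_N$, and the Young diagram of $\nu$ has main diagonal of length $d=k$, with Frobenius coordinates $(\nu_1-1,\ \dots,\ \nu_k-k\mid N-1,\ \dots,\ N-k)$.

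\textit{Step 3.} Apply the Frobenius identity (Proposition~\ref{thm:frobenius}) to expand $\chi^G_\nu(q^{l_1},\dots,q^{l_N})$ as $(1+\mathbf{1}_{\{G=D\}})^{-(k-1)}$ times the $k\times k$ determinant whose $(i,j)$--entry is the hook character $\chi^G_{(\nu_i-i\,\mid\,N-j)}(q^{l_1},\dots,q^{l_N})$, where $(\nu_i-i\mid N-j)=(\nu_i-i+1,\ 1^{N-j},\ 0^{j-1})$. In each entry, divide and multiply by $\chi^G_{(\nu_i-i\,\mid\,N-j)}(q^{\epsilon},\dots,q^{N-1+\epsilon})$ and apply Lemma~\ref{lem:dualitysymplectic} once more, now with the hook in the role of the ``label''. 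Computing the resulting dual specialization directly, the variables produced are $q^{\epsilon},\dots,q^{j-2+\epsilon},\ q^{\beta_i},\ q^{j+\epsilon},\dots,q^{N-1+\epsilon}$, so the entry becomes
$$\frac{\chi^G_\lambda(q^{\epsilon},\dots,q^{j-2+\epsilon},\ x_i,\ q^{j+\epsilon},\dots,q^{N-1+\epsilon})}{\chi^G_\lambda(q^{\epsilon},q^{1+\epsilon},\dots,q^{N-1+\epsilon})}\ \cdot\ \chi^G_{(\nu_i-i\,\mid\,N-j)}(q^{\epsilon},\dots,q^{N-1+\epsilon}),$$
whose first factor is exactly the $\lambda$--dependent part of the $(i,j)$--entry of \eqref{thm2:typeG}.

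\textit{Step 4.} It remains to match the scalar prefactor $\big[(1+\mathbf{1}_{\{G=D\}})^{k-1}\,\chi^G_\nu(q^{\epsilon},\dots,q^{N-1+\epsilon})\big]^{-1}$, together with the product of the $k^2$ hook values $\chi^G_{(\nu_i-i\,\mid\,N-j)}(q^{\epsilon},\dots,q^{N-1+\epsilon})$ (after pulling scalars out of the rows and columns of the determinant), with the target $c^G_{k,N}(q)\cdot V^s(q^{\epsilon},\dots,q^{k-1+\epsilon})/V^s(x_1,\dots,x_k)$ and the Pochhammer weights $(q^{\epsilon}x_i,q^{\epsilon}/x_i;q)_{j-1}(q^{j+\epsilon}x_i,q^{j+\epsilon}/x_i;q)_{k-j}$. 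For this one evaluates $\chi^G_\nu$ and all the hook characters at the progression $q^\rho:=(q^{\epsilon},\dots,q^{N-1+\epsilon})$ using Proposition~\ref{prop:evalsymplectic}, converts $q$--Pochhammer symbols into $V^s$--factors via the elementary identity $(1-q^cx)(1-q^cx^{-1})=-q^c\,(x+x^{-1}-q^c-q^{-c})$, and uses that $V^s$ of a union of two sets of arguments factors as the product of the two $V^s$'s times the ``mixed'' cross terms. In particular, the $V^s(q^{\beta_1},\dots,q^{\beta_k},q^{k+\epsilon},\dots,q^{N-1+\epsilon})$ appearing in $1/\chi^G_\nu(q^{\rho})$ supplies the factor $1/V^s(x_1,\dots,x_k)$; its mixed part cancels against the trailing Pochhammer blocks hidden in the hook evaluations; and the remaining purely $q$--dependent constants reorganize into $c^G_{k,N}(q)$ and $V^s(q^{\epsilon},\dots,q^{k-1+\epsilon})$. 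For type $D$ the factor $2^k$ coming from the $k$ rows of hook evaluations cancels the $2^{k-1}$ from the Frobenius identity and the single $2$ in $\chi^D_\nu(q^{\rho})$. This last reorganization of the $q$--powers, with the attendant sign and parity bookkeeping, is the only delicate point of the argument; every structural step preceding it is formal. Finally, since both sides of \eqref{thm2:typeG} are rational in the $x_i$ and agree on the dense specialization set, the identity holds in general.
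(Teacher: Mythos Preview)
Your proposal is correct and follows essentially the same approach as the paper: specialize the $x_i$ to a Zariski--dense family of $q$--powers, apply the label--variable duality (Lemma~\ref{lem:dualitysymplectic}) to pass to $\chi^G_\nu$ with $\nu=(m_1,\dots,m_k,k^{N-k})$, expand that by the Frobenius hook determinant (Proposition~\ref{thm:frobenius}), apply duality again entrywise, and then match the residual $q$--geometric evaluations via Proposition~\ref{prop:evalsymplectic}. One small inaccuracy: for $G=B$ the characters $\chi^B_\lambda$ are genuine Laurent polynomials in the $z_i$ (not merely in $z_i^{1/2}$), since $(z^{n+1/2}-z^{-n-1/2})/(z^{1/2}-z^{-1/2})=z^n+z^{n-1}+\cdots+z^{-n}$; this does not affect your argument.
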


\begin{proof}
Let us give details for $G = B$ (where $\epsilon = \half$), leaving the cases $G = C, D$ to the reader.
Let $m_1 \geq \dots \geq m_k$ be any integers such that $m_k \geq k$. The partition $(m_1, \ldots, m_k, k^{N-k})$ has Frobenius coordinates
$$a_i = m_i - i, \ b_i = N - i, \textrm{ for all } i = 1, \ldots, k.$$
By Lemma $\ref{lem:dualitysymplectic}$ and Proposition $\ref{thm:frobenius}$, we have
\begin{gather}
\frac{\chi^B_\lambda(q^{m_1 + N - \half}, \ldots, q^{m_k + N - k + \half}, q^{N - \half}, \ldots, q^{k+\half})}
{\chi^B_\lambda(q^{\half}, q^{\frac{3}{2}}, \ldots, q^{N-\half})}
= \frac{\chi^B_{(m_1, \ldots, m_k, k^{N-k})}(q^{\lambda_1+N-\half}, \ldots, q^{\lambda_N+\half})}
{\chi^B_{(m_1, \ldots, m_k, k^{N-k})}(q^{\half}, q^{\frac{3}{2}}, \ldots, q^{N-\half})} \nonumber\\
= \frac{1}{\chi^B_{(m_1, \ldots, m_k, k^{N-k})}(q^{\half}, q^{\frac{3}{2}}, \ldots, q^{N - \half})}
\det_{1\leq i, j\leq k} \left[ \chi^B_{(m_i - i | N - j)}(q^{\lambda_1 + N - \half}, \ldots, q^{\lambda_N + \half}) \right]. \label{mastereqn}
\end{gather}
By Lemma $\ref{lem:dualitysymplectic}$ again, we have
\begin{align*}
&\chi^B_{(m_i - i | N - j)}(q^{\lambda_1 + N - \half}, \ldots, q^{\lambda_N + \half}) =
\chi^B_{(m_i - i + 1, 1^{N-j}, 0^{j-1})}(q^{\lambda_1 + N - \half}, \ldots, q^{\lambda_N + \half})\\
&= \frac{\chi^B_{(m_i - i + 1, 1^{N-j}, 0^{j-1})}(q^{\lambda_1 + N - \half}, \ldots, q^{\lambda_N + \half})}
{\chi^B_{(m_i - i + 1, 1^{N-j}, 0^{j-1})}(q^{\half}, q^{\frac{3}{2}}, \ldots, q^{N-\half})}
\chi^B_{(m_i - i + 1, 1^{N-j}, 0^{j-1})}(q^{\half}, \ldots, q^{N-\half})\\
&= \frac{\chi^B_{\lambda}(q^{m_i + N - i + \half}, q^{N-\half}, \ldots, q^{j+\half}, q^{j-\frac{3}{2}}, \ldots, q^{\half})}
{\chi^B_{\lambda}(q^{\half}, q^{\frac{3}{2}}, \ldots, q^{N - \half})}
\chi^B_{(m_i - i + 1, 1^{N-j}, 0^{j-1})}(q^{\half}, \ldots, q^{N - \half}).
\end{align*}
From Proposition $\ref{prop:evalsymplectic}$, we obtain
\begin{multline*}
\chi^B_{(m_i - i + 1, 1^{N-j}, 0^{j-1})}(q^{\half}, \ldots, q^{N-\half})
= (-1)^{N-j}(q^{m_i + N - i + \half})^{-\half}q^{\frac{j}{2} - \frac{1}{4}}\cdot\frac{1 - q^{m_i + N - i + \half}}{1 - q^{j - \half}}\\
\times \frac{(q^{\half}q^{m_i + N - i + \half}, q^{\half}/q^{m_i + N - i + \half}; q)_{j-1}}{(q^{j}, q^{1-j}; q)_{j-1}},
\end{multline*}
as well as
\begin{multline*}
\chi^B_{(m_1, \ldots, m_k, k^{N-k})}(q^{\half}, q^{\frac{3}{2}}, \ldots, q^{N - \half}) =
(-1)^{k(N+1)}\prod_{i=1}^k \frac{q^{\half(m_i + N - i + \half)} - q^{-\half(m_i + N - i + \half)}}{q^{\half(i - \half)} - q^{-\half(i - \half)}}\\
\times \frac{V^s(q^{m_1 + N - \half}, \ldots, q^{m_k + N - k + \half})}{V^s(q^{k-\half}, \ldots, q^{\frac{3}{2}}, q^{\half})}
\prod_{i=1}^k \frac{(q^{k+\half}q^{m_i + N - i + \half}, q^{k+\half}/q^{m_i + N - i + \half}; q)_{N-k}}{(q^i, q^{k+i}; q)_{N-k}}.
\end{multline*}
Plugging these formulas into $(\ref{mastereqn})$, and after some algebraic manipulations, we deduce that $(\ref{thm2:typeG})$ holds for $x_i = q^{m_i + N - i + \half}$, $i = 1, \ldots, k$.
Since $m_1 \geq \dots \geq m_k \geq k$ are arbitrary, and both sides of $(\ref{thm2:typeG})$ are rational functions on $x_1, \ldots, x_k$ with the only poles on both sides being $x_i = 0$, $i = 1, \ldots, k$, the identity $(\ref{thm2:typeG})$ holds for all $x_1, \ldots, x_k \in \C^*$ too.
\end{proof}

\subsection{Type B-C-D characters: proofs of results}

The following lemma is the analogous to Lemma $\ref{lem:yisunrewrite}$; its proof is also very similar and we omit it.

\begin{lem}\label{lem:symporth}
Let $0\leq m\leq N-1$ be integers, $\lambda\in\GTp_N$, $G \in \{B, C, D\}$, and $x$ be a complex number in the domain $\C\setminus((-\infty, 0] \cup \{1\} \cup \{q^{n+\epsilon} : n\in\Z\})$, such that $q^N < |x| < q^{-N}$.
Then
\[
\frac{\chi^G_\lambda(q^{\epsilon}, \ldots, q^{m - 1 + \epsilon}, x, q^{m+1 + \epsilon}, \ldots, q^{N-1+\epsilon})}
{\chi^G_\lambda(q^{\epsilon}, q^{1+\epsilon}, \ldots, q^{N -  1+\epsilon})}
\]
admits the following integral representation (recall $l_1, \ldots, l_N$ are defined in Notation $\ref{nota:BC}$):
\begin{multline*}
(\ln{q})^2 \cdot \frac{q^{\half(m + \half)} - q^{-\half(m+\half)}}{x^{\half} - x^{-\half}}
\frac{(q^{m+1}, q^{-m}; q)_m (q, q^{2m+2}; q)_{N-m-1} }{(q^{\half}x, q^{\half}/x; q)_m (q^{m + \frac{3}{2}}x, q^{m + \frac{3}{2}}/x; q)_{N-m-1}}\\
\times \left\{\int^{\frac{\pi\ii}{\ln{q}}}_{-\frac{\pi\ii}{\ln{q}}} \frac{d v}{2\pi\ii }
\int_{\LL} \frac{d u}{2\pi \ii }
\ x^u
\frac{(q^{\frac{u}{2} - (m+1)v} - q^{-\frac{u}{2} + (m+1)v})(q^{\frac{v}{2}} - q^{-\frac{v}{2}})}
{(q^{\frac{v-u}{2}} - q^{\frac{u - v}{2}} )(q^{\frac{u}{2}} - q^{-\frac{u}{2}})}
\prod_{i = 1}^N \frac{(1- q^{l_i - v})(1 - q^{l_i + v})}{(1 - q^{l_i - u})(1 - q^{l_i + u})} \right.\\
\left. - \frac{1}{\ln{q}} \int^{\frac{\pi\ii}{2\ln{q}}}_{-\frac{\pi\ii}{2\ln{q}}}
x^v \left(q^{(m + \half)v} - q^{-(m + \half)v}\right) \frac{d v}{2 \pi \ii } \right\}, \ \ \textrm{ if }G = B;
\end{multline*}
\begin{multline*}
(\ln{q})^2 \cdot \frac{q^{m+1} - q^{-(m+1)}}{x - x^{-1}}
\frac{(q^{m+2}, q^{-m}; q)_m (q, q^{2m+3}; q)_{N-m-1} }{(qx, q/x; q)_m (q^{m+2}x, q^{m+2}/x; q)_{N-m-1}}\\
\times \left\{\int^{\frac{\pi\ii}{\ln{q}}}_{-\frac{\pi\ii}{\ln{q}}} \frac{d v}{2\pi\ii }
\int_{\LL} \frac{d u}{2\pi \ii }
\ x^u \frac{q^{(m+1)v} - q^{-(m+1)v}}{q^{u-v} - 1}
\prod_{i = 1}^N \frac{(1- q^{l_i - v})(1 - q^{l_i + v})}{(1 - q^{l_i - u})(1 - q^{l_i + u})} \right.\\
\left. - \frac{1}{\ln{q}} \int^{\frac{\pi\ii}{2\ln{q}}}_{-\frac{\pi\ii}{2\ln{q}}}
x^v \left( q^{(m+1)v} - q^{-(m+1)v} \right) \frac{d v}{2 \pi \ii } \right\}, \ \ \textrm{ if }G = C;
\end{multline*}
\begin{multline*}
\frac{\left( 2 - \mathbf{1}_{\{ m=0 \}} \right)(\ln{q})^2}{2} \frac{  (q^m, q^{-m}; q)_m (q, q^{2m+1}; q)_{N-m-1} }{(x, 1/x; q)_m (q^{m+1} x, q^{m+1}/x; q)_{N-m-1}}\\
\times \left\{\int^{\frac{\pi\ii}{\ln{q}}}_{-\frac{\pi\ii}{\ln{q}}} \frac{d v}{2\pi\ii }
\int_{\LL} \frac{d u}{2\pi \ii }
\ x^u\frac{q^{\frac{u}{2} - (m + \half)v} + q^{-\frac{u}{2} + (m + \half)v}}{q^{\frac{v-u}{2}} - q^{\frac{u-v}{2}}}
\prod_{i = 1}^N \frac{(1- q^{l_i - v})(1 - q^{l_i + v})}{(1 - q^{l_i - u})(1 - q^{l_i + u})} \right. \\
\left. + \frac{1}{\ln{q}} \int^{\frac{\pi\ii}{2\ln{q}}}_{-\frac{\pi\ii}{2\ln{q}}}
x^v \left( q^{mv} + q^{-mv} \right) \frac{d v}{2 \pi \ii } \right\}, \ \ \textrm{ if }G = D.
\end{multline*}
\end{lem}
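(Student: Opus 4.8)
\textbf{Proof plan for Lemma~\ref{lem:symporth}.} The plan is to mimic the proof of Lemma~\ref{lem:yisunrewrite}, starting this time from the double contour integral formulas of Theorem~\ref{thm:bctype} (which already hold for all $a\in\C$) instead of from Theorem~\ref{thm:gorinsun}. Fix $G\in\{B,C,D\}$, let $\epsilon=\epsilon(G)$, and set $x=q^a$; the hypothesis $q^N<|x|<q^{-N}$, with $x$ off the spectrum and $x\neq 1$, will be exactly what is needed for convergence of the additive-variable integral produced below. First one checks that the prefactors in \eqref{thm:typeB}--\eqref{thm:typeD} already agree with the ones in the statement of Lemma~\ref{lem:symporth} after rewriting the quotients of products of factors $(1-q^j)$ via $q$-Pochhammer symbols and recalling from Notation~\ref{nota:BC} that $l_i=\lambda_i+N-i+\epsilon$, so that the $z$-singularities $q^{\pm(\lambda_i+N-i+\epsilon)}$ are precisely $q^{\pm l_i}$; the extra factor $(\ln q)^2$ is supplied by the change of variables below. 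Thus the whole content of the lemma is the transformation of the double integral.

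Next I would pass from multiplicative to additive variables via $z=q^u$, $w=q^v$ (principal branch of the logarithm), exactly as in \eqref{gorinsun3}, and then swap the order of integration. Under this substitution the $z$-contour, which encircles the smallest interval containing $\{q^{\pm l_i}\}$ --- and, for $G=B$, also the point $1$ because of the factor $(w-1)/(z-1)$ --- becomes a closed $u$-contour inside the strip $|\Im u|<\pi/\ln q$ enclosing $\{\pm l_i\}$ (and $0$ for $G=B$), while the $w$-contour becomes a vertical segment $[R-\pi\ii/\ln q,\,R+\pi\ii/\ln q]$ to its left, traversed downward, for some large negative $R$. The rational kernels $z^{a+N-\half}(z-w^{2m+2})/\big(w^{N+m+2}(w-z)\big)$, etc., turn into the $q$-exponential kernels already displayed in \eqref{typeB}--\eqref{typeD} (for $G=B$, $x^u(q^{u/2-(m+1)v}-q^{-u/2+(m+1)v})(q^{v/2}-q^{-v/2})/\big((q^{(v-u)/2}-q^{(u-v)/2})(q^{u/2}-q^{-u/2})\big)$, and the analogues for $C,D$), while the Weyl-type product $\prod_i(w-q^{l_i})(w-q^{-l_i})/\big((z-q^{l_i})(z-q^{-l_i})\big)$ becomes $\prod_i(1-q^{l_i-v})(1-q^{l_i+v})/\big((1-q^{l_i-u})(1-q^{l_i+u})\big)$.

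Then I would deform the closed $u$-contour out to the infinite contour $\LL$ of Figure~\ref{fig:contours}. The only singularity in $u$ crossed in this deformation is the diagonal pole $u=v$ coming from $1/(q^{(v-u)/2}-q^{(u-v)/2})$ (resp. $1/(q^{u-v}-1)$ when $G=C$), and since every factor of the product $\prod_i(1-q^{l_i-v})(1-q^{l_i+v})/\big((1-q^{l_i-u})(1-q^{l_i+u})\big)$ equals $1$ at $u=v$, the residue collapses to the elementary single integral in the statement: $-\tfrac{1}{\ln q}\int x^v(q^{(m+\half)v}-q^{-(m+\half)v})\,dv/(2\pi\ii)$ for $G=B$, $-\tfrac{1}{\ln q}\int x^v(q^{(m+1)v}-q^{-(m+1)v})\,dv/(2\pi\ii)$ for $G=C$, and $+\tfrac{1}{\ln q}\int x^v(q^{mv}+q^{-mv})\,dv/(2\pi\ii)$ for $G=D$. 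The opposite sign in the $D$ case is genuine and must be tracked carefully: it arises because the residue of the $D$-kernel at $u=v$ carries an extra minus coming from $q^{(v-u)/2}-q^{(u-v)/2}$ in the denominator, which then combines with the orientation sign picked up in deforming $\LL(A,B)$ into $\LL$. Legitimacy of the deformation rests on decay of the integrand as $|\Re u|\to\infty$ along $\LL$: there $q^{\pm u}$ is purely imaginary, so $N$ of the denominator factors of the product decay like $q^{|\Re u|}$ while the kernel stays bounded, and together with $|x^u|=|x|^{\Re u}$ this beats $1$ precisely because $q^N<|x|<q^{-N}$ --- the same estimate recorded in the remark after Lemma~\ref{lem:yisunrewrite}, adapted to the $2N$ (or $2N+1$) geometric variables present here.

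Finally, just as in the last part of the proof of Lemma~\ref{lem:yisunrewrite}, one verifies that the result is independent of $R$ and may therefore set $R=0$, recovering the $v$-contour $[-\pi\ii/\ln q,\pi\ii/\ln q]$ of the statement: on shifting $R_1\to R_2$ the horizontal pieces of the $v$-rectangle at $\Im v=\pm\pi/\ln q$ cancel in pairs because the integrand depends on $v$ only through $q^v=-q^{\Re v}$ there, while the pole $v=u$ swept across during the shift contributes a residue that exactly cancels the shift of the elementary single-integral term. The main obstacle is not any single hard step but the faithful transcription of the three distinct kernels through the change of variables and, in particular, pinning down the three correction-integral signs (the $+$ in type $D$); everything else is formally identical to the type $A$ argument of Lemma~\ref{lem:yisunrewrite}.
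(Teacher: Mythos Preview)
Your proposal is correct and is exactly the approach the paper intends: the paper omits the proof entirely, saying only that it ``is also very similar'' to that of Lemma~\ref{lem:yisunrewrite}, and your outline---start from Theorem~\ref{thm:bctype}, pass to additive variables $z=q^u$, $w=q^v$, deform the $u$-contour to $\LL$ while picking up the $u=v$ residue, then verify $R$-independence---is precisely that argument transplanted to the $B,C,D$ kernels. Your remark that the only nontrivial content is the careful bookkeeping of the three kernels and the sign of the correction term (notably the $+$ in type $D$) is accurate.
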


\begin{rem}
The condition $q^{N} < |x| < q^{-N}$ ensures that the double integrals converge.
\end{rem}

\smallskip

\begin{proof}[Proof of Theorems $\ref{thm:singlevariablesymplectic}$, $\ref{thm:multivariablesymplectic}$, and Lemma $\ref{eqn:analyticBCD}$.]

It is not difficult to show that the formulas $(\ref{typeB})$, $(\ref{typeC})$ and $(\ref{typeD})$ define analytic functions on $\C \setminus ((-\infty, 0]\cup \{1\} \cup \{q^{n+\epsilon} : n\in\Z\})$, proving the first part of Lemma $\ref{eqn:analyticBCD}$.
Theorem $\ref{thm:multivariablesymplectic}$ is a consequence of Theorems $\ref{thm:singlevariablesymplectic}$ and $\ref{multivariate}$, thus it suffices to prove Theorem $\ref{thm:singlevariablesymplectic}$.
We claim that \eqref{eqn:asymptotics2} holds for $x$ belonging to compact subsets of $\C \setminus ((-\infty, 0] \cup \{1\} \cup \{ q^{n + \epsilon} : n\in\Z \})$.
This uses Lemma $\ref{lem:symporth}$ and follows closely the proof of Theorem $\ref{thm:asymptoticsmultivariate}$ above, so we omit the details.
Note that $x$ should avoid $(-\infty, 0]$ because $x^u$, $x^v$ and $x^{\half}$ are only defined on $\C \setminus (-\infty, 0]$; also $x$ should avoid $\{1\}\cup\{q^{n+\epsilon} : n\in\Z\}$ to avoid singularities in the right hand sides of $(\ref{thm:typeB})$, $(\ref{thm:typeC})$ and $(\ref{thm:typeD})$.
Also note that, since the Laurent polynomials $\chi_{\lambda(N)}^G(q^{\epsilon}, \cdots, q^{m-1+\epsilon}, x, q^{m+1+\epsilon}, \cdots, q^{N-1+\epsilon})$ are invariant with respect to the inversion $x \mapsto x^{-1}$, \eqref{eqn:asymptotics2} for $x\in\C\setminus\R$ implies $\Phi^{\bfy, G}_m(x; q) = \Phi^{\bfy, G}_m(1/x; q)$, for $x\in\C\setminus\R$.

\smallskip

We still have to show: (A) $\Phi^{\bfy, G}_m(x; q)$ admits an analytic continuation to $\C^*$ and the limit $(\ref{eqn:asymptotics2})$ continues to hold uniformly for $x$ in compact subsets of $\C^*$;
and (B) the converse statement  that if the limit in the left side of $(\ref{thm:multivariablesymplectic})$ exists, then $\{\lambda(N)\in\GTp_N\}_{N \geq 1}$ stabilizes to some $\bfy\in\Y$.

Observe that (A) would immediately imply $\Phi^{\bfy, G}_m(x; q) = \Phi^{\bfy, G}_m(1/x; q)$, for all $x\in\C^*$.

\smallskip

The proof of (B) is identical to the proof of the analogous result of Theorem $\ref{thm:asymptoticsmultivariate}$, except that it uses the proof of Proposition $\ref{prop:sympconstruction}$ (instead of Proposition $\ref{prop:construction}$) and the proof of Theorem $\ref{minimalSymplectic}$ (instead of Theorem $\ref{thm:minimalschur}$).
Let us finish by proving (A).

\smallskip

Let $R>1$ be a (large) real number such that $R \notin \{q^{n+\epsilon} : n\in\Z\}$.
We claim that the sequence
\begin{equation}\label{eqn:sequencesymplectic}
\left\{ \frac{\chi_{\lambda(N)}(q^{\epsilon}, \ldots, q^{m-1+\epsilon}, z, q^{m+1+\epsilon}, \ldots, q^{N-1+\epsilon})}{\chi_{\lambda(N)}(q^{\epsilon}, q^{1+\epsilon}, \ldots, q^{N-1+\epsilon})} \right\}_{N \geq 1}
\end{equation}
of holomorphic functions on $\C^*$ is uniformly bounded on $\{1/R < |z| < R\}$.
In fact, from the nonnegativity of the branching coefficients for symplectic/orthogonal characters (see Prop. $\ref{prop:branchsymplectic}$), and the fact that $1/R < |z| < R$ implies $|z|^n < R^n + 1/R^n$, for any $n\in\Z$, we obtain
\begin{gather*}
\left| \frac{\chi_{\lambda(N)}(q^{\epsilon}, \ldots, q^{m-1+\epsilon}, z, q^{m+1+\epsilon}, \ldots, q^{N-1+\epsilon})}
{\chi_{\lambda(N)}(q^{\epsilon}, q^{1+\epsilon}, \ldots, q^{N-1+\epsilon})}  \right| \leq
\frac{\chi_{\lambda(N)}(q^{\epsilon}, \ldots, q^{m-1+\epsilon}, |z|, q^{m+1+\epsilon}, \ldots, q^{N-1+\epsilon})}
{\chi_{\lambda(N)}(q^{\epsilon}, q^{1+\epsilon}, \ldots, q^{N-1+\epsilon})}\\
\leq \frac{\chi_{\lambda(N)}(\ldots, q^{m-1+\epsilon}, R, q^{m+1+\epsilon}, \ldots)}
{\chi_{\lambda(N)}(q^{\epsilon}, q^{1+\epsilon}, \ldots, q^{N-1+\epsilon})}
+ \frac{\chi_{\lambda(N)}(\ldots, q^{m-1+\epsilon}, 1/R, q^{m+1+\epsilon}, \ldots)}
{\chi_{\lambda(N)}(q^{\epsilon}, q^{1+\epsilon}, \ldots, q^{N-1+\epsilon})}.
\end{gather*}

By our choice of $R$, the limit $(\ref{eqn:asymptotics2})$ holds pointwise for $x = R, 1/R$, implying that
\[
\frac{\chi_{\lambda(N)}(\ldots, q^{m-1+\epsilon}, R, q^{m+1+\epsilon}, \ldots)}
{\chi_{\lambda(N)}(q^{\epsilon}, q^{1+\epsilon}, \ldots, q^{N-1+\epsilon})}
\textrm{ and }
\frac{\chi_{\lambda(N)}(\ldots, q^{m-1+\epsilon}, 1/R, q^{m+1+\epsilon}, \ldots)}
{\chi_{\lambda(N)}(q^{\epsilon}, q^{1+\epsilon}, \ldots, q^{N-1+\epsilon})}
\]
are both uniformly bounded sequences, thus implying our claim.
Next we can apply Montel's theorem: it implies that any subsequence of $(\ref{eqn:sequencesymplectic})$ has subsequential limits, which are analytic on $\{ 1/R < |z| < R \}$.
Each such holomorphic function agrees with $\Phi_m^{\bfy, G}(z; q)$ on an open set, so by analytic continuation, they must all be the same.
Since $R>1$ was arbitrary, the result follows.
\end{proof}

\section{Branching graphs}\label{sec:qschurgraph}

\subsection{Generalities on Branching Graphs}\label{sec:generalboundary}

We recall some general facts about branching graphs and their boundaries.
Our exposition is similar to that of \cite{Ol}; see also \cite{GO}.
Similar statements, in equivalent forms, can be also found in \cite{DF, D, W}.

For a Borel space $X$, let $\M(X)$ denote the set of probability measures on $X$.
The set $\M(X)$ is a convex subset of the vector space of finite, signed measures on $X$.
A Markov kernel between Borel spaces $K: X \dashrightarrow Y$ induces an affine map $K: \M(X) \rightarrow \M(Y)$ between convex sets, that we denote by the same letter.

Let $X_1, X_2, \ldots$ be a sequence of countable sets, equipped with their Borel structure coming from the discrete topology.
Assume we have, for each $N \geq 1$, a Markov kernel $\Lambda^{N+1}_N : X_{N+1} \dashrightarrow X_N$.
Then the chain of Markov kernels
\begin{equation*}
X_1 \dashleftarrow X_2 \dashleftarrow X_3 \dashleftarrow \dots
\end{equation*}
naturally induces the chain
\begin{equation*}
\M(X_1) \leftarrow \M(X_2) \leftarrow \M(X_3) \leftarrow \dots
\end{equation*}
of affine maps of convex sets.
Let $\varprojlim{\M(X_N)}$ be the projective limit.
As a set, it consists of \textit{coherent systems} $\{M_N \in \M(X_N)\}_{N \geq 1}$, i.e., for each $N \geq 1$, the probability measures $M_N$ and $M_{N+1}$ are related by
\[
M_{N+1}\Lambda^{N+1}_N = M_N.
\]
More explicitly, the coherency property is
\begin{equation*}
\sum_{x\in X_{N+1}} M_{N+1}(x) \Lambda^{N+1}_N(x, y) = M_N(y), \textrm{ for all } y\in X_N, \ N \geq 1.
\end{equation*}

The sequence $\{ X_N, \Lambda^{N+1}_N : N \geq 1\}$ is called a \textit{branching graph}.

Equip $\varprojlim{\M(X_N)}$ with the Borel structure arising from the embedding
\begin{equation}\label{qschur:inclusion}
\varprojlim{\M(X_N)} \hookrightarrow \prod_{N \geq 1}{\M(X_N)}.
\end{equation}
The product space in $(\ref{qschur:inclusion})$ is convex and $\varprojlim{\M(X_N)}$ is a convex subset.

\begin{df}
The set of extreme points of the convex space $\varprojlim{\M(X_N)}$ is called the \textit{(minimal) boundary} of the branching graph $\{ X_N, \Lambda^{N+1}_N : N \geq 1\}$.
Let us denote it by $\Omega$.
Given $\omega \in \Omega$, we denote by $\{ M_N^{\omega} \}_{N \geq 1}$ the corresponding coherent system.
\end{df}

\begin{thm}[\cite{Ol}, Thm. 9.2]\label{olshanski}
The set $\Omega$ is a Borel subset of $\varprojlim{\M(\GT_N)}$.
Moreover, for every coherent system $\{M_N\}_{N \geq 1}$, there exists a unique Borel probability measure $\pi\in\M(\Omega)$ such that
\[
M_N(x) = \int_{\Omega} M_N^{\omega}(x) \pi(d \omega), \textrm{ for all } x\in X_N, \ N \geq 1.
\]
Conversely, every $\pi\in\M(\Omega)$ gives a coherent system by the formula above.
The resulting map $\M(\Omega) \rightarrow \varprojlim{\M(X_N)}$ is a bijection.
\end{thm}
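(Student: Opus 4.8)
The plan is to run the standard ``ergodic method'' for branching graphs (Dynkin's entrance--exit boundary theory, cf.\ \cite{DF, D, W} and \cite[\S 9]{Ol}); this delivers the integral decomposition directly and avoids Choquet's theorem, which is convenient because the simplices $\M(X_N)$ need not be compact here. The starting point is a correspondence between coherent systems and measures on the \emph{path space} $\mathcal P := \prod_{N\geq 1} X_N$. Given a coherent system $\{M_N\}$, the prescriptions
\[
P(x_1 = a_1, \ldots, x_N = a_N) := M_N(a_N)\prod_{k=1}^{N-1}\Lambda^{k+1}_k(a_{k+1}, a_k)
\]
form a consistent family of finite-dimensional distributions --- consistency is exactly the coherency relation --- so Kolmogorov's extension theorem produces a unique probability measure $P = P_{\{M_N\}}$ on $\mathcal P$. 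Under $P$ the coordinate process is a ``downward'' Markov chain with one-step kernels $\Lambda^{N+1}_N$ and $N$-th marginal $M_N$; conversely any such ``admissible'' measure on $\mathcal P$ both is determined by and determines a coherent system, since admissibility forces the joint law of $(x_1,\dots,x_N)$ to equal the marginal of $x_N$ times the product of the (fixed) kernels $\Lambda$.

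Next I would invoke reverse martingale convergence. Writing $\Lambda^{N'}_N$ for the iterated kernel $X_{N'}\dashrightarrow X_N$, the Markov property gives $\Lambda^{N'}_N(x_{N'}, y) = P(x_N = y\mid \mathcal F_{\geq N'})$ with $\mathcal F_{\geq N'} := \sigma(x_{N'}, x_{N'+1},\dots)$; hence $N'\mapsto \Lambda^{N'}_N(x_{N'}, y)$ is a bounded martingale for this decreasing filtration and converges, $P$-a.s.\ and in $L^1$, to $P(x_N = y\mid \mathcal F_\infty)$, where $\mathcal F_\infty := \bigcap_{N'}\mathcal F_{\geq N'}$ is the tail $\sigma$-algebra. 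Let $\mathcal P_0\subseteq\mathcal P$ be the (tail, hence universally measurable) set of paths $\omega$ for which $M_N^\omega(y) := \lim_{N'\to\infty}\Lambda^{N'}_N(\omega_{N'}, y)$ exists for all $N, y$ and $\{M_N^\omega\}_N$ is a coherent system; then $b\colon\omega\mapsto\{M_N^\omega\}$ is a Borel, $\mathcal F_\infty$-measurable map $\mathcal P_0\to\varprojlim\M(X_N)$. Disintegrating $P_{\{M_N\}}$ over $\mathcal F_\infty$ and identifying each fibre --- it is admissible with $N$-th marginal $M_N^\omega$, hence equals $P_{\{M_N^\omega\}}$ --- one obtains, for every $x\in X_N$,
\[
M_N(x) = \mathbb E_P\big[P(x_N = x\mid\mathcal F_\infty)\big] = \int M_N^{b(\omega)}(x)\, P_{\{M_N\}}(d\omega) = \int M_N^\omega(x)\,\pi(d\omega), \qquad \pi := b_* P_{\{M_N\}}.
\]

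It remains to identify $\pi$ as a measure on the extreme boundary $\Omega$ and to establish uniqueness. The pivotal fact is the characterization: a coherent system $\{L_N\}$ is extreme in $\varprojlim\M(X_N)$ if and only if the tail $\sigma$-algebra of $P_{\{L_N\}}$ is trivial, equivalently $b_*P_{\{L_N\}} = \delta_{\{L_N\}}$. For ``if'': a decomposition $\{L_N\} = \tfrac12(\{L'_N\}+\{L''_N\})$ lifts to $P_{\{L_N\}} = \tfrac12(P_{\{L'_N\}}+P_{\{L''_N\}})$ on $\mathcal P$, so $P_{\{L'_N\}}\leq 2P_{\{L_N\}}$, its restriction to $\mathcal F_\infty$ is $\{0,1\}$-valued and shares null sets with $P_{\{L_N\}}|_{\mathcal F_\infty}$, whence the a.s.-constant martingale limits force $L'_N = L_N$. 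For ``only if'': apply the decomposition of the previous paragraph to $\{L_N\}$ itself; extremality forces $\pi$ to be a Dirac mass, which places $\{L_N\}$ in the trivial-tail set. Thus $\Omega$ equals the set of trivial-tail coherent systems, and this set is Borel because $\{L_N\}\mapsto P_{\{L_N\}}\mapsto b_*P_{\{L_N\}}$ is Borel and ``$b_*P_{\{L_N\}} = \delta_{\{L_N\}}$'' is a Borel condition. Uniqueness of $\pi$ follows at once: if $L_N(x) = \int_\Omega M_N^\omega(x)\,\pi'(d\omega)$ for some $\pi'\in\M(\Omega)$, then $P_{\{L_N\}} = \int_\Omega P_{\{M_N^\omega\}}\,\pi'(d\omega)$ by the admissible-measure correspondence, and applying $b_*$ with $b_*P_{\{M_N^\omega\}} = \delta_\omega$ on $\Omega$ gives $\pi' = b_*P_{\{L_N\}} = \pi$. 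The converse statement, that every $\pi\in\M(\Omega)$ yields a coherent system via the integral formula, is routine (integrate the coherency relations, all integrands bounded by $1$), and bijectivity of $\pi\mapsto\{M_N\}$ is then immediate from the existence and uniqueness above.

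The main obstacle --- and the real content of \cite[Thm.\ 9.2]{Ol} --- is the third step: correctly disintegrating $P_{\{M_N\}}$ over $\mathcal F_\infty$ and matching the fibres with the $\Lambda$-admissible measures, then pinning down the extreme points of the (generally non-compact) convex set $\varprojlim\M(X_N)$ as exactly the trivial-tail coherent systems, together with the measurability bookkeeping that makes $\Omega$ Borel and each $\omega\mapsto M_N^\omega(x)$ a Borel function.
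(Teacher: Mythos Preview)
The paper does not prove this theorem at all: it is quoted verbatim as \cite[Thm.~9.2]{Ol} and used as a black box. Your sketch is essentially the standard reverse-martingale/entrance-boundary argument that underlies the cited result, so there is nothing to compare against in the present paper; your outline is consistent with the approach of the original reference.
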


Let us also define the Martin boundary of the branching graph $\{ X_N, \Lambda^{N+1}_N : N \geq 1 \}$.
If we compose the Markov kernels $\Lambda^{n+1}_n: X_{n+1} \dashrightarrow X_n$, $N > n \geq k$, we obtain
\[
\Lambda^N_k := \Lambda^{N}_{N-1}\Lambda^{N-1}_{N-2}\cdots \Lambda^{k+1}_k : X_N \dashrightarrow X_k.
\]
In particular, if $N > k$ and $x(N)\in X_N$, then $\Lambda^N_k(x(N), \cdot)$ is a probability measure on $X_k$.

\begin{df}\label{def:martin}
Let $\{M_k \in \M(X_k)\}_{k \geq 1}$ be a coherent system for which there exists a sequence $\{ x(N) \in X_N \}_{N \geq 1}$ such that
\begin{equation}\label{eqn:martinconv}
M_k(y) = \lim_{N\rightarrow\infty}{\Lambda^N_k(x(N), y)}, \textrm{ for all $y\in X_k$, $k \geq 1$.}
\end{equation}
The \textit{Martin boundary} of the branching graph $\{ X_N, \Lambda^{N+1}_N : N \geq 1 \}$ is defined as the subset of $\varprojlim{\M(X_N)}$ consisting of the coherent systems $\{M_k \in \M(X_k)\}_{k \geq 1}$ for which such a sequence $\{x(N) \in X_N\}_{N \geq 1}$ exists. The Martin boundary is denoted $\Omega^{\Martin}$.
\end{df}

\begin{thm}[\cite{OO2}, Thm 6.1]\label{thm:minimalmartin}
The minimal boundary is contained in the Martin boundary.
In other words, for any $\omega\in\Omega$, there exists a sequence $\{x(N)\in X_N\}_{N \geq 1}$ such that
\[
M^{\omega}_k(y) = \lim_{N\rightarrow\infty} \Lambda^N_k(x(N), y), \textrm{ for all }y\in X_k, \ k\geq 1.
\]
\end{thm}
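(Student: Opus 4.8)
The plan is to represent the coherent system $\{M^{\omega}_N\}_{N\ge1}$ attached to a fixed $\omega\in\Omega$ as a Markov chain run in reversed time, and to extract the sequence $\{x(N)\}$ from a reverse martingale convergence. First, note that the coherency relations $M^{\omega}_{N+1}\Lambda^{N+1}_N=M^{\omega}_N$ are precisely the consistency conditions required to apply Kolmogorov's extension theorem to the finite-dimensional laws $\mathbb{P}(\xi_1=x_1,\ldots,\xi_N=x_N):=M^{\omega}_N(x_N)\prod_{j=2}^{N}\Lambda^{j}_{j-1}(x_j,x_{j-1})$. This produces a probability space carrying random variables $\xi_N\in X_N$, $N\ge1$, with $\xi_N$ distributed as $M^{\omega}_N$, which form a backward Markov chain: writing $\mathcal{G}_N:=\sigma(\xi_N,\xi_{N+1},\ldots)$, one has $\mathbb{P}(\xi_k=y\mid\mathcal{G}_N)=\Lambda^N_k(\xi_N,y)$ for all $k<N$ and $y\in X_k$, directly from the construction. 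Consequently, for each fixed $k$ and $y\in X_k$, the bounded process $Y_N:=\Lambda^N_k(\xi_N,y)=\mathbb{P}(\xi_k=y\mid\mathcal{G}_N)$, $N\ge k$, is a reverse martingale for the decreasing filtration $(\mathcal{G}_N)$, so by the reverse martingale convergence theorem $Y_N\to Y_\infty:=\mathbb{P}(\xi_k=y\mid\mathcal{G}_\infty)$ almost surely and in $L^1$, where $\mathcal{G}_\infty:=\bigcap_N\mathcal{G}_N$ is the tail $\sigma$-field.

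The key point is then that extremality of $\omega$ forces $\mathcal{G}_\infty$ to be $\mathbb{P}$-trivial. Indeed, if some $A\in\mathcal{G}_\infty$ had $0<\mathbb{P}(A)<1$, then conditioning the chain on $A$ and on $A^c$ would yield two coherent systems $\{M^{\omega}_N(\cdot\mid A)\}_N$ and $\{M^{\omega}_N(\cdot\mid A^c)\}_N$ — coherency is preserved because $A\in\mathcal{G}_N$ for every $N$, so the backward transition kernels are unaffected by the conditioning — whose convex combination with weights $\mathbb{P}(A),\mathbb{P}(A^c)$ recovers $\{M^{\omega}_N\}_N$. These two systems are genuinely distinct: if they coincided, the finite-dimensional laws of $(\xi_N,\xi_{N+1},\ldots,\xi_M)$ conditioned on $A$ would equal the unconditional ones for all $N\le M$, forcing $A$ to be independent of $\sigma(\xi_1,\xi_2,\ldots)\supseteq\mathcal{G}_\infty\ni A$ and hence of itself, contradicting $0<\mathbb{P}(A)<1$. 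This contradicts $\omega\in\Omega$. Triviality of $\mathcal{G}_\infty$ gives $Y_\infty=\mathbb{E}[Y_\infty]=\mathbb{E}[Y_k]=M^{\omega}_k(y)$ almost surely, i.e.\ $\Lambda^N_k(\xi_N,y)\to M^{\omega}_k(y)$ a.s.\ for every fixed $(k,y)$. (This equivalence of extremality with tail-triviality is the classical input of the general theory; cf.\ \cite{DF, D, W} and \cite{Ol}.)

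To finish, observe that the set of pairs $(k,y)$ with $k\ge1$ and $y\in X_k$ is countable, so almost surely $\Lambda^N_k(\xi_N,y)\to M^{\omega}_k(y)$ holds for all of them simultaneously. Fixing any sample point $\omega_0$ in this full-measure event and setting $x(N):=\xi_N(\omega_0)\in X_N$ produces a deterministic sequence with $\lim_{N\to\infty}\Lambda^N_k(x(N),y)=M^{\omega}_k(y)$ for every $k\ge1$ and $y\in X_k$; by Definition \ref{def:martin} this means $\omega\in\Omega^{\Martin}$, and since $\omega$ was arbitrary, $\Omega\subseteq\Omega^{\Martin}$. The only step requiring real care is the second paragraph, namely the identification of extreme coherent systems with those having trivial tail $\sigma$-field; the Kolmogorov extension, the backward Markov property, the reverse martingale convergence theorem, and the concluding diagonalization are all routine.
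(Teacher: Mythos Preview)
The paper does not supply its own proof of this theorem; it is quoted as \cite[Thm.~6.1]{OO2} and used as a black box. Your argument is correct and is in fact the standard one underlying that reference (and the earlier sources \cite{DF,D,W} listed in Section~\ref{sec:generalboundary}): realize the coherent system as the law of a backward Markov chain via Kolmogorov extension, apply reverse martingale convergence to $\Lambda^N_k(\xi_N,y)=\mathbb{P}(\xi_k=y\mid\mathcal{G}_N)$, identify extremality with triviality of the tail $\sigma$-field $\mathcal{G}_\infty$, and extract a deterministic approximating sequence from a full-measure event using countability of $\bigsqcup_k X_k$. Each step is handled cleanly, including the point that conditioning on a tail event preserves the backward transition kernels and hence coherency of the conditioned marginals, and the distinctness argument via independence of $A$ from itself. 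There is nothing to compare against in the present paper beyond noting that your write-up recovers exactly the classical argument it cites.
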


\subsection{Symmetric $q$-Gelfand-Tsetlin graph}

Let $\sigma = (\sigma_1, \sigma_2, \ldots) \in\{+1, -1\}^{\infty}$ be arbitrary; we will construct a branching graph associated to $\sigma$.

For each $N \geq 1$, define the matrix $[\Lambda^{N+1}_N(\lambda, \mu)]_{\lambda, \mu}$ of format $\GT_{N+1}\times\GT_N$ via the branching relations
\begin{equation}\label{eqn:link}
\begin{aligned}
\frac{s_{\lambda}(x_1, \ldots, x_N, q^{-N})}{s_{\lambda}(1, q^{-1}, \ldots, q^{-N})}
&= \sum_{\mu\in\GT_N}{\Lambda^{N+1}_N(\lambda, \mu)} \frac{s_{\mu}(x_1, \ldots, x_N)}{s_{\mu}(1, q^{-1}, \ldots, q^{1-N})}, &&\textrm{ if }\sigma_N = -1,\\
\frac{s_{\lambda}(x_1, \ldots, x_N, q^N)}{s_{\lambda}(1, q, \ldots, q^N)}
&= \sum_{\mu\in\GT_N}{\Lambda^{N+1}_N(\lambda, \mu)} \frac{s_{\mu}(x_1, \ldots, x_N)}{s_{\mu}(1, q, \ldots, q^{N-1})}, &&\textrm{ if }\sigma_N = +1.
\end{aligned}
\end{equation}

This means the following. For $\lambda\in\GT_{N+1}$, the left hand sides of \eqref{eqn:link} are symmetric Laurent polynomials in $N$ variables. The Schur polynomials $s_{\mu}(x_1, \ldots, x_N)$, $\mu\in\GT_N$, form a basis of the space of symmetric Laurent polynomials in $x_1, \ldots, x_N$. Thus we are guaranteed the existence and uniqueness of the coefficients $\Lambda^{N+1}_N(\lambda, \mu)$, $\mu\in\GT_N$, in the right hand sides of \eqref{eqn:link}.

From Proposition \eqref{prop:branchingrule} and the homogeneity of Schur polynomials, we obtain
\begin{equation}\label{Lambda:formulas}
\Lambda^{N+1}_N(\lambda, \mu) =
\mathbf{1}_{\{ \mu \prec \lambda \}} \times
\begin{cases}
\displaystyle\frac{s_{\mu}(q, q^2, \ldots, q^{N})}{s_{\lambda}(1, q, \ldots, q^N)} & \textrm{ if }\sigma_N = -1,\\
\displaystyle\frac{s_{\mu}(q^{-1}, q^{-2}, \ldots, q^{-N})}{s_{\lambda}(1, q^{-1}, \ldots, q^{-N})}&  \textrm{ if }\sigma_N = +1.
\end{cases}
\end{equation}

\begin{lem}\label{lem:stochastic}
For each $N\geq 1$, the matrix $[\Lambda^{N+1}_N(\lambda, \mu)]$ is stochastic.
\end{lem}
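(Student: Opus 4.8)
The plan is to verify directly the two defining properties of a stochastic matrix: that every entry is nonnegative, and that each row sums to one. Both follow quickly from the explicit formula \eqref{Lambda:formulas} together with the branching rule of Proposition \ref{prop:branchingrule}.

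For nonnegativity, I would use the standing assumption $0 < q < 1$, so that the tuples $(q, q^2, \ldots, q^N)$ and $(1, q, \ldots, q^N)$ consist of pairwise distinct \emph{positive} reals (in particular the Vandermonde determinants in the definition of the Schur polynomials do not vanish). For any $\nu$ in $\GT_n$, the Schur polynomial $s_\nu$ equals a Laurent monomial with positive coefficient (a shift by $(\nu_n, \ldots, \nu_n)$) times an honest Schur polynomial of a partition, which has nonnegative integer coefficients; hence $s_\nu$ takes strictly positive values at any tuple of positive reals. Applying this to the numerator $s_\mu(\cdots)$ and denominator $s_\lambda(\cdots)$ appearing in \eqref{Lambda:formulas}, and observing that the factor $\mathbf{1}_{\{\mu \prec \lambda\}}$ is nonnegative, we conclude that each $\Lambda^{N+1}_N(\lambda, \mu) \geq 0$.

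For the row sums, fix $\lambda \in \GT_{N+1}$ and consider first the case $\sigma_N = -1$. In Proposition \ref{prop:branchingrule} I would specialize $(x_1, \ldots, x_N) = (q, q^2, \ldots, q^N)$ and $u = 1$, obtaining $s_{\lambda}(q, q^2, \ldots, q^N, 1) = \sum_{\mu \prec \lambda} s_{\mu}(q, q^2, \ldots, q^N)$. Since Schur polynomials are symmetric, the left-hand side equals $s_{\lambda}(1, q, \ldots, q^N)$, so dividing through by $s_{\lambda}(1, q, \ldots, q^N)$ gives $\sum_{\mu \in \GT_N} \Lambda^{N+1}_N(\lambda, \mu) = 1$. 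The case $\sigma_N = +1$ is handled identically after replacing $q$ by $q^{-1}$: specialize $(x_1, \ldots, x_N) = (q^{-1}, q^{-2}, \ldots, q^{-N})$ and $u = 1$, use symmetry to identify $s_{\lambda}(q^{-1}, \ldots, q^{-N}, 1)$ with $s_{\lambda}(1, q^{-1}, \ldots, q^{-N})$, and divide. Combining the two paragraphs yields that each $[\Lambda^{N+1}_N(\lambda, \mu)]$ is stochastic.

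I do not anticipate a genuine obstacle; this is a short and routine verification. The only point requiring mild care is that $\lambda$ and $\mu$ are \emph{signatures}, possibly with negative parts, so $s_\lambda$ and $s_\mu$ are Laurent polynomials rather than ordinary polynomials. This affects neither the positivity argument (a Laurent monomial with positive coefficient times a nonnegative-coefficient polynomial is still positive on positive reals) nor the branching identity, since Proposition \ref{prop:branchingrule} is already stated at the level of $\GT$; so the argument goes through verbatim.
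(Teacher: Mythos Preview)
Your proof is correct and follows essentially the same approach as the paper: verify nonnegativity from the explicit formula \eqref{Lambda:formulas} and obtain the row-sum identity by specializing the defining branching relation. The only cosmetic difference is that for positivity the paper invokes the product formula of Proposition~\ref{prop:evaluation}, whereas you use the equivalent elementary observation that a signature Schur function is a positive Laurent monomial times a partition Schur polynomial and hence positive on tuples of positive reals; and for the row sums the paper plugs the specialization directly into \eqref{eqn:link} rather than into Proposition~\ref{prop:branchingrule}, which amounts to the same computation.
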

\begin{proof}
For any $\lambda\in\GT_{N+1}$, $\mu\in\GT_N$, let us show $\Lambda^{N+1}_N(\lambda, \mu) \geq 0$.
From Proposition $(\ref{prop:evaluation})$, it follows that $s_{\kappa}(1, q, \ldots, q^K) > 0$ if $q > 0$.
Then $\Lambda^{N+1}_N(\lambda, \mu) \geq 0$ follows from the explicit formula $(\ref{Lambda:formulas})$.
Next, for any fixed $\lambda\in\GT_{N+1}$, set $x_i = q^{i-1}$, $i = 1, 2, \ldots, N$, in the polynomial equalities $(\ref{eqn:link})$ to show $1 = \sum_{\mu}{\Lambda^{N+1}_N(\lambda, \mu)}$ for both $\sigma_N = 1$ and $\sigma_N = -1$.
\end{proof}

Instead of $\sigma = (\sigma_1, \sigma_2, \ldots) \in\{+1, -1\}^{\infty}$, we can consider the sequence $\bfb = (b(1), b(2), \ldots)\in\N_0^{\infty}$ given by
\begin{equation}\label{sigmab}
b(n) := \#\{ 1\leq i\leq n : \sigma_i = -1 \}, \textrm{ so that } n - b(n) = \#\{ 1\leq i\leq n : \sigma_i = +1 \}.
\end{equation}
The sequence $\bfb$ satisfies $b(1)\in\{0, 1\}$ and $b(n+1) - b(n) \in \{0, 1\}$ for all $n \geq 1$; conversely any sequence $\bfb$ with these properties arises from some $\sigma\in\{+1, -1\}^{\infty}$ via the relations $(\ref{sigmab})$.

\begin{df}
The sequence $\{\GT_N, \Lambda^{N+1}_N : N \geq 1\}$, defined by $(\ref{eqn:link})$, is called the \textit{symmetric $q$-Gelfand-Tsetlin graph associated to $\sigma \in \{\pm 1\}^{\infty}$} (or \textit{associated to $\bfb\in\N_0^{\infty}$}).
The minimal boundary of the symmetric $q$-Gelfand-Tsetlin graph associated to $\sigma\in\{\pm 1\}^{\infty}$ will be denoted $\Omega_q(\sigma)$, or simply $\Omega_q$,  if there is no confusion about $\sigma$.
Its Martin boundary will be denoted $\Omega^{\Martin}_q(\sigma)$, or just $\Omega_q^{\Martin}$.
\end{df}

It will be shown later in Theorems \ref{thm:minimalschur} and \ref{thm:symmetricschur} that whenever $\sigma, \sigma' \in \{\pm 1\}^{\infty}$ satisfy the generic condition \eqref{genericSigma}, the topological spaces $\Omega_q(\sigma)$ and $\Omega_q(\sigma')$ are homeomorphic. The same remark applies to the Martin boundaries.

\subsection{BC type $q$-Gelfand-Tsetlin graph}\label{sec:bcgraph}

For $G = B, C$ and $D$, set $\epsilon = \half, 1$, and $0$, respectively.
For each $N \geq 1$, define the matrix $[\Lambda^{N+1}_{N}(\lambda, \mu)]_{\lambda, \mu}$ of format $\GTp_{N+1}\times\GTp_N$ via the branching relation
\begin{equation}\label{df:symplecticlinks}
\frac{\chi^G_{\lambda}(x_1, \ldots, x_N, q^{N+\epsilon})}{\chi_{\lambda}^G(q^{\epsilon}, q^{1+\epsilon}, \ldots, q^{N+\epsilon})}
= \sum_{\mu\in\GTp_N}{ \Lambda^{N+1}_N(\lambda, \mu)\frac{\chi^G_{\mu}(x_1, x_2, \ldots, x_N)}{\chi^G_{\mu}(q^{\epsilon}, q^{1+\epsilon}, \ldots, q^{N-1+\epsilon})} }.
\end{equation}
From Proposition $\ref{prop:branchsymplectic}$, we have
\begin{equation}\label{eqn:links}
\Lambda^{N+1}_N(\lambda, \mu) = \frac{\chi_{\mu}^G(q^{\epsilon}, q^{1 + \epsilon}, \ldots, q^{N-1+\epsilon})}{\chi^G_{\lambda}(q^{\epsilon}, q^{1+\epsilon}, \ldots, q^{N+\epsilon})}\chi^G_{\lambda/\mu}(q^{N+\epsilon}).
\end{equation}
Note that the kernels $\Lambda^{N+1}_N$ depend on $G$, but we suppress it from the notation.

Repeating the proof of Lemma $\ref{lem:stochastic}$, we obtain:

\begin{lem}
For each $N \geq 1$, the matrix $[\Lambda^{N+1}_N(\lambda, \mu)]$ is stochastic.
\end{lem}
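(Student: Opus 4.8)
The plan is to follow verbatim the argument used for Lemma~\ref{lem:stochastic}, now with Schur polynomials replaced by the characters $\chi^G_\lambda$ and with Proposition~\ref{prop:evaluation} replaced by its $BCD$ analogue, Proposition~\ref{prop:evalsymplectic}. So there are two things to check: that every entry $\Lambda^{N+1}_N(\lambda,\mu)$ is nonnegative, and that every row sums to $1$.

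First I would establish nonnegativity of the entries. From Proposition~\ref{prop:evalsymplectic} one checks, exactly as in the proof of Lemma~\ref{lem:stochastic}, that $\chi^G_\kappa(q^\epsilon, q^{1+\epsilon}, \ldots, q^{K-1+\epsilon}) > 0$ for every $\kappa \in \GTp_K$ and $0 < q < 1$ (the $V^s$ factors contribute the same sign to numerator and denominator, and the remaining factors are individually of matching sign; for type $D$ there is simply an extra positive factor $2$). Next, $\chi^G_{\lambda/\mu}(q^{N+\epsilon}) \ge 0$: by $(\ref{chilambdamu})$ this is a sum over $\nu$ with $\lambda \succ \nu \succ \mu$ of the terms $\tau^G(u;\lambda,\nu,\mu)\, u^{2|\nu|-|\lambda|-|\mu|}$ evaluated at $u = q^{N+\epsilon} > 0$, and in each of the three cases the coefficient $\tau^G$ is one of $1$, $1 + u^{-1}$, $0$, $2$ — all nonnegative for $u > 0$ — while the monomial $u^{2|\nu|-|\lambda|-|\mu|}$ is positive. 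Plugging these two observations into the explicit formula $(\ref{eqn:links})$ yields $\Lambda^{N+1}_N(\lambda,\mu) \ge 0$.

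Then I would verify that each row sums to $1$. Fix $\lambda \in \GTp_{N+1}$ and specialize $x_i = q^{i-1+\epsilon}$ for $i = 1, \ldots, N$ in the polynomial identity $(\ref{df:symplecticlinks})$. The left-hand side becomes $\chi^G_\lambda(q^\epsilon, \ldots, q^{N-1+\epsilon}, q^{N+\epsilon}) / \chi^G_\lambda(q^\epsilon, q^{1+\epsilon}, \ldots, q^{N+\epsilon}) = 1$, while on the right-hand side each ratio $\chi^G_\mu(q^\epsilon, \ldots, q^{N-1+\epsilon}) / \chi^G_\mu(q^\epsilon, q^{1+\epsilon}, \ldots, q^{N-1+\epsilon})$ equals $1$, so the right-hand side collapses to $\sum_{\mu \in \GTp_N} \Lambda^{N+1}_N(\lambda,\mu)$. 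Hence each row is nonnegative and sums to $1$, i.e.\ $[\Lambda^{N+1}_N(\lambda,\mu)]$ is stochastic.

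The only point deserving care — rather than a genuine obstacle — is the nonnegativity of the branching coefficients $\chi^G_{\lambda/\mu}(q^{N+\epsilon})$, since $(\ref{chilambdamu})$ is not manifestly positive in an arbitrary form (for instance, rewriting it via Remark~\ref{symmetry:inverse} introduces negative exponents); one must use the form in which the argument $u = q^{N+\epsilon}$ is a positive real and all the $\tau^G$ coefficients are visibly $\ge 0$, as above. Everything else is a direct transcription of the proof of Lemma~\ref{lem:stochastic}.
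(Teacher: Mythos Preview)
Your proposal is correct and follows exactly the approach the paper intends: the paper's own proof consists solely of the phrase ``Repeating the proof of Lemma~\ref{lem:stochastic}, we obtain'', and you have faithfully carried out that repetition, replacing Proposition~\ref{prop:evaluation} by Proposition~\ref{prop:evalsymplectic} and the Schur branching by the explicit formula~(\ref{eqn:links}) together with the manifest nonnegativity of $\tau^G(u;\lambda,\nu,\mu)$ at $u=q^{N+\epsilon}>0$. Your added remark about using the form~(\ref{chilambdamu}) rather than the inverted form of Remark~\ref{symmetry:inverse} is a sensible clarification of a detail the paper leaves implicit.
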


\begin{df}
The sequence $\{\GTp_N, \Lambda^{N+1}_N : N \geq 1\}$, defined by $(\ref{df:symplecticlinks})$, is called the \textit{BC type $q$-Gelfand-Tsetlin graph}.
The \textit{(minimal) boundary of the BC type $q$-Gelfand-Tsetlin graph} will be denoted $\Omega_q^G$.
The \textit{Martin boundary of the BC type $q$-Gelfand-Tsetlin graph} will be denoted $\Omega_q^{G, \Martin}$.
\end{df}

The boundary $\Omega^G_q$ depends on the type $G\in\{B, C, D\}$. We show later in Theorems \ref{minimalSymplectic} and \ref{thm:minimalsymplectic} that, as topological spaces, all three of them are homeomorphic to each other (but they correspond to three distinct families of coherent systems on the BC type $q$-Gelfand-Tsetlin graph). The same remark applies to the Martin boundaries $\Omega_q^{G, \Martin}$.

\section{Boundary of the symmetric $q$-Gelfand-Tsetlin graph}\label{boundary:qschur}

In this section, let us fix a sequence $\sigma = (\sigma_1, \sigma_2, \ldots) \in\{ \pm 1 \}^{\infty}$ satisfying the assumption
\begin{equation}\label{genericSigma}
\lim_{N\rightarrow\infty}{\#\{ 1\leq i\leq N : \sigma_i = -1 \}} = \lim_{N\rightarrow\infty}{\#\{ 1\leq i\leq N : \sigma_i = +1 \}} = +\infty.
\end{equation}
Equivalently, in terms of the sequence $\bfb$ given by $(\ref{sigmab})$,
\begin{equation*}
\lim_{N\rightarrow\infty}{b(N)} = \lim_{N\rightarrow\infty}{(N - b(N))} = +\infty.
\end{equation*}

The goal of this section is to characterize the minimal boundary of the symmetric $q$-Gelfand-Tsetlin graph associated to $\bfb$.
To proceed via the ergodic method of Vershik-Kerov, \cite{VK, V}, we need to first characterize the Martin boundary of the symmetric $q$-Gelfand-Tsetlin graph.

In subsections $\ref{sec:inject}$, $\ref{sec:prelim}$ and $\ref{sec:martin}$, we identify the Martin boundary with the set $\X$ of all doubly infinite, nondecreasing integer sequences (recall Definition $\ref{def:stabilization}$).
In Subsection $\ref{sec:minimal}$, we show that the minimal boundary coincides with the Martin boundary.
The proof of the latter statement is based on the Law of Large Numbers of Theorem $\ref{thm:LLN}$.

\subsection{A family of coherent probability measures}\label{sec:inject}

In this section, we define a map from the set $\X$ into the Martin boundary of the symmetric $q$-Gelfand-Tsetlin graph.

Let $k\in\N$ and let $C_k$ be the space of analytic functions $f(z_1, \ldots, z_k)$ on $(\C^*)^k$ for which there exist $a_{m_1, \ldots, m_k}\in\C$, $m_1, \ldots, m_k\in\Z$, such that
\begin{equation}\label{eqn:fourierexample}
f(z_1, \ldots, z_k) = \sum_{m_1, \dots, m_k\in\Z}{a_{m_1, \ldots, m_k}z_1^{m_1}\cdots z_k^{m_k}}\ \ \ \ \forall (z_1, \ldots, z_k)\in (\C^*)^k,
\end{equation}
and the right side is an absolutely and uniformly convergent sum on compact subsets of $(\C^*)^k$.
Then necessarily
\begin{equation*}
a_{m_1, \ldots, m_k} = \oint_{|z_1| = 1} \cdots \oint_{|z_k| = 1} \frac{f(z_1, \ldots, z_k)}{z_1^{m_1+1}\cdots z_k^{m_k+1}} \prod_{i=1}^k{\frac{dz_i}{2\pi\ii}}.
\end{equation*}
Let us call $C_k^{S_k}$ the linear subspace of $C_k$, consisting of functions $f(z_1, \ldots, z_k)$ for which the coefficients in the expansion $(\ref{eqn:fourierexample})$ satisfy
\begin{equation*}
a_{m_1, \ldots, m_k} = a_{m_{\sigma(1)}, \ldots, m_{\sigma(k)}}, \textrm{ for all }\sigma\in S_k, \ m_1, \ldots, m_k\in\Z.
\end{equation*}
For example, finite linear combinations of Schur polynomials $s_{\lambda}(z_1, \ldots, z_k)$, $\lambda\in\GT_k$, are elements of $C_k^{S_k}$.
For each $\mu\in\GT_k$, define the linear functional $F_{\mu} : C_k^{S_k} \rightarrow \C$ by
\begin{equation}\label{def:Fmufunctional}
F_{\mu}(f) := \frac{1}{k!}\oint_{|z_1|=1} \cdots \oint_{|z_k|=1}
f(z_1, \ldots, z_k) \overline{s_{\mu}(z_1, \ldots, z_k)}  \prod_{1\leq i<j\leq k}{|z_i - z_j|^2} \prod_{i=1}^k{\frac{dz_i}{2\pi\ii}}.
\end{equation}

\begin{lem}\label{lem:schurfunctionals}
The linear functionals $\{F_{\mu} : C_k^{S_k} \rightarrow \C\}_{\mu\in\GT_k}$, defined by $(\ref{def:Fmufunctional})$, satisfy
\begin{enumerate}
	\item $F_{\mu}(s_{\nu}(z_1, \ldots, z_k)) = \delta_{\mu, \nu}$, for all $\mu, \nu\in\GT_k$.
	\item If $\{f_N\}_{N \geq 1}\subset C_k^{S_k}$ is such that $f_N \rightarrow f$ uniformly on compact subsets of $(\C^*)^k$, then $f \in C_k^{S_k}$ and $F_{\mu}(f_N) \rightarrow F_{\mu}(f)$ for all $\mu\in\GT_k$.
	\item Let $h\in C_k^{S_k}$ be such that $F_{\mu}(h) = 0$, for all $\mu\in\GT_k$. Then $h$ is identically zero.
\end{enumerate}
\end{lem}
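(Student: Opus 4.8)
The plan is to prove the three assertions in order, since (1) is the computational heart and (2)--(3) follow formally from it.

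For (1), I would start from the classical Weyl integration / orthogonality identity for Schur polynomials on the torus: for $\mu,\nu\in\GT_k$,
\[
\frac{1}{k!}\oint_{|z_1|=1}\cdots\oint_{|z_k|=1} s_\nu(z_1,\ldots,z_k)\,\overline{s_\mu(z_1,\ldots,z_k)}\prod_{1\le i<j\le k}|z_i-z_j|^2\prod_{i=1}^k\frac{dz_i}{2\pi\ii}=\delta_{\mu,\nu}.
\]
This is exactly the statement that the irreducible characters of $U(k)$ are orthonormal in $L^2$ of the group with respect to Haar measure, pushed forward to the maximal torus (Weyl integration formula); alternatively one can prove it directly by writing $s_\lambda V = \det[z_i^{\lambda_j+k-j}]$, so that $s_\nu s_\mu^* |V|^2 = \det[z_i^{\nu_j+k-j}]\cdot\overline{\det[z_i^{\mu_j+k-j}]}$, expanding both determinants as sums over permutations, and using $\oint_{|z|=1} z^a\,\overline{z^b}\,\frac{dz}{2\pi\ii}=\delta_{a,b}$ together with the fact that the exponents $\nu_j+k-j$ (resp. $\mu_j+k-j$) are strictly decreasing, hence distinct, so that the only surviving terms force $\nu=\mu$ and contribute $k!$ in total. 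This gives $F_\mu(s_\nu)=\delta_{\mu,\nu}$.

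For (2), the point is that uniform convergence on compact subsets of $(\C^*)^k$, in particular on the torus $|z_i|=1$, lets us pass the limit through the (compact-contour) integral defining $F_\mu$; so $F_\mu(f_N)\to F_\mu(f)$ is immediate once we know $f\in C_k^{S_k}$. To see the latter: each $f_N$ has a Laurent expansion with coefficients $a^{(N)}_{m_1,\ldots,m_k}=\oint\cdots\oint f_N\,z_1^{-m_1-1}\cdots z_k^{-m_k-1}\prod\frac{dz_i}{2\pi\ii}$, and uniform convergence on the torus gives $a^{(N)}_{\bd m}\to a_{\bd m}:=\oint\cdots\oint f\,z_1^{-m_1-1}\cdots z_k^{-m_k-1}\prod\frac{dz_i}{2\pi\ii}$; the $S_k$-symmetry of the coefficients is preserved under this limit, and one checks (using that $f$ is analytic on all of $(\C^*)^k$, so its Laurent series on any polyannulus converges there) that $\sum a_{\bd m}z^{\bd m}=f$ with the required absolute/uniform convergence, i.e.\ $f\in C_k^{S_k}$. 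I would keep this argument brief, citing the standard theory of Laurent expansions of analytic functions on Reinhardt domains.

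For (3), suppose $h\in C_k^{S_k}$ with $F_\mu(h)=0$ for all $\mu\in\GT_k$. Writing $h=\sum_{\bd m}a_{\bd m}z^{\bd m}$ with $S_k$-symmetric coefficients, the symmetrized monomials group into $\sum_{\bd m}a_{\bd m}z^{\bd m}=\sum_{\lambda\in\GT_k}a_\lambda\, m_\lambda(z)$ where $m_\lambda$ is the monomial symmetric function; since the transition matrix between $\{m_\lambda\}$ and $\{s_\lambda\}$ is unitriangular (with respect to dominance), we may equally write $h=\sum_{\lambda\in\GT_k}c_\lambda\, s_\lambda(z)$ for suitable scalars $c_\lambda$, the sum converging uniformly on compact subsets of $(\C^*)^k$ (in particular on the torus). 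Applying $F_\mu$ and using (1) together with the continuity from (2) gives $c_\mu=F_\mu(h)=0$ for every $\mu$, hence $h\equiv 0$.

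The main obstacle, and the step I would spend the most care on, is (3), specifically the legitimacy of the expansion $h=\sum_\lambda c_\lambda s_\lambda$ together with term-by-term integration: one must make sure that absolute and uniform convergence of the Laurent series of $h$ on the unit torus is enough to rearrange it into a uniformly convergent series of Schur polynomials and then interchange $F_\mu$ with the infinite sum. This is where the hypothesis ``$h\in C_k^{S_k}$'' (absolute, uniform convergence on compacts of $(\C^*)^k$, not just pointwise) is essential, and I would state the interchange explicitly rather than leaving it implicit.
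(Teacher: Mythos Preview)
Your treatment of (1) and (2) matches the paper's: both are dispatched as standard (the paper cites Macdonald for (1) and calls (2) obvious).

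For (3) you take a genuinely different route from the paper, and the obstacle you flag is real. The paper does \emph{not} expand $h$ as a series $\sum_\lambda c_\lambda s_\lambda$. Instead it argues: since $\{s_\mu\}_{\mu\in\GT_k}$ spans the symmetric Laurent polynomials, and these are dense in the continuous symmetric functions on $\mathbb{T}^k$ (Stone--Weierstrass, after the change of variables $z_i\mapsto z_i+z_i^{-1}$), the hypothesis $F_\mu(h)=0$ for all $\mu$ forces $\int h\,\bar g\,|V|^2=0$ for every continuous symmetric $g$ on $\mathbb{T}^k$. Taking $g=h$ gives $\int |h|^2|V|^2=0$, so $h\equiv 0$ on the torus, and then analyticity on $(\C^*)^k$ finishes. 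This completely sidesteps any rearrangement or term-by-term integration.

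Your route is not hopeless, but the point you correctly isolate is more serious than your write-up suggests. The unitriangularity of the Kostka matrix gives each $m_\mu$ as a \emph{finite} combination of $s_\lambda$'s with $\lambda\le\mu$, but when you invert and collect, $c_\lambda=\sum_{\mu\ge\lambda,\,|\mu|=|\lambda|}a_\mu(K^{-1})_{\mu\lambda}$ is an \emph{infinite} sum: for signatures (as opposed to partitions) the set $\{\mu\in\GT_k:\mu\ge\lambda,\ |\mu|=|\lambda|\}$ is infinite (e.g.\ for $k=2$, $\lambda=(0,0)$, it contains all $(n,-n)$). So neither the definition of $c_\lambda$ nor the uniform convergence of $\sum_\lambda c_\lambda s_\lambda$ on the torus is automatic, and you would need a separate estimate (controlling inverse Kostka numbers against the decay of $a_\mu$) to close the argument. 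The paper's density-plus-$g=h$ trick buys you item (3) in two lines without any of this.
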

\begin{proof}
The first item of the lemma is well known, see for instance \cite[I.4, VI.9]{M}.

The second item is obvious.

For the third item, let $h\in C_k^{S_k}$ be such that $F_{\mu}(h) = 0$, for all $\mu\in\GT_k$.
The span of the Schur polynomials $\{s_{\mu}(z_1, \ldots, z_k)\}_{\mu\in\GT_k}$ is the linear space of symmetric Laurent polynomials on $k$ variables, and this is dense in the space of continuous, symmetric functions on the $k$--dimensional torus $\mathbb{T}^k$.
Therefore
\begin{equation}\label{innerzero}
\oint_{|z_1|=1} \cdots \oint_{|z_k|=1}
h(z_1, \ldots, z_k) \overline{g(z_1, \ldots, z_k)}  \prod_{1\leq i<j\leq k}{|z_i - z_j|^2} \prod_{i=1}^k{\frac{dz_i}{2\pi\ii}} = 0,
\end{equation}
for any continuous, symmetric function $g$ on $\mathbb{T}^k$.
Since $h$ is analytic on $(\C^*)^k$, it is continuous on $\mathbb{T}^k$.
It follows from \eqref{innerzero} that $h$ vanishes on $\mathbb{T}^k$ and, from its analyticity, $h$ also vanishes on $(\C^*)^k$.
\end{proof}

\begin{prop}\label{prop:construction}
Take any $\bft\in\X$ and let $\{ \lambda(N) \in\GT_N \}_{N \geq 1}$ be a sequence that $\bfb$-stabilizes to $\bft$.
For each $k \geq 1$, $\mu\in\GT_k$, the limit
\begin{equation*}
M_k^{\bft} (\mu) := \lim_{N \rightarrow\infty} \Lambda^N_k(\lambda(N), \mu)
\end{equation*}
exists and does not depend on the choice of $\{\lambda(N)\}_{N \geq 1}$.
Moreover, $M_k^{\bft}$ is a probability measure on $\GT_k$, and $\{M_k^{\bft}\}_{k \geq 1}$ is a coherent system.
\end{prop}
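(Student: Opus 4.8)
The plan is to reduce the statement to the single-variable asymptotics already established in Theorem~\ref{thm:asymptoticsmultivariate} together with the completeness/continuity properties of the functionals $F_\mu$ from Lemma~\ref{lem:schurfunctionals}. First I would record that, by the definition of the Markov kernels $\Lambda^{N+1}_N$ in \eqref{eqn:link} and the homogeneity of Schur polynomials, the iterated kernel $\Lambda^N_k(\lambda(N),\cdot)$ is exactly the coefficient measure in the expansion of the normalized Schur polynomial
\begin{equation*}
\frac{s_{\lambda(N)}(x_1,\dots,x_k,\,q^{\bullet})}{s_{\lambda(N)}(1,q,\dots,q^{N-1})}
= \sum_{\mu\in\GT_k}\Lambda^N_k(\lambda(N),\mu)\,\frac{s_\mu(x_1,\dots,x_k)}{s_\mu(1,q,\dots,q^{k-1})},
\end{equation*}
where the remaining $N-k$ variables are the appropriate geometric progression $1,q,\dots,q^{b(N)-1},q^{b(N)+k},\dots,q^{N-1}$ dictated by $\sigma$ (here I use that $\sigma_k,\dots,\sigma_{N-1}$ and the sequence $\bfb$ determine which of the two cases of \eqref{eqn:link} applies at each level, and that the splitting of the progression into a ``$q$-increasing near $0$'' part and a ``$q$-increasing past $q^{b(N)+k}$'' part is precisely the normalization appearing in \eqref{mult:limit}). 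In other words, $\Lambda^N_k(\lambda(N),\mu) = s_\mu(1,q,\dots,q^{k-1})\,F_\mu\big(\Phi_{\lambda(N)}(\,\cdot\,;q,b(N),N)\big)$, with $\Phi_{\lambda(N)}$ as in Step~5 of the proof of Theorem~\ref{thm:asymptoticsmultivariate}, applied in $k$ variables.

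Next I would invoke Theorem~\ref{thm:asymptoticsmultivariate}: since $\{\lambda(N)\}$ $\bfb$-stabilizes to $\bft\in\X$, the left-hand side above converges to $\Phi^{\bft}(x_1,\dots,x_k;q)$ uniformly on compact subsets of $(\C^*)^k$, and in particular uniformly on the torus $\mathbb T^k$. The function $\Phi^{\bft}(x_1,\dots,x_k;q)$ is analytic on $(\C^*)^k$ and, being a limit of elements of $C_k^{S_k}$, lies in $C_k^{S_k}$ by part~(2) of Lemma~\ref{lem:schurfunctionals}. Applying the continuity statement in that same part, for each fixed $\mu\in\GT_k$ we get
\begin{equation*}
\Lambda^N_k(\lambda(N),\mu) = s_\mu(1,q,\dots,q^{k-1})\,F_\mu\big(\text{LHS}_N\big)\ \xrightarrow[N\to\infty]{}\ s_\mu(1,q,\dots,q^{k-1})\,F_\mu\big(\Phi^{\bft}(\,\cdot\,;q)\big)=:M_k^{\bft}(\mu),
\end{equation*}
which proves existence of the limit and its independence of the stabilizing sequence (the right-hand side depends only on $\bft$). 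That the $M_k^{\bft}(\mu)$ are nonnegative follows since each $\Lambda^N_k(\lambda(N),\mu)\ge 0$ (Lemma~\ref{lem:stochastic}), and $\sum_{\mu\in\GT_k}M_k^{\bft}(\mu)=1$ follows either by evaluating the limiting identity at $(x_1,\dots,x_k)=(1,q,\dots,q^{k-1})$ — where $\Phi^{\bft}(1,q,\dots,q^{k-1};q)=1$ by passing the normalization to the limit — or by noting that the total mass is $\lim_N\sum_\mu\Lambda^N_k(\lambda(N),\mu)=1$, the interchange of sum and limit being justified by the uniform (hence $L^1(\mathbb T^k)$) convergence.

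Finally, coherency $M_{k+1}^{\bft}\Lambda^{k+1}_k = M_k^{\bft}$ is inherited from the prelimit: for each $N>k+1$ one has the exact identity $\Lambda^N_{k+1}(\lambda(N),\cdot)\,\Lambda^{k+1}_k = \Lambda^N_k(\lambda(N),\cdot)$ by definition of the iterated kernels, and since each $\Lambda^{k+1}_k(\nu,\mu)$ is supported on the finitely many $\mu\prec\nu$ and all measures involved are probability measures, one may pass to the limit $N\to\infty$ termwise (the sum over $\nu\in\GT_{k+1}$ can be truncated using tightness of $\{\Lambda^N_{k+1}(\lambda(N),\cdot)\}_N$, which holds because it converges to the probability measure $M_{k+1}^{\bft}$). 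Hence $\{M_k^{\bft}\}_{k\ge 1}$ is a coherent system, completing the proof. The only genuinely delicate point is the very first one — correctly matching the geometric progressions in \eqref{eqn:link} with the normalization in \eqref{mult:limit} so that the iterated kernel is literally read off as Schur coefficients of the function appearing in Theorem~\ref{thm:asymptoticsmultivariate}; once that bookkeeping is in place, everything else is soft functional analysis and the cited convergence theorem.
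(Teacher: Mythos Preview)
Your overall strategy matches the paper's: derive the branching identity expressing the normalized Schur polynomial as $\sum_\mu \Lambda^N_k(\lambda(N),\mu)\,s_\mu/s_\mu(1,\dots,q^{k-1})$, apply Theorem~\ref{thm:asymptoticsmultivariate} to the left side, and extract the coefficient limits via Lemma~\ref{lem:schurfunctionals}(2). However, there is a genuine gap in your argument that $\sum_\mu M_k^{\bft}(\mu)=1$.

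Neither of your two justifications works. ``Evaluating the limiting identity at $(1,q,\dots,q^{k-1})$'' presumes the pointwise equality $\Phi^{\bft(k)}(x)=\sum_\mu M_k^{\bft}(\mu)\,s_\mu(x)/s_\mu(1,\dots,q^{k-1})$; but so far you only know both sides have the same image under each $F_\mu$, and you have not shown the right-hand series converges anywhere. Your alternative, interchanging $\lim_N$ with $\sum_{\mu\in\GT_k}$ via ``uniform (hence $L^1(\mathbb T^k)$) convergence'', conflates convergence of the \emph{functions} on the torus with a dominated-convergence statement over the infinite index set $\GT_k$; Fatou gives only $\sum_\mu M_k^{\bft}(\mu)\le 1$, and you have no independent tightness for $\{\Lambda^N_k(\lambda(N),\cdot)\}_N$. (Your coherency argument then invokes this very tightness, so the gap propagates.) The paper fills this gap explicitly: it sets $g(x):=\sum_\mu a_\mu\, s_\mu(x)/s_\mu(1,\dots,q^{k-1})$, proves absolute convergence on $(\C^*)^k$ by evaluating the prelimit at the positive specialization $(t,tq,\dots,tq^{k-1})$ and using the bound $|s_\mu(z)/s_\mu(1,\dots,q^{k-1})|\le (R+R^{-1})^{|\mu|}$, and only then applies Lemma~\ref{lem:schurfunctionals}(3) to conclude $g=\Phi^{\bft(k)}$, after which evaluation at $(1,q,\dots,q^{k-1})$ is legitimate.

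A secondary point you flag but do not resolve: iterating \eqref{eqn:link} from level $N$ down to $k$ leaves the geometric progression with split point $b_k(N):=b(N)-b(k)$, not $b(N)$, so the limit function is $\Phi^{\bft(k)}$ for the shifted sequence $t^k_i:=t_{i+b(k)}$ rather than $\Phi^{\bft}$. This shift is harmless but must be tracked; your formula with $b(N)$ is not literally what the iterated kernel produces.
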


\begin{proof}
Let $\{\lambda(N)\in\GT_N\}_{N \geq 1}$ be any sequence of signatures that $\bfb$-stabilizes to $\bft$.
Let $k \geq 1$ be arbitrary, and define $b_k(N) := \max\{0,\ b(N) - b(k)\}$, so that $b_k(N) = \#\{k < i\leq N : \sigma_i = -1\}$ whenever $N > k$.
Consider the equality
\begin{equation}\label{branch1}
\begin{gathered}
\frac{s_{\lambda(N)}(1, q, \ldots, q^{b_k(N) - 1}, q^{b_k(N)}x_1, \ldots, q^{b_k(N)}x_k, q^{b_k(N)+k}, \ldots, q^{N-2}, q^{N-1})}{s_{\lambda(N)}(1, q, \ldots, q^{N-2}, q^{N-1})}\\
= \sum_{\mu\in\GT_k} \Lambda^N_k(\lambda(N), \mu)  \frac{s_{\mu}(x_1, \ldots, x_k)}{s_{\mu}(1, q, \ldots, q^{k-1})},
\end{gathered}
\end{equation}
which follows from $(\ref{eqn:link})$, by induction on $N - k$.
Let us look at the first line of $(\ref{branch1})$.
It is clear that $0 \leq b(N) - b_k(N) \leq k$ and thus
\begin{equation*}
\lim_{N\rightarrow\infty}{b_k(N)} = \lim_{N\rightarrow\infty}{(N - b_k(N))} = +\infty.
\end{equation*}
Moreover if we let
\begin{equation*}
\bfb_k := (b_k(1), b_k(2), \ldots),
\end{equation*}
then the sequence $\{ \lambda(N) \}_{N \geq 1}$ $\bfb_k$-stabilizes to $\bft(k) := (\ldots, t_{-1}^k, t_0^k, t^k_1, \ldots)$, $t_i^k := t_{i+b(k)}$, for all $i\in\Z$.
Then, from the limit \eqref{mult:limit} in Theorem $\ref{thm:asymptoticsmultivariate}$, the first line of $(\ref{branch1})$ converges to $\Phi^{\bft(k)}(x_1, \ldots, x_k; q)$ uniformly on compact subsets of $(\C^*)^k$, as $N$ goes to infinity.
Then by $(\ref{branch1})$ and Lemma $\ref{lem:schurfunctionals}$ (2), the following limit exists:
$$\lim_{N\rightarrow\infty}{\Lambda^N_k(\lambda(N), \mu)} = s_{\mu}(1, q, \ldots, q^{k-1}) F_{\mu}(\Phi^{\bft(k)}) =: a_{\mu}, \ \textrm{ for any $\mu\in\GT_k$}.$$

We show that $\{a_{\mu} : \mu\in\GT_k\}$ determines a probability measure on $\GT_k$.
Since $\Lambda^N_k(\lambda(N), \mu) \geq 0$ and $\sum_{\mu\in\GT_k}{\Lambda^N_k(\lambda(N), \mu)} = 1$ for all $N > k$, then
\begin{equation}\label{ineqas}
a_{\mu} \geq 0, \textrm{ for all } \mu\in\GT_k;\ \sum_{\mu\in\GT_k}{a_{\mu}} \leq 1.
\end{equation}
We need an extra argument to show $\sum_{\mu\in\GT_k}{a_{\mu}} = 1$.
Consider the function
\begin{equation*}
g(z_1, \ldots, z_k) := \sum_{\mu\in\GT_k} a_{\mu} \frac{s_{\mu}(z_1, \ldots, z_k)}{s_{\mu}(1, q, \ldots, q^{k-1})}.
\end{equation*}
Since $(\ref{branch1})$ converges uniformly on compact subsets, then in particular, it converges pointwise at the point $(x_1, x_2, \ldots, x_k) = (t, tq, \ldots, tq^{k-1})$, for any $t>0$.
But at this point, each summand
\begin{equation*}
\Lambda^N_k(\lambda(N), \mu) \cdot \frac{s_{\mu}(t, tq, \ldots, tq^{k-1})}{s_{\mu}(1, q, \ldots, q^{k-1})} = \Lambda^N_k(\lambda(N), \mu) \cdot t^{|\mu|}
\end{equation*}
is nonnegative.
Then the sum $\sum_{\mu\in\GT_k}{a_{\mu}t^{|\mu|}}$ must be convergent, for any $t > 0$.
Let $R>1$ be arbitrary; for any $(z_1, \ldots, z_k)\in(\C^*)^k$ such that $Rq^{i-1} < |z_i| < R^{-1}q^{i-1}$, the nonnegativity of the branching coefficients for Schur polynomials (see Proposition $\ref{prop:branchingrule}$) and homogeneity of Schur polynomials imply
\begin{equation*}
\left| \frac{s_{\mu}(z_1, \ldots, z_k)}{s_{\mu}(1, \ldots, q^{k-1})} \right| \leq
\frac{s_{\mu}(|z_1|, \ldots, |z_k|)}{s_{\mu}(1, \ldots, q^{k-1})} \leq
\frac{s_{\mu}((R+R^{-1}), \ldots, (R+R^{-1})q^{k-1})}{s_{\mu}(1, \ldots, q^{k-1})} = (R+R^{-1})^{|\mu|}.
\end{equation*}

As $R>1$ is arbitrary, we conclude that the sum defining $g$ is absolutely and uniformly convergent on compact subsets of $(\C^*)^k$, and $g$ is a well-defined analytic function in that domain.
Moreover, $g \in C_k^{S_k}$ because each $s_{\mu}$ is a symmetric polynomial.

From items (1)--(2) of Lemma $\ref{lem:schurfunctionals}$, we have $F_{\mu}(g) = a_{\mu}/s_{\mu}(1, q, \ldots, q^{k-1})$, for all $\mu\in\GT_k$.
But we also know $F_{\mu}(\Phi^{\bft(k)}) = a_{\mu}/s_{\mu}(1, q, \ldots, q^{k-1})$.
Therefore $h := g - \Phi^{\bft(k)} \in C_k^{S_k}$ is such that $F_{\mu}(h) = F_{\mu}(g) - F_{\mu}(\Phi^{\bft(k)}) = 0$, for all $\mu\in\GT_k$.
Item (3) of Lemma $\ref{lem:schurfunctionals}$ then implies $h = 0$, i.e., $g = \Phi^{\bft(k)}$.
In particular, evaluating this equality at the point $(1, q, \ldots, q^{k-1})$ and using the limit \eqref{mult:limit} of Theorem $\ref{thm:asymptoticsmultivariate}$ yields (below $\{ \lambda(N) \}_{N \geq 1}$ is any sequence that $\bfb$-stabilizes to $\bft$):
\begin{gather*}
\sum_{\mu\in\GT_k}{a_{\mu}} = g(1, q, \ldots, q^{k-1}) = \Phi^{\bft(k)}(1, q, \ldots, q^{k-1}) = \\
\lim_{N \rightarrow\infty} \left. \frac{s_{\lambda(N)}(1, \ldots, q^{b_k(N) - 1}, q^{b_k(N)}x_1, \ldots, q^{b_k(N)}x_k, q^{b_k(N) + k}, \ldots, q^{N-1})}{s_{\lambda(N)}(1, q, \ldots, q^{N-2}, q^{N-1})} \right|_{x_i = q^{i - 1} \forall i}
= \lim_{N \rightarrow \infty}{1} = 1.
\end{gather*}

Thus if we let $M^{\bft}_k(\mu) := a_{\mu}$, for any $\mu\in\GT_k$, we have that $M^{\bft}_k$ is a probability measure on $\GT_k$.
Moreover, because $\Phi^{\bft(k)}$ does not depend on the choice of $\bfb$-stabilizing sequence $\{\lambda(N)\}_{N \geq 1}$, neither does each $a_{\mu} = s_{\mu}(1, q, \ldots, q^{k-1}) F_{\mu}(\Phi^{\bft(k)})$.
The coherency is immediate from the definitions.
\end{proof}

\subsection{Concentration bound}\label{sec:prelim}

We keep the notations from the previous subsection.

\begin{prop}\label{prop:schurestimate}
Let $k\in\N_0$ and $\{\lambda(N) \in \GT_N\}_{N\geq 1}$ be arbitrary.
Further, let $\{\mu(N)\in\GT_N\}_{N \geq 1}$ be random variables such that each $\mu(N)$ is distributed according to the probability distribution $\Lambda^{N+1}_{N}(\lambda(N+1), \cdot)$.
Then there exists a constant $c > 0$, independent of $N$, such that
\begin{equation*}
\Prob\left( \mu(N)_{b(N) + i} = \lambda(N+1)_{b(N+1) + i}, \textrm{ for all } -k \leq i \leq k \right) >
\begin{cases}
1 - cq^{b(N)},& \textrm{if } \sigma_{N+1} = -1;\\
1 - cq^{N-b(N)},& \textrm{if } \sigma_{N+1} = +1.
\end{cases}
\end{equation*}
\end{prop}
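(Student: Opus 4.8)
The plan is to derive the concentration estimate directly from the explicit formula \eqref{Lambda:formulas} for the transition kernel together with the branching rule (Proposition \ref{prop:branchingrule}) and the $q$-geometric evaluation (Proposition \ref{prop:evaluation}). First I would fix $N$ and consider the two cases separately; by the symmetry $q \leftrightarrow q^{-1}$ relating $\sigma_{N+1} = -1$ and $\sigma_{N+1} = +1$ (and the corresponding swap of the roles of $b(N)$ and $N - b(N)$), it suffices to treat, say, $\sigma_{N+1} = -1$. Write $\lambda := \lambda(N+1) \in \GT_{N+1}$ and let $\mu$ range over signatures with $\mu \prec \lambda$. The event we want to lower-bound is that the ``middle'' coordinates of $\mu$ around position $b(N)$ equal the prescribed values $\lambda_{b(N)+1+i}$ for $-k \le i \le k$ (shifting indices appropriately so that the interlacing constraint $\mu_{b(N)+i} = \lambda(N+1)_{b(N+1)+i}$ comes out right, using $b(N+1) = b(N) + 1$ when $\sigma_{N+1} = -1$). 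Because of interlacing $\lambda_{j+1} \le \mu_j \le \lambda_j$, fixing $\mu_j$ to its forced value $\lambda_{j+1}$ whenever $\lambda_{j+1} = \lambda_j$ costs nothing; the real content is bounding the probability that $\mu_j$ does not take the smallest allowed value on the (at most $2k+1$) relevant coordinates.

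The key computation is then a ratio estimate. Using \eqref{Lambda:formulas}, $\Lambda^{N+1}_N(\lambda,\mu) = s_\mu(q,\dots,q^N)/s_\lambda(1,q,\dots,q^N)$, and by the branching rule summed over the complementary coordinates, the probability of the complement of our event can be written as a sum over $\mu \prec \lambda$ with at least one ``bad'' middle coordinate, of $s_\mu(q,q^2,\dots,q^N)/s_\lambda(1,q,\dots,q^N)$. I would compare each such bad $\mu$ with the corresponding $\mu'$ obtained by decreasing the offending coordinate(s) to their minimal interlacing value, and show using the product formula of Proposition \ref{prop:evaluation} that $s_\mu(q,\dots,q^N)/s_{\mu'}(q,\dots,q^N)$ is at most a geometric factor $q^{(\text{distance from middle})}$ times a bounded constant — essentially because the $q$-geometric specialization of a Schur polynomial is a product of terms $(1-q^{\mu_i - \mu_j + j - i})/(1 - q^{j-i})$ and $q^{n(\mu)}$, and moving mass at coordinate position $p$ (counted from the ends) introduces a factor of size $O(q^{p})$. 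Summing the resulting geometric series over the positions to the left and right of $b(N)$ that lie below index $b(N)$, and noting that the leftmost relevant interlacing constraints involve coordinates at distance $\ge b(N)$ from the left end (since $k$ is fixed and $b(N) \to \infty$), yields a bound of the form $c\, q^{b(N)}$; the analogous argument from the other end would be needed but is suppressed by the case reduction.

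More precisely, I would set up the telescoping: order the ``middle'' coordinates and, for a configuration $\mu$ that first deviates from the prescribed values at position $j$, peel off one unit of deviation at a time, each peel contributing a factor bounded by $C q^{\,\min(j,\,N+1-j)}$ for a universal constant $C = C(q)$ coming from the boundedness of $\prod_i (1+q^i)$-type tails in the product formula. Because all the indices $j$ occurring among $b(N)+i$, $-k \le i \le k$, satisfy $j \ge b(N) - k$, every such factor is $O(q^{b(N)})$, and the total over all deviating configurations is dominated by $\sum_{r \ge 0}(2k+1)(Cq^{b(N)})^{r+1} \le c\, q^{b(N)}$ for $N$ large (so that $Cq^{b(N)} < 1/2$), with the constant absorbing the finitely many small $N$. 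Collecting these estimates and invoking the same argument with $q$ replaced by $q^{-1}$ for the case $\sigma_{N+1} = +1$ (which exchanges $b(N) \leftrightarrow N - b(N)$ in the exponent) completes the proof.

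The main obstacle I anticipate is making the ``each unit of deviation costs a factor $O(q^{\,\text{position}})$'' step fully rigorous: the product formula of Proposition \ref{prop:evaluation} gives a global product over all pairs $i<j$, and one must check carefully that decreasing a single coordinate $\mu_j$ by one changes this product by a factor that is both bounded above (uniformly in $\lambda$ and $N$) and small like $q^{\min(j, N+1-j)}$, handling the interaction with neighboring coordinates and the $q^{n(\mu)}$ prefactor. This is a somewhat delicate but elementary estimate on ratios of $q$-Pochhammer-type products, and it is where the bulk of the work lies; everything else is bookkeeping with interlacing and summing geometric series.
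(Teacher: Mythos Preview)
Your approach matches the paper's: reduce to $\sigma_{N+1}=-1$ via the $q\leftrightarrow q^{-1}$ symmetry (the paper phrases this as $s_\kappa(x_1,\dots,x_M)=s_{\kappa^-}(x_1^{-1},\dots,x_M^{-1})$, so $P^-(\kappa\mid\nu)=P^+(\kappa^-\mid\nu^-)$), then use the product formula of Proposition~\ref{prop:evaluation} to show that each coordinate $\mu_j$ with $j$ near $b(N)$ concentrates at its minimal interlacing value $\lambda_{j+1}$.

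Two corrections. First, the cost of a unit deviation at position $j$ is $q^{j}$, not $q^{\min(j,\,N+1-j)}$: for $\sigma_{N+1}=-1$ the weight $s_\mu(q,\dots,q^N)$ is proportional to $q^{|\mu|+n(\mu)}$ times bounded products, and incrementing $\mu_j$ by one shifts $|\mu|+n(\mu)$ by exactly $j$. (The opposite sign case gives $q^{N+1-j}$; the two never mix within a single case.) Second, your telescoping over global configurations is where the argument gets murky, and the sum $\sum_{r\ge 0}(2k+1)(Cq^{b(N)})^{r+1}$ does not correctly count the configurations with a given total deviation. The paper sidesteps this entirely by \emph{conditioning} on all coordinates except $\mu_j$ (this is Lemma~\ref{lem:est1}): under the conditional law the ratio $\Prob(\mu_j=b+m)/\Prob(\mu_j=b)$ is a single explicit product, bounded by $q^{mj}/(q;q)_\infty^2$ uniformly in the conditioned values. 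Summing over $m\ge 1$ gives $\Prob(\mu_j\ne\lambda_{j+1})\le c'q^{j}$ unconditionally, and then Boole's inequality over the $2k+1$ indices $j\in\{b(N)-k,\dots,b(N)+k\}$ finishes. This conditioning device is precisely what dissolves the obstacle you flagged at the end, and it replaces the global $s_\mu/s_{\mu'}$ comparison (whose many-to-one correction map is the source of your bookkeeping trouble).
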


We need some preparations for the proof.
Given $M\in\N$, $\nu\in\GT_{M+1}$, define two probability distributions on $\GT_M$, to be denoted $P^+(\cdot | \nu)$ and $P^-(\cdot| \nu)$, and given by
\begin{equation*}
P^+(\kappa \mid \nu) := \mathbf{1}_{\{\kappa \prec \nu\}} \frac{s_{\kappa}(q, q^2, \ldots, q^M)}{s_{\nu}(1, q, \ldots, q^M)};\
P^-(\kappa \mid \nu) := \mathbf{1}_{\{\kappa \prec \nu\}} \frac{s_{\kappa}(q^{-1}, q^{-2}, \ldots, q^{-M})}{s_{\nu}(1, q^{-1}, \ldots, q^{-M})}.
\end{equation*}
There is an explicit closed formula for the evaluation of a Schur polynomial on a $q$-geometric series, see Proposition $\ref{prop:evaluation}$; it gives
\begin{equation}\label{Pminus}
P^+(\kappa \mid \nu) = \mathbf{1}_{\{\kappa \prec \nu\}} q^{|\kappa| + n(\kappa)} \prod_{1\leq i < j\leq M}{( 1 - q^{\kappa_i - \kappa_j + j - i} )}
\times \frac{q^{-n(\nu)}(q; q)_M}{\prod_{1\leq i < j\leq M+1}{(1 - q^{\nu_i - \nu_j + j - i})}}.
\end{equation}

\begin{lem}\label{lem:est1}
Let $m\in\N$ be fixed.
There exists a constant $c' > 0$ such that, for sufficiently large $M\in\N$, any $\nu\in\GT_{M+1}$ and any integer $1 \leq i \leq M-m+1$, the following holds:

Let $\kappa\in\GT_M$ be distributed according to $P^+(\cdot \mid \nu)$. Then $$\Prob\left( \kappa_j = \nu_{j+1}, \textrm{ for all } j = i, i+1, \ldots, i+m-1 \right) > 1 - c'q^i.$$
\end{lem}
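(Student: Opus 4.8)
The plan is to extract the needed concentration estimate directly from the explicit product formula \eqref{Pminus} for $P^+(\cdot\mid\nu)$. Fix $\nu\in\GT_{M+1}$ and the index $i$. Since $\kappa\prec\nu$, each $\kappa_j$ is constrained to the interval $[\nu_{j+1},\nu_j]$, and the event we want to estimate from below is $\{\kappa_j=\nu_{j+1}\text{ for }j=i,\dots,i+m-1\}$, i.e.\ the coordinates $\kappa_i,\dots,\kappa_{i+m-1}$ take their \emph{smallest} admissible values. The first step is to write $\Prob(\kappa_j=\nu_{j+1}\ \forall j\in[i,i+m-1])$ as a ratio of partial sums of \eqref{Pminus} and to compare it with the full sum $\sum_{\kappa\prec\nu}P^+(\kappa\mid\nu)=1$; equivalently, I would bound the complementary probability $\Prob(\exists j\in[i,i+m-1]:\ \kappa_j>\nu_{j+1})$ by a union bound over $j$ and then over the possible values $\kappa_j=\nu_{j+1}+s$, $s\ge 1$.

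The heart of the estimate is the following monotonicity-type observation: increasing a single coordinate $\kappa_j$ by $1$ (when this keeps $\kappa$ a signature interlacing $\nu$) multiplies $P^+(\kappa\mid\nu)$ by a factor that I can read off from \eqref{Pminus}. The factor $q^{|\kappa|+n(\kappa)}$ contributes a power $q^{\,1+(j-1)}=q^{j}$ (from $|\kappa|$ gaining $1$ and $n(\kappa)=\kappa_2+2\kappa_3+\cdots+(M-1)\kappa_M$ gaining $j-1$; for $j=1$ only the $|\kappa|$ term contributes, giving $q^1$, so the exponent is $\max(j,1)=j$ since $j\ge i\ge 1$). The Vandermonde-type product $\prod_{a<b}(1-q^{\kappa_a-\kappa_b+b-a})$ changes only in the $2(M-1)$ factors involving index $j$, and each such factor stays in a bounded range $[\text{(something)},1)$ because all exponents $\kappa_a-\kappa_b+b-a$ are positive integers. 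Thus there is an absolute constant $C=C(q)$ (independent of $M$, $\nu$, $j$) with
\[
\frac{P^+(\kappa^{(j,+1)}\mid\nu)}{P^+(\kappa\mid\nu)}\ \le\ C\,q^{\,j},
\]
where $\kappa^{(j,+1)}$ is $\kappa$ with its $j$-th coordinate raised by one. The main technical care needed here is to check that the Vandermonde factors are bounded \emph{uniformly in} $M$ — there are $\sim 2M$ of them, but each lies in $(0,1]$ except possibly those few where the exponent could be small, and those are controlled by a constant; I would isolate the finitely many ``dangerous'' factors (exponent equal to a small positive integer) and absorb the rest using $\prod(1-q^{\text{positive integer}})\ge (q;q)_\infty>0$.

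Granting this one-step bound, a telescoping/geometric-series argument finishes the proof: the probability that $\kappa_j\ge\nu_{j+1}+1$ is at most (sum over $s\ge1$ of the $s$-step analogue) $\le C q^{j}/(1-Cq^{j})\le C' q^{j}$ for $M$ (hence the relevant exponents) large enough that $Cq^{j}<1/2$, and then a union bound over $j=i,\dots,i+m-1$ gives $\Prob(\exists j:\kappa_j>\nu_{j+1})\le C'\sum_{j\ge i}q^{j}\le c' q^{i}$ with $c'=C'/(1-q)$. Hence $\Prob(\kappa_j=\nu_{j+1}\ \forall j\in[i,i+m-1])>1-c'q^{i}$, which is the claim. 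The analogous statement for $P^-$ (which will presumably be Lemma \ref{lem:est1}'s sibling) follows by the symmetry $q\mapsto q^{-1}$ combined with the signature flip $\kappa\mapsto(-\kappa_M\ge\cdots\ge-\kappa_1)$, reducing it to the case just treated. The step I expect to be the genuine obstacle is the uniform-in-$M$ control of the Vandermonde factors under a single-coordinate increment: naively the product has unboundedly many terms, and one must argue that only boundedly many of them can be close to $0$ (equivalently, have small exponent), the rest being bounded below by the convergent infinite product $(q;q)_\infty$.
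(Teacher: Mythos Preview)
Your proposal is correct and follows essentially the same route as the paper. The paper reduces to $m=1$ by Boole's inequality at the outset and then \emph{conditions} on $\kappa_1,\dots,\kappa_{j-1},\kappa_{j+1},\dots,\kappa_M$ to obtain the ratio bound $\Prob(\kappa_j=b+s)/\Prob(\kappa_j=b)\le q^{js}/\bigl((q^2;q)_\infty(q;q)_\infty\bigr)$, while you achieve the equivalent estimate by a pointwise comparison $P^+(\kappa)/P^+(\tilde\kappa)$ together with the injection $\kappa\mapsto\tilde\kappa$ (lowering $\kappa_j$ to $\nu_{j+1}$); both arguments hinge on exactly the observation you flag as the obstacle, namely that the exponents $\kappa_j-\kappa_b+b-j$ for $b>j$ are distinct positive integers, so the product of the corresponding Vandermonde factors is bounded below by $(q;q)_\infty$. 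One small correction: there are $M-1$ Vandermonde factors involving index $j$, not $2(M-1)$; and it is cleaner to bound the $s$-step ratio directly (yielding $Cq^{js}$) rather than telescoping $s$ one-step bounds (yielding $(Cq^j)^s$), as this avoids the artificial requirement $Cq^j<1$.
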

\begin{proof}
The proof is a generalization of the proof of \cite[Lem. 3.13]{GO}.

From Boole's inequality, it is clear that we only need to consider the case $m = 1$.
In other words, take any $1 \leq i \leq M$ and let us find a constant $c' > 0$ such that
\begin{equation*}
\Prob\left( \kappa_i = \nu_{i+1} \right) > 1 - c'q^i.
\end{equation*}

Set $a := \nu_i$, $b := \nu_{i+1}$, so that $a \geq \kappa_i \geq b$ almost surely.
If $a = b$, then $\Prob\left( \kappa_i = b = \nu_{i+1} \right) = 1$ and there is nothing to prove.
Then assume $a \geq b+1$, so there exist values $m\in\Z$ such that $a - b \geq m \geq 1$.
For each such value, we estimate the ratio
\begin{equation}\label{eqn:ratio}
\frac{\Prob(\kappa_i = b+m)}{\Prob(\kappa_i = b)}
\end{equation}
and show that it is of order $O(q^{mi})$.
Let us actually estimate the ratio $(\ref{eqn:ratio})$, where both probabilities are conditional on given values $\kappa_1, \ldots, \kappa_{i-1}, \kappa_{i+1}, \ldots, \kappa_M$, which are fixed, but arbitrary, and satisfy $\nu_j \geq \kappa_j \geq \nu_{j+1}$ for all relevant $j$.
Under such conditional probability, and from the formula $(\ref{Pminus})$ for $P^+(\cdot | \nu)$, we have
\begin{equation*}
\Prob(\kappa_i = d) \propto q^{id}\prod_{r=1}^{i - 1}(1 - q^{\kappa_r - d + i - r})\prod_{s=i+1}^M(1 - q^{d - \kappa_s + s - i}),
\end{equation*}
for any $a \geq d \geq b$, where the hidden constant is independent of $d$. Then for any $a - b \geq m \geq 1$, and using $q\in (0, 1)$, we obtain
\begin{equation}\label{bound1}
\begin{gathered}
\frac{\Prob(\kappa_i = b + m)}{\Prob(\kappa_i = b)} = q^{mi}\prod_{r=1}^{i-1}\frac{1 - q^{\kappa_r - b - m + i - r}}{1 - q^{\kappa_r - b + i - r}}\prod_{s = i+1}^M\frac{1 - q^{b + m - \kappa_s + s - i}}{1 - q^{b - \kappa_s + s - i}}\\
\leq \frac{q^{mi}}{\prod_{r=1}^{i - 1}(1 - q^{\kappa_r - b + i - r})\prod_{s = i+1}^M{(1 - q^{b - \kappa_s + s - i})}}.
\end{gathered}
\end{equation}

Next, since $\kappa_i - i$ is strictly decreasing on $i$ and $1 + b \leq a = \nu_i \leq \kappa_{i-1}$:
\begin{equation*}
2 \leq \kappa_{i - 1} - b + 1 < \kappa_{i - 2} - b + 2 < \ldots < \kappa_1 - b + i - 1
\end{equation*}
and thus
\begin{equation}\label{bound2}
\prod_{r=1}^{i-1}(1 - q^{\kappa_r - b + i - r}) > (1 - q^2)(1 - q^3)\cdots = (q^2; q)_{\infty}.
\end{equation}
Similarly, we can deduce $1 \leq b + 1 - \kappa_{i+1} < b + 2 - \kappa_{i+2} < \ldots < b + M - i - \kappa_M$ and
\begin{equation}\label{bound3}
\prod_{s = i+1}^M(1 - q^{b - \kappa_s + s - i}) > (1 - q)(1 - q^2)\cdots = (q; q)_{\infty}.
\end{equation}

From the bounds $(\ref{bound1})$, $(\ref{bound2})$ and $(\ref{bound3})$, we have
\begin{equation}\label{expbound}
\frac{\Prob(\kappa_i = b+m)}{\Prob(\kappa_i = b)} \leq \frac{ q^{mi} }{(q^2; q)_{\infty}(q; q)_{\infty}}\ \ \forall \ 1 \leq m \leq a-b.
\end{equation}
By adding the inequalities $(\ref{expbound})$ over $1\leq m\leq a-b$, and using $1 + q + \ldots + q^{a-b-1} < 1/(1-q)$:
\begin{equation}\label{expbound3}
\frac{\Prob(\kappa_i \geq b+1)}{\Prob(\kappa_i = b)} \leq \frac{q^i}{(q; q)^2_{\infty}}.
\end{equation}
Since $\Prob(\kappa_i \geq b+1) = 1 - \Prob(\kappa_i = b)$, we finally obtain
\begin{equation*}
\Prob(\kappa_i = b) \geq \left( 1 + \frac{q^i}{(q; q)^2_{\infty}} \right)^{-1} \geq
1 - \frac{q^i}{(q; q)^2_{\infty}} = 1 - c'q^i,
\end{equation*}
with positive constant $c' := 1/(q; q)_{\infty}^2$.
Note that the bound just proven was uniform over $\kappa_1, \ldots, \kappa_{i - 1}, \kappa_{i+1}, \ldots, \kappa_M$ that we were conditioning over. Thus the bound also holds without the conditioning, and we are done.
\end{proof}

\begin{proof}[Proof of Proposition $\ref{prop:schurestimate}$]
If $\sigma_{N+1} = -1$, note that the probability measures $\Lambda^{N+1}_N(\lambda(N+1), \cdot)$ and $P^+(\cdot | \lambda(N+1))$ are the same.
We can apply Lemma $\ref{lem:est1}$ to show that if $\mu(N)$ is $\Lambda^{N+1}_N(\lambda(N+1), \cdot)$-distributed, then
\begin{equation}\label{event}
\Prob( \mu(N)_{b(N)+i-1} = \lambda(N)_{b(N)+i} \textrm{ for all } -k \leq i\leq k ) > 1 - cq^{b(N)},
\end{equation}
for some $c>0$ independent of $N$.
As $\sigma_{N+1} = -1$, then $b(N+1)=b(N)+1$, so the left side of $(\ref{event})$ equals
\[
\Prob( \mu_{b(N)+i-1} = \lambda(N)_{b(N+1)+i-1} \textrm{ for all } -k \leq i\leq k ),
\]
giving us the desired result.

Let us proceed to the case $\sigma_{N+1} = +1$, which will be deduced from the case $\sigma_{N+1}=-1$.
From the determinantal definition of Schur polynomials, we have
\begin{equation*}
s_{\kappa}(x_1, \ldots, x_M) = s_{\kappa^-}(1/x_1, \ldots, 1/x_M), \ \kappa\in\GT_M,
\end{equation*}
where $\kappa^- := (-\kappa_M, \ldots, -\kappa_1) \in \GT_M$.
Therefore $P^-(\kappa | \nu) := P^+(\kappa^- | \nu^-)$, for any $\kappa\in\GT_M$, $\nu\in\GT_{M+1}$.
In other words, if $\kappa$ is distributed according to $P^-(\cdot | \nu)$, then $\kappa^-$ is distributed according to $P^+( \cdot | \nu^-)$.

Next, let $\mu(N)^- := (-\mu(N)_N, \ldots, -\mu(N)_1)$, $\lambda(N+1)^- := (-\lambda(N+1)_{N+1}, \ldots, -\lambda(N+1)_1)$; if $c > 0$ is the constant for the case $\sigma_{N+1} = -1$ treated above, then
\begin{multline*}
\Prob\left( \mu(N)_{b(N) + i} = \lambda(N+1)_{b(N+1) + i}, \textrm{ for all } -k \leq i \leq k \right)\\
= \Prob\left( \mu(N)^-_{N + 1 - b(N) - i} = \lambda(N+1)^-_{N + 2 - b(N+1) - i}, \textrm{ for all } -k \leq i \leq k \right)\\
> 1 - c q^{N + 2 - b(N+1)} = 1 - (cq^2)q^{N - b(N)},
\end{multline*}
where the last equality comes from $b(N+1) = b(N)$, in the case $\sigma_{N+1} = +1$.
\end{proof}

\subsection{Law of Large Numbers and the Martin boundary}\label{sec:martin}

\begin{thm}\label{thm:LLN}
Let $\{\mu(N)\in\GT_N\}_{N \geq 1}$ be a sequence of random signatures such that each $\mu(N)$ is $M_N^{\bft}$-distributed (see Proposition $\ref{prop:construction}$ for the definition of the probability measures $M_N^{\bft}$).
Then, for each $k\in\Z$, the probability of the event
\begin{equation*}
\mu(L)_{b(L) + k} = t_{1-k}
\end{equation*}
tends to $1$, as $L$ goes to infinity.
\end{thm}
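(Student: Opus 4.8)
The plan is to derive the concentration of the single coordinate $\mu(L)_{b(L)+k}$ from the one-step estimate of Proposition \ref{prop:schurestimate}, telescoped down a path of the symmetric $q$-Gelfand--Tsetlin graph, together with the identification of $M^{\bft}_N$ as coming from a $\bfb$-stabilizing sequence (Proposition \ref{prop:construction}). Fix $L$ and set $r := |k|$; since $\mu(L)$ is $M^{\bft}_L$-distributed it suffices to lower-bound $M^{\bft}_L(A_L)$ for the window event $A_L := \{\nu\in\GT_L : \nu_{b(L)+i} = t_{1-i}\text{ for all }-r\le i\le r\}$, because $A_L\subseteq\{\nu:\nu_{b(L)+k}=t_{1-k}\}$. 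Pick a sequence $\{\lambda(N)\in\GT_N\}_{N\ge 1}$ that $\bfb$-stabilizes to $\bft$; by Proposition \ref{prop:construction}, $\Lambda^N_L(\lambda(N),\nu)\to M^{\bft}_L(\nu)$ pointwise in $\nu\in\GT_L$, and since these are probability measures on the countable set $\GT_L$, Scheff\'e's lemma upgrades this to $\Lambda^N_L(\lambda(N),A_L)\to M^{\bft}_L(A_L)$. For each fixed $N>L$, run the Markov chain with transition kernels $\Lambda^{M+1}_M$ downward from level $N$, started at the deterministic signature $\lambda(N)$: this produces $\lambda(N)=\mu^{(N)}(N),\mu^{(N)}(N-1),\dots,\mu^{(N)}(L)$ with $\mu^{(N)}(L)\sim\Lambda^N_L(\lambda(N),\cdot)$ and with the conditional law of $\mu^{(N)}(M)$ given the higher levels equal to $\Lambda^{M+1}_M(\mu^{(N)}(M+1),\cdot)$.

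For $L\le M<N$ let $E_M$ be the event $\{\mu^{(N)}(M)_{b(M)+i} = \mu^{(N)}(M+1)_{b(M+1)+i}\text{ for all }|i|\le r\}$. On $\bigcap_{M=L}^{N-1}E_M$ these equalities chain together (the shift of the index $b(M)$ is exactly compensated by the shift of $\mu^{(N)}(M+1)$'s index $b(M+1)$), yielding $\mu^{(N)}(L)_{b(L)+i} = \lambda(N)_{b(N)+i}$ for all $|i|\le r$. Conditioning on the higher levels and applying Proposition \ref{prop:schurestimate} to the transition from level $M+1$ to level $M$ (with the signature at level $M+1$ playing the role of ``$\lambda(N+1)$'') gives $\Prob(E_M^c)\le c\,q^{e(M)}$, uniformly in $N$, where $c$ depends only on $r$ and
\[
e(M) := \begin{cases} b(M), & \text{if }\sigma_{M+1}=-1,\\ M-b(M), & \text{if }\sigma_{M+1}=+1.\end{cases}
\]
By the union bound, $\Prob\bigl(\bigcap_{M=L}^{N-1}E_M\bigr) \ge 1 - c\sum_{M=L}^{N-1}q^{e(M)}$. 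The essential point is that the error series $\sum_{M\ge 1}q^{e(M)}$ converges: the map $M\mapsto b(M)$ is strictly increasing on $\{M:\sigma_{M+1}=-1\}$ and $M\mapsto M-b(M)$ is strictly increasing on $\{M:\sigma_{M+1}=+1\}$, so $\sum_{M\ge 1}q^{e(M)} \le \tfrac{2}{1-q} < \infty$ and its tail $R_L := \sum_{M\ge L}q^{e(M)}$ satisfies $R_L\to 0$ as $L\to\infty$.

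To conclude, since $\{\lambda(N)\}$ $\bfb$-stabilizes to $\bft$ and there are only $2r+1$ relevant indices, there is $N_0=N_0(r)$ with $\lambda(N)_{b(N)+i}=t_{1-i}$ for all $|i|\le r$ whenever $N\ge N_0$. Hence for every $N\ge\max(N_0,L+1)$,
\[
\Lambda^N_L(\lambda(N),A_L) = \Prob\bigl(\mu^{(N)}(L)\in A_L\bigr) \ge \Prob\Bigl(\bigcap_{M=L}^{N-1}E_M\Bigr) \ge 1 - c\,R_L .
\]
Letting $N\to\infty$ gives $M^{\bft}_L(A_L) \ge 1 - cR_L$, so $\Prob(\mu(L)_{b(L)+k}=t_{1-k}) \ge 1 - cR_L \to 1$ as $L\to\infty$.

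The main obstacle is precisely the summability bookkeeping: a crude per-step bound by $q^{\min(b(M),\,M-b(M))}$ need not be summable over $M$, and one must exploit that Proposition \ref{prop:schurestimate} only charges the ``expensive'' error $q^{b(M)}$ at the sparse steps where $b$ increases, and symmetrically $q^{M-b(M)}$ only where $M-b$ increases, so that after reindexing each of the two error contributions becomes a genuine geometric series. Everything else --- the telescoping of the coordinate-preservation events across levels and the passage $\Lambda^N_L(\lambda(N),\cdot)\to M^{\bft}_L$ through the inequality --- is routine.
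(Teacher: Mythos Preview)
Your proof is correct and follows essentially the same route as the paper: telescope Proposition~\ref{prop:schurestimate} along the Markov chain from level $N$ down to level $L$, observe that the step errors split into two injectively-indexed geometric series (one on $\{M:\sigma_{M+1}=-1\}$ via $b(M)$, one on $\{M:\sigma_{M+1}=+1\}$ via $M-b(M)$), then pass to the limit $N\to\infty$ using Proposition~\ref{prop:construction} and $\bfb$-stabilization. The only cosmetic difference is that you use a union bound $1-c\sum q^{e(M)}$ where the paper uses the product bound $\prod(1-cq^{x(i)})\ge\prod_{i\ge N_0}(1-cq^i)^2$; both rest on the same ``each value appears at most once in each of the two subsequences'' observation that you correctly isolate as the main point.
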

\begin{proof}
Let $\{\lambda(N) \in \GT_N\}_{N \geq 1}$ be a sequence of signatures that $\bfb$-stabilizes to $\bft$.
From Proposition $\ref{prop:construction}$, the probability measures $\Lambda^N_K(\lambda(N), \cdot)$ converge weakly to $M_K^{\bft}$, for all $K \geq 1$.
So if we let $\{\mu^{N, K}\}_{N \geq K} \subset \GT_K$ be a sequence of random signatures with $\mu^{N, K}$ being $\Lambda^N_K(\lambda(N), \cdot)$-distributed, and let $\mu^K \in \GT_K$ be random $M^{\bft}_K$-distributed, then $\mu^{N, K}$ converges weakly to $\mu^K$.

Let $\epsilon > 0$ be arbitrary.
Let $c>0$ be the constant given in Proposition $\ref{prop:schurestimate}$ for the given $k\in\N$ in the statement of the Theorem.
There exists $N_0\in\N$ large enough so that
\[
\prod_{i=N_0}^{\infty}(1 - cq^i)^2 > 1 - \epsilon.
\]
As $\lim_{N\rightarrow\infty} b(N) = \lim_{N\rightarrow\infty} (N - b(N)) = + \infty$, there exists a large enough $L\in\N$ such that
\[
\min\{b(N), N - b(N)\} \geq N_0, \textrm{ for all } N > L.
\]
Then Proposition $\ref{prop:schurestimate}$ implies
\[
\Prob \left(  \mu^{N, L}_{b(L) + k} = \lambda(N)_{b(N) + k}  \right) \geq \prod_{i=L}^{N-1} (1 - cq^{x(i)}) \  \ \ \forall N > L,
\]
where
\[
x(i) := \begin{cases}
b(i),& \textrm{ if } \sigma_{i+1} = -1;\\
i-b(i),& \textrm{ if } \sigma_{i+1} = +1.
\end{cases}
\]
By our choice of $L$, $x(i) \geq N_0$ for all $i \geq L_0$.
Also, the sequence $(x(L), x(L+1), \dots)$ does not contain the same number more than twice.
As a result,
\begin{equation}\label{eqn:ineq1}
\Prob\left( \mu^{N, L}_{b(L) + k} = \lambda(N)_{b(N) + k} \right) \geq \prod_{i=N_0}^{\infty} (1 - cq^i)^2 > 1 - \epsilon \ \ \ \forall N > L.
\end{equation}

Now since $\lambda(N)$ $\bfb$-stabilizes to $\bft$, we have $\lambda(N)_{b(N) + k} = t_{1 - k}$, for large enough $N\in\N$.
Also, as mentioned above, $\mu^{N, L}$ converges weakly to $\mu^L$, as $N$ goes to infinity.
Therefore, from $(\ref{eqn:ineq1})$ and these observations, we obtain the inequality
\begin{equation}\label{eqn:ineq2}
\Prob\left( \mu^{L}_{b(L) + k} = t_{1 - k} \right) > 1 - \epsilon.
\end{equation}
Since $\epsilon > 0$ was arbitrary, $(\ref{eqn:ineq2})$ implies the theorem.
\end{proof}

\begin{thm}\label{thm:minimalschur}
The Martin boundary of the symmetric $q$-Gelfand-Tsetlin graph associated to $\sigma$ is in bijection with the set $\X$, under the map $\bft \mapsto \{ M_N^{\bft} \}_{N \geq 1}$ of Proposition $\ref{prop:construction}$.
\end{thm}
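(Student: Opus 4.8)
The statement asserts that $\bft \mapsto \{M_N^{\bft}\}_{N\ge 1}$ is a bijection between $\X$ and the Martin boundary $\Omega_q^{\Martin}(\sigma)$. I would prove this in three moves: (1) the map lands in $\Omega_q^{\Martin}$; (2) it is injective; (3) it is surjective. Move (1) is essentially already done in Proposition~\ref{prop:construction}, which shows that for $\bft\in\X$ the limits $M_k^{\bft}(\mu)=\lim_{N\to\infty}\Lambda^N_k(\lambda(N),\mu)$ exist along any $\bfb$-stabilizing sequence $\{\lambda(N)\}$ and form a coherent system; since such a $\bfb$-stabilizing sequence always exists (e.g.\ take $\lambda(N)$ with $\lambda(N)_{b(N)+1-i}=t_i$ for $b(N)+1-N\le i\le b(N)$ and constant tails), Definition~\ref{def:martin} is satisfied and $\{M_N^{\bft}\}\in\Omega_q^{\Martin}$.

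\textbf{Injectivity.} Suppose $\bft\ne\bft'$ in $\X$, say $t_{1-k}\ne t'_{1-k}$ for some $k\in\Z$. Pick sequences $\{\lambda(N)\}$, $\{\lambda'(N)\}$ that $\bfb$-stabilize to $\bft$, $\bft'$ respectively, and let $\mu(N)$ be $M_N^{\bft}$-distributed, $\mu'(N)$ be $M_N^{\bft'}$-distributed. Theorem~\ref{thm:LLN} gives that $\Prob(\mu(L)_{b(L)+k}=t_{1-k})\to 1$ and $\Prob(\mu'(L)_{b(L)+k}=t'_{1-k})\to 1$ as $L\to\infty$. Since $t_{1-k}\ne t'_{1-k}$, the one-dimensional marginal distributions of $\mu(L)_{b(L)+k}$ and $\mu'(L)_{b(L)+k}$ are eventually mutually singular, in particular the probability measures $M_L^{\bft}$ and $M_L^{\bft'}$ on $\GT_L$ differ for all large $L$. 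Hence the coherent systems are distinct and the map is injective.

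\textbf{Surjectivity.} Let $\{M_k\}_{k\ge 1}\in\Omega_q^{\Martin}$, so there is a sequence $\{\lambda(N)\in\GT_N\}$ with $\Lambda^N_k(\lambda(N),\cdot)\to M_k$ weakly for every $k$. I must produce $\bft\in\X$ with $\{M_N^{\bft}\}=\{M_k\}$. The key is to show that the limits $t_{1-m}:=\lim_{N\to\infty}\lambda(N)_{b(N)+m}$ exist for every $m\in\Z$, i.e.\ that $\{\lambda(N)\}$ itself $\bfb$-stabilizes. For this I would run the argument already sketched in Step~6 of the proof of Theorem~\ref{thm:asymptoticsmultivariate}: set $\lambda^{N,k}$ to be $\Lambda^N_k(\lambda(N),\cdot)$-distributed and $\mu^k$ to be $M_k$-distributed, so $\lambda^{N,k}\Rightarrow\mu^k$. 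Apply the concentration bound of Proposition~\ref{prop:schurestimate} iteratively, exactly as in the proof of Theorem~\ref{thm:LLN}, to get that for each fixed $k$ and each $m$ the random variable $\lambda^{N,k}_{b(k)+m}$ equals $\lambda(N)_{b(N)+m}$ with probability $\ge\prod_{i=N_0}^{\infty}(1-cq^i)^2$, uniformly in $N>L$. Letting $N\to\infty$, the distribution of $\mu^k_{b(k)+m}$ assigns mass $\ge 1-\epsilon$ to any value that $\lambda(N)_{b(N)+m}$ takes infinitely often; since $\epsilon$ is arbitrary, $\lambda(N)_{b(N)+m}$ can take only one value infinitely often, so the limit $t_{1-m}$ exists and $\mu^k_{b(k)+m}=t_{1-m}$ almost surely. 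Since the $t_{1-m}$ inherit the monotonicity $\lambda(N)_1\ge\lambda(N)_2\ge\cdots$, we get $\cdots\le t_{-1}\le t_0\le t_1\le\cdots$, i.e.\ $\bft\in\X$, and $\{\lambda(N)\}$ $\bfb$-stabilizes to $\bft$. Then Proposition~\ref{prop:construction} identifies $\lim_N\Lambda^N_k(\lambda(N),\cdot)=M_k^{\bft}$, which must coincide with the given weak limit $M_k$; hence $\{M_k\}=\{M_N^{\bft}\}$ and surjectivity follows.

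\textbf{Main obstacle.} The only nontrivial point is the surjectivity step: deducing stabilization of $\{\lambda(N)\}$ from mere weak convergence of the pushforwards $\Lambda^N_k(\lambda(N),\cdot)$. Weak convergence a priori controls only the $\mu$-marginals at fixed finite rank $k$, while stabilization is a statement about the original signatures $\lambda(N)$ at the moving index $b(N)+m$. Bridging this gap is precisely what Proposition~\ref{prop:schurestimate} is designed for: it transfers information about coordinate $b(N)+m$ of $\lambda(N+1)$ down one level with error $O(q^{\min(b(N),N-b(N))})$, and these errors are summable under the genericity assumption~\eqref{genericSigma}, so the telescoped estimate survives the limit $N\to\infty$. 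Everything else is bookkeeping: combining Proposition~\ref{prop:construction}, Theorem~\ref{thm:LLN}, and this concentration argument in the right order.
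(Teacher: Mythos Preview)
Your proposal is correct and follows essentially the same route as the paper: well-definedness via Proposition~\ref{prop:construction}, injectivity via the Law of Large Numbers (Theorem~\ref{thm:LLN}), and surjectivity by iterating the concentration bound of Proposition~\ref{prop:schurestimate} to force any approximating sequence $\{\lambda(N)\}$ to $\bfb$-stabilize. The only cosmetic difference is that the paper fixes the threshold $2/3$ (so that two distinct subsequential limits would give total mass $>4/3$, while no subsequential limit would leave mass $\le 1/3$), whereas you let $\epsilon\to 0$; both versions work.
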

\begin{proof}
By Proposition $\ref{prop:construction}$ and Theorem $\ref{thm:LLN}$, each coherent system $\{M_N^{\bft}\}_{N \geq 1}$ belongs to the Martin boundary, and all of them are distinct.
It remains to show that there are no other points in the Martin boundary.

Let $\{M_k\}_{k \geq 1}$ be an element of $\Omega_q^{\Martin}$; then there exists a sequence $\{\lambda(N) \in \GT_N\}_{N \geq 1}$ such that
\begin{equation}\label{eqn:schurdfnMartin}
M_k(\mu) = \lim_{N\rightarrow\infty} \Lambda^N_k(\lambda(N), \mu), \textrm{ for all }\mu\in\GT_k, k \geq 1.
\end{equation}
Let $m\in\Z$ be fixed, but arbitrary, and let $c > 0$ be the constant in Proposition $\ref{prop:schurestimate}$ for $k = |m|+1$.
Let $N_0\in\N$ be such that
\[
\prod_{i=N_0}^{\infty} (1 - cq^i)^2 > 2/3.
\]

Since $\lim_{N\rightarrow\infty} b(N) = \lim_{N\rightarrow\infty} (N - b(N)) = +\infty$, there exists $L\in\N$ such that $\min\{b(N), N - b(N)\} \geq N_0$, for all $N \geq L$.
Fix $L$ for the moment.
For any $N \geq L$, let $\mu^N\in\GT_L$ be a random signature which is $\Lambda^N_L(\lambda(N), \cdot)$-distributed.
Also let $\xi^N := \mu^N_{b(L)+m}\in\Z$, so it is a random integer.
From Proposition $\ref{prop:schurestimate}$,
\[
\Prob(\xi^N = \lambda(N)_{b(N)+m}) \geq
\Prob \left( \mu^N_{b(L) + i} = \lambda(N)_{b(N)+i}, \textrm{ for all } -k \leq i \leq k \right) \geq
\prod_{i=L}^{N-1} (1 - cq^{x(i)}),
\]
where $x(i) := b(i)$, if  $\sigma_{i+1} = -1$ and $x(i) := i - b(i)$, if $\sigma_{i+1} = +1$.
By our choice of $L$, $x(i) \geq N_0$ for $i \geq L$.
Moreover, the sequence $(x(L), x(L+1), \ldots)$ does not contain the same number more than twice. As a result,
\begin{equation}\label{eqn:boundxi}
\Prob(\xi^N = \lambda(N)_{b(N)+m}) \geq \prod_{i=N_0}^{\infty} (1 - cq^i)^2 > 2/3.
\end{equation}
By assumption, $M_L$ is the weak limit of $\Lambda^N_L(\lambda(N), \cdot)$, as $N$ tends to infinity.
Therefore, if we let $M$ be the pushforward of $M_L$ from $\GT_L$ to the coordinate $b(N)+m$, then $M$ is a probability measure and the weak limit of the laws of the random variables $\xi^N$.
From $(\ref{eqn:boundxi})$, we deduce that $M(\{p\}) > 2/3$, for any $p\in\Z$ which is a subsequential limit of the sequence $\{\lambda(N)_{b(N)+m}\}_{N \geq 1}$.
This cannot occur for more than one $p\in\Z$, or the total measure of $M$ would be at least than $4/3$.
On the other hand, there has to be at least one subsequential limit of $\{\lambda(N)_{b(N)+m}\}_{N \geq 1}$, since otherwise the total measure of $M$ would be at most $1/3$.
Therefore the sequence $\{\lambda(N)_{b(N)+m}\}_{N \geq 1}$ converges as $N$ goes to infinity.
Since $m\in\Z$ was arbitrary, the limit
\[
\lim_{N\rightarrow\infty} \lambda(N)_{b(N)+m}
\]
exists for any $m\in\Z$.
This implies that $\{\lambda(N)\}_{N \geq 1}$ must $\bfb$-stabilize to some $\bft\in\X$.
From $(\ref{eqn:schurdfnMartin})$ and Proposition $\ref{prop:construction}$, it follows that $M_k = M_k^{\bft}$ for any $k\in\N$, i.e., $\{M_k\}_k$ coincides with the coherent system $\{ M_N^{\bft} \}_{N \geq 1}$, concluding the proof.
\end{proof}

\subsection{Characterization of the minimal boundary}\label{sec:minimal}

\begin{thm}\label{thm:symmetricschur}
The minimal boundary $\Omega_q$ of the symmetric $q$-Gelfand-Tsetlin graph associated to $\sigma$ is equal to the Martin boundary $\Omega_q^{\Martin}$.
\end{thm}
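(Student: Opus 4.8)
The plan is to combine the general inclusion of Theorem \ref{thm:minimalmartin}, which gives $\Omega_q \subseteq \Omega_q^{\Martin}$ for free, with a proof of the reverse inclusion obtained by showing that every coherent system lying in the Martin boundary is actually extreme. By Theorem \ref{thm:minimalschur} the Martin boundary consists precisely of the coherent systems $\{M_N^{\bft}\}_{N\geq 1}$, $\bft\in\X$, of Proposition \ref{prop:construction}; so I would fix an arbitrary $\bft\in\X$ and prove that $\{M_N^{\bft}\}_{N\geq 1}$ is an extreme point of $\varprojlim\M(\GT_N)$.

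To carry this out I would invoke Olshanski's disintegration (Theorem \ref{olshanski}): there is a unique $\pi\in\M(\Omega_q)$ with $M_N^{\bft}(\cdot)=\int_{\Omega_q}M_N^{\omega}(\cdot)\,\pi(d\omega)$ for all $N$, and the task reduces to showing that $\pi$ is a Dirac mass. Since $\Omega_q\subseteq\Omega_q^{\Martin}$, Theorem \ref{thm:minimalschur} lets me attach to each $\omega\in\Omega_q$ a sequence $\bft(\omega)\in\X$ with $M_N^{\omega}=M_N^{\bft(\omega)}$ for all $N$. The key input is then the Law of Large Numbers of Theorem \ref{thm:LLN}, applied both to $\bft$ and, for each $\omega$, to $\bft(\omega)$. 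Concretely, for fixed $k,p\in\Z$ I would set $A_N:=\{\kappa\in\GT_N:\kappa_{b(N)+k}=p\}$ and integrate the disintegration identity over $A_N$; the function $\omega\mapsto M_N^{\omega}(A_N)$ is a countable sum of nonnegative measurable functions, hence measurable, and is bounded by $1$. Theorem \ref{thm:LLN} forces the left-hand side to converge to $\mathbf{1}_{\{t_{1-k}=p\}}$ and the integrand to converge pointwise to $\mathbf{1}_{\{t_{1-k}(\omega)=p\}}$, so dominated convergence yields $\pi(\{\omega:t_{1-k}(\omega)=p\})=\mathbf{1}_{\{t_{1-k}=p\}}$. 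Taking $p=t_{1-k}$ and intersecting over all $k\in\Z$ gives $\pi(\{\omega:\bft(\omega)=\bft\})=1$.

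To conclude I would use the injectivity half of Theorem \ref{thm:minimalschur}: the set $\{\omega\in\Omega_q:\bft(\omega)=\bft\}$ has at most one element, and having full $\pi$-measure it must be a single point $\omega_0$, whence $\pi=\delta_{\omega_0}$ and $M_N^{\bft}=M_N^{\omega_0}$ for every $N$. Thus $\{M_N^{\bft}\}_{N\geq 1}=\omega_0\in\Omega_q$ is extreme; since every Martin boundary point is of this form, $\Omega_q^{\Martin}\subseteq\Omega_q$, and therefore $\Omega_q=\Omega_q^{\Martin}$.

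I do not expect a serious obstacle: essentially all the probabilistic content has already been front-loaded into the concentration estimate of Proposition \ref{prop:schurestimate} and the ensuing Law of Large Numbers (Theorem \ref{thm:LLN}), and what is left is the standard ergodic-method packaging. The one point that needs a little care is the interchange of limit and integral — one must check that the events $A_N$ retain the same meaning on every level and that $\omega\mapsto M_N^\omega(A_N)$ is measurable and uniformly bounded — but this is routine. The only genuinely delicate bookkeeping is that Theorem \ref{thm:LLN} must be invoked simultaneously at $\bft$ and at the a priori unknown boundary parameters $\bft(\omega)$; this is legitimate precisely because Theorem \ref{thm:minimalschur} identifies the coherent system of every boundary point $\omega$ with some $M_N^{\bft(\omega)}$, to which Theorem \ref{thm:LLN} applies verbatim.
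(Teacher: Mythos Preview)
Your proposal is correct and follows essentially the same route as the paper's own proof: both use Theorem~\ref{thm:minimalmartin} for the inclusion $\Omega_q\subseteq\Omega_q^{\Martin}$, then argue via Theorem~\ref{thm:minimalschur} and the Law of Large Numbers (Theorem~\ref{thm:LLN}) that each $\{M_N^{\bft}\}_{N\geq 1}$ is extreme. The paper compresses the extremality step into the phrase ``pairwise disjoint supports, hence one cannot be a convex combination of the others'' and points to \cite{GO}, \cite{Ol2} for details; your disintegration-plus-dominated-convergence argument is exactly the unpacking of that phrase.
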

\begin{proof}
This is a Corollary of Theorems $\ref{thm:LLN}$, $\ref{thm:minimalschur}$ and $\ref{thm:minimalmartin}$.
Indeed, by the latter one, the minimal boundary is contained in the Martin boundary and it remains to show that the coherent systems $\{ M_N^{\bft} \}_{N \geq 1}$ are extreme points of the convex space $\lim_{\leftarrow}{\mathcal{M}(\GT_N)}$ of coherent systems on the symmetric $q$-Gelfand-Tsetlin graph.
By Theorem $\ref{thm:LLN}$, all the measures $\{M_N^{\bft}\}_{N \geq 1}$ have pairwise disjoint supports, hence one cannot be a convex combination of the others, which finishes the proof.
See also \cite[proof of Thm. 3.12]{GO} and \cite[Step 3 in proof of Thm. 6.2]{Ol2} for more detailed expositions of similar proofs.
\end{proof}

\section{Boundary of the BC type $q$-Gelfand-Tsetlin graph}\label{sec:BCboundary}

In this section, we characterize the Martin and minimal boundaries of the BC type $q$-Gelfand-Tsetlin graph.
We suppress the type $G\in\{B, C, D\}$ from most notations, for simplicity.
The parameter $\epsilon$ is $\half, 1, 0$, for $G = B, C, D$, respectively.

The reader should compare Section $\ref{boundary:qschur}$ with this one.
Both follow the same approach and yield similar results.

\subsection{A family of coherent probability measures}

In Proposition $\ref{prop:sympconstruction}$, we define a map from $\Y$ into construct the measures of the Martin boundary of the BC type $q$-Gelfand-Tsetlin graph.

Let $k\in\N$ be arbitrary.
Recall the linear space $C_k$, defined at the beginning of Section $\ref{sec:inject}$, of analytic functions $f(z_1, \ldots, z_k)$ on $(\C^*)^k$ for which an absolutely and uniformly convergent expansion
\begin{equation}\label{eqn:fourierbc}
f(z_1, \ldots, z_k) = \sum_{m_1, \dots, m_k\in\Z}{a_{m_1, \ldots, m_k}z_1^{m_1}\cdots z_k^{m_k}}\ \ \forall (z_1, \ldots, z_k)\in (\C^*)^k,
\end{equation}
exists.
Let us call $C_k^{W_k}$ the linear subspace of $C_k$, consisting of functions $f(z_1, \ldots, z_k)$ for which the coefficients in the expansion $(\ref{eqn:fourierbc})$ satisfy
\begin{equation*}
a_{m_1, \ldots, m_k} = a_{\epsilon_1m_{\sigma(1)}, \ldots, \epsilon_km_{\sigma(k)}}, \textrm{ for all }\sigma\in S_k, \ m_1, \ldots, m_k\in\Z, \ \epsilon_1, \ldots, \epsilon_k\in\{1, -1\}.
\end{equation*}

For example, finite linear combinations of the symplectic/orthogonal polynomials $\chi^G_{\lambda}(z_1, \ldots, z_k)$, $G\in\{B, C, D\}$, $\lambda\in\GTp_k$,
belong to $C_k^{W_k}$.
For each $G\in\{B, C, D\}$ and $\mu\in\GTp_k$, define the functionals $F_{\mu}^G: C_k^{W_k} \rightarrow \C$ via
\begin{equation*}
F_{\mu}^G(f) := \frac{\widetilde{F}_{\mu}^G(f)}{\widetilde{F}_{\mu}^G(\chi^G_{\mu}(z_1, \ldots, z_k))},
\end{equation*}
where
\begin{equation*}
\widetilde{F}_{\mu}^G(f) := \oint_{|z_1| = 1} \cdots \oint_{|z_k| = 1}
f(z_1, \ldots, z_k) \overline{\chi_{\mu}^G(z_1, \ldots, z_k)} \mathfrak{m}^G(z_1, \ldots, z_k) \prod_{i=1}^k{\frac{dz_i}{2\pi\ii}};
\end{equation*}
\begin{equation}\label{eqn:weightm}
\mathfrak{m}^G(z_1, \ldots, z_k) := \prod_{1 \leq i < j \leq N} |z_i - z_j|^{2} |1 - z_iz_j|^{2}
\times  \begin{cases}
	\prod_{i=1}^N{|1-z_i|^2}, & \text{if}\ G = B; \\
	\prod_{i=1}^N{|1-z_i|^2|1+z_i|^2}, & \text{if}\ G = C; \\
	1, & \textrm{if}\ G = D.
    \end{cases}
\end{equation}

There is an explicit formula for $\widetilde{F}_{\mu}^G(\chi_{\mu}^G(z_1, \ldots, z_k))$, see \cite[Lem. 3.5]{OlOs}, which shows that it is nonzero and therefore the definition of $F_{\mu}^G$ is well-posed.

\begin{lem}\label{technicalsymplectic}
The linear functionals $\{F_{\mu}^G : C_k^{W_k} \rightarrow \C\}_{\mu\in\GT_k}$ satisfy
\begin{enumerate}
	\item $F^G_{\mu}(\chi^G_{\nu}(z_1, \ldots, z_k)) = \delta_{\mu, \nu}$, for all $\mu, \nu\in\GTp_k$.
	\item If $\{f_N\}_{N \geq 1} \subset C_k^{W_k}$ is such that $f_N \rightarrow f$ uniformly on compact subsets of $(\C^*)^k$, then $f\in C_k^{W_k}$ and $F_{\mu}^G(f_N) \rightarrow F_{\mu}^G(f)$ for all $\mu\in\GTp_k$.
	\item If $h\in C_k^{W_k}$ and $F_{\mu}^G(h) = 0$, for all $\mu\in\GTp_k$, then $h$ is identically zero.
\end{enumerate}
\end{lem}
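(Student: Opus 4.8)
The plan is to mirror, step for step, the proof of Lemma~\ref{lem:schurfunctionals}, with the Weyl integration formula and the orthogonality of irreducible characters of the compact groups $SO(2k+1)$, $Sp(k)$, $SO(2k)$ taking over the role played there by Schur orthogonality for $U(k)$. The first move, which underlies item~(1), is to identify the weight $\mathfrak{m}^G$ of \eqref{eqn:weightm} with the density of the pushforward of Haar measure on $G(k)$ under the eigenvalue map, up to a positive constant. On the torus $\mathbb{T}^k$ one has the elementary identities $|z_i - z_j|^2 |1 - z_i z_j|^2 = (z_i + z_i^{-1} - z_j - z_j^{-1})^2$, $|1 - z_i|^2 = 2 - z_i - z_i^{-1}$ and $|1 - z_i|^2|1 + z_i|^2 = 2 - z_i^2 - z_i^{-2}$, so that $\mathfrak{m}^G$ is exactly the squared modulus of the Weyl denominator of the relevant root system, i.e.\ the Weyl measure for $SO(2k+1)$, $Sp(k)$, $SO(2k)$ in types $B$, $C$, $D$ respectively. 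Granting this, $\widetilde{F}_\mu^G(f)$ is a fixed positive multiple of the $L^2(G(k))$-pairing of $f$ against $\chi^G_\mu$, and the character orthogonality relations give $\widetilde{F}_\mu^G(\chi^G_\nu) = c^G_\mu\,\delta_{\mu,\nu}$ with $c^G_\mu>0$. For type~$D$ one must additionally observe that $\chi^D_\mu = \chi^{D,+}_\mu + \chi^{D,-}_\mu$ is a sum of twin irreducible characters and that, for $\mu\neq\nu$ in $\GTp_k$, the four twins $\mu^\pm,\nu^\pm$ are pairwise non-isomorphic $SO(2k)$-representations, so the cross terms still vanish and $c^D_\mu$ (equal to $1$ or $2$) is still positive. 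Since $\widetilde{F}_\mu^G(\chi^G_\mu)\neq 0$ is in any case quoted from \cite[Lem.~3.5]{OlOs}, dividing by it yields item~(1).

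For item~(2): a locally uniform limit $f$ of holomorphic functions on $(\C^*)^k$ is holomorphic there, and since $(\C^*)^k$ is a Reinhardt domain (equivalently, under $z_j = e^{w_j}$ holomorphic functions on $(\C^*)^k$ correspond to $2\pi\ii$-periodic entire functions on $\C^k$) it admits a Laurent expansion of the form \eqref{eqn:fourierbc} converging absolutely and uniformly on compact subsets; thus $f\in C_k$. Its coefficients $a_{m_1,\ldots,m_k}$ are recovered by integration over $\mathbb{T}^k$, a compact subset of $(\C^*)^k$ on which $f_N\to f$ uniformly, hence they are the limits of the corresponding coefficients of $f_N$ and inherit the $W_k$-symmetry, so $f\in C_k^{W_k}$. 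Finally $F^G_\mu(f_N)\to F^G_\mu(f)$ because $\widetilde{F}_\mu^G$ is integration over the compact torus $\mathbb{T}^k$ against a measure with bounded density $\overline{\chi^G_\mu}\,\mathfrak{m}^G$, hence continuous under uniform convergence on $\mathbb{T}^k$.

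For item~(3): if $F^G_\mu(h)=0$ for all $\mu\in\GTp_k$ then $\widetilde{F}_\mu^G(h)=0$, i.e.\ $h$ is orthogonal, against the measure $\mathfrak{m}^G$ on $\mathbb{T}^k$, to every $\chi^G_\mu$, hence to every $W_k$-invariant Laurent polynomial. By the Stone--Weierstrass theorem those polynomials are dense in $C(\mathbb{T}^k)^{W_k}$; symmetrizing an arbitrary continuous test function over $W_k$ and using the $W_k$-invariance of $h$ and of $\mathfrak{m}^G$ upgrades the orthogonality to all of $C(\mathbb{T}^k)$. Therefore $h\,\mathfrak{m}^G$ vanishes almost everywhere on $\mathbb{T}^k$; since $\mathfrak{m}^G$ is strictly positive off a set of measure zero and $h$ is continuous, $h$ vanishes identically on $\mathbb{T}^k$, and then $h\equiv 0$ on $(\C^*)^k$ because its Laurent coefficients are the Fourier coefficients of $h|_{\mathbb{T}^k}\equiv 0$. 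The only genuinely non-routine step I anticipate is item~(1) --- matching $\mathfrak{m}^G$ to the Weyl measure and carefully bookkeeping the reducibility of the type~$D$ characters; everything else is essentially the argument of Lemma~\ref{lem:schurfunctionals} transplanted to the Weyl group $W_k$.
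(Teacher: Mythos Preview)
Your proposal is correct and follows essentially the same approach as the paper, which likewise cites \cite{OlOs} for item~(1), declares item~(2) obvious, and for item~(3) mirrors the proof of Lemma~\ref{lem:schurfunctionals}. The only cosmetic difference is in establishing density of $W_k$-invariant Laurent polynomials in $C(\mathbb{T}^k)^{W_k}$: you invoke Stone--Weierstrass directly, while the paper uses the change of variables $z\mapsto (z+z^{-1})/2$ to reduce to ordinary symmetric polynomials on $[-1,1]^k$ and then applies the classical Weierstrass theorem.
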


\begin{proof}
The first item of the lemma is well known, see for instance \cite{OlOs}.
The second item is obvious.
As for the third item, its proof is similar to the proof of Lemma \ref{lem:schurfunctionals} (3).
The only difference is that now we need the fact that the linear space of $W_k$--symmetric Laurent polynomials on $k$ variables is dense in the space of continuous, $W_k$--symmetric functions on the $k$-dimensional torus $\mathbb{T}^k$.
Indeed note that, under the change of variables $z \in \mathbb{T} \rightarrow [-1, 1] \ni x := (z+z^{-1})/2$, the $W_k$--symmetric Laurent polynomials become usual symmetric polynomials.
Our statement is then equivalent to the fact that symmetric polynomials are dense on the space of continuous, symmetric functions on $[-1, 1]^k$, and this follows from Weierstrass approximation theorem.
\end{proof}

\begin{prop}\label{prop:sympconstruction}
Take any $\bfy\in\Y$. Let $\{ \lambda(N) \in \GTp_N \}_{N \geq 1}$ be a sequence that stabilizes to $\bfy$.
Then for each $k \geq 1$, $\mu\in\GTp_k$, the limit
\begin{equation*}
M^{\bfy}_k(\mu) = \lim_{N\rightarrow\infty} \Lambda^N_k(\lambda(N), \mu)
\end{equation*}
exists and does not depend on the choice of $\{ \lambda(N) \}_{N \geq 1}$.
For each $k \geq 1$, $M_k^{\bfy}$ is a probability measure on $\GTp_k$ and, moreover, $\{M^{\bfy}_k \in \M(\GTp_k)\}_{k \geq 1}$ is a coherent system.
\end{prop}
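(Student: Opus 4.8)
The plan is to follow the proof of Proposition \ref{prop:construction} almost verbatim, with the type $A$ inputs replaced by their $B$--$C$--$D$ analogues: Theorem \ref{thm:multivariablesymplectic} in place of Theorem \ref{thm:asymptoticsmultivariate}, the functionals $F^G_\mu$ and Lemma \ref{technicalsymplectic} in place of $F_\mu$ and Lemma \ref{lem:schurfunctionals}, and the links \eqref{df:symplecticlinks} in place of \eqref{eqn:link}. One simplification relative to the type $A$ case is that here there is no auxiliary sequence $\sigma$: passing from rank $N$ to rank $N+1$ one always adjoins the variable $q^{N+\epsilon}$, so the relabelling $\bft\mapsto\bft(k)$ of the type $A$ argument is not needed, and stabilization of $\{\lambda(N)\}_{N\ge1}$ to $\bfy$ restricts directly to the tail $\{\lambda(N)\}_{N\ge k}$.

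First I would fix $k$ and a sequence $\{\lambda(N)\in\GTp_N\}$ stabilizing to $\bfy$, and iterate \eqref{df:symplecticlinks} (via $\Lambda^N_k=\Lambda^N_{N-1}\cdots\Lambda^{k+1}_k$) to obtain, for $N\ge k$, the \emph{finite} expansion
\[
\frac{\chi^G_{\lambda(N)}(x_1,\dots,x_k,q^{k+\epsilon},\dots,q^{N-1+\epsilon})}{\chi^G_{\lambda(N)}(q^{\epsilon},\dots,q^{N-1+\epsilon})}=\sum_{\mu\in\GTp_k}\Lambda^N_k(\lambda(N),\mu)\,\frac{\chi^G_\mu(x_1,\dots,x_k)}{\chi^G_\mu(q^{\epsilon},\dots,q^{k-1+\epsilon})}.
\]
By Theorem \ref{thm:multivariablesymplectic} the left side converges, uniformly on compacts of $(\C^*)^k$, to $\Phi^{\bfy,G}(x_1,\dots,x_k;q)$; applying $F^G_\mu$ to both sides (a linear operation on the finite sum) and using Lemma \ref{technicalsymplectic}(1)--(2) gives
\[
\lim_{N\to\infty}\Lambda^N_k(\lambda(N),\mu)=\chi^G_\mu(q^{\epsilon},\dots,q^{k-1+\epsilon})\cdot F^G_\mu(\Phi^{\bfy,G}(\,\cdot\,;q))=:a_\mu,
\]
which depends on $\bfy$ only. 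Since each $\Lambda^N_k(\lambda(N),\cdot)$ is a probability measure on $\GTp_k$, Fatou's lemma yields $a_\mu\ge0$ and $\sum_\mu a_\mu\le1$.

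The substantive point is to upgrade this to $\sum_\mu a_\mu=1$. Mirroring Proposition \ref{prop:construction}, I would put $g(z):=\sum_{\mu\in\GTp_k}a_\mu\,\chi^G_\mu(z)/\chi^G_\mu(q^{\epsilon},\dots,q^{k-1+\epsilon})$ and first show it converges absolutely and locally uniformly on $(\C^*)^k$. The homogeneity of Schur polynomials exploited in the type $A$ proof is unavailable, so instead I would use the nonnegativity of the weight multiplicities of $\chi^G_\mu$: one has $|\chi^G_\mu(z_1,\dots,z_k)|\le\chi^G_\mu(|z_1|,\dots,|z_k|)$, and since every weight $\nu$ occurring in $\chi^G_\mu$ satisfies $\|\nu\|_1\le\|\mu\|_1=|\mu|$, on the region $\{R^{-1}<|z_i|<R,\ 1\le i\le k\}$ we get $\chi^G_\mu(|z_1|,\dots,|z_k|)\le(\dim\chi^G_\mu)\,R^{|\mu|}\le(C_kR)^{|\mu|}$ for a constant $C_k$ depending only on $k$. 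On the other hand, evaluating the displayed expansion at $(t,\dots,t)$ with $t>0$, all summands are nonnegative and the left side converges as $N\to\infty$, so Fatou gives $\sum_\mu a_\mu\,\chi^G_\mu(t,\dots,t)/\chi^G_\mu(q^{\epsilon},\dots,q^{k-1+\epsilon})<\infty$ for every $t>0$; since $\chi^G_\mu(t,\dots,t)\ge t^{|\mu|}$ for $t\ge1$ (the highest-weight term), choosing $t=C_kR$ dominates the terms of $g$ on that region. As $R$ is arbitrary, $g$ is analytic on $(\C^*)^k$ and, being a locally uniform limit of finite combinations of $\chi^G_\mu$'s, lies in $C_k^{W_k}$. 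By Lemma \ref{technicalsymplectic}(2), $F^G_\mu(g)=a_\mu/\chi^G_\mu(q^{\epsilon},\dots,q^{k-1+\epsilon})=F^G_\mu(\Phi^{\bfy,G}(\,\cdot\,;q))$ for all $\mu$, so Lemma \ref{technicalsymplectic}(3) forces $g=\Phi^{\bfy,G}(\,\cdot\,;q)$. Evaluating at $(q^{\epsilon},\dots,q^{k-1+\epsilon})$, where the left side of the expansion is identically $1$, and using Theorem \ref{thm:multivariablesymplectic} once more yields $\sum_\mu a_\mu=g(q^{\epsilon},\dots,q^{k-1+\epsilon})=\Phi^{\bfy,G}(q^{\epsilon},\dots,q^{k-1+\epsilon};q)=1$. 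Hence $M^{\bfy}_k(\mu):=a_\mu$ defines a probability measure on $\GTp_k$, independent of the stabilizing sequence.

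Coherency then comes for free: applying Fatou to $\sum_\mu\Lambda^N_{k+1}(\lambda(N),\mu)\Lambda^{k+1}_k(\mu,\nu)=\Lambda^N_k(\lambda(N),\nu)$ shows $(M^{\bfy}_{k+1}\Lambda^{k+1}_k)(\nu)\le M^{\bfy}_k(\nu)$ for all $\nu$, and since both sides are probability measures they coincide. The only genuinely delicate step is the convergence of $g$ and its identification with $\Phi^{\bfy,G}$: the non-homogeneity of $B$--$C$--$D$ characters forces the weight-multiplicity/highest-weight comparison above in place of the one-line homogeneity bound of the type $A$ argument; everything else is routine and parallels Section \ref{boundary:qschur}.
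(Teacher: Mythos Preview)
Your proposal is correct and follows exactly the approach the paper takes: the paper's own proof merely says to repeat the argument of Proposition~\ref{prop:construction} with Theorem~\ref{thm:multivariablesymplectic} and Lemma~\ref{technicalsymplectic} in place of their type~$A$ counterparts, and you have carried this out in detail. Your weight-multiplicity bound for $|\chi^G_\mu(z)|$ is a valid substitute for the homogeneity argument used in type~$A$; a slightly more direct alternative, closer in spirit to the type~$A$ proof, is to use that $\chi^G_\mu(t_1,\dots,t_k)$ is monotone in each $t_i+t_i^{-1}$ (being a nonnegative combination of $W_k$-orbit sums), so $|\chi^G_\mu(z)|\le\chi^G_\mu(R,\dots,R)$ directly on $\{R^{-1}<|z_i|<R\}$, which avoids the dimension estimate.
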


\begin{proof}
Let $\{\lambda(N)\in\GTp_N\}_{N \geq 1}$ be any sequence of nonnegative signatures that stabilizes to $\bfy\in\Y$; also let $k\in\N$ be arbitrary.
From the definition $(\ref{df:symplecticlinks})$ of the Markov kernels $\Lambda^{N+1}_N$, we have
\begin{equation}\label{eqn:prelimitdefining}
\frac{\chi^G_{\lambda}(x_1, \ldots, x_k, q^{k+\epsilon}, \ldots, q^{N-1+\epsilon})}{\chi_{\lambda}^G(q^{\epsilon}, q^{1+\epsilon}, \ldots, q^{N-1+\epsilon})} =
\sum_{\mu \in \GTp_k} \Lambda^N_k(\lambda(N), \mu) \frac{\chi^G_{\mu}(x_1, \ldots, x_k)}{\chi_{\mu}^G(q^{\epsilon}, \ldots, q^{k-1+\epsilon})}.
\end{equation}
By virtue of Theorem $\ref{thm:multivariablesymplectic}$, the left side of $(\ref{eqn:prelimitdefining})$ converges to the analytic function $\Phi^{\bfy, G}(x_1, \ldots, x_k; q)$, on compact subsets of $(\C^*)^k$.
The rest of the proof is similar to that of Proposition $\ref{prop:construction}$.
One shows that $\{\Lambda^N_k(\lambda(N), \mu) \}_{N \geq k}$ has a limit, for any $\mu\in\GTp_k$, and moreover each limit
\begin{equation}\label{symplecticamu}
\lim_{N \rightarrow \infty} \Lambda^N_k(\lambda(N), \mu) =
\chi_{\mu}^G(q^{\epsilon}, q^{1+\epsilon}, \ldots, q^{k-1+\epsilon}) F^G_{\mu}(\Phi^{\bfy, G}) =: a_{\mu}
\end{equation}
does not depend on the stabilizing sequence of nonnegative signatures $\{ \lambda(N) \}_{N \geq 1}$.
One then shows $a_{\mu} \geq 0$ and $\sum_{\mu\in\GTp_k}{a_{\mu}} = 1$, and therefore the measure $M^{\bfy}_k \in \M(\GTp_k)$, $M^{\bfy}_k(\mu) := a_{\mu}$, satisfies the desired properties.
The coherency of the sequence $\{ M^{\bfy}_k \}_{k \geq 1}$ is evident.
In working out these outlined steps, one should follow the proof of Proposition $\ref{prop:construction}$ with the only difference that Lemma $\ref{technicalsymplectic}$ is used instead of Lemma $\ref{lem:schurfunctionals}$.
\end{proof}

\subsection{Concentration bound}

\begin{prop}\label{prop:sympestimate}
Let $k\in\N$ and let $\{\lambda(N) \in \GTp_N \}_{N \geq 1}$ be an arbitrary sequence.
Let $\mu(N)\in\GTp_N$ be random variables such that each $\mu(N)$ is distributed according to the probability distribution $\Lambda^{N+1}_{N}(\lambda(N+1), \cdot)$.
Then there exists a constant $c > 0$, independent of $N$, such that
\begin{equation*}
\Prob\left( \mu(N)_{N + 1 -  i} = \lambda(N+1)_{N + 2 - i} \textrm{ for all } i = 1, 2, \ldots, k \right) > 1 - cq^N.
\end{equation*}
\end{prop}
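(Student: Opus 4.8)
The plan is to follow the strategy of the proof of Proposition~\ref{prop:schurestimate}: establish a concentration estimate for a single coordinate of $\mu(N)$, uniform over the values of the remaining coordinates, and then combine the $k$ resulting estimates by a union bound. Since all coordinates are nonnegative, no analogue of the sign-flip dichotomy appearing in Proposition~\ref{prop:schurestimate} is needed.

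Fix $\lambda:=\lambda(N+1)\in\GTp_{N+1}$. Combining the expression (\ref{eqn:links}) for the Markov kernel with the branching rule of Proposition~\ref{prop:branchsymplectic} and the $q$-geometric specialization of Proposition~\ref{prop:evalsymplectic}, one sees that the law of $\mu(N)$ is, up to a normalizing constant not depending on $\mu$, the measure
\begin{equation*}
\mu\ \longmapsto\ \chi^G_{\mu}(q^{\epsilon},q^{1+\epsilon},\ldots,q^{N-1+\epsilon})\ \sum_{\substack{\nu\in\GTp_{N+1}\\ \lambda\succ\nu\succ\mu}}\tau^G(q^{N+\epsilon};\lambda,\nu,\mu)\,q^{(N+\epsilon)(2|\nu|-|\lambda|-|\mu|)}
\end{equation*}
on $\GTp_N$, where $\chi^G_{\mu}$ at the geometric progression is the explicit expression of Proposition~\ref{prop:evalsymplectic} in terms of the shifted coordinates $m_i:=\mu_i+N-i+\epsilon$, a $W_N$-Vandermonde $V^s(q^{m_N},\ldots,q^{m_1})$, and (for $G=B,C$) a product $\prod_i(q^{-\epsilon m_i}-q^{\epsilon m_i})$. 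Because $m_1>m_2>\cdots>m_N\ge 1$ with consecutive gaps at least $1$, the factor $V^s(q^{m_N},\ldots,q^{m_1})$ equals $q^{-\sum_i(N-i)m_i}$ times a quantity bounded above and below by positive constants independent of $N$ and $\mu$, and similarly $\prod_i(q^{-\epsilon m_i}-q^{\epsilon m_i})$ equals $q^{-\epsilon|\mu|}$ times such a bounded quantity; moreover, for $N$ large, the $\nu$-sum is comparable to its term of smallest $2|\nu|-|\mu|$, the coefficients $\tau^G$ and the subdominant terms contributing only bounded perturbations. Keeping these bookkeeping facts in mind, a short computation shows that, for fixed $p$, the displayed weight --- viewed as a function of $\mu_p$ with all other coordinates held fixed --- is maximized at $\mu_p=\lambda_{p+1}$ (this value is always admissible, because interlacing through $\nu$ forces $\mu_{p+1}\le\lambda_{p+1}\le\mu_{p-1}$, with the convention $\mu_{N+1}:=0$), and raising $\mu_p$ by one unit from $\lambda_{p+1}$ multiplies it by a factor of order $q^{p}$: the Vandermonde contributes $q^{-(N-p)}$, the prefactor (if present) contributes $q^{-\epsilon}$, and the $\nu$-sum contributes $q^{N+\epsilon}$, since its minimizing $\nu$ has $2|\nu|-|\mu|$ increasing by exactly one.

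The single-coordinate estimate is then carried out exactly as in the proof of Lemma~\ref{lem:est1}. Fix $p\in\{N-k+1,\ldots,N\}$, condition on all coordinates of $\mu(N)$ other than $\mu_p$ (summing out the intermediate pattern $\nu$), and put $P(d):=\Prob(\mu(N)_p=d\mid\textrm{rest})$. Using the weight formula above together with the telescoping-and-boundedness arguments of (\ref{bound1})--(\ref{expbound3}), one obtains
\begin{equation*}
\sum_{d\neq\lambda_{p+1}}\frac{P(d)}{P(\lambda_{p+1})}\ \le\ C\,q^{p}
\end{equation*}
for a constant $C>0$ independent of $N$ and of the conditioning, whence $\Prob(\mu(N)_p\neq\lambda_{p+1}\mid\textrm{rest})\le Cq^{p}$; as this bound is uniform in the conditioning it holds unconditionally. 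A union bound over $p=N,N-1,\ldots,N-k+1$, together with the observation that the event in the statement equals $\bigcap_{i=1}^{k}\{\mu(N)_{N+1-i}=\lambda(N+1)_{N+2-i}\}$, then gives
\begin{equation*}
\Prob\bigl(\mu(N)_{N+1-i}=\lambda(N+1)_{N+2-i}\ \textrm{ for all }i=1,\ldots,k\bigr)\ \ge\ 1-\sum_{p=N-k+1}^{N}Cq^{p}\ \ge\ 1-cq^{N},
\end{equation*}
with $c:=Cq^{1-k}/(1-q)$, which depends on $k$ but not on $N$.

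The main obstacle is carrying out the single-coordinate step uniformly across the three series. For $G=C$ it is cleanest, since $\tau^C\equiv1$; for $G=B$ one must keep track of the extra frozen variable equal to $1$ and of the coefficient $\tau^B\in\{1,1+u^{-1}\}$, verifying that the $1+u^{-1}$ contributions are subdominant at $u=q^{N+\epsilon}$; and for $G=D$ one must check that the vanishing coefficients $\tau^D=0$ (occurring when $0<\nu_{N+1}<\min\{\mu_N,\lambda_{N+1}\}$) do not remove the minimizing $\nu$ in the $\nu$-sum, and that the doubled coefficients $\tau^D=2$ do not affect the order of its dominant term. I would present $G=C$ in full detail and indicate these routine modifications for $G=B,D$, as is done elsewhere in the paper for Theorems~\ref{thm:bctype} and~\ref{multivariate}.
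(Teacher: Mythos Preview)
Your overall strategy matches the paper's: reduce via a union bound to a single coordinate $\mu_p$ (with $p=N+1-m$), then control the conditional law of $\mu_p$ by bounding ratios of consecutive probabilities, exactly as in Lemma~\ref{lem:est1}.  There are, however, two differences and one genuine gap worth flagging.

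\emph{Difference in conditioning and in the intermediate bound.}  The paper does \emph{not} sum out the intermediate pattern $\nu$.  Instead it works with the joint measure $M_N(\nu,\mu\mid\lambda)$ on $\GTp_{N+1}\times\GTp_N$ and conditions on all of $\mu$ except $\mu_p$ \emph{and} all of $\nu$ except the single coordinate $\nu_{p+1}$ (or $\nu_p$, depending on the direction).  The one-dimensional sum over that $\nu$-coordinate is then a finite geometric series that can be evaluated exactly, and the resulting ratio $\Prob(\mu_p=b\pm1)/\Prob(\mu_p=b)$ comes out of order $q^{N}$, not $q^{p}$.  Your choice to sum out all of $\nu$ is legitimate and, as you note, the weaker bound $O(q^p)$ still sums over $p=N-k+1,\ldots,N$ to give $1-cq^N$; the paper's route simply yields a sharper per-coordinate estimate with less bookkeeping (only one $\nu$-coordinate to track, so no ``comparable to its dominant term'' argument for an $(N{+}1)$-fold product is needed).

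\emph{Gap: the downward direction.}  You assert that the weight is maximized at $\mu_p=\lambda_{p+1}$ but only compute the multiplicative factor for \emph{raising} $\mu_p$ above $\lambda_{p+1}$.  The event $\{\mu_p\neq\lambda_{p+1}\}$ also contains $\{\mu_p<\lambda_{p+1}\}$, and there the $\nu$-constraints behave differently: increasing $\mu_p$ past $\lambda_{p+1}$ moves the \emph{lower} bound of $\nu_p$, whereas decreasing $\mu_p$ below $\lambda_{p+1}$ moves the \emph{upper} bound of $\nu_{p+1}$, which does not affect the dominant (smallest-$|\nu|$) term in the same way.  The paper treats these two cases separately (its inequalities (\ref{firstline}) and (\ref{secondline})) using the two symmetric forms of $\chi^G_{\lambda/\mu}$ from Remark~\ref{symmetry:inverse}.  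Your outline needs the same split; once done, the downward ratio is in fact of order $q^{2N-p+2\epsilon}$, even smaller than $q^p$, so the conclusion survives.

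\emph{Minor inaccuracy.}  The claim that $V^s(q^{m_N},\ldots,q^{m_1})\big/q^{-\sum_i(N-i)m_i}$ is bounded above and below by positive constants independent of $N$ is not true as stated: that quotient is a product of $\binom{N}{2}$ factors $(1-q^{m_a-m_b})(1-q^{m_a+m_b})<1$.  What you actually need---and what the ratio arguments of (\ref{bound1})--(\ref{expbound3}) supply---is that the \emph{ratio} of two such products differing in a single $m_p$ is bounded, since only $O(N)$ factors change and each changes by a bounded amount.  This is how the paper proceeds.
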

\begin{proof}
By Boole's inequality, it suffices to show that for a fixed integer $m \geq 1$, there exists a constant $c > 0$, independent of $N$, such that
\begin{equation}\label{generalreduction}
\Prob\left( \mu(N)_{N + 1 -  m} = \lambda(N+1)_{N + 2 - m} \right) > 1 - cq^N.
\end{equation}

Let us do the case $m = 1$ and then remark on the small differences needed for the argument in the general case $m \geq 2$.
For $m = 1$, it will suffice to show the existence of constants $c_1, c_2 > 0$ such that
\begin{align}
\Prob( \mu(N)_{N} \leq \lambda(N+1)_{N+1} - 1 ) < c_1 q^N;\label{firstline}\\
\Prob( \mu(N)_{N} \geq \lambda(N+1)_{N+1} + 1 ) < c_2 q^N.\label{secondline}
\end{align}

Denote $\lambda(N+1)$ by $\lambda$, and $\mu(N)$ by $\mu$.

\smallskip

We begin with $(\ref{firstline})$.
If $\lambda_{N+1} = 0$, then $\Prob( \mu_{N} \leq \lambda_{N+1} - 1 ) = 0$ because $\mu_N \geq 0$ almost surely, so assume $\lambda_{N+1} \geq 1$.
For each $0 \leq b < \lambda_{N+1}$, we want to bound the ratio
\begin{equation}\label{ratiosmall}
\frac{\Prob(\mu_N = b)}{\Prob(\mu_N = b+1)}
\end{equation}
and show it is of order $O(q^N)$.
It is convenient to define a measure $M_N(\cdot | \lambda)$ on $\GTp_{N+1} \times \GTp_N$ via
\[
M_N(\nu, \mu | \lambda) := \mathbf{1}_{\{\mu \prec \nu \prec \lambda\}}\frac{\chi_{\mu}^G(q^{\epsilon}, q^{1 + \epsilon}, \ldots, q^{N-1+\epsilon})}{\chi^G_{\lambda}(q^{\epsilon}, q^{1+\epsilon}, \ldots, q^{N+\epsilon})}
(q^{N+\epsilon})^{2|\nu| - |\lambda| - |\mu|}
\cdot \tau^G(q^{N+\epsilon}; \lambda, \nu, \mu).
\]
The formula for $\tau^G(q^{N+\epsilon}; \lambda, \nu, \mu)$ is in the statement of Proposition $\ref{prop:branchsymplectic}$.
Using Proposition $\ref{prop:evalsymplectic}$, and the fact that $0 < q < 1$, we deduce that $M_N(\cdot | \lambda)$ is a positive measure.
From Proposition $\ref{prop:branchsymplectic}$, we also deduce that the pushforward of $M_N(\cdot | \lambda)$, under the projection $\pi : \GTp_{N+1} \times \GTp_N \rightarrow \GTp_N$, is $\Lambda^{N+1}_N(\lambda, \cdot)$.
In particular, $M_N(\cdot | \lambda)$ is a probability measure.
So instead of estimating the ratio $(\ref{ratiosmall})$ with respect to the probability measure $\Lambda^{N+1}_N(\lambda, \cdot)$, we estimate it with respect to the probability measure $M_N(\cdot | \lambda)$ and show that it is of order $O(q^N)$.

We can do a further simplification. Instead of considering $M_N(\cdot | \lambda)$, let us consider arbitrary nonnegative integers $\nu_1, \ldots, \nu_N$ and $\mu_1, \ldots, \mu_{N-1}$, such that $\lambda_i \geq \nu_i \geq \lambda_{i+1}$, $\nu_j \geq \mu_j \geq \nu_{j+1}$, for relevant $i, j$, and let us estimate the ratio $(\ref{ratiosmall})$ with respect to $M_N(\cdot | \lambda)$ conditioned on $\nu_1, \ldots, \nu_N$, $\mu_1, \ldots, \mu_{N-1}$.
For any $0 \leq e \leq d \leq \lambda_{N+1}$ (and with respect to this probability measure), we get
\begin{gather*}
\Prob(\mu_N = d) = \Prob(\mu_N = d,\ \nu_{N+1} = d) + \ldots + \Prob(\mu_N = d, \ \nu_{N+1} = 0),\\
\Prob(\mu_N = d,\ \nu_{N+1} = e) \propto q^{(N+\epsilon)(2e - d)} \tau^G(q^{N+\epsilon}; \lambda, \nu, \mu)  \chi_{\mu}^G(q^{\epsilon}, q^{1+\epsilon}, \ldots, q^{N-1+\epsilon});
\end{gather*}
the proportionality constant in the last equation does not depend on $d, e$.

If $G = B$, $\tau^B(q^{N+\epsilon}; \lambda, \nu, \mu) = 1 + q^{-N-\epsilon} = 1 + q^{-N-\half}$, unless $\nu_{N+1} = 0$ in which case $\tau^B(q^{N+\epsilon}; \lambda, \nu, \mu) = 1$.
Moreover,
\begin{gather*}
\chi_{\mu}^B(q^{\half}, q^{\frac{3}{2}}, \ldots, q^{N-1+\half}) \propto q^{-\frac{d}{2}} (1 - q^{d + \half}) \prod_{i=1}^{N-1} ( 1 - q^{\mu_i + N - i - d})(1 - q^{\mu_i + N - i + d + 1}).
\end{gather*}
As a result, we obtain
\begin{gather*}
\Prob(\mu_N = d,\ \nu_{N+1} = e) \propto
q^{(N+\half)(2e - d) - \frac{d}{2}} (1 - q^{d + \half}) (1 + q^{-N - \half} - \mathbf{1}_{\{ e = 0 \}}q^{-N - \half})\\
\times\prod_{i=1}^{N-1} ( 1 - q^{\mu_i + N - i - d})(1 - q^{\mu_i + N - i + d + 1}),
\end{gather*}
and therefore
\begin{gather*}
\Prob(\mu_N = d) \propto
q^{-(N+\half)d - \frac{d}{2}} (1 - q^{d + \half})\prod_{i=1}^{N-1} ( 1 - q^{\mu_i + N - i - d})(1 - q^{\mu_i + N - i + d + 1})\\
\times\left( (1 + q^{-N-\half})\frac{1 - q^{(2N+1)(d+1)}}{1 - q^{2N+1}} - q^{-N-\half} \right), \textrm{ for }G = B.
\end{gather*}

If $G = C$, $\tau^C = 1$ always, and
\begin{equation*}
\chi_{\mu}^C(q, q^2, \ldots, q^N) \propto q^{-d}(1 - q^{2(d+1)})
\prod_{i=1}^{N-1} (1 - q^{\mu_i + N - i - d})(1 - q^{\mu_i + N - i + d + 2}).
\end{equation*}
As a result, we have
\begin{gather*}
\Prob(\mu_N = d) \propto q^{-(N+1)d - d} (1 - q^{2(d+1)})
\prod_{i=1}^{N-1} (1 - q^{\mu_i + N - i - d})(1 - q^{\mu_i + N - i + d + 2})\\
\times \left( \frac{1 - q^{(2N+2)(d+1)}}{1 - q^{2N+2}} \right), \textrm{ for } G = C.
\end{gather*}

If $G = D$, $\tau^D = 0$ unless $\nu_{N+1} \in \{0, \mu_N\}$ (at least in this case that we assume $\mu_N \leq \lambda_{N+1}$) and $\tau^D = 1$ in those two cases.
Moreover,
\begin{equation*}
\chi_{\mu}^D(1, q, \ldots, q^{N-1}) \propto
\prod_{i=1}^{N-1} (1 - q^{\mu_i + N - i - d})(1 - q^{\mu_i + N - i + d}).
\end{equation*}
As a result, we have
\begin{gather*}
\Prob(\mu_N = d) \propto q^{-Nd}(1 + q^{2Nd})
\prod_{i=1}^{N-1} (1 - q^{\mu_i + N - i - d})(1 - q^{\mu_i + N - i + d}), \textrm{ for } G = D.
\end{gather*}

Then, for any $0 \leq b < \lambda_{N+1}$, the ratio $\Prob(\mu_N = b)/\Prob(\mu_N = b+1)$ equals
\begin{align*}
q^{N+1} \frac{1 - q^{b + \half}}{1 - q^{b + \frac{3}{2}}} \prod_{i=1}^{N-1} \frac{ (1 - q^{\mu_i + N - i - b})(1 - q^{\mu_i + N - i + b + 1}) }{ (1 - q^{\mu_i + N - i - b - 1})(1 - q^{\mu_i + N - i + b + 2}) }\times \frac{1 - q^{(2N+1)(b+1)}}{1 - q^{(2N+1)(b+2)}},& \textrm{ for }G = B;\\
q^{N+2} \frac{1 - q^{2(b+1)}}{1 - q^{2(b+2)}}
\prod_{i=1}^{N-1} \frac{ (1 - q^{\mu_i + N - i - b})(1 - q^{\mu_i + N - i + b + 2}) }{ (1 - q^{\mu_i + N - i - b - 1})(1 - q^{\mu_i + N - i + b + 3}) } \times \frac{1 - q^{(2N+2)(b+1)}}{1 - q^{(2N+2)(b+2)}},& \textrm{ for } G = C;\\
q^N \frac{1 + q^{2Nb}}{1 + q^{2N(b+1)}} \prod_{i=1}^{N-1} \frac{ (1 - q^{\mu_i + N - i - b})(1 - q^{\mu_i + N - i + b}) }{ (1 - q^{\mu_i + N - i - b - 1})(1 - q^{\mu_i + N - i + b + 1}) },& \textrm{ for }G = D.
\end{align*}

By following the same analysis as in the proof of Lemma $\ref{lem:est1}$, we can upper bound all three expressions above by (the exponentially small) $2q^N/(q; q)_{\infty}^2$.
Let us give, as an example, the details for the type $G = B$.
Use the bounds $0 < 1 - q^{b+\half} < 1 - q^{b + \frac{3}{2}}$, $1 - q^{(2N+1)(b+1)} < 1 - q^{(2N+1)(b+2)}$ and $(1 - q^{\mu_i + N - i - b})(1 - q^{\mu_i + N - i + b + 1}) < 1$ for any $1 \leq i \leq N-1$, to obtain
\begin{equation}\label{schur1stbound}
\frac{\Prob(\mu_N = b)}{\Prob(\mu_N = b+1)} \leq q^{N+1} \frac{1}{\prod_{i=1}^{N-1} (1 - q^{\mu_i + N - i - b - 1})(1 - q^{\mu_i + N - i + b + 2})}.
\end{equation}
Since $\mu_1 \geq \mu_2 \geq \dots \geq \mu_{N-1}$, then
\begin{gather*}
\mu_1 + N - b - 2 >  \dots  >  \mu_i + N - i - b - 1 > \dots > \mu_{N-1} - b \geq \mu_{N - 1} - (\lambda_{N+1} - 1) \geq 1;\\
\mu_1 + N + b + 1 > \dots > \mu_i + N - i + b + 2 > \dots > \mu_{N-1} + b + 3 \geq 3.
\end{gather*}
So $(\ref{schur1stbound})$ implies
\begin{equation*}
\frac{\Prob(\mu_N = b)}{\Prob(\mu_N = b+1)} \leq
q^{N+1} \frac{1}{(q; q)_{\infty}(q^3; q)_{\infty}} \leq
\frac{q^N}{(q; q)_{\infty}^2},
\textrm{ for type }G=B.
\end{equation*}
Similar bounds can be achieved for types $C, D$.
Recall that the probabilities we are dealing with are $M_N(\cdot | \lambda)$ after conditioning over values for $\nu_1, \ldots, \nu_N, \mu_1, \ldots, \mu_{N-1}$.
But the bound achieved did not take these values into consideration, therefore $\Prob(\mu_N = b)/\Prob(\mu_N = b+1) < q^N/(q; q)^2_{\infty}$, if $\mu_N$ is $M_N(\cdot | \lambda)$-distributed.
The bound just shown holds for any $0 \leq b \leq \lambda_{N+1}$.
This implies the existence of a desired constant $c_1 > 0$ for $(\ref{firstline})$.

\smallskip

Next, let us prove $(\ref{secondline})$; the proof is similar to that of $(\ref{firstline})$.
We can assume $\lambda_N \geq \lambda_{N+1} + 1$, since otherwise $\Prob(\mu_N = \lambda_{N+1}+1) = 0$. For each $\lambda_N - 1 \geq b \geq \lambda_{N+1}$, we want to show the ratio
\begin{equation}\label{symplecticratio2}
\frac{\Prob\left( \mu_N = b+1 \right)}{\Prob\left( \mu_N = b \right)}
\end{equation}
is of order $O(q^N)$.
Define the measure $M_N'(\cdot | \lambda)$ on $\GTp_{N+1} \times \GTp_N$ via
\[
M_N'(\nu, \mu | \lambda) := \mathbf{1}_{\{\mu \prec \nu \prec \lambda\}}\frac{\chi_{\mu}^G(q^{\epsilon}, q^{1 + \epsilon}, \ldots, q^{N-1+\epsilon})}{\chi^G_{\lambda}(q^{\epsilon}, q^{1+\epsilon}, \ldots, q^{N+\epsilon})}
(q^{N+\epsilon})^{|\lambda| + |\mu| - 2|\nu|}
\cdot \tau^G(q^{-(N+\epsilon)}; \lambda, \nu, \mu).
\]
As before, Proposition $\ref{prop:evalsymplectic}$ and Proposition $\ref{prop:branchsymplectic}$, see also Remark $\ref{symmetry:inverse}$, show that $M_N'(\cdot | \lambda)$ is a probability measure whose pushforward under the projection $\GTp_{N+1} \times \GTp_N \rightarrow \GTp_N$ is $\Lambda^{N+1}_N(\lambda, \cdot)$.
As we did before, instead of estimating the ratio $(\ref{symplecticratio2})$ with respect to the probability measure $\Lambda^{N+1}_N(\lambda, \cdot)$, we estimate it when (in both the numerator and denominator) $\mu$ is distributed according to $M_N'(\cdot | \lambda)$ after conditioning over fixed values for $\nu_1, \nu_2, \ldots, \nu_{N-1}, \nu_{N+1}$ and $\mu_1, \ldots, \mu_{N-1}$.
For any $\lambda_{N+1} \leq d\leq e \leq \lambda_N$, with respect to this probability measure, we get
\begin{equation}\label{eqn:prob}
\begin{gathered}
\Prob(\mu_N = d) = \Prob(\mu_N = d,\ \nu_N = d) + \ldots + \Prob(\mu_N = d, \ \nu_N = \lambda_N),\\
\Prob(\mu_N = d,\ \nu_N = e) \propto q^{(N+\epsilon)(d - 2e)} \tau^G(q^{-(N+\epsilon)}; \lambda, \nu, \mu)  \chi_{\mu}^G(q^{\epsilon}, q^{1+\epsilon}, \ldots, q^{N-1+\epsilon}).
\end{gathered}
\end{equation}
We know that $\tau^G(q^{-(N+\epsilon)}; \lambda, \nu, \mu)$ depends only on $\lambda_{N+1}, \nu_{N+1}$ and $\mu_N$, so let us denote it $\tau^G_N(\lambda_{N+1}, \nu_{N+1}, \mu_N)$. The factor $\tau^G_N(\lambda_{N+1}, \nu_{N+1}, d)$ appears in each term of the first line of $(\ref{eqn:prob})$, so it can be factored out.
On the other hand, from the formulas in Proposition $\ref{prop:evalsymplectic}$, we obtain that $\Prob(\mu_N = d)$ is proportional (up to a nonzero factor that does not depend on $d$) to:
\begin{align*}
\tau^B_N(\lambda_{N+1}, \nu_{N+1}, d) \cdot q^{(N+\half)(d - 2\lambda_N) - \frac{d}{2}} (1 - q^{d + \half})&\prod_{i=1}^{N-1} ( 1 - q^{\mu_i + N - i - d})(1 - q^{\mu_i + N - i + d + 1})\\
\times\left( \frac{1 - q^{(2N+1)(\lambda_N - d + 1)}}{1 - q^{2N+1}} \right),& \textrm{ for }G = B;\\
\tau^C_N(\lambda_{N+1}, \nu_{N+1}, d) \cdot q^{(N+1)(d - 2\lambda_N) - d}(1 - q^{2(d+1)})
&\prod_{i=1}^{N-1} (1 - q^{\mu_i + N - i - d})(1 - q^{\mu_i + N - i + d + 2}) \\
\times \left( \frac{1 - q^{(2N+2)(\lambda_N - d + 1)}}{1 - q^{2N+2}} \right),& \textrm{ for }G = C;\\
\tau^D_N(\lambda_{N+1}, \nu_{N+1}, d)  \cdot q^{N(d - 2\lambda_N)} \prod_{i=1}^{N-1} (1 - q^{\mu_i + N - i - d})&(1 - q^{\mu_i + N - i + d})\\
\times \left( \frac{1 - q^{2N(\lambda_N - d +1)}}{1 - q^{2N}} \right),& \textrm{ for }G = D.
\end{align*}
Next we note that for any $\lambda_N - 1 \geq b \geq \lambda_{N+1}$, we have $\tau^G_N(\lambda_{N+1}, \nu_{N+1}, b) = \tau^G_N(\lambda_{N+1}, \nu_{N+1}, b+1)$, except in the case that $G = D$, $b = 0$, in which case $\tau^D_N(\lambda_{N+1}, \nu_{N+1}, 0) = 1$, $\tau^D_N(\lambda_{N+1}, \nu_{N+1}, 1) = 2$.
Therefore the ratio $\Prob(\mu_N = b+1)/\Prob(\mu_N = b)$ equals
\begin{align*}
q^N \frac{(1 - q^{b+\frac{3}{2}})(1 - q^{(2N+1)(\lambda_N - b)})}{(1 - q^{b+\half})(1 - q^{(2N+1)(\lambda_N - b + 1)})}
\prod_{i=1}^{N-1} \frac{(1 - q^{\mu_i + N - i - b - 1})(1 - q^{\mu_i + N - i + b + 2})}{(1 - q^{\mu_i + N - i - b})(1 - q^{\mu_i + N - i + b + 1})},& \textrm{ for } G = B;\\
q^N \frac{(1 - q^{2(b+2)})(1 - q^{2N(\lambda_N - b)})}{(1 - q^{2(b+1)})(1 - q^{2N(\lambda_N - b +1)})}
\prod_{i=1}^{N-1} \frac{(1 - q^{\mu_i + N - i - b - 1})(1 - q^{\mu_i + N - i + b + 3})}{(1 - q^{\mu_i + N - i - b})(1 - q^{\mu_i + N - i + b + 2})},& \textrm{ for }G = C;\\
q^N (1 + \mathbf{1}_{\{ b = 0 \}}) \frac{1 - q^{2N(\lambda_N - b)}}{1 - q^{2N(\lambda_N - b + 1)}}
\prod_{i = 1}^{N-1} \frac{(1 - q^{\mu_i + N - i - b - 1})(1 - q^{\mu_i + N - i + b + 1})}{(1 - q^{\mu_i + N - i - b})(1 - q^{\mu_i + N - i + b})},& \textrm{ for }G = D.
\end{align*}
Just as we have done already a few times, we can upper bound all these three expressions by $cq^N/(q; q)_{\infty}^2$, for some constant $c > 0$; the bound is uniform on the values $\nu_1, \ldots, \nu_{N-1}, \nu_{N+1}$, $\mu_1, \ldots, \mu_{N-1}$ that we conditioned over. Therefore, if $\mu_N$ is $M_N'(\cdot | \lambda)$-distributed, we also have that $\Prob\left( \mu_N = b+1 \right)/\Prob\left( \mu_N = b \right)$ is upper bounded by $cq^N/(q; q)^2_{\infty}$,
for any $\lambda_N - 1 \geq b \geq \lambda_{N+1}$.
The bound $(\ref{secondline})$ follows.

\smallskip

The general case $m \geq 2$ of $(\ref{generalreduction})$ is very similar.
It suffices to show the existence of constants $c_1, c_2 > 0$ such that
\begin{align}
\Prob( \mu(N)_{N+1-m} \leq \lambda(N+1)_{N+2-m} - 1 ) < c_1 q^N;\label{generalfirstline}\\
\Prob( \mu(N)_{N+1-m} \geq \lambda(N+1)_{N+2-m} + 1 ) < c_2 q^N.\label{generalsecondline}
\end{align}
Denote $\lambda(N+1)$ by $\lambda$, and $\mu(N)$ by $\mu$.
For $(\ref{generalfirstline})$, we can assume $\lambda_{N+2-m} > \lambda_{N+3-m}$, since otherwise $\Prob( \mu_{N+1-m} \leq \lambda_{N+2-m} - 1 ) = 0$.
Then the inequality can be deduced if we show that, for each $\lambda_{N+3-m} \leq b < \lambda_{N+2-m}$, the ratio
\[
\frac{\Prob(\mu_{N+1-i} = b)}{\Prob(\mu_{N+1-i} = b+1)}
\]
is of order $O(q^N)$.
As before, it suffices to consider $\mu_N$ with law $M_N(\cdot | \lambda)$ and to show the last statement with respect to this probability measure.
In fact, we can even condition over any values of $\nu_1, \ldots, \nu_{N+1-m}, \nu_{N+3-m}, \ldots, \nu_{N+1}$ and $\mu_1, \ldots, \mu_{N-m}, \mu_{N+2-m}, \ldots, \mu_N$ such that $\lambda_i \geq \nu_i \geq \lambda_{i+1}$, $\nu_j \geq \mu_j \geq \nu_{j+1}$, for relevant $i, j$.
For any $\lambda_{N + 3 - m} \leq e \leq d \leq \lambda_{N + 2 - m}$, we use
\begin{gather*}
\Prob(\mu_{N+1-m} = d) = \Prob(\mu_{N+1-m} = d,\ \nu_{N+2-m} = d) + \ldots + \Prob(\mu_{N+1-m} = d, \ \nu_{N+2-m} = 0),\\
\Prob(\mu_{N+1-m} = d,\ \nu_{N+2-m} = e) \propto q^{(N+\epsilon)(2e - d)} \tau^G(q^{N+\epsilon}; \lambda, \nu, \mu) \chi_{\mu}^G(q^{\epsilon}, q^{1+\epsilon}, \ldots, q^{N-1+\epsilon}).
\end{gather*}
Then we can repeat the analysis above to prove the desired bound.
The only difference is that now $\tau^G(q^{N+\epsilon}; \lambda, \nu, \mu)$ does not depend on $\mu_{N+1-m}$ or on $\nu_{N+2-m}$ (it only depends on $\lambda_{N+1}, \nu_{N+1}, \mu_N$), so we could even use instead
\[
\Prob(\mu_{N+1-m} = d,\ \nu_{N+2-m} = e) \propto q^{(N+\epsilon)(2e - d)} \chi_{\mu}^G(q^{\epsilon}, q^{1+\epsilon}, \ldots, q^{N-1+\epsilon}),
\]
which makes the remaining steps of the proof of $(\ref{generalfirstline})$ even simpler.
For the proof of $(\ref{generalsecondline})$, the same outline works, in particular, the factor $\tau^G(q^{N+\epsilon}; \lambda, \nu, \mu)$ does not play a role in the proof.
\end{proof}

\subsection{Law of Large Numbers and the Martin boundary}

\label{sec:martin_BC}

\begin{thm}[Law of Large Numbers]\label{thm:LLNsymplectic}
Let $k\in\N$ be arbitrary, let $\bfy \in \Y$, and $\{M_N^{\bfy}\}_{N \geq 1}$ be the coherent system of Proposition $\ref{prop:sympconstruction}$.
Let $\{\mu(N)\in\GTp_N\}_{N \geq 1}$ be a sequence of random nonnegative signatures such that each $\mu(N)$ is $M_N^{\bfy}$-distributed.
Then, for any $k\in\N$, the probability of the event
\begin{equation*}
\mu(L)_{L + 1 - k} = y_k
\end{equation*}
tends to $1$, as $L$ goes to infinity.
\end{thm}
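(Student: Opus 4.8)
The plan is to follow the proof of Theorem \ref{thm:LLN} almost verbatim, replacing Proposition \ref{prop:construction} by Proposition \ref{prop:sympconstruction} and Proposition \ref{prop:schurestimate} by Proposition \ref{prop:sympestimate}. The argument is in fact slightly cleaner in the $B$-$C$-$D$ case, since the one-step concentration bound of Proposition \ref{prop:sympestimate} carries a single exponentially small error term $cq^N$ rather than the two competing ones $cq^{b(N)}$ and $cq^{N-b(N)}$ present in type $A$.

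First I would fix $k\in\N$ and $\bfy\in\Y$, choose a sequence $\{\lambda(N)\in\GTp_N\}_{N\geq 1}$ that stabilizes to $\bfy$, and recall from Proposition \ref{prop:sympconstruction} that $\Lambda^N_L(\lambda(N),\cdot)$ converges weakly to $M_L^{\bfy}$ as $N\to\infty$, for every fixed $L\geq 1$. Fix $L$; for each $N\geq L$, realize a Markov chain $\mu^{N,N}=\lambda(N),\ \mu^{N,N-1},\ldots,\mu^{N,L}$ with transitions given by the kernels $\Lambda^{j+1}_j$ (so $\mu^{N,j}$ is drawn from $\Lambda^{j+1}_j(\mu^{N,j+1},\cdot)$). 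By the composition property of the kernels, $\mu^{N,L}$ is $\Lambda^N_L(\lambda(N),\cdot)$-distributed, hence converges weakly, as $N\to\infty$, to a random $\mu^L\in\GTp_L$ with law $M_L^{\bfy}$.

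Next, apply Proposition \ref{prop:sympestimate} at each step of the chain: conditionally on $\mu^{N,j+1}$, and uniformly over its value, with probability exceeding $1-cq^j$ one has $\mu^{N,j}_{j+1-i}=\mu^{N,j+1}_{j+2-i}$ for all $1\leq i\leq k$, where $c>0$ is the constant of that proposition. Chaining these events over $j=L,\ldots,N-1$ and using the chain rule for conditional probabilities, I obtain
\[
\Prob\!\left(\mu^{N,L}_{L+1-i}=\lambda(N)_{N+1-i}\ \textrm{for all }1\leq i\leq k\right)\ \geq\ \prod_{j=L}^{N-1}(1-cq^j).
\]
Given $\epsilon>0$, choose $L_0$ so large that $\prod_{j\geq L_0}(1-cq^j)>1-\epsilon$, which is possible since $\sum_j q^j<\infty$. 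Then for every $L\geq L_0$ the right-hand side above is $>1-\epsilon$ for all $N>L$. Because $\lambda(N)$ stabilizes to $\bfy$, we have $\lambda(N)_{N+1-k}=y_k$ for all $N$ sufficiently large, so the displayed event forces $\mu^{N,L}_{L+1-k}=y_k$; hence $\Prob(\mu^{N,L}_{L+1-k}=y_k)>1-\epsilon$ for all large $N$. Passing to the weak limit $N\to\infty$ of the finitely supported laws of the integer-valued random variables $\mu^{N,L}_{L+1-k}$ yields $\Prob(\mu^L_{L+1-k}=y_k)\geq 1-\epsilon$. Since this holds for every $L\geq L_0$ and $L_0$ depends only on $\epsilon$, we conclude $\liminf_{L\to\infty}\Prob(\mu(L)_{L+1-k}=y_k)\geq 1-\epsilon$ for all $\epsilon>0$, i.e.\ the probability tends to $1$, which is the assertion of the theorem.

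The argument is essentially routine once Proposition \ref{prop:sympestimate} is available; the heavy lifting --- the exponential concentration estimate for a single branching step, uniformly in the signature and covering all three types $B,C,D$ --- has already been carried out there. I expect the only points needing a little care to be bookkeeping ones: transferring the uniform-in-$N$ lower bound to the limiting measure $M_L^{\bfy}$ via weak convergence on the discrete space $\GTp_L$ (equivalently, on $\Z$ after projecting onto the $(L+1-k)$-th coordinate), and keeping track of the index shift $j+2-i\mapsto j+1-i$ produced at each step of the chain. I do not anticipate any genuine obstacle beyond these.
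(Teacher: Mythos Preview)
Your proposal is correct and follows essentially the same approach as the paper: the paper itself simply states that the proof is a consequence of Proposition~\ref{prop:sympestimate} and is very similar to that of Theorem~\ref{thm:LLN}, leaving the details to the reader. Your write-up is precisely the expected adaptation, and your observation that the chaining bound $\prod_{j=L}^{N-1}(1-cq^j)$ is cleaner here than the two-sided product $\prod(1-cq^{x(i)})$ of the type~$A$ argument is exactly right.
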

\begin{proof}
The theorem is a consequence of Proposition $\ref{prop:sympestimate}$; the proof is very similar to that of Theorem $\ref{thm:LLN}$, and therefore we leave the details to the reader.
\end{proof}

The following theorem is the analogue of Theorem $\ref{thm:minimalschur}$; its proof is similar and we omit it.

\begin{thm}\label{minimalSymplectic}
The Martin boundary of the BC type $q$-Gelfand-Tsetlin graph is in bijection with the set $\Y$, under the map $\bfy \mapsto \{M_N^{\bfy}\}_{N \geq 1}$ of Proposition $\ref{prop:sympconstruction}$.
\end{thm}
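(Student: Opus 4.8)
The plan is to transcribe the proof of Theorem \ref{thm:minimalschur}, replacing each of its inputs by the BC-type analogue: Proposition \ref{prop:sympconstruction} in place of Proposition \ref{prop:construction}, the concentration bound Proposition \ref{prop:sympestimate} in place of Proposition \ref{prop:schurestimate}, and the Law of Large Numbers Theorem \ref{thm:LLNsymplectic} in place of Theorem \ref{thm:LLN}. Throughout, the role played in type $A$ by the moving index $b(N)+i$ is taken over by the index $N+1-i$, i.e.\ by the $i$-th smallest coordinate of a signature, which is preserved under the branching $\GTp_{N+1}\to\GTp_N$. First I would dispose of the easy direction: by Proposition \ref{prop:sympconstruction} the family $\{M_N^{\bfy}\}_{N\ge1}$ is a coherent system for every $\bfy\in\Y$, and it lies in the Martin boundary, being realized as $M_k^{\bfy}(\mu)=\lim_{N\to\infty}\Lambda^N_k(\lambda(N),\mu)$ along any sequence $\{\lambda(N)\in\GTp_N\}$ stabilizing to $\bfy$ (for instance $\lambda(N):=(y_N,y_{N-1},\dots,y_1)$). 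Distinct sequences $\bfy$ give distinct coherent systems, because by Theorem \ref{thm:LLNsymplectic} the measure $M_N^{\bfy}$ concentrates, as $N\to\infty$, on signatures $\kappa$ with $\kappa_{N+1-k}=y_k$, so the supports of $M_N^{\bfy}$ and $M_N^{\bfy'}$ are disjoint for all large $N$ when $\bfy\ne\bfy'$. Hence $\bfy\mapsto\{M_N^{\bfy}\}$ is an injection into the Martin boundary.

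It remains to prove surjectivity. Let $\{M_k\}_{k\ge1}$ be an element of the Martin boundary and pick $\{\lambda(N)\in\GTp_N\}_{N\ge1}$ with $M_k(\mu)=\lim_{N\to\infty}\Lambda^N_k(\lambda(N),\mu)$ for all $k\ge1$, $\mu\in\GTp_k$. Fix $m\in\N$ and let $c>0$ be the constant of Proposition \ref{prop:sympestimate} for $k=m$. Choose $N_0$ with $\prod_{i\ge N_0}(1-cq^i)>2/3$ (possible since $\sum_i q^i<\infty$; in contrast to type $A$, no hypothesis relating $N_0$ to $b(N)$ is needed, because the one-step error is $cq^N$ outright). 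For $L\ge N_0$ and $N>L$, let $\mu^N\in\GTp_L$ be $\Lambda^N_L(\lambda(N),\cdot)$-distributed, realized via a Markov chain $\lambda(N)=\kappa^{(N)}\to\kappa^{(N-1)}\to\cdots\to\kappa^{(L)}=\mu^N$ with $\kappa^{(j)}\sim\Lambda^{j+1}_j(\kappa^{(j+1)},\cdot)$. Applying Proposition \ref{prop:sympestimate} at the step $\kappa^{(j+1)}\to\kappa^{(j)}$ (with error $cq^j$) and intersecting over $L\le j\le N-1$, we obtain that with probability at least $\prod_{j=L}^{N-1}(1-cq^j)\ge\prod_{i\ge N_0}(1-cq^i)>2/3$ the $i$-th smallest coordinates of $\mu^N$ and of $\lambda(N)$ agree for all $i\le m$; in particular
\[
\Prob\bigl(\mu^N_{L+1-m}=\lambda(N)_{N+1-m}\bigr)>\tfrac23\qquad\text{for all }N>L .
\]

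Now I would finish exactly as in the type $A$ case. Since $M_L$ is the weak limit of $\Lambda^N_L(\lambda(N),\cdot)$ as $N\to\infty$, its pushforward $M$ under the map $\kappa\mapsto\kappa_{L+1-m}$ is a probability measure on $\N_0$ that is the weak limit of the laws of $\mu^N_{L+1-m}$. The displayed bound then forces $M(\{p\})\ge2/3$ for every subsequential limit $p\in\N_0$ of the deterministic sequence $\{\lambda(N)_{N+1-m}\}_{N}$; tightness of a weakly convergent sequence of probability measures gives that at least one such $p$ exists, and at most one does (two would contribute total mass $\ge4/3$). Hence $y_m:=\lim_{N\to\infty}\lambda(N)_{N+1-m}$ exists; as $m$ was arbitrary and signatures are nonincreasing, $0\le y_1\le y_2\le\cdots$, so $\bfy:=(y_1,y_2,\dots)\in\Y$ and $\{\lambda(N)\}$ stabilizes to $\bfy$. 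By Proposition \ref{prop:sympconstruction}, $M_k=M_k^{\bfy}$ for all $k$, i.e.\ $\{M_k\}=\{M_N^{\bfy}\}$, which proves surjectivity and hence the theorem.

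The only nonroutine point is the tightness argument of the last paragraph, which upgrades the uniform-in-$N$ concentration estimates into genuine convergence of the numerical sequence $\lambda(N)_{N+1-m}$; apart from that one must merely check that the telescoping of Proposition \ref{prop:sympestimate} aligns the coordinates correctly, which it does once they are indexed from the bottom of the signature (so that the tracked index $i$ does not drift as one descends the chain).
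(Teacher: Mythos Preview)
Your proposal is correct and follows essentially the same approach the paper indicates: the paper omits the proof, stating it is similar to that of Theorem \ref{thm:minimalschur}, and you have carried out precisely that transcription, with the appropriate substitutions (Proposition \ref{prop:sympconstruction} for Proposition \ref{prop:construction}, Proposition \ref{prop:sympestimate} for Proposition \ref{prop:schurestimate}, Theorem \ref{thm:LLNsymplectic} for Theorem \ref{thm:LLN}, and the bottom-indexed coordinate $N+1-i$ in place of $b(N)+i$). Your observation that the telescoped product $\prod_{j\ge N_0}(1-cq^j)$ needs no square here, since Proposition \ref{prop:sympestimate} gives error $cq^N$ with no dependence on a sign sequence, is exactly the right simplification relative to type $A$.
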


\subsection{Characterization of the minimal boundary}

The following is the main theorem of this section; its proof is similar to that of Theorem $\ref{thm:symmetricschur}$ and we omit it.

\begin{thm}\label{thm:minimalsymplectic}
The minimal boundary $\Omega_q^G$ of the BC type $q$-Gelfand-Tsetlin graph is equal to the Martin boundary $\Omega_q^{G, \Martin}$.
\end{thm}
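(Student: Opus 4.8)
The plan is to reproduce, in the BC setting, the argument behind Theorem \ref{thm:symmetricschur}, deducing the statement from three results already in hand: the general inclusion $\Omega_q^G \subseteq \Omega_q^{G, \Martin}$ (Theorem \ref{thm:minimalmartin}), the identification of the Martin boundary with $\Y$ via $\bfy \mapsto \{M_N^{\bfy}\}_{N \geq 1}$ (Theorem \ref{minimalSymplectic}), and the Law of Large Numbers of Theorem \ref{thm:LLNsymplectic}. First I would invoke Theorem \ref{thm:minimalmartin} to reduce the problem to the reverse inclusion $\Omega_q^{G, \Martin} \subseteq \Omega_q^G$; since by Theorem \ref{minimalSymplectic} every element of $\Omega_q^{G, \Martin}$ equals $\{M_N^{\bfy}\}_{N \geq 1}$ for a unique $\bfy\in\Y$, it then suffices to check that each such coherent system is an extreme point of the convex set $\varprojlim \M(\GTp_N)$ of coherent systems on the BC type $q$-Gelfand-Tsetlin graph.

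The heart of the argument is to show that the coherent systems $\{M_N^{\bfy}\}_{N \geq 1}$, $\bfy\in\Y$, have pairwise \emph{asymptotically disjoint} supports. I would argue as follows: fix $\bfy\neq\bfy'$ in $\Y$ and choose an index $k$ with $y_k\neq y_k'$; if $\mu(N)\in\GTp_N$ is $M_N^{\bfy}$-distributed, then Theorem \ref{thm:LLNsymplectic} yields $\Prob(\mu(N)_{N+1-k} = y_k)\to 1$ as $N\to\infty$, and likewise $M_N^{\bfy'}$ concentrates on the set of signatures whose $(N+1-k)$-th coordinate equals $y_k'$. Hence for all large $N$ the measures $M_N^{\bfy}$ and $M_N^{\bfy'}$ are concentrated on disjoint subsets of $\GTp_N$. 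By Theorem \ref{olshanski}, the coherent system $\{M_N^{\bfy}\}_{N \geq 1}$ decomposes as $M_N^{\bfy}(\mu) = \int_{\Omega_q^G} M_N^{\omega}(\mu)\,\pi(d\omega)$ for a unique $\pi\in\M(\Omega_q^G)$, and each $\omega\in\Omega_q^G\subseteq\Omega_q^{G,\Martin}$ is some $\{M_N^{\bfy'}\}_{N \geq 1}$; the asymptotic disjointness of supports then forces $\pi$ to be the point mass at $\bfy$, so that $\{M_N^{\bfy}\}_{N \geq 1}$ is extreme. This proves $\Omega_q^{G,\Martin}\subseteq\Omega_q^G$, hence equality.

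I expect the main obstacle to be essentially bookkeeping rather than a genuine difficulty: one must make rigorous the implication \emph{asymptotically disjoint supports $\Rightarrow$ extreme} inside the projective-limit formalism, namely that applying the decomposition $M_N^{\bfy}(\mu) = \int M_N^{\omega}(\mu)\,\pi(d\omega)$ to the event $\{\mu_{N+1-k} = y_k\}$ for large $N$ and invoking Theorem \ref{thm:LLNsymplectic} (for $M_N^{\bfy}$ and for every $M_N^{\omega}$ appearing in the support of $\pi$) pins $\pi$ down to the point mass at $\bfy$. This is precisely the mechanism carried out in \cite[proof of Thm. 3.12]{GO} and \cite[Step 3 in proof of Thm. 6.2]{Ol2}, and the only substantive inputs are Theorem \ref{thm:LLNsymplectic} and the Martin boundary identification of Theorem \ref{minimalSymplectic}; no idea beyond transcribing the corresponding type A proof is required.
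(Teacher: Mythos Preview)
Your proposal is correct and follows essentially the same approach as the paper: the paper's own proof simply says ``its proof is similar to that of Theorem~\ref{thm:symmetricschur} and we omit it,'' and Theorem~\ref{thm:symmetricschur} is proved exactly by combining Theorem~\ref{thm:minimalmartin}, the Martin boundary identification, and the Law of Large Numbers to get pairwise (asymptotically) disjoint supports, with the same references \cite[proof of Thm.~3.12]{GO} and \cite[Step 3 in proof of Thm.~6.2]{Ol2} for the ``disjoint supports $\Rightarrow$ extreme'' step. You have filled in precisely the details the paper omits.
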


\end{document}